\newcommand{\real}{\mathbb{R}}
\newcommand{\ints}{\mathbb{Z}}
\DeclareMathOperator{\GL}{GL}
\DeclareMathOperator{\PSL}{PSL}
\DeclareMathOperator{\SL}{SL}
\DeclareMathOperator{\U}{U}
\DeclareMathOperator{\Real}{Re}
\DeclareMathOperator{\sub}{Sub}
\DeclareMathOperator{\cl}{Cl}
\DeclareMathOperator{\Stab}{Stab}
\DeclareMathOperator{\op}{op}
\DeclareMathOperator{\supp}{supp}
\DeclareMathOperator{\Prob}{Prob}
\DeclareMathOperator{\IRS}{IRS}
\newcommand{\Sub}[1]{\mathrm{Sub}(#1)}
\newcommand{\Cl}[1]{\mathrm{Cl}(#1)}
\theoremstyle{plain}
\newtheorem{thm}{Theorem}[section]
\newtheorem{theorem}[thm]{Theorem}
\newtheorem{lem}[thm]{Lemma}
\newtheorem{lemma}[thm]{Lemma}
\newtheorem*{lemma*}{Lemma}
\newtheorem{remark}[thm]{Remark}
\newtheorem{prop}[thm]{Proposition}
\newtheorem{cor}[thm]{Corollary}
\newtheorem{example}[thm]{Example}
\newtheorem*{thm*}{Theorem}
\numberwithin{equation}{section}
\theoremstyle{definition}
\newtheorem{defn}[thm]{Definition}
\newtheorem{notation}[thm]{Notation}
\newtheorem{eg}[thm]{Example}
\newtheorem{rmk}[thm]{Remark}
\title{Spectral gap for products and a strong normal subgroup theorem}
\author{Uri Bader, Tsachik Gelander and Arie Levit}
\begin{document}

\begin{abstract}
We establish a general spectral gap theorem for actions of products of groups which may replace Kazhdan's property (T) in various situations. As a main application, we prove that a confined subgroup of an irreducible lattice in a higher rank semisimple Lie group is of finite index. This significantly strengthens the classical normal subgroup theorem of Margulis and removes the property (T) assumption from the recent counterpart result of Fraczyk and Gelander. We further show that any confined discrete subgroup of a higher rank semisimple Lie group satisfying a certain irreducibility condition is an irreducible lattice. This implies a variant of the Stuck--Zimmer conjecture under a strong irreducibility assumption of the action.
\end{abstract}

\maketitle

%\vspace{0.5cm} % Adjust the vertical space as needed
%\noindent
\begin{center}\textbf{Dedicated to Gregory Margulis with great admiration and affection.}\end{center}

\section{Introduction}
\label{sec:intro}

A subgroup $\Lambda$ of a given discrete group $\Gamma$ is called \emph{confined} if there exists a finite subset $F \subset \Gamma \setminus \{e\}$ such that $\Lambda^\gamma \cap F \neq \emptyset$ holds true for every element $\gamma \in \Gamma$. One of our main results is the following:

\begin{theorem}
\label{thm intro: lattices}
Let $G$ be a connected semisimple  Lie group of real rank at least two  and with trivial center. Let $\Gamma$ be an irreducible lattice in $G$. Then any confined subgroup of  $ \Gamma$ has finite index.
\end{theorem}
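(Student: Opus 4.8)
The plan is to run Margulis's normal subgroup theorem architecture with the confined subgroup $\Lambda$ in place of a normal subgroup, and with the product spectral gap theorem of this paper replacing Kazhdan's property~(T) whenever $\Gamma$ lacks it. Concretely I would show that the quasi-regular representation $\ell^2(\Gamma/\Lambda)$ both weakly contains the trivial representation (an ``amenability'' input) and has a spectral gap (a ``rigidity'' input); together these force a nonzero $\Gamma$-invariant vector in $\ell^2(\Gamma/\Lambda)$, i.e.\ $[\Gamma:\Lambda]<\infty$. To exploit confinement I would pass to the Chabauty dynamics: let $\mathcal{Y}\subseteq\Sub{\Gamma}$ be a minimal subsystem of the closure of the conjugation orbit $\{\gamma\Lambda\gamma^{-1}:\gamma\in\Gamma\}$. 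The confining finite set $F\subseteq\Gamma\setminus\{e\}$ certifies that every $\Delta\in\mathcal{Y}$ meets $F$; in particular $\{e\}\notin\mathcal{Y}$, and this non-degeneration is essentially the only feature of $\Lambda$ the argument uses. (When $\mathcal{Y}$ is finite one may also short-circuit via the classical theorem applied to the finite-index subgroup $N_\Gamma(\Lambda)$, but this is not needed.)

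Amenability half: producing the weak containment of the trivial representation, the analogue of ``$\Gamma/N$ amenable''. For a normal subgroup $N$ the mechanism is standard: the $\Gamma$-action on the Furstenberg boundary $B=G/P$ is amenable (Zimmer); the $\sigma$-algebra of $N$-invariant subsets of $B$ is $\Gamma$-invariant, hence by the Margulis factor theorem it comes from a partial flag variety $G/Q$ on which $N$ acts trivially; $N$ is Zariski dense in $G$ --- its Zariski closure is a normal subgroup, so a product of simple factors, and irreducibility rules out proper ones --- so $G$ acts trivially on $G/Q$, forcing $Q=G$ and ergodicity of $N$ on $B$; an amenable action together with an ergodic normal subgroup then yields the weak containment of the trivial representation by Zimmer's argument. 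In the confined case I would first observe, using only minimality of $\mathcal{Y}$ together with the standard fact that an irreducible lattice with trivial center meets every proper direct factor of $G$ trivially, that every $\Delta\in\mathcal{Y}$ is Zariski dense in $G$: the ``factor support'' of the Zariski closure of $\Delta$ is a $\Gamma$-invariant, hence constant, function on $\mathcal{Y}$, and were it proper then $\Lambda$ itself would be trivial. The substantive step is then to recover ergodicity of $\Delta$ on $B$ and the weak containment in $\ell^2(\Gamma/\Lambda)$ without normality of $\Delta$: for this I would fix an admissible measure $\mu$ on $\Gamma$, pass to a $\mu$-stationary measure on $\mathcal{Y}$, form a stationary joining of $\mathcal{Y}$ with $B$, and run relative versions of the factor theorem and Howe--Moore over $B$, using $F$ to rule out degeneration of the stationary random subgroup.

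Rigidity half: killing the almost invariant vectors via $\rank_\real(G)\ge 2$. If $\Gamma$ has property~(T) --- in particular whenever $G$ is simple, since a simple Lie group of real rank $\ge 2$ has~(T) --- the trivial representation is isolated in $\widehat{\Gamma}$, so the weak containment yields a genuine nonzero invariant vector in $\ell^2(\Gamma/\Lambda)$, hence $[\Gamma:\Lambda]<\infty$. If $\Gamma$ does not have property~(T), then neither does $G$, so $G$ has a rank-one simple factor; as $\rank_\real(G)\ge 2$ there is a further factor, whence $G=G_1\times G_2$ with $G_1$ rank-one simple and, by irreducibility of $\Gamma$, the projections $\operatorname{pr}_i(\Gamma)$ dense. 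I would then apply the product spectral gap theorem to the $G$-action on $G/\Lambda$ induced from $\Gamma\actson\Gamma/\Lambda$ --- equivalently, to the representation $\ell^2(\Gamma/\Lambda)$ of the irreducible lattice $\Gamma\hookrightarrow G_1\times G_2$ --- whose hypotheses are supplied by Howe--Moore, density of the projections, and the non-degeneracy that confinement imposes on the action; the conclusion is a spectral gap, which with the amenability input again forces a nonzero invariant vector and $[\Gamma:\Lambda]<\infty$. (The special case of $\Lambda$ normal and nontrivial is Margulis's normal subgroup theorem for $G$ with trivial center.)

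The main obstacle is the amenability half in the non-normal setting. For a normal subgroup ergodicity on the boundary is automatic from the factor theorem, because $N$-invariance is $\Gamma$-equivariant; for a confined subgroup one has only an $N_\Gamma(\Lambda)$-equivariant $\sigma$-algebra, and must instead manufacture the right stationary measure on $\mathcal{Y}$, prove relative versions of the factor theorem and Howe--Moore over $B$, and --- most delicately --- use the finite set $F$ to keep the stationary random subgroup from collapsing toward $\{e\}$ at any stage. Reconciling the topological dynamics on $\Sub{\Gamma}$ (minimality of $\mathcal{Y}$, the set $F$) with this measurable boundary theory is where the real work lies; the rigidity half is then a clean invocation of property~(T) or of the product spectral gap theorem of this paper.
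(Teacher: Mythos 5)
Your overall architecture (amenability input vs.\ spectral gap input, with the product spectral gap theorem replacing property (T)) is the right spirit, but both halves contain genuine gaps, and the paper resolves each of them by a mechanism you do not have. For the \enquote{amenability half}: you propose to establish weak containment of the trivial representation in $\ell^2(\Gamma/\Lambda)$ by running relative versions of the factor theorem and Howe--Moore over the boundary for a stationary random subgroup. You correctly flag this as the main obstacle, but you do not close it, and there is no known way to carry it out for a fixed non-normal $\Lambda$; a priori a confined subgroup need not be coamenable in $\Gamma$, so the weak containment you want is not available at the outset. The paper sidesteps $\ell^2(\Gamma/\Lambda)$ entirely: it uses arithmeticity (Margulis) plus the \emph{charmenability} of $S$-arithmetic lattices (Bader--Boutonnet--Houdayer--Peterson) to produce an ergodic $\Gamma$-invariant probability measure on a non-trivial URS inside $\overline{\Lambda^\Gamma}$, proves that every ergodic confined \emph{invariant random subgroup} is almost surely of finite index (Theorem \ref{thm:confined ergodic action of lattice}), and only at the very end passes back to $\Lambda$ itself using the fact that finite-index subgroups are isolated points of $\Sub{\Gamma}$. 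Working at the level of IRS's rather than with the fixed subgroup $\Lambda$ is what makes the amenability half tractable; your plan, as written, cannot produce the invariant vector in $\ell^2(\Gamma/\Lambda)$ without already knowing coamenability.

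For the \enquote{rigidity half}: you assert that the hypotheses of the product spectral gap theorem for the induced action are \enquote{supplied by Howe--Moore, density of the projections, and the non-degeneracy that confinement imposes.} This underestimates two substantive points. First, a subgroup confined in $\Gamma$ need \emph{not} be confined in $G$ when $\Gamma$ is non-uniform (Example \ref{ex:confinedinGamma}), so conjugate limits taken in $G$ can degenerate; the paper controls this with Lemma \ref{lem:understanding limits of confined in a lattice}, which uses Clozel's spectral gap on the base $L^2_0(G/\Gamma)$ to prevent escape of mass and to show that accumulation points of $\mathrm{Stab}_*(|f_n|^2\cdot\overline{\mu})$ stay supported on $G$-conjugates of confined subgroups of $\Gamma$. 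Second, the spectral gap theorem for semisimple products (Theorem \ref{theorem:getting spectral gap - analytic groups}) requires Zariski-dense and non-relatively-compact projections of these limit subgroups to the rank-one factor; this is Lemma \ref{lemma:confined subgroups of arithemtic lattices}, whose proof needs arithmeticity (fields of definition of boundary fixed points), stationary random subgroups, and boundary dynamics on the symmetric space or building --- none of which follows from Howe--Moore or density of $\mathrm{pr}_i(\Gamma)$. Until these two inputs are supplied, the invocation of the product spectral gap theorem is not justified.
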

 
Note that every non-trivial normal subgroup is confined. So Theorem \ref{thm intro: lattices} vastly extends the celebrated normal subgroup theorem   of Margulis \cite{margulis1978quotient,margulis1979finiteness}, which says in turn that every non-trivial normal subgroup of a center-free higher-rank irreducible lattice has finite index. 
Loosely speaking, this theorem and  its generalizations \cite{SZ,burger2000lattices,bekka2007operator,creutz2017stabilizers,fraczyk2023infinite} are consequences of the conflict between two incompatible analytic properties, namely \enquote{amenability} and \enquote{spectral gap}. Hence these results are  easier to obtain if the semisimple group $G$, or least one of its factors, has Kazhdan's property (T).  The general case of the normal subgroup theorem where the group $G$ does not have property (T) required    special  attention, which Margulis carried out in \cite{margulis1979finiteness}. In the classical context of lattices in higher rank semisimple Lie groups, this has been the state of the art until today;  no improvement  has been made in the absence of property (T). \footnote{The works \cite{burger2000lattices,shalom2000rigidity,bader2006factor}  do not rely on Kazhdan's property (T). However,  they improve the  normal subgroup theorem in a different direction than the current paper, i.e. they extend the class of groups to which the theorem applies, while here we obtain a  stronger result in  the classical case of semisimple Lie groups.}

Here is another way to think about  Theorem \ref{thm intro: lattices}. Given a discrete group $\Gamma$  we consider the  space of its subgroups $\Sub{\Gamma}$, called its \emph{Chabauty space}. Put a compact topology on $\Sub{\Gamma}$ by identifying it with a closed subset of the Cantor space $\{0,1\}^\Gamma$. The group $\Gamma$ acts on its Chabauty space  by homeomorphisms via conjugation. In this language,  a subgroup $\Lambda \in \Sub{\Gamma}$ is \emph{unconfined} (i.e. not confined) if and only if the trivial subgroup of $\Gamma$ belongs to the closure of the $\Gamma$-orbit of the subgroup $\Lambda$ in the Chabauty space. 

\addtocounter{thm}{-1}
\begin{theorem}[Reformulation]
Let $\Gamma$ be an irreducible lattice in a connected, center-free semisimple Lie group of real rank at least two.
Then any infinite index subgroup of  $\Gamma$ is unconfined, i.e. admits a sequence of conjugates converging to the trivial subgroup in the Chabauty topology.
\end{theorem}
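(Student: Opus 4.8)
The plan is to follow the two-part architecture of Margulis's normal subgroup theorem --- an \emph{amenability} input coming from the Furstenberg boundary played against a \emph{rigidity} input --- but to replace the rigidity furnished by property (T) (or by Margulis's ad hoc rank-$\ge 2$ argument) with the spectral gap theorem for products proved in this paper, and to replace the quotient group $\Gamma/N$ by the dynamics of $\Gamma$ on its Chabauty space. So suppose $\Lambda\le\Gamma$ is confined, with finite witness $F\subset\Gamma\setminus\{e\}$, and assume toward a contradiction that $[\Gamma:\Lambda]=\infty$. The hypothesis says exactly that the Chabauty closure of $\{\Lambda^\gamma:\gamma\in\Gamma\}$ in $\Sub\Gamma$ avoids the trivial subgroup; passing to a minimal subsystem yields a compact minimal $\Gamma$-space $\mathcal X\subseteq\Sub\Gamma$ whose points are all confined and --- since a Chabauty limit of conjugates of the infinite-index subgroup $\Lambda$ is again of infinite index, $\Gamma$ being finitely generated --- all of infinite index. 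Fix an admissible generating measure $\mu$ on $\Gamma$; being compact, $\mathcal X$ carries a $\mu$-stationary probability measure $\nu$.

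Let $B=G/P$ be the Furstenberg--Poisson boundary of $(\Gamma,\mu)$, so $\Gamma\actson B$ is amenable and stationarity of $\nu$ produces a $\Gamma$-equivariant boundary map $B\to\Prob(\mathcal X)$. The heart of the argument is a Nevo--Zimmer-type structure theorem for the stationary system $\Gamma\actson(\mathcal X,\nu)$: since $\rank_\real G\ge 2$, the group $G$ acts through two independent directions --- genuine simple factors $G=G_1\times G_2$ when $G$ fails property (T), a pair of horospherical subgroups attached to distinct maximal parabolics in general --- and irreducibility of $\Gamma$ makes each of them act on the $L^2$-space attached to $(\mathcal X,\nu)$ with no almost-invariant vectors; this is precisely the configuration to which the spectral gap theorem for products applies, and it provides the uniform gap that property (T) supplies in the classical proof. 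Running the Nevo--Zimmer machine with this gap in hand, the boundary map factors through a projective factor $B=G/P\onto G/Q$ for some parabolic $Q$, and confinement of the subgroups in $\mathcal X$ forces $Q=G$ --- a subgroup realizing a proper parabolic as a factor is too constrained to be confined of infinite index. Hence the boundary map is essentially constant, and its value $\nu_0$ is a $\Gamma$-invariant probability measure supported on $\mathcal X$.

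An ergodic component of $\nu_0$ is an ergodic invariant random subgroup of $\Gamma$ supported on $\mathcal X$. By the Stuck--Zimmer theorem --- every ergodic p.m.p.\ action of a higher-rank irreducible lattice is essentially free or essentially transitive --- it is either $\delta_{\{e\}}$ or uniformly supported on the conjugates of a finite-index subgroup. But $\mathcal X$ contains neither the trivial subgroup (by confinement) nor any finite-index subgroup (its points all have infinite index), a contradiction. So $[\Gamma:\Lambda]<\infty$, which proves the theorem; the reformulation is merely this statement read through the dictionary \enquote{$\Lambda$ is confined $\iff$ $\{e\}$ is not in the Chabauty orbit-closure of $\Lambda$}.

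The main obstacle is the middle paragraph: verifying the no-almost-invariant-vectors property of each of the two directions on the Hilbert space built from $(\mathcal X,\nu)$ and $B$ --- this is where confinement (no degeneration to $\{e\}$) and irreducibility of $\Gamma$ must be combined precisely, and it is what the spectral gap theorem for products is engineered to certify --- together with the elimination of a proper projective factor $Q$. A secondary issue, relevant when $\Gamma$ is non-uniform, is reconciling the finiteness of $F$ with the cusps of $\Gamma\backslash G$; on the alternative route that deduces the theorem from the companion statement of the paper --- a confined discrete subgroup of $G$ with suitable irreducibility is an irreducible lattice --- this reappears as the transfer of confinement from $\Gamma$ to the ambient group $G$, after which $\Lambda$, being discrete of infinite covolume, cannot be a lattice.
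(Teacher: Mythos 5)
Your overall architecture (an amenability input played against a rigidity input, with the product spectral gap theorem standing in for property (T)) is in the spirit of the paper, and the reduction to showing that a non-trivial confined minimal subsystem of $\Sub{\Gamma}$ carries a $\Gamma$-invariant measure concentrated on finite-index subgroups is correct. But your closing step is a genuine gap: you invoke \enquote{the Stuck--Zimmer theorem --- every ergodic p.m.p.\ action of a higher-rank irreducible lattice is essentially free or essentially transitive} as a black box. In the absence of property (T) that statement \emph{is} the Stuck--Zimmer conjecture; the actual theorem (and its refinement by Hartman--Tamuz) requires at least one simple factor of $G$ to be Kazhdan, and the entire point of the present theorem is that $G$ need not have such a factor. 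The spectral gap you extract in your middle paragraph is spent on upgrading the stationary measure to an invariant one, so nothing is left to feed the Stuck--Zimmer machinery at the end. The paper's route is different exactly here: the invariant measure on the URS comes for free from charmenability of arithmetic lattices (Bader--Boutonnet--Houdayer--Peterson), the resulting confined IRS of $\Gamma$ is \emph{induced} to a p.m.p.\ $G$-space $(Y,\overline{\mu})$, spectral gap of $L^2_0(Y,\overline{\mu})$ is established via Theorem \ref{theorem:getting spectral gap - analytic groups}, and only then is the Stuck--Zimmer/Bader--Shalom argument run --- which does give essential transitivity once spectral gap is supplied as a hypothesis (cf.\ Corollary \ref{cor:SZ-P}).

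The second problem is that the hypothesis verification for the spectral gap theorem --- which you dismiss with \enquote{this is precisely the configuration to which the spectral gap theorem applies} --- is where most of the work lies. Two inputs are needed: (i) for an asymptotically $G_1$-invariant sequence $f_n$, the accumulation points of $\mathrm{Stab}_*(|f_n|^2\cdot\overline{\mu})$ must be shown to concentrate on $G$-conjugates of confined subgroups of $\Gamma$; for non-uniform $\Gamma$ this requires ruling out escape of mass into the cusps, which the paper does with Clozel's property $(\tau)$ (Lemmas \ref{lemma:the good set C} and \ref{lem:understanding limits of confined in a lattice}). This is the \enquote{secondary issue} you flag but do not resolve, and it is not secondary: confinement in $\Gamma$ does not imply confinement in $G$ (Example \ref{ex:confinedinGamma}). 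And (ii) such subgroups must have Zariski-dense, non-relatively-compact projections to the rank-one factor $G_2$, which the paper extracts from arithmeticity and boundary dynamics of stationary random subgroups (Lemma \ref{lemma:confined subgroups of arithemtic lattices}). Your Nevo--Zimmer/projective-factor elimination (\enquote{a subgroup realizing a proper parabolic as a factor is too constrained to be confined of infinite index}) is an assertion rather than an argument and does not substitute for either input; note also that Nevo--Zimmer structure theory applies to stationary actions of the ambient group $G$, not of $\Gamma$ directly, so an induction step would be needed there as well.
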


A \emph{uniformly recurrent subgroup} (URS) of $\Gamma$ is a closed minimal  $\Gamma$-subsystem of $\Sub{\Gamma}$ \cite{glasner2015uniformly}. Note that if $X \subset \Sub{\Gamma}$ is a non-trivial uniformly recurrent subgroup then any subgroup $\Lambda \in X$ is confined.

\begin{cor}
\label{cor:lattices URS statement }
Let $\Gamma$ be an irreducible lattice as in Theorem \ref{thm intro: lattices}. Any non-trivial uniformly recurrent subgroup $X \subset \Sub{\Gamma}$ is of the form $X = \{\Lambda^\gamma \: : \: \gamma \in \Gamma\} $ where $\Lambda $ is some finite-index subgroup of $\Gamma$.
\end{cor}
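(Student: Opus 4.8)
The plan is to deduce the corollary formally from Theorem~\ref{thm intro: lattices}, the only extra ingredients being two soft facts: that non-triviality of a URS forces all of its members to be confined, and that a finitely generated group has only finitely many subgroups of any prescribed finite index.

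First I would unwind the definitions. A URS $X \subseteq \Sub{\Gamma}$ is a closed, $\Gamma$-invariant, minimal subset, and ``non-trivial'' means $X \neq \{\{e\}\}$. I claim every $\Lambda \in X$ is confined. Indeed, if some $\Lambda \in X$ were unconfined, then by the Chabauty reformulation recalled in the introduction the trivial subgroup $\{e\}$ lies in the closure of $\{\Lambda^\gamma : \gamma \in \Gamma\}$, which is contained in $X$ because $X$ is closed and $\Gamma$-invariant. But $\{\{e\}\}$ is a nonempty closed $\Gamma$-invariant subset of $X$, so minimality of $X$ would give $X = \{\{e\}\}$, contrary to non-triviality.

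Next I would invoke Theorem~\ref{thm intro: lattices}: every confined subgroup of $\Gamma$ has finite index, so by the previous step every $\Lambda \in X$ satisfies $[\Gamma : \Lambda] < \infty$. Now fix one such $\Lambda$ and put $n = [\Gamma : \Lambda]$. Every conjugate $\Lambda^\gamma$ again has index $n$, and since $\Gamma$ is finitely generated (higher-rank irreducible lattices are even finitely presented) there are only finitely many subgroups of $\Gamma$ of index $n$: such a subgroup is determined by the associated transitive action of a fixed finite generating set on an $n$-element set. Hence the conjugacy orbit $\{\Lambda^\gamma : \gamma \in \Gamma\}$ is finite, therefore closed in the Hausdorff space $\Sub{\Gamma}$; being also $\Gamma$-invariant and contained in $X$, minimality of $X$ forces $X = \{\Lambda^\gamma : \gamma \in \Gamma\}$, and $\Lambda$ has finite index. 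This is precisely the assertion.

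Since all of the analytic difficulty is absorbed into Theorem~\ref{thm intro: lattices}, there is no serious obstacle in the remaining steps; the only points to watch are the correct reading of ``non-trivial'' (so that the minimality argument genuinely produces a contradiction) and the elementary count of finite-index subgroups, which relies on finite generation of $\Gamma$.
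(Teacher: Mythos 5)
Your proposal is correct and follows essentially the same route as the paper: the authors also derive the corollary from Theorem \ref{thm intro: lattices} via the observation (stated explicitly in their introduction) that every member of a non-trivial URS is confined, and then conclude by minimality once one member is known to have finite index. The only cosmetic difference is that you justify finiteness of the conjugation orbit by counting index-$n$ subgroups of a finitely generated group, whereas one can also note directly that the normalizer of a finite-index subgroup has finite index; both work.
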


The above results hold true more generally for $S$-arithmetic subgroups of semisimple algebraic groups over local fields of zero characteristic. For a rigorous statement of the results in that generality see Theorem \ref{thm: lattices, general} below.

In the special case where $G$ has a simple factor with Kazhdan's property (T), these results follow from the recent work of Fraczyk--Gelander \cite{fraczyk2023infinite} as well as the work of Bader--Boutonnet--Houdayer--Peterson \cite{BBHP}. 
The breakthrough of the current paper is that it applies to all higher rank groups regardless of property (T). 

The preceding discussion concerns subgroups which are a priori contained in a given lattice. We now turn to consider more general discrete subgroups. A suitable adaptation of the notion of confined subgroups is required here. A subgroup $\Lambda$ of a second countable locally compact group $G$ is called \emph{confined} if the trivial subgroup   is not in the Chabauty closure of the orbit $\Lambda^G$ via conjugation.
When $G$ is a semisimple Lie group, a discrete subgroup   $\Lambda \le G$ is confined if and only if the corresponding locally symmetric space $\Lambda\backslash G/K$ has a uniform upper bound on its injectivity radius at all points (where $K$ is a maximal compact subgroup of $G$).
It is proven in \cite{fraczyk2023infinite} that confined discrete subgroups of simple center-free Lie groups of real rank at least two are lattices; see \cite{fraczyk2023infinite} for a more refined statements and for analogs in the semisimple setup where property (T) is assumed.

To state our next result
%concerning discrete subgroups which are not apriori assumed to be contained in any given lattice,
we will need a sharpening of the confined condition. It is   used to ensure that a discrete subgroup of a product does not degenerate into any proper factor when taking conjugates. The term \emph{conjugate limit} in Definition \ref{def:strongly and irreducibly confined - intro} stands for any subgroup  in the Chabauty orbit closure under conjugation of the given subgroup. 
 
\begin{defn}
\label{def:strongly and irreducibly confined - intro}
A subgroup $\Lambda$ of a  locally compact second countable group $G$ is  \emph{strongly confined} if no conjugate limit of $\Lambda$ is contained in a proper normal subgroup of $G$. It is  \emph{irreducibly confined}
    if it is strongly confined and
    the  intersection $\Lambda \cap H$ is trivial for   any   proper normal subgroup $H\lhd G$.
\end{defn}

Certainly, every strongly confined subgroup is confined, and every confined subgroup of a simple Lie group is strongly (and irreducibly) confined. 

 \begin{theorem}
\label{thm intro:generalLie}
Let $G$ be a connected semisimple  Lie group of real rank at least two  and with trivial center. A discrete subgroup $\Lambda$ of $G$   is irreducibly confined if and only if $\Lambda$ is an irreducible lattice. 
\end{theorem}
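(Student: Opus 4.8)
The plan is to deduce Theorem~\ref{thm intro:generalLie} from the lattice case (Theorem~\ref{thm intro: lattices}) together with the structure theory of semisimple Lie groups. One direction is essentially formal: if $\Lambda$ is an irreducible lattice, then its projection to every proper factor is dense (irreducibility), so no conjugate limit can land in a proper normal subgroup — for a lattice, conjugation is by elements of $G$ and the orbit closure in $\Sub{G}$ consists of lattices with the same covolume, none of which can be contained in a proper normal subgroup since such a subgroup has infinite covolume; similarly $\Lambda \cap H$ is a discrete subgroup of the smaller-rank group $H$ on which $\Lambda/(\Lambda\cap H)$ would have to still be a lattice in $G/H$, and irreducibility forces $\Lambda \cap H = \{e\}$. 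So the content is the converse.

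For the converse, suppose $\Lambda \le G$ is a discrete subgroup that is irreducibly confined. The first step is to show $\Lambda$ is a lattice. Since $\Lambda$ is in particular confined, the earlier-cited result of Fraczyk--Gelander \cite{fraczyk2023infinite} handles this when $G$ is simple; in the semisimple case one wants to bootstrap using the spectral gap theorem for products that is the technical heart of the paper. Concretely, I would argue that confinement gives a uniform upper bound on the injectivity radius of $\Lambda \backslash G / K$, hence a nonamenable (spectral gap) situation, while if $\Lambda$ had infinite covolume one could, using strong confinement to prevent degeneration into a factor, run the Stuck--Zimmer/Nevo--Zimmer style machinery (now available without property (T) via the product spectral gap) to produce an invariant random subgroup supported on infinite-covolume subgroups with good properties, and derive a contradiction with amenability at infinity. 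The key point that strong confinement buys us is exactly that every conjugate limit still projects onto each factor, so the induced IRS on $\Sub{G}$ is "irreducible" in the appropriate sense and the product spectral gap applies.

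Once $\Lambda$ is known to be a lattice, the second step is to show it is \emph{irreducible}. Suppose not; then up to finite index $\Lambda$ splits as $\Lambda_1 \times \Lambda_2$ with respect to a decomposition $G = G_1 \times G_2$ into products of lower-dimensional semisimple groups, with each $\Lambda_i$ a lattice in $G_i$. But then $\Lambda_1 = \Lambda \cap G_1$ (up to the finite-index issue) is a nontrivial intersection with the proper normal subgroup $G_1 \lhd G$, contradicting the definition of irreducibly confined. Here one has to be slightly careful about the finite-index subtlety — a reducible lattice need not split literally as a product, only virtually — but the intersection $\Lambda \cap G_1$ is a lattice in $G_1$ (by the arithmeticity/Margulis structure of reducible lattices, or directly because the $G_1$-orbits on $\Lambda\backslash G$ are closed) and in particular nontrivial, which is all we need.

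The main obstacle I anticipate is the first step — upgrading a strongly confined discrete subgroup of a \emph{semisimple} (possibly property-(T)-less) group to a lattice. The simple case is already a substantial theorem of \cite{fraczyk2023infinite}, and the semisimple case genuinely needs the new product spectral gap input; the delicate part is organizing the induction on the factors so that strong confinement is preserved when one passes to quotient groups or to intersections with factors, and verifying that the relevant invariant random subgroup (or the boundary map it produces) inherits the irreducibility hypothesis needed to feed into the spectral gap theorem. By contrast, the irreducibility step and the forward direction are comparatively soft and should follow from standard arguments about lattices in products.
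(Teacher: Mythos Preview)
Your plan has a genuine gap in the converse direction, and a smaller flaw in the forward direction.

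\textbf{Forward direction.} Your covolume argument for strong confinement is not correct: Chabauty limits of a non-uniform lattice need not be lattices (conjugating into a cusp can produce limits that are, say, unipotent groups), so covolume is not preserved under Chabauty limits. The paper handles this via Lemma~\ref{lemma:confined subgroup of an irreducible lattice is strongly confined}, which uses arithmeticity to control characteristic polynomials and show that any degeneration into a proper factor would consist of unipotents, which can then be pushed off to infinity---contradicting confinement.

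\textbf{Converse direction.} Your Step~1 is where the real problem lies. Producing from $\Lambda$ an IRS $\nu$ supported on $\overline{\Lambda^G}$ and showing (via Nevo--Zimmer stiffness plus the product spectral gap plus Stuck--Zimmer) that $\nu$ is supported on irreducible lattices is exactly what the paper does. But this does \emph{not} say $\Lambda$ itself is a lattice: the stationary/Ces\`aro limit can perfectly well concentrate on lattices in the conjugate closure even when $\Lambda$ has infinite covolume. Your proposed contradiction ``the IRS would be supported on infinite-covolume subgroups'' simply need not hold. The paper closes this gap with Chabauty local rigidity of irreducible lattices (Theorem~\ref{thm:Chabauty local rigidity}): once some conjugate limit of $\Lambda$ is an irreducible lattice $\Gamma$, local rigidity forces a tail of conjugates of $\Lambda$ to already be conjugate to $\Gamma$, hence $\Lambda$ is an irreducible lattice. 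This is the missing ingredient in your sketch, and it also renders your Step~2 unnecessary---irreducibility comes for free from the same stroke, so the two-step decomposition ``first lattice, then irreducible'' never arises.

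Finally, the opening sentence---deducing Theorem~\ref{thm intro:generalLie} from Theorem~\ref{thm intro: lattices}---is a non-starter: Theorem~\ref{thm intro: lattices} concerns subgroups of a \emph{given} lattice, whereas here $\Lambda$ is not assumed to sit inside any lattice. The paper proves Theorem~\ref{thm intro:generalLie} through the independent route of Theorem~\ref{thm:general with non-zarsiki dense intersection}.
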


We refer to Theorem \ref{thm:general with non-zarsiki dense intersection} below for a   stronger but somewhat more involved version of Theorem \ref{thm intro:generalLie} (relying  on a weaker assumption than irreducibly confined).

% \begin{theorem} \label{thm intro:general}
%     For $i=1,\ldots, n$ let $k_i$ be a local field of characteristic 0, ${\bf G}_i$ a semisimple $k_i$-algebraic group and $G_i={\bf G}_i(k_i)$
% Consider $G=\prod_i G_i$ and let $\Lambda$ be a discrete subgroup of $G$.
% Assume $\sum_i \rank_{k_i} ({\bf G}_i)\geq 2$.
% Then $\Lambda$ is strongly confined
% iff $\Lambda$ is an irreducible lattice.
% \end{theorem}

%Of course, the intersection of an irreducible lattice as in the above Theorem \ref{thm intro:generalLie} with each proper normal factor is trivial (rather than just unconfined).

Moving beyond Lie groups we have fewer techniques at our disposal. However, it is possible to get around this by requiring a stronger irreducibility condition.
 
% \begin{defn}
% Let $G = G_1 \times G_2$ be a product of two second countable locally compact  groups.
% A subgroup of $G$ is \emph{strongly strongly confined} if its intersection with both factors is not confined and  all of  its conjugate limits have dense projections to both factors.    
% \end{defn}

\begin{theorem} \label{thm:strong irreducible}
Let $G = G_1 \times G_2$ be a product of two second countable locally compact  groups. Let $\Lambda \le G$ be a discrete coamenable  subgroup. Assume that $G_2$ has a compact abelianization and that there are no $G_2$-invariant vectors in $L^2_0(G/\Lambda)$. If  every  conjugate limit of $\Lambda$ projects densely to the factor $G_2$ then $\Lambda$ is a lattice in $G$.
\end{theorem}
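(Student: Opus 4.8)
The plan is to deduce from coamenability that the quasi-regular representation $L^2(G/\Lambda)$ contains a non-zero $G$-invariant vector; a non-zero $G$-invariant vector $\xi \in L^2(G/\Lambda)$ corresponds to the finite non-zero $G$-invariant measure $|\xi|^2 m$ (with $m$ any quasi-invariant measure in the canonical class on $G/\Lambda$), which is exactly to say that $\Lambda$ is a lattice. So suppose, towards a contradiction, that $\Lambda$ is not a lattice; then $L^2(G/\Lambda)$ has no non-zero $G$-invariant vector, and therefore $L^2_0(G/\Lambda) = L^2(G/\Lambda)$. On the other hand, coamenability of $\Lambda$ says precisely that $L^\infty(G/\Lambda)$ carries a $G$-invariant mean, which provides unit vectors $h_n \in L^2(G/\Lambda)$ with $\|g \cdot h_n - h_n\|_2 \to 0$ uniformly for $g$ in compact subsets of $G$ (Reiter's property); that is, $L^2(G/\Lambda)$ has $G$-almost-invariant vectors. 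Hence the proof will be complete once we establish that $L^2_0(G/\Lambda)$ has a spectral gap for the $G$-action, i.e. admits no $G$-almost-invariant vectors. (Equivalently, pushing the invariant mean forward produces a $G$-invariant probability measure on $\overline{\Lambda^G} \subseteq \Sub{G}$, and one could try to argue through this invariant random subgroup; but working with $L^2_0(G/\Lambda)$ directly is what matches the shape of the spectral-gap hypothesis.)

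To obtain the spectral gap on $L^2_0(G/\Lambda)$ I would invoke our product spectral gap theorem. Two of its hypotheses are granted outright: $G_2$ has compact abelianization, and $L^2_0(G/\Lambda)$ has no $G_2$-invariant vectors. The remaining input is an irreducibility condition relative to the first factor, roughly that $L^2_0(G/\Lambda)$ carries no non-trivial $G_1$-invariant structure. Its baby form --- ergodicity of $G_1 \actson G/\Lambda$ --- is immediate from the hypothesis once applied to $\Lambda$ itself (a conjugate limit of itself): a $G_1$-invariant measurable function on $G/\Lambda$ lifts to a left-$(G_1 \times \{e\})$-invariant, right-$\Lambda$-invariant function on $G_1 \times G_2$, hence to a function of the $G_2$-coordinate that is invariant under the dense subgroup $p_2(\Lambda) \le G_2$, hence essentially constant. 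The point of assuming that \emph{every} conjugate limit of $\Lambda$ --- not merely $\Lambda$ --- projects densely onto $G_2$ is to provide the \emph{robust} irreducibility that the spectral gap theorem genuinely needs: it forbids conjugates of $\Lambda$ from drifting towards subgroups of $G_1 \times H_2$ with $H_2$ a proper closed subgroup of $G_2$ (in particular towards $G_1 \times \{e\}$, or towards the trivial subgroup). That is precisely the kind of degeneration which --- as the example of a suitably chosen coamenable Schottky subgroup of a rank-one factor shows --- can destroy the spectral gap while leaving mere ergodicity untouched.

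The crux, and the step I expect to be the main obstacle, is converting the Chabauty-side hypothesis ``every conjugate limit of $\Lambda$ projects densely onto $G_2$'' into the precise representation-theoretic hypothesis required by our product spectral gap theorem. I would approach this by relating $G_1$-invariant --- and, more to the point, $G_1$-\emph{almost}-invariant --- structure on $G/\Lambda$ to the closures of $G_1$-orbits there, and hence to the conjugate-limit set $\overline{\Lambda^G} \subseteq \Sub{G}$: a failure of the required irreducibility should, via an ergodic-decomposition (disintegration) argument, manufacture a proper closed $G_1$-invariant object inside $G/\Lambda$ and ultimately exhibit a conjugate limit $\Delta$ of $\Lambda$ whose projection $\overline{p_2(\Delta)}$ is a proper closed subgroup of $G_2$, contradicting the hypothesis. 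Carrying this out rigorously calls for some care, as the relevant $G$-invariant measure on $G/\Lambda$ may be an infinite Radon measure and the $G_1$-orbits need not be closed; this analysis, together with the exact checking that the hypotheses of the product spectral gap theorem are met, is the technical heart of the argument. Granting it, the contradiction recorded in the first paragraph completes the proof that $\Lambda$ is a lattice.
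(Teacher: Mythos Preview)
Your overall architecture is exactly the paper's: show that $L^2_0(G/\Lambda)$ has spectral gap via the product spectral gap theorem, then observe that coamenability forces $L^2_0(G/\Lambda)\subsetneq L^2(G/\Lambda)$, i.e.\ $\Lambda$ is a lattice. The first paragraph of your proposal is spot on.

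Where you go astray is in the last two paragraphs: you treat the verification of the remaining hypothesis of the spectral gap theorem as ``the technical heart of the argument'' and sketch an ergodic-decomposition strategy. In fact this step is a one-line observation, and your detour suggests you are misremembering the form of the hypothesis. The spectral gap theorem (Theorem~\ref{theorem:getting spectral gap}, or the sharper Theorem~\ref{theorem:getting spectral gap - general case}) is \emph{already} phrased on the Chabauty side: it asks for a closed $G$-invariant subset of $\Sub{G}$ containing $\mathrm{Stab}_G(x)$ for almost every $x\in X$, with every $H$ in that subset satisfying $\overline{G_1 H}=G$. For $X=G/\Lambda$ one has $\mathrm{Stab}_G(g\Lambda)=\Lambda^{g}$, so the relevant closed $G$-invariant set is precisely $\overline{\Lambda^G}$. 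The assumption ``every conjugate limit of $\Lambda$ projects densely to $G_2$'' says exactly that every $H\in\overline{\Lambda^G}$ satisfies $\overline{G_1 H}=G$. No disintegration, no analysis of $G_1$-orbit closures, no passage through an invariant random subgroup is needed; the Chabauty hypothesis \emph{is} the representation-theoretic hypothesis. The paper's proof is accordingly two sentences long.
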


%\begin{remark}
%Recall that the strong version of the Stuck--Zimmer conjecture is that the Stuck--Zimmer rigidity theorem holds for irreducible p.m.p actions of higher rank semisimple Lie groups regardless of Property (T). A $G$ action is irreducible if every factor of $G$ acts ergodically. In view of \cite[\S 7]{fraczyk2023infinite} an action of a product group is irreducible iff almost every stabilizer projects densely to every factor. We remark that Theorem \ref{thm:strong irreducible} implies the Stuck--Zimmer conjecture under the stronger irreducibility assumption that every conjugate limit of almost every stabilizer projects densely to every factor.
%\end{remark}

The novelty of the current work is to rely on the product structure of the group $G$ as a replacement for property (T). The main breakthrough  is the following general spectral gap result for  actions of product groups, under certain assumptions on the stabilizer structure of the action.

%(see Theorem \ref{theorem:getting spectral gap}, Theorem \ref{theorem:getting spectral gap - general case} and Theorem \ref{theorem:getting spectral gap - analytic groups} below).
%The main technical breakthrough of this paper is the establishment of various spectral gap results for measure preserving actions of direct products, 

\begin{theorem}[Spectral gap for actions of products]
\label{theorem:getting spectral gap}
Let $G_1$ and $G_2$ be a pair of locally compact second countable compactly generated groups such that $G_2$ has compact abelianization. Set $G = G_1 \times G_2$. 
Let $X$ be a locally compact topological $G$-space endowed with a $G$-invariant (finite or infinite) measure $m$. Assume that
\begin{itemize}
    \item $L^2_0(X,m)^{G_2}=0$, and
    \item there is a closed $G$-invariant subset of $\Sub G$ containing $\mathrm{Stab}_G(x)$ for \mbox{$m$-almost} every  point $x\in X$ such that every subgroup $H$ in  this subset satisfies $\overline{G_1 H} = G$.
\end{itemize}
Then the unitary Koopman $G$-representation $L^2_0(X,m)$ has a spectral gap.
\end{theorem}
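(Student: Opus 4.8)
The plan is to argue by contradiction: suppose $L^2_0(X,m)$ has no spectral gap as a $G$-representation. Since $G = G_1 \times G_2$ is compactly generated and $G_2$ has compact abelianization, the absence of a spectral gap for $G$ should be leveraged to produce almost-invariant vectors with good invariance properties under one of the factors. The first step is to recall that, because $L^2_0(X,m)^{G_2}=0$, the restriction of the representation to $G_2$ has — or fails to have — a spectral gap; if $G_2$ itself acted with spectral gap on $L^2_0(X,m)$ we would be done, so the real content is to handle the case where almost-invariant vectors for $G$ are, in an averaged sense, nearly $G_1$-invariant. The key mechanism is a Howe--Moore / mixing-type or an ergodic decomposition argument: given a sequence of unit vectors $\xi_n$ that are $(K,\eps_n)$-almost invariant for $G$ with $\eps_n \to 0$, one decomposes $\xi_n$ over the Mackey/ergodic decomposition of the $G$-action and shows that the mass must concentrate on the part where the $G_2$-action is trivial, contradicting $L^2_0(X,m)^{G_2}=0$ — unless the stabilizer hypothesis is violated.

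The heart of the proof is the second bullet point, and this is where I expect the main obstacle to lie. The idea is to use a disintegration of $m$ (or of the representation $L^2_0(X,m)$) that is compatible with the stabilizer map $x \mapsto \mathrm{Stab}_G(x) \in \Sub G$. For $m$-a.e.\ $x$ the stabilizer $H = \mathrm{Stab}_G(x)$ lies in a closed $G$-invariant subset $Y \subseteq \Sub G$ with the property that $\overline{G_1 H} = G$ for every $H \in Y$. The point is that almost-invariant vectors for $G_1$ are then automatically almost-invariant for all of $G$ "modulo the stabilizers": if $\xi$ is a unit vector which is nearly fixed by $G_1$, and we pass to the (virtual) quotient $X \to \Stab$-orbit data, then near-$G_1$-invariance together with $\overline{G_1 H} = G$ forces near-$G$-invariance of the relevant fibrewise components, because $G_1$ together with the point stabilizer generates a dense subgroup of $G$. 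The delicate part is making this "modulo stabilizers" reasoning precise at the level of $L^2$: one needs to know that the failure of a spectral gap for $G$ produces almost-invariant vectors that can be taken (after averaging over a compact subset of $G_1$, using compact generation) to be genuinely almost $G_1$-invariant, and then one must transfer this to almost $G_2$-invariance using that $G_2 \subseteq \overline{G_1 \mathrm{Stab}_G(x)}$ pointwise $m$-a.e. Making the pointwise density statement interact correctly with the $L^2$-norm — uniformly in $x$ over a set of large measure — is the technical crux, and I anticipate it will require the compactness of $Y$ in $\Sub G$ (so that the density $\overline{G_1 H} = G$ holds "uniformly" along a fixed compact generating set of $G_2$) together with a Baire-category or exhaustion argument.

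Once almost $G_2$-invariant unit vectors in $L^2_0(X,m)$ are produced, the contradiction is immediate: since $G_2$ has compact abelianization, there are standard tools (e.g.\ the argument that a compactly generated group with no spectral gap on a representation with no invariant vectors must have almost-invariant vectors that, after projecting via a suitable mean on a compact set, converge weakly to a genuine invariant vector — or a direct use of the fact that weak containment of the trivial representation plus an appropriate averaging yields an actual invariant vector when the obstruction group is compact-by-something) to conclude that $L^2_0(X,m)^{G_2} \neq 0$, contradicting the first hypothesis. I would structure the write-up as: (1) reduce to producing almost $G_1$-invariant vectors using compact generation of $G$ and a standard spectral-gap dichotomy; (2) set up the stabilizer disintegration and state the key uniformity lemma extracted from compactness of the $G$-invariant subset of $\Sub G$; (3) upgrade almost $G_1$-invariance to almost $G_2$-invariance via the density condition $\overline{G_1 H} = G$; (4) derive the contradiction from $L^2_0(X,m)^{G_2}=0$, using compactness of the abelianization of $G_2$ to pass from almost-invariance to genuine invariance. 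Step (2)–(3) is where essentially all the work and all the hypotheses are consumed.
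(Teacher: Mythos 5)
Your proposal correctly identifies the overall shape of the argument --- argue by contradiction and use the density $\overline{G_1 H}=G$ of the stabilizers to transfer almost-invariance from $G_1$ to $G_2$ --- but it has two genuine gaps, and one of its four steps is simply false. Step (4) is wrong: compact abelianization of $G_2$ does not let you ``pass from almost-invariance to genuine invariance''; take $G_2=\mathrm{PSL}_2(\mathbb{R})$, which is perfect (hence has trivial abelianization) and admits representations with almost-invariant but no invariant vectors. In the paper the hypotheses $L^2_0(X,m)^{G_2}=0$ and compact abelianization are consumed at the very beginning, not at the end: they are used, via a quantitative ``differentiation'' estimate (Proposition~\ref{prop:noai}, with the abelianization tracked on a compact set by Lemma~\ref{lem:ArAs}), to manufacture from the failure of spectral gap a \emph{discordant} sequence of unit vectors that is $G$-uniform, asymptotically $G_1$-invariant, and provably \emph{not} asymptotically $G_2$-invariant (Lemma~\ref{lem:mar3.7}). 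This construction is absent from your outline; note that merely extracting asymptotically $G$-invariant vectors, as your step (1) suggests, yields vectors that are already almost $G_2$-invariant and produces no contradiction with anything.

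Second, the transfer step (your (2)--(3)) is indeed the technical crux, but you supply no mechanism, and the ``loose'' version of the statement you describe is false: Example~\ref{example: dense projections non smooth} of the paper (with $G=\mathbb{Z}\times\widehat{\mathbb{Z}}$ acting on $G/\Gamma$ for $\Gamma$ the diagonal copy of $\mathbb{Z}$) exhibits an asymptotically $G_1$-invariant sequence whose stabilizer measures are constant with dense projection to $G_2$, yet which is not asymptotically $G_2$-invariant. The actual transfer (Proposition~\ref{prop:moving invariance around with dense projections}) applies only to sequences of the special form $u_n=\varphi v_n$, pre-smoothed by a fixed $\varphi\in\mathcal{A}(G)$ with $\|v_n\|\le 2$, which are in addition $G$-uniform; its proof controls $(1-g_2)u_n$ pointwise on the sets $\Omega(Ug^{-1})=\{x\::\:\Stab(x)\cap Ug^{-1}\neq\emptyset\}$, using that a stabilizer element near $g$ lets one replace the action of $g$ by that of a small element of $U$ absorbed by the smoothing, combined with a Portmanteau argument and a finite cover $V_1\subset\bigcup U_1h_i$ handled by the asymptotic $G_1$-invariance. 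Neither Howe--Moore/mixing nor a Mackey/ergodic decomposition enters, and neither would suffice here (the measure may be infinite and the stabilizers are nontrivial). So while your intuition about where the work lies is sound, the proposal omits the construction of the discordant sequence, misplaces the role of compact abelianization, and asserts a transfer principle that fails without the smoothing and uniformity hypotheses.
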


A more technically demanding but sharper statement is Theorem \ref{theorem:getting spectral gap - general case} below. Additionally, in the context of products of semisimple real or $p$-adic Lie groups, we obtain the stronger Theorem \ref{theorem:getting spectral gap - analytic groups}.
%, in which one only needs to consider \emph{discrete} subgroups $H$ in the second assumption.
%we require the second assumption above only for discrete subgroups $H$.

%. In particular, 
 %in the second item of Theorem \ref{theorem:getting spectral gap} one is only required is required only for discrete conjugate limits.

%A straightforward consequence of Theorem \ref{theorem:getting spectral gap} is the following:

%\begin{cor}
%Let $G_1$ and $G_2$ be a pair of locally compact second countable compactly generated groups such that $G_2$ has compact abelianization. Let $\Lambda \le G$ be a discrete co-amenable subgroup. Suppose that every discrete conjugate limit of $\Lambda$ projects densely to both factors. Then $\Lambda$ is a lattice in $G$.
%\end{cor}

Recall that a probability measure preserving  action of a center-free semisimple Lie group $G$ is called irreducible if every simple factor of $G$ acts ergodically. The rigidity theorem of Stuck and Zimmer \cite{SZ} says that if $G$ has higher rank  and  property (T) then every irreducible probability measure preserving action of $G$ is either essentially free or essentially transitive. 
%In other words every non-trivial irreducible invariant random subgroup (IRS) is supported on irreducible lattices. 
Hartman and Tamuz \cite{Hartman-Tamuz} showed that it is enough to suppose that one of the simple factors of $G$ has property (T).
%, essentially by showing that the stabilizers are co-amenable subgroups, and combined this with the spectral gap of the action of the Kazhdan factor when such exists. 
The famous Stuck--Zimmer conjecture says that the rigidity theorem should apply to all higher rank semisimple Lie groups regardless of property (T).

Let us say that a discrete subgroup of a semisimple Lie group $G$ is {\it irreducible} if it projects densely to every proper factor of $G$.
An ergodic action of $G$ with almost surely irreducible stabilizers is irreducible  \cite[\S7]{fraczyk2023infinite}. 
Let us say that a discrete subgroup $\Lambda\le G$ is {\it strongly irreducible} if every discrete conjugate limit of $\Lambda$ is irreducible, and that a probability measure preserving action is strongly irreducible if almost every stabilizer is strongly irreducible.

\begin{cor}[A weak version of the Stuck--Zimmer conjecture]
\label{cor:intro: weak version}
Let $G$ be a  connected center-free semisimple Lie group of real rank at least two. Then every   strongly irreducible probability measure preserving action of $G$ is essentially transitive. 
%Namely, every  strongly irreducible invariant random subgroup of $G$  is supported on lattices.
%Namely, every  strongly irreducible invariant random subgroup of $G$  is supported on lattices.
\end{cor}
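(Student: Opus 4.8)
The plan is to identify almost every point stabilizer as an irreducible lattice, using the structure theorem for confined discrete subgroups, and then to deduce essential transitivity from the ergodic theory of actions whose stabilizers are lattices.

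First I would reduce to an ergodic action: the stabilizer $\mathrm{Stab}_G(x)$ depends only on $x$, so it is unaffected by passing to an ergodic component, and the hypothesis is inherited by almost every component; thus assume $G\curvearrowright(X,\mu)$ is ergodic. The stabilizer map $x\mapsto\mathrm{Stab}_G(x)\in\Sub G$ is $G$-equivariant and pushes $\mu$ forward to an ergodic invariant random subgroup $\nu$ of $G$. Since strong irreducibility is a notion about discrete subgroups, the hypothesis means that $\Lambda:=\mathrm{Stab}_G(x)$ is, for $\mu$-almost every $x$, a discrete strongly irreducible subgroup of $G$; by \cite[\S7]{fraczyk2023infinite} the action is then in particular irreducible. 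One may also assume that $G$ is not simple, a simple $G$ of real rank at least two having Kazhdan's property (T), in which case the assertion is contained in the theorem of Stuck and Zimmer \cite{SZ}.

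The crux is to show that $\Lambda$ is an irreducible lattice for almost every $x$. Here I would apply the refined version of Theorem~\ref{thm intro:generalLie}, namely Theorem~\ref{thm:general with non-zarsiki dense intersection}, whose hypothesis --- weaker than irreducible confinement --- I claim holds for every discrete strongly irreducible subgroup. Write $G=G_1\times\dots\times G_n$ with the $G_i$ simple and $n\ge 2$. Then: (i) no discrete conjugate limit of $\Lambda$ can be trivial, since the trivial subgroup does not project densely onto the nontrivial factor $G_1$ and so is not irreducible; equivalently, $\Lambda$ is confined. (ii) No discrete conjugate limit of $\Lambda$ is contained in a proper normal subgroup, since an irreducible subgroup has dense, hence nontrivial, projection to every $G_i$, whereas every proper normal subgroup of $G$ is contained in some $\prod_{k\ne i}G_k$. (iii) For every proper nontrivial normal subgroup $H=\prod_{i\in S}G_i$, the group $\Delta:=\Lambda\cap H$ is not Zariski dense in $H$: if it were, then the projection $\pi_H(\Lambda)$ --- which normalizes $\Delta$ and, by irreducibility of $\Lambda$, is dense in $H$ --- would be contained in $N_H(\Delta)$; but the identity component of $N_H(\Delta)$ centralizes $\Delta$, hence its Zariski closure $H$, hence lies in the trivial center of $H$, so $N_H(\Delta)$ is discrete, contradicting the density of $\pi_H(\Lambda)$. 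These are precisely the conditions needed to invoke Theorem~\ref{thm:general with non-zarsiki dense intersection}, which in addition disposes, via the Zariski-density condition (iii), of the non-discrete conjugate limits of $\Lambda$ (not seen directly by strong irreducibility), and concludes that $\Lambda$ is an irreducible lattice in $G$.

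It remains to deduce essential transitivity. Since $\nu$ is almost surely a lattice, almost every orbit $G\cdot x\cong G/\mathrm{Stab}_G(x)$ is a finite-volume homogeneous space and carries a $G$-invariant probability measure; assembling these orbital measures and using ergodicity forces $\mu$ to be supported on a single orbit. Equivalently --- and this is a standard fact in the Stuck--Zimmer circle of ideas, see \cite{SZ} --- an ergodic probability measure preserving action of $G$ whose stabilizers are almost surely lattices is essentially transitive. Hence $G\curvearrowright(X,\mu)$ is essentially transitive, as claimed.

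The hard part, granting the structure theorem (which carries the real weight), is twofold: establishing item (iii) in a form robust enough to feed into Theorem~\ref{thm:general with non-zarsiki dense intersection}, and, more conceptually, matching the a priori purely measure-theoretic hypothesis ``strongly irreducible action'' with the pointwise group-theoretic condition on an individual stabilizer that the theorem requires --- the control of the non-discrete conjugate limits, which is where the product-group spectral gap machinery of the paper is genuinely used.
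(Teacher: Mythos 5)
The decisive step of your argument --- invoking Theorem~\ref{thm:general with non-zarsiki dense intersection} for an individual stabilizer $\Lambda$ --- is not justified, because that theorem requires $\Lambda$ to be \emph{strongly confined} in the sense of Definition~\ref{defn:strongly confined}: \emph{no} conjugate limit of $\Lambda$, discrete or not, may be contained in a proper normal subgroup. Strong irreducibility of the action only constrains the \emph{discrete} conjugate limits of $\Lambda$, so your items (i) and (ii) establish confinement and exclude the discrete limits from proper factors, but say nothing about a non-discrete subgroup $\Delta\in\overline{\Lambda^G}$ lying inside a proper factor. Your closing remark that Theorem~\ref{thm:general with non-zarsiki dense intersection} ``disposes of'' the non-discrete conjugate limits inverts the logic: that theorem \emph{assumes} they have been disposed of. Controlling them for a general discrete subgroup is genuinely hard (compare the effort spent in Lemma~\ref{lemma:confined subgroup of an irreducible lattice is strongly confined} even in the lattice case), and nothing in the hypotheses of Corollary~\ref{cor:intro: weak version} hands this to you.

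The paper's proof is arranged precisely to sidestep this issue. After reducing to the ergodic, irreducible, non-property-(T) case and writing $G=G_1\times G_2$ with $G_2$ simple of real rank one, it applies the spectral gap criterion (Theorem~\ref{theorem:getting spectral gap - general case}) \emph{directly to the Koopman representation of $(X,m)$}, not to a single stabilizer: for an asymptotically $G_1$-invariant sequence $f_n$, the Margulis-function estimate (Corollary~\ref{cor:discsub}) shows that any accumulation point of $\mathrm{Stab}_*(|f_n|^2\cdot m)$ is supported on \emph{discrete} subgroups, and only then is strong irreducibility invoked to get dense projections --- exactly the class of limits it controls. Spectral gap together with Hartman--Tamuz (as in Corollary~\ref{cor:SZ-P}) then yields essential transitivity. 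Your item (iii) and the final ``lattice stabilizers imply essential transitivity'' step are fine, but the route through Theorem~\ref{thm:general with non-zarsiki dense intersection} cannot be completed as written.
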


See Theorem \ref{theorem:strictly confined coamenable is a lattice} below for an alternative weak version of the Stuck--Zimmer conjecture. It says that an irreducible invariant random subgroup of a connected center-free higher rank semisimple Lie group is supported on lattices if and only if it is almost surely irreducibly confined (in the sense of Definition \ref{def:strongly and irreducibly confined - intro}).

\subsection*{Structure of the paper}

In \S\ref{sec:prelim} we address various preliminaries such as   unitary representations, spectral gap and  asymptotically invariant vectors as well as the Chabauty topology. In \S\ref{sec:Q} we make some remarks regarding the representation theory of direct product of groups with compact abeliazniations. The technical heart of this paper is \S\ref{sec:Koopman reps} in which we prove Theorem~\ref{theorem:getting spectral gap}, as well as its generalization, Theorem~\ref{theorem:getting spectral gap - general case}.
In \S\ref{sec:standard semisimple group} we introduce our working notion of standard semisimple groups. In \S\ref{sec:geometry of discrete subgroups of products} we study geometric properties of discrete subgroups of standard semisimple  groups and establish our spectral gap theorem for actions of products of semisimple groups. In
\S\ref{sec:confined and irreducibly} we introduce confined and strongly confined subgroups. In \S\ref{sec:confined subgroups of lattices} and \S\ref{sec:strongly confined} we deal with confined subgroups of lattices and strongly confined subgroups of semisimple Lie groups, respectively.
Theorem \ref{thm intro: lattices} and   Corollary \ref{cor:lattices URS statement }
are proven in \S\ref{sec:confined subgroups of lattices}
and Theorem \ref{thm intro:generalLie}
and Theorem \ref{thm:strong irreducible}
are proven in \S\ref{sec:strongly confined}.
Lastly, the standalone \S\ref{sec:margulis functions} takes up the notion of Margulis functions needed to ensure discreteness in a certain argument in \S\ref{sec:strongly confined}.

% \subsection*{Acknowledgements}

% The authors would like to thank Lior Bary-Soroker and Ilya Gekhtman for several useful discussions.

\section{Preliminaries}
\label{sec:prelim}

We set up some basic notions and terminology to be used throughout the entire paper.

\subsection*{The Chabauty space}

Let $G$  be a locally compact second countable group. 
The Haar measure on $G$ will be denoted by $m_G$. 
Typically, the group $G$ will have a compact abelianization, and in particular it will be unimodular.

A discrete subgroup of the group $G$ will typically be denoted by $\Lambda$. The Haar measure on the quotient $G/\Lambda$  will be denoted by  $m_{G/\Lambda}$.
If the measure $m_{G/\Lambda}$ is  finite we say that $\Lambda$ is a \emph{lattice} in $G$.
A lattice will  typically be denoted by $\Gamma$.
In case the quotient $G/\Gamma$ is compact the lattice $\Gamma$ is called \emph{uniform}.

We  denote by $\Sub G$ the space of all closed subgroups of the group $G$ endowed with the Chabauty topology \cite{chabauty1950limite}. Recall that the space $\Sub G$ is compact, and the group $G$ acts on it by homeomorphisms via conjugation. A non-empty minimal closed $G$-invariant subset of $\Sub G $ is called an \emph{uniformly recurrent subgroup (URS)} of $G$, see  \cite{glasner2015uniformly}. We will use the notation 
\[\Lambda^G = \{\Lambda^g \: : \: g \in G\} \subset \Sub{G}\]
for the $G$-orbit under conjugation of a given subgroup $\Lambda \in \Sub{G}$.
\begin{defn}
\label{def:conjugate limit}
A \emph{conjugate limit} of a subgroup $\Lambda \le G$ is any subgroup $\Delta \in \overline{\Lambda^G}$.
\end{defn}

We denote by $\Prob(\Sub G)$ the space of all probability measures on $\Sub{G}$ endowed with the weak-$*$ topology. This makes $\Prob(\Sub G)$  a compact convex space. It is regarded  with the natural $G$-action.
A $G$-fixed point in this space is called an \emph{invariant random subgroup (IRS)} of $G$. We denote the space $\Prob(\Sub G)^G$ of all invariant random subgroups  by $\IRS(G)$. See \cite{abert2014kesten,7s,7s-b,KM-IRS,gelander2018invariant} or the surveys \cite{gelander2018view,gelander-lecture-IRS}.

Say that $\Gamma$ is a lattice in $G$ and $\nu \in \IRS(\Gamma)$ is an invariant random subgroup of $\Gamma$. It is possible to induce  $\nu$ and obtain an \emph{induced invariant random subgroup} $\overline{\nu} \in \IRS(G)$. This is done as follows. View $\Sub{\Gamma}$ as a subset of $\Sub{G}$ and regard $\nu$ as a probability measure on the space $\Sub{G}$. Fix an arbitrary Borel fundamental domain $\mathcal{F} \subset G$ for the lattice $\Gamma$ and normalize the Haar measure so that $m_G(\mathcal{F})=1$. Finally take $\overline{\nu} = \int_{\mathcal{F}} g_* \nu \;\mathrm{d} m_G(g)$.

Recall that a probability measure preserving Borel $G$-space is called \emph{irreducible} if every non-trivial normal subgroup of $G$ is acting ergodically. An invariant random subgroup $\nu$ is \emph{irreducible} if the Borel $G$-space $(\Sub{G},\nu)$ is irreducible.

Let $\mu$ be probability measure on the group $G$. We will typically assume that $\mu$ is absolutely continuous with respect to the Haar measure and that its support generates the group $G$. A \emph{$\mu$-stationary random subgroup} of the group $G$ is a probability measure $\nu \in \Prob(\Sub G)$ satisfying $\mu * \nu = \nu$. For some recent works dealing with stationary random subgroups see \cite{gelander2022effective,fraczyk2023infinite,gekhtman2023stationary}.

\subsection*{Algebras of  functions}

We denote by $C_c(G)$   the $\mathbb{C}$-algebra of all  compactly supported continuous complex-valued functions on the group $G$  with the algebra product given by convolution.
We regard $C_c(G)$ as a normed algebra with respect to the topology of uniform convergence on compact subsets. We also endow $C_c(G)$ with the supremum norm $\|\cdot \|_\infty$.
For a function $f$ on the group $G$ we write $\check{f}=f\circ \iota$, where $\iota:G\to G $ is the inversion map $\iota : g \mapsto g^{-1}$. We set
\[ \mathcal{A}(G) = \{ f \in C_c(G) \: : \:  f\ge 0, \; f=\check{f} \;\, \text{and} \;\, m_G(f) = 1\}. \]

A function $f\in C_c(G)$ is   \emph{generating} if its support $\mathrm{supp}(f)$ generates the group $G$.
Note that if the group $G$ is connected then any  non-zero function is generating. 

% Note that indicator functions of compact open subgroups of $G$ are in $\mathcal{A}(G)$.
% By van Dantzig theorem, such are in abundance in case $G$ is totally disconnected.
% Though this fact could be used to simplify some technicalities, we will not use it.

\subsection*{Unitary representations}
\label{subsection:unitary reps}

Vector spaces are taken over the complex numbers. 
In particular, the Banach algebra $L^1(G)$ and the Hilbert spaces $L^2(G/\Lambda)$ where $\Lambda$ is some discrete subgroup of $G$ are taken with complex  coefficients.
It is tacitly assumed that these spaces are taken with respect to the Haar measures $m_G$ and $m_{G/\Lambda}$ correspondingly. The corresponding norms are denoted by $\|\cdot\|_1$ and $\|\cdot \|_2$.

Hilbert spaces will  typically be denoted by $V$ and  assumed to be separable. 
An unindexed norm $\|\cdot\|$ is typically associated with a Hilbert space which should be clear from the context.
We will denote by $\mathrm{B}(V)$ the algebra of bounded operators on the Hilbert space $V$ and by $\|\cdot\|_{\op}$ the operator norm on $\mathrm{B}(V)$. 
We denote by $\U(V)$ the group of unitary operators in $\mathrm{B}(V)$
 endowed with the strong operator topology.
This is a Polish topological group.

By a \emph{unitary representation} we mean a continuous homomorphism $G\to \U(V)$.
By an obvious abuse of notation, given such a unitary representation, an element $g\in G$ and a vector $v\in V$, we denote by $gv$ the image of $v$ under the unitary operator associated with $g$.  
Such a unitary representation  extends to a representation  of the algebra of complex-valued measures of bounded total  variation on the group $G$.
In particular, probability measures on the group $G$ act on the Hilbert space $V$ via averaging operators. Symmetric probability measures give rise to self-adjoint operators of norm at most one.

Regarding elements of the Banach algebra $L^1(G)$ as densities of $m_G$-absolutely continuous measures on the group $G$, we get a  representation $L^1(G)\to \mathrm{B}(V)$.  
In particular, an element $f \in \mathcal{A}(G)$ gives rise to a self-adjoint operator of norm at most  $1$, which we regard as a smooth averaging operator.
By an abuse of notation, given any vector $v\in V$ we denote by $f v$ the image of $v$ under this operator.  

\begin{lemma} \label{lem:mazur}
    Let $X$ be a space endowed with a positive measure (either finite or infinite).
    The map
    \[ \{f\in L^2(X)\: : \: \|f\|_2=1,~f\geq 0\} \to \{f\in L^1(X) \: : \: \|f\|_1=1,~f\geq 0\}, \quad f \mapsto f^2\]
    is a uniform homeomorphism, namely this map and its  inverse are uniformly continuous.
\end{lemma}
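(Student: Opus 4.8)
The plan is to establish the two directions of the uniform homeomorphism separately, relying only on elementary inequalities for nonnegative reals so that everything transfers to $L^2(X)$ via the pointwise estimates and the relations $\|f\|_2^2 = \|f^2\|_1$.

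First I would handle the forward map $f \mapsto f^2$. Given $f, g \in L^2(X)$ with $\|f\|_2 = \|g\|_2 = 1$ and $f, g \geq 0$, I want to bound $\|f^2 - g^2\|_1 = \int |f - g|\,|f + g|$ in terms of $\|f - g\|_2$. By Cauchy--Schwarz, $\int |f-g|\,|f+g| \leq \|f - g\|_2 \, \|f + g\|_2 \leq \|f-g\|_2\,(\|f\|_2 + \|g\|_2) = 2\|f-g\|_2$. This is clean and gives Lipschitz continuity of the forward map with constant $2$ (independent of the ambient measure, finite or infinite), so uniform continuity is immediate.

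The reverse direction is the harder part and the main obstacle: I need to control $\|f - g\|_2$ by $\|f^2 - g^2\|_1$ for nonnegative unit vectors, i.e. I must show $\sqrt{\cdot}$ does not spread things apart too much. Here the key pointwise inequality is $(\sqrt{a} - \sqrt{b})^2 \leq |a - b|$ for all $a, b \geq 0$ (which follows from $(\sqrt a - \sqrt b)^2 \le |\sqrt a - \sqrt b|\,(\sqrt a + \sqrt b) = |a-b|$). Applying this with $a = f^2$, $b = g^2$ and integrating gives $\|f - g\|_2^2 = \int (f - g)^2 \leq \int |f^2 - g^2| = \|f^2 - g^2\|_1$, hence $\|f - g\|_2 \leq \|f^2 - g^2\|_1^{1/2}$. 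So the inverse map is $\tfrac12$-Hölder continuous, again with a modulus of continuity independent of the measure space, which in particular yields uniform continuity.

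Finally I would note that the two maps are genuinely mutual inverses on the indicated sets: if $f \geq 0$ and $\|f\|_2 = 1$ then $f^2 \geq 0$ and $\|f^2\|_1 = \|f\|_2^2 = 1$, so the forward map lands in the target set; conversely if $h \geq 0$ and $\|h\|_1 = 1$ then $\sqrt h \geq 0$ and $\|\sqrt h\|_2^2 = \|h\|_1 = 1$, and $(\sqrt h)^2 = h$, $\sqrt{f^2} = f$ a.e. since $f \geq 0$. Combining the Lipschitz bound for $f \mapsto f^2$ with the Hölder bound for its inverse completes the proof. The only subtlety worth a sentence is measurability of $\sqrt{h}$ and of $f^2$, which is automatic since $t \mapsto t^2$ and $t \mapsto \sqrt t$ are continuous on $[0,\infty)$.
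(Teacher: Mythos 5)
Your proposal is correct and follows essentially the same route as the paper: the forward bound $\|f^2-g^2\|_1 \le \|f+g\|_2\|f-g\|_2 \le 2\|f-g\|_2$ via Cauchy--Schwarz, and the reverse bound via the pointwise inequality $|a-b|^2 \le |a^2-b^2|$ for $a,b\ge 0$, giving $\|f-g\|_2 \le \|f^2-g^2\|_1^{1/2}$. The extra remarks on the maps being mutual inverses and on measurability are fine but not needed beyond what the paper records.
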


The domain and the range of the map $f \mapsto f^2$  are  endowed with the $\|\cdot\|_2$-metric and the  $\|\cdot\|_1$-metric, respectively.

\begin{proof}[Proof of Lemma \ref{lem:mazur}]
Consider any pair of functions $f,g \in L^2(X)$ with $\|f\|_2=\|g\|_2 = 1$ and $f, g \ge 0$. Observe that the pair $f,g$  satisfies
  \[ \|f^2-g^2\|_1=\langle |f+g|,|f-g| \rangle \leq \|f+g\|_2\cdot \|f-g\|_2 \leq 2 \|f-g\|_2.   \]
Therefore the map  $f \mapsto f^2$ is uniformly continuous. 
  To show that its inverse  is uniformly continuous, we use the inequality
  $ |a-b|^2\leq |a^2-b^2| $  which is valid for any pair of real numbers $a,b\geq 0$. This implies that the pair of functions $f,g$ satisfies
  \[ \|f-g\|_2 \leq \|f^2-g^2\|_1^{1/2}. \]
  This concludes the proof.
\end{proof}

\subsection*{Asymptotically invariant vectors}
Let  $G\to \U(V)$ be a unitary representation of the group $G$ on the Hilbert space $V$.

\begin{defn}
A sequence of non-zero vectors $v_n \in V$ is called
\emph{asymptotically $G$-invariant} if 
for every compact subset $K\subset G$ 
\[ \lim_n \sup_{k \in K} \|(1-k)v_n\| / \|v_n\| = 0.\]
\end{defn}

Fix a generating function $\phi\in \mathcal{A}(G)$. It is well known that a sequence of non-zero vectors $v_n \in V$ is asymptotically $G$-invariant if and only if
\[ \lim_n \|(1-\phi) v_n\| / \|v_n\|  = 0. \]

In case such an asymptotically $G$-invariant sequence exists, we say that the representation $V$ \emph{almost has $G$-invariant vectors}.
Otherwise, we say that it has a \emph{spectral gap}. Indeed, spectral gap is equivalent to saying that $\|\phi\|_{\op}<1$ or to the fact that $0$ is not contained in the spectrum of the positive operator $1-\phi$.

\begin{defn}[{\cite[Definition IV.3.5]{margulis1991discrete}}]
A subset $A\subset V \setminus \{0\}$ is said to be \emph{$G$-uniform} if for every $\varepsilon>0$ there exists an identity neighborhood $U\subset G$ such that 
\[ \sup_{v \in A}  \sup_{g \in U}  \|(1-g)v\| / \|v\| \leq \varepsilon. \]
A sequence of vectors $v_n \in V$ is said to be \emph{$G$-uniform} if the set $\{v_n\}$ is.
\end{defn}
Another way to think about this definition is to say that orbit maps of  vectors from the subset $A$ mapping into the Hilbert space $V$ are uniformly equicontinuous.
It is easy to see that every asymptotically \mbox{$G$-invariant} sequence in $V$ is $G$-uniform.

Here are some elementary lemmas concerning the above notions.  

 % \begin{lemma}
 % \label{lemma:convolution makes a vector smooth}
 % Let $\psi \in C_c(G)$ be any function and $v_n \in \mathcal{H}$ be any sequence of vectors. Assume that $\liminf_n \|\pi(\psi)v_n\| > 0$. Then the sequence $\pi(\psi) v_n$ is $\pi(G)$-uniform.
 % \end{lemma}

 \begin{lemma}
 \label{lemma:adding a zero sequence}
 Let $v_n, u_n \in V$ be two sequences of vectors with $\liminf_n \|v_n\| > 0$ and $\limsup_n \|u_n\| = 0$. If the sequence $v_n$ is asymptotically $G$-invariant (respectively $G$-uniform) then the sequence $v_n + u_n$ has the same property.
 \end{lemma}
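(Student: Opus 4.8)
The plan is to reduce both assertions to the triangle inequality, using the observation that the new normalizing factors $\|v_n+u_n\|$ are asymptotically comparable to the old ones $\|v_n\|$. First I would set $c=\liminf_n\|v_n\|>0$ and note that for all sufficiently large $n$ one has $\|v_n\|\ge c/2$ and $\|u_n\|\le c/4$, hence $\|v_n+u_n\|\ge\|v_n\|-\|u_n\|\ge c/4$. In particular the vectors $v_n+u_n$ are eventually nonzero, $\|u_n\|/\|v_n+u_n\|\to 0$, and since $\bigl|\,\|v_n+u_n\|-\|v_n\|\,\bigr|\le\|u_n\|$ we also get $\|v_n\|/\|v_n+u_n\|\to 1$. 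Passing to a tail of the sequence is harmless, since all of the notions involved concern only the eventual behaviour of a sequence of vectors.

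For the asymptotically $G$-invariant statement I would fix a compact $K\subseteq G$ and, using that each $g\in K$ acts unitarily (so $\|(1-g)u_n\|\le 2\|u_n\|$), estimate
\[
\sup_{g\in K}\frac{\|(1-g)(v_n+u_n)\|}{\|v_n+u_n\|}\;\le\;\frac{\|v_n\|}{\|v_n+u_n\|}\cdot\sup_{g\in K}\frac{\|(1-g)v_n\|}{\|v_n\|}\;+\;\frac{2\|u_n\|}{\|v_n+u_n\|}.
\]
The first summand tends to $0$ because the prefactor tends to $1$ while the remaining supremum tends to $0$ by hypothesis on $(v_n)$; the second summand tends to $0$ by the first paragraph. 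Hence $(v_n+u_n)$ is asymptotically $G$-invariant.

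For the $G$-uniform statement the same displayed estimate does the work, now with $\sup_{g\in K}$ replaced by $\sup_{g\in U_0}$ for an appropriate identity neighbourhood $U_0$: given $\varepsilon>0$, choose $U_0$ from the $G$-uniformity of $(v_n)$ so that $\sup_n\sup_{g\in U_0}\|(1-g)v_n\|/\|v_n\|\le\varepsilon/4$, and then choose $N$ with $\|v_n\|/\|v_n+u_n\|\le 2$ and $2\|u_n\|/\|v_n+u_n\|\le\varepsilon/2$ for all $n\ge N$; this bounds the expression by $\varepsilon$ for $n\ge N$. The finitely many indices $n<N$ with $v_n+u_n\ne 0$ are handled individually, using continuity at $e$ of the orbit map $g\mapsto g(v_n+u_n)$, and one shrinks $U_0$ by intersecting it with the resulting finitely many neighbourhoods. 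I do not expect a genuine obstacle here; the one point meriting care — carried out in the first paragraph — is that $v_n+u_n$ must be normalised by its own norm rather than by $\|v_n\|$, and these two quantities are asymptotically equal.
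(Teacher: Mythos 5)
Your proof is correct and follows essentially the same route as the paper's: a triangle-inequality estimate $\|(1-g)(v_n+u_n)\|\le\|(1-g)v_n\|+2\|u_n\|$ combined with the observation that $\|v_n+u_n\|$ is eventually comparable to $\|v_n\|$ (the paper tests against a fixed averaging operator $\varphi\in\mathcal{A}(G)$ rather than against $\sup_{g\in K}$, but this is only a cosmetic difference). Your explicit treatment of the finitely many initial indices in the $G$-uniform case is a small point the paper leaves implicit, and it is handled correctly.
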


\begin{proof}
Fix a generating function $\varphi \in \mathcal{A}(G)$. Assume to begin with that the sequence $v_n$ is asymptotically $G$-invariant. For all $n$ sufficiently large so that $\|u_n\| \le \frac{1}{2} \|v_n\|$ we have $\|v_n + u_n\| \ge \frac{1}{2}\|v_n\|$ and
\begin{align}
\begin{split}
\frac{\|(1-\varphi) (v_n + u_n) \|}{ \|v_n + u_n\| } &\le 2 \frac{\|\varphi v_n - v_n\| + \| \varphi u_n\| + \|u_n\|}{\|v_n\|} \le \\
&\le 2 \frac{\|(1-\varphi) v_n \|   + 2\|u_n\|}{\|v_n\|}.
\end{split}
\end{align}
We have used the fact that $\varphi$ gives rise to a contracting operator so that $\|\varphi\|_\textrm{op} \le 1$. Letting $n \to \infty$  shows that the sequence $v_n + u_n$ is indeed asymptotically $G$-invariant.

%Next, assume that the sequence $v_n$ is $G$-uniform. 
%Given $\epsilon>0$ we can pick $N$ such that $\| u_n\|\le\frac{\epsilon}{4}$ for all $n>N$. Then we may pick an identity neighborhood $U$ such that 
%$  \sup_{g \in U}  \|(1-g)v_n\| / \|v_n\| \leq \varepsilon/2$ for every $n$ and $  \sup_{g \in U}  \|(1-g)u_n\| / \|u_n\| \leq \varepsilon/2$ for every $n\le N$. Then 
%$  \sup_{g \in U}  \|(1-g)(v_n+u_n)\| \leq \varepsilon$
%for every $n$.

Next, assume that the sequence $v_n$ is $G$-uniform. 
The verification of the fact that the sequence $v_n + u_n$ is also $G$-uniform is very similar to the above computation, up to considering the operator $g$ for some sufficiently small element $g \in G$ instead of the averaging operator $\varphi$.
\end{proof}

 \begin{lemma}
 \label{lemma:not invariant means a particular element}
 Let $v_n \in V$ be a $G$-uniform sequence of non-zero vectors. If the sequence $v_n$ is \emph{not} asymptotically $G$-invariant then there is some element $g \in G$ such that 
 \[\limsup_n \|(1-g) v_n\| / \|v_n\| > 0.\]
 \end{lemma}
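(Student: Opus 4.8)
The plan is to argue by contradiction via a compactness/diagonalization argument over a countable exhaustion of $G$ by compact sets. Fix a generating function $\varphi \in \mathcal{A}(G)$. Since $v_n$ is not asymptotically $G$-invariant, after passing to a subsequence we may assume there is some $\eps_0 > 0$ with $\|(1-\varphi)v_n\|/\|v_n\| \ge \eps_0$ for all $n$; equivalently (using the standard equivalence recalled just before Lemma~\ref{lemma:adding a zero sequence}, or working directly with compact sets) there is a fixed compact set $K_0 \subset G$ with $\sup_{k \in K_0}\|(1-k)v_n\|/\|v_n\| \ge \eps_0$ for all $n$. The point is to upgrade the ``$\sup$ over a compact set'' to a ``single element'' using $G$-uniformity.

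The key mechanism is this: $G$-uniformity says orbit maps $g \mapsto gv_n/\|v_n\|$ are uniformly equicontinuous. So I would first choose, using the definition of $G$-uniformity with $\eps = \eps_0/2$, an identity neighborhood $U$ with $\sup_n \sup_{u \in U}\|(1-u)v_n\|/\|v_n\| \le \eps_0/2$. Cover the compact set $K_0$ by finitely many translates $g_1 U, \dots, g_N U$. For each $n$, pick $k_n \in K_0$ realizing (up to a factor) the supremum, so $\|(1-k_n)v_n\|/\|v_n\| \ge \eps_0$; write $k_n \in g_{j(n)} U$, i.e. $k_n = g_{j(n)} u_n$ with $u_n \in U$. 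By pigeonhole some index $j$ occurs infinitely often; pass to that subsequence, so $j(n) \equiv j$ and set $g := g_j$. Then from $(1-k_n) = (1-g) + g(1 - u_n)$ and $\|g\|_{\op} = 1$ we get
\[
\eps_0 \le \frac{\|(1-k_n)v_n\|}{\|v_n\|} \le \frac{\|(1-g)v_n\|}{\|v_n\|} + \frac{\|(1-u_n)v_n\|}{\|v_n\|} \le \frac{\|(1-g)v_n\|}{\|v_n\|} + \frac{\eps_0}{2},
\]
so $\|(1-g)v_n\|/\|v_n\| \ge \eps_0/2$ along this subsequence, hence $\limsup_n \|(1-g)v_n\|/\|v_n\| \ge \eps_0/2 > 0$ for the original sequence as well (a $\limsup$ only increases when we drop the restriction to a subsequence).

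The only genuinely delicate point — and the one I'd want to state carefully — is the reduction from ``$v_n$ is not asymptotically $G$-invariant'' to ``there is a single compact $K_0$ and $\eps_0 > 0$ with $\sup_{K_0}\|(1-k)v_n\|/\|v_n\| \ge \eps_0$ for infinitely many $n$''. By definition, failure of asymptotic invariance means it is \emph{not} the case that for every compact $K$, $\sup_{k\in K}\|(1-k)v_n\|/\|v_n\| \to 0$; so there exists \emph{some} compact $K_0$ for which the limit is not $0$, which gives $\eps_0 > 0$ and an infinite subsequence along which the sup is $\ge \eps_0$. This is routine but worth spelling out since it is where the compact set $K_0$ that drives the covering argument comes from. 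Everything else is the triangle inequality plus the contraction property $\|g\|_{\op}=1$, exactly as in the proof of Lemma~\ref{lemma:adding a zero sequence}.
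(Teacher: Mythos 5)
Your argument is correct and is essentially the paper's own proof: extract a compact set $K_0$ and $\varepsilon_0>0$ witnessing the failure of asymptotic invariance, use $G$-uniformity to get an identity neighborhood $U$ controlling $\|(1-u)v_n\|/\|v_n\|$ by $\varepsilon_0/2$, cover $K_0$ by finitely many translates of $U$, and pigeonhole. The paper states the final step more tersely ("one of these elements $g_i$ is as required"), but your explicit decomposition $(1-gu)=(1-g)+g(1-u)$ and the subsequence bookkeeping are exactly what that step amounts to.
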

\begin{proof}
Assume that the sequence $v_n$ is not asymptotically $G$-invariant. This means that there is some compact subset $ K \subset G$ and some $\varepsilon > 0$ such that
\[ \limsup_n \sup_{ g \in K} \|(1-g) v_n  \| / \|v_n\| > \varepsilon. \]
Since the sequence $v_n$ is $G$-uniform we may find a symmetric identity neighborhood $U \subset G$ such that 
\[\sup_n \sup_{g \in U}  \|(1-g)v_n\| / \|v_n\| \leq \varepsilon/2. \]
Let $U g_1 ,\ldots,U g_N $ be a finite cover of the compact subset $K$ for some choice of elements $g_1,\ldots,g_N \in G$. It follows that one of these elements $g_i$ is as required.
\end{proof}

It is useful to note that both properties of being asymptotically $G$-invariant as well as that of being $G$-uniform are preserved under rescaling (i.e. $v_n \mapsto c_n v_n$ for some arbitrary scaling constants $c_n > 0$).

% \begin{lemma}
% \label{lemma:Margulis lemma with alpha(f)}
% Let $f\in\mathcal{A}(G)$.
%  There is a constant $a = a(f) > 2$ such that   any unitary representation $G \to \U(V)$ and any vector $v \in V$ satisfy
% \[ 2\left(\|x\|^2 - \mathrm{Re} \left<fx,x\right>\right) \le \sup_{g \in \text{supp}(f)} \|(1-g)x\| \le a \left(\|x\|^2 - \mathrm{Re} \left<fx,x\right>\right). \]
% \end{lemma}
% \begin{proof}
% This is Lemma IV.3.2 of \cite{margulis1991discrete}.
% \end{proof}

\begin{lemma}
\label{lemma:length function}
Let $D \subset G$ be any subset. For each vector $v \in V$ and for all $n \in \mathbb{N}$ we have
\begin{equation*}  
\sup_{g \in D^n} \| (1-g)v \| \le n \sup_{g \in D} \| (1-g)v \|.
\end{equation*}\end{lemma}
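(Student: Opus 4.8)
The plan is to prove this by a straightforward induction on $n$, using the triangle inequality together with the fact that the representation is unitary (so that left-translation by any group element is an isometry on $V$).

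For the base case $n = 1$ the inequality is an equality, so there is nothing to do. For the inductive step, suppose the claim holds for $n$, and let $g \in D^{n+1}$. Write $g = hk$ with $h \in D^n$ and $k \in D$. Then decompose
\[ (1-g)v = (1 - hk)v = (1-h)v + h(1-k)v, \]
so that by the triangle inequality
\[ \|(1-g)v\| \le \|(1-h)v\| + \|h(1-k)v\| = \|(1-h)v\| + \|(1-k)v\|, \]
where in the last step I use that $h$ acts by a unitary operator. Now bound $\|(1-h)v\| \le \sup_{g' \in D^n}\|(1-g')v\| \le n\sup_{g'\in D}\|(1-g')v\|$ by the inductive hypothesis, and $\|(1-k)v\| \le \sup_{g'\in D}\|(1-g')v\|$ since $k \in D$. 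Adding these gives $\|(1-g)v\| \le (n+1)\sup_{g'\in D}\|(1-g')v\|$, and taking the supremum over $g \in D^{n+1}$ completes the induction.

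There is really no main obstacle here; the only mild subtlety is to make sure the decomposition $g = hk$ with the factors in the right sets is legitimate, which it is by the definition of $D^{n+1} = D^n \cdot D$ (and one should note the statement is vacuous or trivial if $D^n$ is empty for some $n$, e.g. if $D = \emptyset$, in which case both sides are suprema over the empty set). One could equally well write $g = kh$ with $k \in D$, $h \in D^n$ and decompose $(1-kh)v = (1-k)v + k(1-h)v$; either grouping works since the bound is symmetric. The unitarity of the $G$-action — which is in force throughout this subsection since $G \to \mathrm{U}(V)$ is a unitary representation — is the one property being used, and it is exactly what makes the cross term $h(1-k)v$ contribute only $\|(1-k)v\|$ rather than something depending on $h$.
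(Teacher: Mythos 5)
Your proof is correct and is essentially identical to the paper's: both use the decomposition $(1-gh)v = (1-g)v + g(1-h)v$, the triangle inequality, unitarity of the action, and induction on $n$. Nothing is missing.
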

\begin{proof}
Fix a vector $v \in V$. By the triangle inequality,  any pair of elements $g,h \in G$ satisfies
\begin{equation*}  
   \|(1-gh)v\|\leq \|(1-g)v\|+\|g(1-h)v\| = \|(1-g)v\|+\|(1-h)v\|.
\end{equation*}
The desired conclusion follows by induction on $n$.
\end{proof}

We will require the following lemma dealing with  asymptotic invariance in the $L^1$-sense.

\begin{lemma} \label{lem:L1SG}
Let $G$ be a locally compact group admitting a  measure preserving action on a probability measure space $(X,m)$.
Assume that the   representation $L^2_0(X)$ has  spectral gap. If $v_i \in L^1(X)$ is an asymptotically $G$-invariant sequence of vectors with $\|v_i\|_1 = 1$ and $v_i \ge 0$ then it converges in $L^1(X)$ to the constant function $1$.
\end{lemma}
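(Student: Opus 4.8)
\textbf{Proof plan for Lemma \ref{lem:L1SG}.}

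The plan is to transfer the statement from $L^1$ to $L^2$ via the square-root homeomorphism of Lemma \ref{lem:mazur}, use the spectral gap in $L^2_0(X)$ to conclude $L^2$-convergence of the normalized square roots to the constant, and then push the conclusion back to $L^1$. First I would set $w_i = v_i^{1/2} \in L^2(X)$, so that $w_i \geq 0$ and $\|w_i\|_2 = 1$. The key point is that the map $f \mapsto f^2$ of Lemma \ref{lem:mazur} is a $G$-equivariant uniform homeomorphism between the relevant positive unit spheres (equivariance is immediate since the $G$-action on $X$ is measure preserving, hence commutes with pointwise squaring). Since $v_i = w_i^2$ is asymptotically $G$-invariant in $L^1$, uniform continuity of the inverse map (the inequality $\|f-g\|_2 \leq \|f^2-g^2\|_1^{1/2}$ from the proof of Lemma \ref{lem:mazur}) lets me deduce that $w_i$ is asymptotically $G$-invariant in $L^2$: for any compact $K \subset G$ and $k \in K$, $\|(1-k)w_i\|_2 = \|w_i - k w_i\|_2 \leq \|w_i^2 - (kw_i)^2\|_1^{1/2} = \|v_i - k v_i\|_1^{1/2} \to 0$ uniformly over $k \in K$.

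Next I would decompose $w_i = 1 + w_i^0$ where $1$ is the constant function (note $\langle w_i, 1\rangle_{L^2} = \int w_i \, dm$, which need not be exactly the right constant, so more precisely I write $w_i = c_i \cdot 1 + w_i^0$ with $w_i^0 \in L^2_0(X)$ and $c_i = \int w_i\, dm \in (0,1]$). Since $w_i$ is asymptotically $G$-invariant in $L^2$ and the constant component is genuinely $G$-invariant, the orthogonal component $w_i^0$ is also asymptotically $G$-invariant in $L^2_0(X)$ (subtracting the invariant part only helps). If $\liminf_i \|w_i^0\|_2 > 0$, then along a subsequence $w_i^0 / \|w_i^0\|_2$ would be a sequence of unit vectors in $L^2_0(X)$ witnessing almost-invariant vectors, contradicting the spectral gap hypothesis. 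Hence $\|w_i^0\|_2 \to 0$. Since $\|w_i\|_2^2 = c_i^2 + \|w_i^0\|_2^2 = 1$, this forces $c_i \to 1$, and therefore $w_i \to 1$ in $L^2(X)$. Finally, applying the forward direction of Lemma \ref{lem:mazur} (uniform continuity of $f \mapsto f^2$, via $\|f^2 - g^2\|_1 \leq 2\|f-g\|_2$) gives $v_i = w_i^2 \to 1^2 = 1$ in $L^1(X)$, as desired.

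The step I expect to be the main obstacle — or at least the one requiring the most care — is verifying that $w_i$ is genuinely asymptotically $G$-invariant \emph{in the $L^2$ sense}, i.e., controlling $\sup_{k \in K}\|(1-k)w_i\|_2 / \|w_i\|_2$ uniformly. The subtlety is that Lemma \ref{lem:mazur} bounds the $L^2$-distance between $w_i$ and $k w_i$ by the square root of an $L^1$-distance, and one must check this square root goes to zero uniformly over the compact set $K$; this is fine because the $L^1$ asymptotic invariance of $v_i$ is already stated uniformly over compacta, and the square-root function is continuous and monotone, so the uniformity is preserved. A minor point to address is that $\|w_i\|_2 = 1$ is fixed, so there is no normalization issue on the $L^2$ side. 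Everything else is bookkeeping with the Pythagorean decomposition and the definition of spectral gap.
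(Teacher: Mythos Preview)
Your proposal is correct and follows essentially the same approach as the paper's proof, which also passes to $w_i = v_i^{1/2}$ via Lemma~\ref{lem:mazur}, invokes the spectral gap to get $\|w_i - 1\|_2 \to 0$, and then applies Lemma~\ref{lem:mazur} in the forward direction. The paper's argument is terser (it omits the explicit Pythagorean decomposition you spell out), but the logic is identical.
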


In the above statement, the notion of an asymptotically $G$-invariant sequence is understood in the $L^1$-sense.

\begin{proof}
The lemma is a direct consequence of Lemma~\ref{lem:mazur}. Indeed, the sequence $v_i^{\frac{1}{2}}\in L^2(X)$ is asymptotically invariant (in the $L^2$-sense). Therefore $\|v_i^{\frac{1}{2}} - 1\|_2 \to 0$. Another application of Lemma~\ref{lem:mazur} gives $\|v_i-1\|_1\to 0$, as required.
\end{proof}

\section{Unitary representations of product groups}
\label{sec:Q}

In this section we establish the following special property of the  representation theory of product groups.

\begin{lemma}
\label{lem:mar3.7}
    Let $G=G_1\times G_2$ where $G_1$ and $G_2$ are compactly generated locally compact groups. Let $G\to \U(V)$ be a unitary representation without spectral gap and with $V^{G_2}=0$.
    If $G_2$ has  compact abelianization then there exists a sequence of unit vectors in $V$ which is $G$-uniform, asymptotically $G_1$-invariant and \emph{not} asymptotically $G_2$-invariant.
\end{lemma}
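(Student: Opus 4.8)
The plan is to start from the failure of spectral gap to produce an asymptotically $G$-invariant sequence $w_n$ of unit vectors, and then deform it into a sequence that remains $G_1$-asymptotically invariant but becomes ``$G_2$-active''. The key tool will be the spectral decomposition of $V$ with respect to the (abelian, since $G_2$ has compact abelianization) part of $G_2$ together with an ergodic-decomposition-style argument for the $G_2$-action, or more directly, a projection/averaging argument. Concretely, since $V^{G_2}=0$, the trivial $G_2$-representation is not weakly contained in $V|_{G_2}$ after removing the already-known $G$-invariant behavior; but $V$ almost has $G$-invariant vectors. I would exploit the mismatch: the sequence $w_n$ is almost $G$-invariant, hence almost $G_2$-invariant, yet $V$ has no genuine $G_2$-invariant vectors. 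The idea is to apply a $G_2$-averaging operator (convolution by some $\phi_2\in\mathcal A(G_2)$, viewed inside $\mathcal A(G)$ via $\phi_2\otimes\delta$-like construction — really convolution on the $G_2$ coordinate) and look at $v_n = w_n - \phi_2 w_n$, the ``non-$G_2$-invariant part''.

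First I would fix generating functions $\phi_1\in\mathcal A(G_1)$ and $\phi_2\in\mathcal A(G_2)$ and regard them as elements of $\mathcal A(G)$ acting on $V$; note these operators commute. Let $w_n$ be a unit asymptotically $G$-invariant sequence, which exists by hypothesis. Set $v_n = (1-\phi_2)w_n$. Since $V^{G_2}=0$ and $G_2$ is compactly generated, the operator $1-\phi_2$ has trivial kernel, but more is needed: I claim $\liminf_n\|v_n\|>0$. If not, then along a subsequence $\phi_2 w_n\to w_n$ in norm (using $\|w_n\|=1$), so $w_n$ would be asymptotically invariant under $\phi_2$ ``strongly''; iterating via Lemma~\ref{lemma:length function} this would force $w_n$ to be asymptotically invariant under the whole group generated by $\supp\phi_2$, i.e. all of $G_2$ — but that is automatic anyway. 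The real point must be subtler: I expect one actually needs to choose $w_n$ \emph{not} already producing a $G_2$-invariant-like sequence, and here is where $V^{G_2}=0$ combined with the \emph{absence} of spectral gap for the full $G$ (not for $G_2$ alone) enters. So instead I would argue by contradiction on the conclusion: suppose every $G$-uniform asymptotically $G_1$-invariant unit sequence \emph{is} asymptotically $G_2$-invariant, hence asymptotically $G$-invariant. Then apply the $\phi_2$-trick to $w_n$: since $\|\phi_2 w_n\|\to 1$ and by Lemma~\ref{lemma:adding a zero sequence} the sequence $\phi_2 w_n$ is again asymptotically $G$-invariant; but $\phi_2 w_n$ lies ``closer'' to $V^{G_2}$, and by a weak-compactness/Mazur-type argument (Lemma~\ref{lem:mazur} is for the $L^2$/$L^1$ scalar setting, but a Banach–Saks / weak limit argument works in general Hilbert space) one extracts a genuine nonzero $G_2$-invariant vector, contradicting $V^{G_2}=0$.

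The cleanest route, which I would ultimately follow, is: take $w_n$ asymptotically $G$-invariant unit vectors. Consider $u_n := \phi_2 w_n$. Then $\|u_n - w_n\|\to 0$ (asymptotic $G_2$-invariance), so $u_n$ is also asymptotically $G$-invariant with $\|u_n\|\to 1$. Now form $v_n := w_n - u_n = (1-\phi_2)w_n$. By Lemma~\ref{lemma:adding a zero sequence} applied in the contrapositive direction and the triangle inequality, $v_n$ is asymptotically $G_1$-invariant (since both $w_n$ and $u_n$ are, and $\phi_2$ commutes with the $G_1$-action so $v_n$ genuinely transforms under $G_1$), and it is $G$-uniform because $w_n$ is and $\phi_2$ preserves $G$-uniformity (convolution operators do not worsen equicontinuity of orbit maps — this needs a short check using $\|(1-g)\phi_2 w\| = \|\phi_2(1-g)w\|\le\|(1-g)w\|$). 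The crucial claim is $\liminf_n\|v_n\|>0$: here I use that if $\|v_n\|\to 0$ then $w_n$ is an asymptotically $G$-invariant sequence with $w_n - \phi_2 w_n\to 0$; such behavior, pushed through \emph{all} symmetric generating functions of $G_2$ and using that $G$-uniform almost-$G$-invariant sequences have weak limit points in $V$ (rescale so they stay unit), would produce a weak limit $w_\infty$ fixed by $G_2$ — and one checks $w_\infty\ne 0$ using that $\|\phi_2 w_n\|\to 1$ forces the weak limit to be nonzero — contradicting $V^{G_2}=0$. Then $v_n/\|v_n\|$ is the desired sequence: $G$-uniform, asymptotically $G_1$-invariant, and \emph{not} asymptotically $G_2$-invariant because $\phi_2 v_n = \phi_2 w_n - \phi_2^2 w_n$, and if $v_n$ were asymptotically $G_2$-invariant then $\phi_2 v_n - v_n\to 0$ in the rescaled sense, giving $(1-\phi_2)^2 w_n$ small compared to $(1-\phi_2)w_n$, forcing $\phi_2$ to have $1$ as an isolated-ish spectral value on the relevant subspace and ultimately $w_n$ truly $G_2$-invariant in the limit — again contradicting $V^{G_2}=0$.

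I expect the main obstacle to be making the ``$\liminf\|v_n\|>0$'' and ``$v_n$ not asymptotically $G_2$-invariant'' claims rigorous, since both hinge on converting approximate fixed-vector statements for the single averaging operator $\phi_2$ into a genuine $G_2$-fixed vector. The right technical device is a weak-limit argument: a $G$-uniform almost-invariant sequence of unit vectors has a weakly convergent subsequence, the weak limit is genuinely $G_1$-invariant and, if the sequence fails to be asymptotically $G_2$-invariant, one needs the limit to still detect $G_2$; conversely in the degenerate cases the weak limit is $G_2$-fixed and nonzero, contradicting the hypothesis $V^{G_2}=0$. Packaging this so that the \emph{same} $\phi_2$ works (rather than needing $\phi_2^{*k}$ for all $k$) may require invoking that for a self-adjoint contraction $T=\phi_2$ on the Hilbert space generated by $\{w_n\}$, ``$Tw_n - w_n\to 0$'' implies the spectral measures of $w_n$ concentrate near $1$, and then a standard argument shows weak limits are honest eigenvectors of $T$ with eigenvalue $1$, which for a generating $\phi_2$ on $G_2$ with compact abelianization are exactly the $G_2$-invariant vectors.
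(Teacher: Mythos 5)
There is a genuine gap, and it sits at the very center of your argument. If $w_n$ is an asymptotically $G$-invariant sequence of unit vectors, then it is in particular asymptotically $K_2$-invariant for the compact set $K_2=\supp(\phi_2)$, and averaging gives $\|(1-\phi_2)w_n\|\le\sup_{k\in K_2}\|(1-k)w_n\|\to 0$. So your $v_n=(1-\phi_2)w_n$ \emph{always} tends to $0$ in norm; indeed you derive $\|u_n-w_n\|\to 0$ yourself two sentences before asserting that ``the crucial claim is $\liminf_n\|v_n\|>0$''. That claim is not subtle, it is false, and the contradiction is internal to your own text. Once you renormalize $v_n/\|v_n\|$, the $G_1$-errors $\|(1-g_1)v_n\|\le 2\|(1-g_1)w_n\|$ get divided by $\|v_n\|\to 0$ with no control whatsoever on the relative rates, so neither asymptotic $G_1$-invariance nor $G$-uniformity survives. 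The weak-limit fallback does not repair this: weak limits of asymptotically invariant unit vectors are indeed invariant, but they are typically \emph{zero} — that is exactly what ``almost invariant vectors without invariant vectors'' means — and $\|\phi_2 w_n\|\to 1$ in no way forces a nonzero weak limit (consider normalized indicators of long intervals in $\ell^2(\mathbb{Z})$). A further warning sign is that your argument never genuinely uses the compact abelianization of $G_2$; since the lemma is a strengthening of Margulis's Lemma IV.3.7, which requires property (Q) precisely for this step, a proof that ignores the hypothesis cannot be complete.

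The paper resolves exactly the rate-calibration problem you run into, by working with the joint spectral calculus of the two commuting self-adjoint operators $f_1\in\mathcal{A}(G_1)$ and $f_2\in\mathcal{A}(G_2)$. One does not take arbitrary almost-invariant $w_n$ and then differentiate; one \emph{selects} unit vectors $v_i$ in $p_{n_i}q_{m_i}V$, where $p_n=P([\alpha_n,1])$, $q_m=Q([\alpha_m,\alpha_{m+1}])$ and $\alpha_n=1-2^{-n}$. The hypothesis $V^{G_2}=0$ gives $Q(\{1\})=0$, which together with the absence of spectral gap produces nonzero projections $p_{n_i}q_{m_i}$ with $n_i,m_i\to\infty$. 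Confining the $f_2$-spectrum to the dyadic \emph{shell} $[\alpha_m,\alpha_{m+1}]$, rather than to $[\alpha_m,1]$, is what pins down the relative size of $\|(1-f_2)v_i\|$ against $\sup_{k\in K_2}\|(1-k)v_i\|$ (Lemma \ref{lem:almostmax}), so that the differentiated vectors $u_i=(1-k_i)f_2v_i$ have norm comparable to the quantities being estimated and no uncontrolled renormalization occurs. Finally, the compact abelianization enters through Proposition \ref{prop:noai}: choosing $k_i$ to maximize $\|(1-k_i)f_2v_i\|$, one still has $\sup_{k\in K_2}\|(1-k)u_i\|\ge\alpha\|u_i\|$ for a uniform $\alpha>0$, which is the certification that the sequence is \emph{not} asymptotically $G_2$-invariant — the step your proposal has no substitute for.
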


This  lemma is a version of \cite[Lemma IV.3.7]{margulis1991discrete}.
Margulis proves this result  for groups which form a Gelfand pair with respect to a compact subgroup. He refers to this condition as 
   \emph{property (Q)}. Our version is more general, as we replace this assumption by  compact abelianization. 
We will call a sequence of vectors with the peculiar properties provided by Lemma \ref{lem:mar3.7} a \emph{discordant sequence}. The lemma  is proved at the end of this section.  

\begin{remark}
\label{remark:non trivial part 1}
The assumption that the representation $G \to \mathrm{U}(V)$ has no spectral gap implicitly implies that the Hilbert space $V$ is non-zero. Therefore the additional assumption  $V^{G_2} = 0$  forces the group $G_2$ to be non-trivial. We allow $G_1$ to be trivial.
%For the purpose of the current \S\ref{sec:Q} the group $G_1$ may  be trivial, however.
\end{remark}

  %Margulis Lemma establishes the existence of a certain sequence of vectors in a representations of a product of groups, under the assumption that these groups form Gelfand pairs with respect to compact subgroups (property Q). We remove this assumption.

\subsection*{Compact abelianization and unitary representations}

Let $G$ a compactly generated locally compact group with compact abelianization. Let $K \subset G$ be a compact, symmetric and generating    identity neighborhood. 
The following lemma shows that the compact abelianization property can  be tracked on a compact subset.

\begin{lemma} \label{lem:ArAs}
There is a symmetric compact subset $Q \subset G$ and a constant $\delta > 0$ with the following property --- for every continuous function $\phi: Q \to \mathbb{C}$ satisfying $\|\phi|_K\|_\infty = 1$ there is  a pair of elements  $k_1 \in K$ and $k_2 \in Q$  with
    \[ |\phi(k_1)+\phi(k_2)-\phi(k_1k_2)|\geq \delta. \]
\end{lemma}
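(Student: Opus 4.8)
The statement asserts that the failure of a function on a compact set $Q$ to behave like a homomorphism can be detected uniformly, with a uniform lower bound $\delta$. The natural approach is a compactness/contradiction argument over the space of candidate functions, so the plan is to first fix the right ambient finite-dimensional-type data and then extract $\delta$ by a limiting argument.

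First I would set up the target set $Q$. Since $G$ has compact abelianization, the commutator subgroup is cocompact, but more usefully: let $\pi\colon G\to G^{\mathrm{ab}}$ be the abelianization and note $\overline{[G,G]}$ is a closed subgroup with compact quotient. Because $K$ is compact, symmetric and generating, there is some fixed $n$ so that $K^n$ contains a full set of commutators $[k,k']=kk'k^{-1}k'^{-1}$ for all $k,k'\in K$ together with a transversal witnessing that $\overline{[G,G]}$ is not the whole group only if $G$ is abelian — but really the key point is: if $\phi$ were additive on all of $Q\supseteq K\cup KK$ in the sense of the inequality failing, i.e. $\phi(k_1k_2)=\phi(k_1)+\phi(k_2)$ for all relevant pairs, then $\phi$ restricted to the subgroup generated by $K$ (which is all of $G$) would extend to a genuine continuous homomorphism $G\to\mathbb{C}$, hence factor through $G^{\mathrm{ab}}$, hence be \emph{bounded} (as $G^{\mathrm{ab}}$ is compact and $\phi$ continuous), in fact it would be a continuous character-like map into $(\mathbb{C},+)$ from a compact group and therefore identically zero — contradicting $\|\phi|_K\|_\infty=1$. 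So I would take $Q = K\cup K\cdot K$ (symmetric, compact), and the dichotomy becomes: either some pair $k_1\in K$, $k_2\in Q$ has $|\phi(k_1)+\phi(k_2)-\phi(k_1k_2)|$ bounded below, or $\phi$ genuinely extends to a homomorphism and vanishes.

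The heart is then upgrading "$>0$ for each $\phi$" to "$\geq\delta$ uniformly". Here is where I would argue by contradiction and compactness. Suppose no such $\delta$ exists; then there is a sequence $\phi_m\colon Q\to\mathbb{C}$ with $\|\phi_m|_K\|_\infty=1$ and $\sup_{k_1\in K, k_2\in Q}|\phi_m(k_1)+\phi_m(k_2)-\phi_m(k_1k_2)|\to 0$. The obstacle is that $C(Q)$ is infinite-dimensional and the $\phi_m$ need not be equicontinuous, so Arzelà–Ascoli does not apply directly and we cannot pass to a uniformly convergent subsequence. I would circumvent this by working \emph{pointwise}: the functions $\phi_m$ are uniformly bounded on $K$ by $1$, and the near-additivity on $K\times K$ forces $\phi_m$ to be uniformly bounded on $Q=K\cup KK$ by $3$ (roughly), so by a diagonal/Tychonoff argument pass to a subnet converging pointwise on $Q$ to some $\phi_\infty\colon Q\to\mathbb{C}$ with $|\phi_\infty|\le 3$. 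The limit satisfies the exact cocycle identity $\phi_\infty(k_1k_2)=\phi_\infty(k_1)+\phi_\infty(k_2)$ whenever $k_1\in K$, $k_2\in Q$ and $k_1k_2\in Q$; this is enough to propagate additivity along $K$-words and conclude $\phi_\infty$ extends to a continuous homomorphism $G\to(\mathbb{C},+)$ — here one must check continuity of the extension, which follows because on the generating compact set $K$ it is a pointwise limit of continuous functions satisfying a near-cocycle bound, or alternatively one can first regularize by convolving with a fixed $\psi\in\mathcal A(G)$ to restore continuity without destroying the near-additivity. Since $G^{\mathrm{ab}}$ is compact, the only continuous homomorphism $G\to\mathbb{C}$ is $0$, so $\phi_\infty\equiv 0$ on $Q$, contradicting $\|\phi_m|_K\|_\infty=1$ (which forces $\sup_K|\phi_\infty|$ to be $\ge$ some positive value — actually one needs a little care since pointwise limits need not preserve sup norm; to be safe I would instead normalize and use a finite $\eps$-net of $K$ to pick out a point where $|\phi_m|\ge 1/2$, pass to a limit there, and get $|\phi_\infty|\ge 1/2$ at that point).

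\textbf{Main obstacle.} The genuine difficulty is the lack of equicontinuity: extracting any kind of limit from $(\phi_m)$ requires either the pointwise/Tychonoff route (which then demands re-proving continuity of the limiting homomorphism, the delicate point) or a preliminary smoothing step via convolution against a fixed element of $\mathcal A(G)$ to force equicontinuity on $K$ while only perturbing the defect by a controlled amount. I expect the cleanest writeup smooths first: replace $\phi_m$ by $\psi * \phi_m$ for a fixed generating $\psi\in\mathcal A(G)$, observe this operation commutes suitably with the defect functional up to a uniformly small error, obtain an equicontinuous family, apply Arzelà–Ascoli on a large compact set, and finish as above. One must also handle the bookkeeping that the constant $\delta$ and the set $Q$ depend only on $K$ (hence on $G$), not on $\phi$ — which is automatic from the contradiction setup since $K$ was fixed throughout.
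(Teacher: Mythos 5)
Your overall strategy (argue by contradiction, extract a limit, show the limit is a continuous homomorphism $G\to(\mathbb{C},+)$, and kill it using compactness of the abelianization) is the same as the paper's, but two of your concrete choices fail, and one of them is fatal.

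The fatal one is taking $Q=K\cup K\cdot K$ fixed in advance. With that $Q$ the lemma is simply false: take $G=\mathbb{Z}/N\mathbb{Z}$ with $N$ large (compact, hence compact abelianization), $K=\{e,a,a^{-1}\}$, and $\phi(a^j)=j$ for $|j|\le 2$. Then $\|\phi|_K\|_\infty=1$, yet for every pair $k_1\in K$, $k_2\in Q$ with $k_1k_2\in Q$ the defect $\phi(k_1)+\phi(k_2)-\phi(k_1k_2)$ is exactly $0$. The underlying error is your claim that exact additivity on pairs from $K$ forces $\phi$ to extend to a homomorphism of $G$: a partial homomorphism on a generating set controls only relations of length two, and relations of greater length (such as $a^N=e$ above) obstruct the extension. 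This is precisely why the paper must let $Q=K^n$ with $n$ depending on $G$, produced by the contradiction argument itself: assuming failure for every pair $(K_n,1/n)$ yields functions $\phi_n$ on $K_n$ whose near-additivity controls words of length up to $n$, and the limit is then defined on all of $G=\bigcup_n K_n$ and is a genuine homomorphism there.

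The second gap is that you correctly identify equicontinuity as the obstacle but do not resolve it. The paper's resolution is neither of your two routes: near-additivity together with $\|\phi_n|_K\|_\infty=1$ forces $|\phi_n|$ to be uniformly \emph{small} near the identity, via the power trick $|m\phi_n(h)-\phi_n(h^m)|\le (m-1)/n$ for $h\in U_m$ with $U_m^m\subset K$, giving $\|\phi_n|_{U_m}\|_\infty\le 2/m$; equicontinuity on each $K_i$ then follows from $|\phi_n(g)-\phi_n(hg)|\le 1/n+|\phi_n(h)|$, and Arzel\`a--Ascoli applies. Your pointwise/Tychonoff alternative leaves the non-triviality of the limit genuinely open (the points of $K$ where $|\phi_m|=1$ may wander, and pointwise convergence elsewhere says nothing about them; a finite net does not fix this without equicontinuity), and your convolution alternative is unjustified: the defect of $\psi*\phi$ at a pair $(k_1,k_2)$ is an average of defects of $\phi$ at pairs $(g^{-1}k_1,k_2)$ with $g^{-1}k_1$ ranging outside $K$, which your hypothesis does not control.
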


\begin{proof}
We set inductively $K_1=K$ and $K_n=K_1\cdot K_{n-1}$ for all $n \in \mathbb{N}$.
Assume by contradiction that there exists a sequence of continuous functions $\phi_n:K_n\to \mathbb{C}$ satisfying $ \|\phi_n|_K\|_\infty=1$ and such that
\[ \sup_{(k_1,k_2) \in K \times K_{n-1}}|\phi_n(k_1)+\phi_n(k_2)-\phi_n(k_1k_2)| \le  1/n \]
 for every $n \in \mathbb{N}$. 
Fix an index $i \in \mathbb{N}$. We claim that: 
\begin{itemize}
    \item The sequence $(\phi_n|_{K_i})_{n >  i}$ is uniformly bounded on $K_i$.  Indeed for each $n$, using $\|\phi_n|_K\|_\infty = 1$ and the triangle inequality  we see that 
    \[ \|\phi_n|_{K_i}\|_\infty \leq i+(i-1)/n < 2i. \]
    \item The  sequence $(\phi_n|_{K_i})_{n > i}$ is equicontinuous on $K_{i-1}$. For each $m \in \mathbb{N}$ let $U_m \subset G$ be a sufficiently small identity neighborhood so that $U_m^m \subset K$. Provided that $n \ge m$, the triangle inequality implies that every element $h \in U_m$ satisfies
\[ |m\phi_n(h)-\phi_n(h^m)| \leq (m-1)/n <1.\]   
Thus using $\|\phi_n|_K\|_\infty= 1$ we get
\[ |m\phi_n(h)|\leq |m\phi_n(h)-\phi_n(h^m)|+|\phi_n(h^m)|< 2 \]
 for every element $h \in U_m$. We conclude that $\|\phi_n|_{U_m}\|_\infty\leq 2/m$ for all $n\geq m$. 
  Therefore   every pair of elements $g \in K_i$ and $h \in U_m$ satisfies 
\[ |\phi_n(g)-\phi_n(hg)| < 1/n + |\phi_n(h)| \le 1/n + 2/m \le 3/m \]
provided that $n \ge m$. The desired equicontinuity follows.
\end{itemize}

We use the Arzela--Ascoli theorem to conclude that for each fixed $i \in \mathbb{N}$, the sequence $(\phi_n|_{K_i})_{n > i}$ has a uniformly convergent subsequence in $C(K_i)$.
Hence, by a standard diagonal argument, the sequence $\phi_n$ admits a convergent subsequence with respect to the topology of uniform convergence on compact subsets of $G$. The triangle inequality shows that the limit of this subsequence is a continuous homomorphism from $G$ to the additive group of $\mathbb{C}$. This homomorphism is non-trivial as 
 $\|\phi_n|_K\|_\infty=1$ for all $n$. This is a contradiction.
 
To finish the proof, we may take $ Q = K_n$ and $\delta = \frac{1}{n}$ for some suitable index $n$ where our assumption toward contradiction fails.
\end{proof}

The following proposition shows that in any unitary representation of a group with compact abelianization,
every non-invariant vector can  be transformed in a uniform fashion into  another   non-invariant vector by \enquote{differentiating}, i.e applying an operator of the form $1-k$ where the element $k$ is taken from a compact subset.

\begin{prop} \label{prop:noai}
There exists a constant $\alpha = \alpha(G) >0$ with the following property. Let $G\to \U(V)$ be any unitary representation and $v \in V$ any vector. If $k_0 \in K$ is an element satisfying
   \[   \|(1-k_0)v\| = \sup_{k\in K} \|(1-k)v\| \]
 then   
\[ \sup_{k\in K} \|(1-k)(1-k_0)v \| \geq \alpha    \|(1-k_0)v\|. \]
\end{prop}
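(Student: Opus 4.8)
The plan is to argue by contradiction, leveraging Lemma~\ref{lem:ArAs}: if the desired inequality fails badly, then the function $\phi(k) = \langle v, kv\rangle$ (or more precisely a closely related function built from the representation) is forced to be nearly additive on a compact set while not being small, contradicting the compact abelianization encoded by Lemma~\ref{lem:ArAs}. Concretely, suppose that for every $\alpha>0$ there were a representation $G\to\U(V_\alpha)$ and a vector $v_\alpha\in V_\alpha$ with $\|(1-k_0)v_\alpha\| = \sup_{k\in K}\|(1-k)v_\alpha\|$ but $\sup_{k\in K}\|(1-k)(1-k_0)v_\alpha\| < \alpha\|(1-k_0)v_\alpha\|$; after normalizing so that $\|(1-k_0)v_\alpha\| = 1$, the vector $w_\alpha = (1-k_0)v_\alpha$ is a unit vector which is "almost $K$-invariant" with quantitative error $\alpha$, while at the same time $\|(1-k)v_\alpha\|\le 1$ for all $k\in K$ caps how large $v_\alpha$ itself can be "seen" through these differences.

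The key computation I would carry out is to relate $\langle v_\alpha, k v_\alpha\rangle$ on the compact set $Q$ from Lemma~\ref{lem:ArAs} to an approximately additive function. The identity $\|(1-k)v\|^2 = 2\,\Real\langle v, (1-k)v\rangle$ (valid since $k$ is unitary) means that the real-valued function $\psi(k) := \|(1-k)v_\alpha\|^2 / 2 = \Real\langle v_\alpha, (1-k)v_\alpha\rangle$ is the natural object. I would compute, using $1 - k_1 k_2 = (1-k_1) + k_1(1-k_2)$, that $\psi(k_1 k_2) - \psi(k_1) - \psi(k_2)$ is controlled by cross terms of the form $\Real\langle v_\alpha, (1-k_1) k_1 \cdots \rangle$ — actually the cleaner route is to work with $\phi(k) = \langle w_\alpha, k w_\alpha \rangle$ where $w_\alpha = (1 - k_0) v_\alpha$: then $1 - \Real\phi(k) = \|(1-k)w_\alpha\|^2/2 \le \alpha^2/2$ for all $k\in K$, so $\phi$ is within $\alpha^2/2$ of the constant function $1$ on $K$, forcing $|\phi(k_1)+\phi(k_2)-\phi(k_1 k_2)|$ to be small on $K\times Q$ — but wait, I need $\phi$ bounded on all of $Q$, which requires $w_\alpha$ to be close to $K_n$-invariant, and that follows from Lemma~\ref{lemma:length function} applied to $D = K$: $\sup_{g\in K^n}\|(1-g)w_\alpha\| \le n\alpha$, hence $\sup_{g\in Q}\|(1-g)w_\alpha\| \le n\alpha$ where $Q = K_n$. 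Then $\Real\phi \ge 1 - n^2\alpha^2/2$ and $|\phi|\le 1$ throughout $Q$, so on $K$ we have $\|\phi|_K\|_\infty$ close to $1$ (at least, we can rescale $\phi$ to have sup-norm exactly $1$ on $K$, absorbing a bounded factor), while $|\phi(k_1) + \phi(k_2) - \phi(k_1 k_2)|$ is $O(n^2\alpha^2)$ uniformly, which contradicts Lemma~\ref{lem:ArAs} once $\alpha$ is small relative to $\delta/n^2$.

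Let me restructure this more carefully as the actual proof skeleton. Fix $Q, \delta$ from Lemma~\ref{lem:ArAs}, and let $n$ be such that $Q \subseteq K_n$ (we may take $Q = K_n$ directly). Set $\alpha = \alpha(G)$ to be a small explicit constant, say $\alpha = \delta / (C n^2)$ for a universal constant $C$ to be pinned down. Suppose toward a contradiction the conclusion fails for some representation $V$ and vector $v$: after normalizing $\|(1-k_0)v\| = 1$, put $w = (1-k_0)v$, so $\|w\| \le \sup_{k\in K}\|(1-k)v\|$; actually $\|w\| = \|(1-k_0)v\|=1$ is not quite what I want — I want $w$ to be the unit vector, so rescale: let $w = (1-k_0)v / \|(1-k_0)v\|$, a genuine unit vector, and the failure hypothesis reads $\sup_{k\in K}\|(1-k)w\| < \alpha$. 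By Lemma~\ref{lemma:length function}, $\sup_{g\in Q}\|(1-g)w\| \le n\alpha$. Define $\phi: Q \to \complex$ by $\phi(g) = \langle w, gw\rangle$; then for all $g\in Q$, $|1 - \phi(g)| \le \|w\|\cdot\|(1-g)w\| \le n\alpha$, and moreover $|\phi(g)| \le 1$. For $(k_1,k_2)\in K\times K_{n-1}$, write $1 - k_1 k_2 = (1-k_1) + k_1(1-k_2)$ and expand to get $|\phi(k_1) + \phi(k_2) - \phi(k_1 k_2)| \le |1 - \phi(k_1)| \cdot (\text{bounded}) + \cdots \le C' n\alpha$ for a universal $C'$ (each term is a product of a unit vector norm and a "small difference" norm $\le n\alpha$, up to finitely many pieces). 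Since $\|\phi|_K\|_\infty \ge |1 - n\alpha| \ge 1/2$ for $\alpha$ small, rescaling $\phi$ to have $\|\phi|_K\|_\infty = 1$ only inflates the additivity defect by a factor $\le 2$, giving a defect $\le 2C'n\alpha < \delta$ for $\alpha$ small enough — contradicting Lemma~\ref{lem:ArAs}. Thus the conclusion holds with $\alpha = \alpha(G)$ chosen as above. The main obstacle I anticipate is making the bookkeeping of the cross-term expansion clean enough that the additivity defect is genuinely $O(n\alpha)$ (or $O(n^2\alpha)$) uniformly in $V$ and $v$ — the telescoping $1 - k_1 k_2 = \sum (\text{prefix})(1 - k_i)$ over a word realizing $k_2\in K_{n-1}$ is what does this, but one must be careful that the relevant "prefix" elements applied to $w$ don't blow up, which they don't since they're unitary and $w$ is a unit vector. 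Everything else is routine once Lemma~\ref{lem:ArAs} and Lemma~\ref{lemma:length function} are in hand.
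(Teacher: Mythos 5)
There is a genuine gap at the heart of your argument: the claim that, because $\phi(g)=\langle w,gw\rangle$ is within $O(n\alpha)$ of the constant function $1$ on $Q$, its additivity defect $|\phi(k_1)+\phi(k_2)-\phi(k_1k_2)|$ must be small. This is false: the additivity defect of the constant function $1$ is $|1+1-1|=1$, and a direct computation gives
\[ \phi(k_1)+\phi(k_2)-\phi(k_1k_2)=\bigl\langle w,\bigl(1-(1-k_1)(1-k_2)\bigr)w\bigr\rangle = 1-\bigl\langle (1-k_1^{-1})w,(1-k_2)w\bigr\rangle, \]
so under your failure hypothesis the defect is at least $1-n\alpha^2$, i.e.\ close to $1$, not to $0$. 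Your $\phi$ therefore does not contradict Lemma~\ref{lem:ArAs} at all (that lemma is perfectly compatible with functions whose defect is everywhere about $1$), and the contradiction you are after never materializes. A related symptom: your argument makes no essential use of the extremality of $k_0$, which is needed for the statement; and your abandoned first variant, $\psi(k)=\Real\langle v,(1-k)v\rangle$, has a defect involving only first differences of $v$, so it cannot say anything about $\sup_k\|(1-k)(1-k_0)v\|$.

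The repair is the asymmetric test function used in the paper: with $c=\|(1-k_0)v\|$ and $u=(1-k_0)v$, set $\phi(g)=\langle(1-g)v,u\rangle/c^2$. Extremality of $k_0$ gives $\|\phi|_K\|_\infty=\phi(k_0)=1$ exactly, and the additivity defect equals the second difference $\langle(1-k_1)(1-k_2)v,u\rangle/c^2=\langle(1-k_2)v,(1-k_1^{-1})u\rangle/c^2$, a pairing of a first difference of $v$ (bounded by $nc$ via Lemma~\ref{lemma:length function}) against a first difference of $u$ (bounded by $d=\sup_{k\in K}\|(1-k)u\|$). Lemma~\ref{lem:ArAs}, applied directly rather than by contradiction, then yields $\delta\leq nd/c$, i.e.\ $d\geq(\delta/n)\,c$. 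If you instead try to salvage your route with $\psi=1-\phi$, whose defect $\langle(1-k_1^{-1})w,(1-k_2)w\rangle$ really is quadratically small, you hit a normalization problem: one only knows $d^2/2\leq\|\psi|_K\|_\infty\leq d$, and dividing by $\|\psi|_K\|_\infty$ to invoke Lemma~\ref{lem:ArAs} turns the resulting inequality into the vacuous $n\geq\delta/2$.
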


\begin{proof}
Let $Q \subset G$ be the compact symmetric identity neighborhood and $\delta > 0$ be the constant  provided by Lemma~\ref{lem:ArAs}. Let $n \in \mathbb{N}$ be such that $Q \subset K^n$. Take  $\alpha=\delta/n$. 
Fix any element $k_0 \in K$ such that 
\[ c= \|(1-k_0)v \| = \sup_{k\in K} \|(1-k)v\|. \]
We set 
\[ u=(1-k_0)v \quad \text{and} \quad d=\sup_{k\in K} \|(1-k)u\|. \]
Note that $\|u\| = c$. Our goal is  to show that $d\geq \alpha c$. We will assume as we may that $c>0$.
 
Consider the complex-valued continuous function
\[ \phi \in C(G), \quad \phi(g)=\langle (1-g)v,u \rangle/c^2 \quad \forall g \in G. \]
Notice that $\|\phi|_{K}\|_\infty = \phi(k_0)= 1$. At this point we use the compact abelianization assumption together with Lemma \ref{lem:ArAs} to find a pair of elements $k_1\in K$ and $k_2 \in Q$
satisfying
    \[ |\phi(k_1)+\phi(k_2)-\phi(k_1k_2)|\geq \delta. \]
Denote $h_1=k_1^{-1} \in K$. We get
\begin{align*} n\alpha & =\delta \leq |\phi(k_1)+\phi(k_2)-\phi(k_1 k_2)|= |\langle (1-k_1)(1-k_2)v,u \rangle|/ c^2  \\
&=|\langle (1-k_2)v,(1-h_1)u \rangle|/c^2 \leq \|(1-k_2)v\|\cdot \|(1-h_1)u\|/c^2.\end{align*}
It follows from Lemma \ref{lemma:length function} that 
\[\|(1-k_2)v\| \le nc \quad \text{and} \quad \|(1-h_1)u\| \le d.\]
Putting everything together gives $n \alpha \le n d / c$. This inequality is equivalent to the desired conclusion.
\end{proof}

%Proposition~\ref{prop:noai} can be regarded as a method  to provide uniformly non-invariant vectors by \enquote{differentiating}.

\subsection*{Constructing a discordant sequence of vectors}

Let $f \in \mathcal{A}(G)$ be any function.  
Denote $K = \mathrm{supp}(f) = \overline{\{g \in G \: : \: f(g) \neq 0\}}$. % Let $a = a(f) > 2$ be the constant associated to the function $f$ as in Lemma \ref{lemma:Margulis lemma with alpha(f)}.
The following lemma, which is quite technical, will be used in the proof of Lemma~\ref{lem:mar3.7} below to allow for a smoothing procedure to be applied.

\begin{lemma} \label{lem:almostmax}
Consider the constants
$ \alpha_n = 1 - 2^{-n}$ for all  $n \in \mathbb{N}$.
Let $G \to \U(V)$ be any unitary representation.  Denote $p_n = P(\left[\alpha_n,\alpha_{n+1}\right])$ where $P$ is the projection-valued measure associated to $f$ regarded as a self-adjoint operator on $V$. Then every vector $v \in V$ satisfies for all $ n \ge 4$ that
\[ \sup_{k \in K} \|(1-k)p_nv\| \le 2 \sup_{k \in K} \|(1-k)fp_nv\|. \]
% Let $G$ be a locally compact group.
% Let $K\subset G$ be a compact subset and fix a symmetric measure $\mu\in\Prob(K)$.
% Let $G\to \U(V)$ be a unitary representation and regard $\mu$ as a positive operator on $V$.
% For a fixed $n$ let $p_n$ be its spectral projection associated with the interval $[1-2^{-n},1-2^{-(n+1)}]$.
% Fix $n\geq 4$ and $v\in p_nV$.
% Then 
%     \[ \max_{k\in K} \|(1-k) v\| \leq 2\max_{k\in K} \|(1-k)\mu v\|. \]
\end{lemma}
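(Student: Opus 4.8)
The plan is to work with $w := p_n v$ and reduce everything to the trivial decomposition $w = fw + (1-f)w$, controlling the ``wiggle'' $\sup_{k\in K}\|(1-k)(1-f)w\|$ of the correction term on the spectral subspace $p_n V$. Since $p_n = P([\alpha_n,\alpha_{n+1}])$ is a spectral projection of the self-adjoint operator $f$, it commutes with $f$, so both $w$ and $fw$ lie in $p_n V$, on which $f$ has spectrum contained in $[\alpha_n,\alpha_{n+1}]$; equivalently $1-f$ has spectrum contained in $[2^{-(n+1)},\,2^{-n}]$ there. Applying $1-k$ to the decomposition and taking suprema over $k\in K$ gives
\[ \sup_{k\in K}\|(1-k)w\| \;\le\; \sup_{k\in K}\|(1-k)fw\| \;+\; \sup_{k\in K}\|(1-k)(1-f)w\| , \]
so it suffices to show that the last term is at most $\tfrac12 \sup_{k\in K}\|(1-k)w\|$; rearranging then gives the claim, the quantity $\sup_{k\in K}\|(1-k)w\|$ being finite because each $\pi(k)$ is unitary.

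For an upper bound on the correction, unitarity of $\pi(k)$ together with the spectral bound on $p_n V$ gives, for every $k\in K$, the estimate $\|(1-k)(1-f)w\| \le 2\|(1-f)w\| \le 2^{\,1-n}\|w\|$. The crucial point is then a matching \emph{lower} bound $\sup_{k\in K}\|(1-k)w\| \ge 2^{-n/2}\|w\|$. To obtain it I would use that $f \ge 0$, $f = \check f$ and $m_G(f)=1$: expanding $\|(1-\pi(g))w\|^2 = 2\|w\|^2 - 2\Real\langle \pi(g)w, w\rangle$ and integrating against $f$ produces the identity
\[ \int_{K} f(g)\,\|(1-\pi(g))w\|^2 \,\dd m_G(g) \;=\; 2\langle (1-f)w,\, w\rangle \;\ge\; 2^{-n}\|w\|^2 , \]
the last step being the spectral lower bound for $1-f$ on $p_n V$. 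Since $f$ is a probability density supported on $K$, the left-hand side is at most $\bigl(\sup_{k\in K}\|(1-k)w\|\bigr)^2$, and the lower bound follows.

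Combining the two estimates, for all $n \ge 4$ one gets
\[ \sup_{k\in K}\|(1-k)(1-f)w\| \;\le\; 2^{\,1-n}\|w\| \;\le\; 2^{\,1-n}\cdot 2^{\,n/2}\,\sup_{k\in K}\|(1-k)w\| \;=\; 2^{\,1-n/2}\,\sup_{k\in K}\|(1-k)w\| \;\le\; \tfrac12\,\sup_{k\in K}\|(1-k)w\| , \]
which is exactly what was needed, and the lemma follows.

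The one delicate point — and the main obstacle if one proceeds naively — is that $\pi(k)$ does \emph{not} commute with the smoothing operator $f$, so one cannot simply pull $1-k$ across $1-f$ and invoke $\|1-f\|_{\op}\le 2^{-n}$; estimating the wiggle of $(1-f)w$ only through its norm together with the pointwise bound $\|(1-f)w\|\ge 2^{-(n+1)}\|w\|$ yields merely a constant of order $4$. The gain comes from the $L^2$-averaged identity above, whose square root upgrades the lower bound to $\sup_{k\in K}\|(1-k)w\|\ge 2^{-n/2}\|w\|$ (instead of $\gtrsim 2^{-n}\|w\|$); this square-root improvement is precisely what forces the constant down to $2$ and accounts for the threshold $n\ge 4$, at which $2^{\,1-n/2}\le \tfrac12$.
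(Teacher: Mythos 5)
Your proof is correct and rests on the same two ingredients as the paper's: the decomposition $p_nv = fp_nv + (1-f)p_nv$ together with the identity obtained by averaging $\|(1-k)u\|^2 = 2\Real\langle (1-k)u,u\rangle$ against $f$, which gives the square-root improvement $2\langle (1-f)u,u\rangle \le \bigl(\sup_{k\in K}\|(1-k)u\|\bigr)^2$ and, via the spectral localization to $[\alpha_n,\alpha_{n+1}]$, forces the constant down to $2$ for $n\ge 4$. The only cosmetic difference is that you apply this identity to $w=p_nv$ to get a lower bound on the left-hand side and then absorb the correction term, whereas the paper applies it to $u=fp_nv$ and bounds the left-hand side directly in terms of $c=\sup_{k\in K}\|(1-k)u\|$.
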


\begin{proof}
Let $v \in V$ be an arbitrary vector. We assume without loss of generality that $v \in p_n V$ and that $v \neq 0$. Set $u=f v$ and observe that $\|u\|\geq \alpha_n \|v\|>0$. We normalize these vectors so that  $\|u\|=1$. Note that $u \in p_n V$ so that 
\[\langle f u,u\rangle \leq \|f  u\|\leq \alpha_{n+1} \quad \text{and} \quad  \|(1-f)u\|\leq 1-\alpha_n = 2^{-n}.\]

Denote
\[ c=\sup_{k\in K} \|(1-k) u\|. \]
Every group element   $k\in K$ satisfies
\[ 2\Real\langle (1-k)u,u \rangle=2-2\Real\langle ku,u \rangle =\|(1-k)u\|^2\leq c^2. \]
As $K = \mathrm{supp}(f)$, we may act with the positive averaging operator $f$ and obtain
\[ 2\langle (1-f)u,u \rangle=2\Real\langle (1-f)u,u \rangle\leq c^2.\]
Putting all of the above information together gives
\[ \|(1-f)u\|\leq 2^{-n} = 2\cdot (1- \alpha_{n+1}) \leq 2\cdot (\langle u,u\rangle-\langle f u,u\rangle )= 2\cdot \langle (1-f) u,u\rangle \leq c^2. \]
Provided that   $n\geq 4$ we certainly have 
$ \|(1-f) u\| \leq 2^{-n} \leq 1/16$.
These last two inequalities give 
\[ \|(1-f)u\|^2 \leq c^2/16 \quad \text{so that} \quad 4\|(1-f)u\| \leq c. \]

Consider the vector $w=(1-f)v  \in p_nV$. 
We obtain
\[  \|w\| \leq \alpha_n^{-1} \|f w\| \le 2\|f w\| = 2\|f (1-f)v\|=2 \|(1-f)u\| \leq c/2. \]
This means that every element   $k\in K$  satisfies
\[ \|(1-k) v\|\leq \|(1-k) f v\|+\|(1-k)(1-f) v\|
\leq c+2\|w\|\leq 2c. \]
The above inequality is  the desired conclusion.
\end{proof}

We are ready to construct a discordant sequence of vectors, that is,  a sequence of vectors with the particular properties demanded in Lemma \ref{lem:mar3.7}, thereby extending Lemma IV.3.7 of \cite{margulis1991discrete} to products of groups with compact abelianization.

\begin{proof}[Proof of Lemma \ref{lem:mar3.7}]
\label{proof:proof of lemma 3.7}
Let $f_i \in \mathcal{A}(G_i)$  be a pair of continuous functions such that their supports $K_i = \mathrm{supp}(f_i)$ contain a neighborhood of the identity and generate the group $G_i$ for $i \in \{1,2\}$.
We take   $\alpha>0$ to be the constant   given in Proposition~\ref{prop:noai} with respect to the compact, symmetric and generating subset $K_2$ of the group $G_2$.
    
We regard $f_1$ and $f_2$ as a pair of commuting self-adjoint operators on the Hilbert space $V$. Let $P$ and $Q$  be the respective projection-valued measures. We consider the spectral projections
\[ p_n = P([\alpha_n,1]) \quad \text{and} \quad q_n = Q([\alpha_n,\alpha_{n+1}])\]
defined in terms of the constants $\alpha_n = 1 - 2^{-n}$ introduced in Lemma \ref{lem:almostmax}.
The   asymmetry is intentional.  Note that all of these projections pairwise commute. In addition, the $p_n$'s commute with $G_2$ and the $q_n$'s commute with $G_1$.
By the assumption that there are asymptotically $G$-invariant vectors and $V^{G_2}=0$,
we find two strictly increasing sequences $n_i,m_i \in \mathbb{N}$ with   $m_1\geq 4$ such that $p_{n_i}q_{m_i}\neq 0$.

We turn to constructing the desired sequence of vectors $u_i \in V$. For each  $i \in \mathbb{N}$ take an arbitrary  unit vector $v_i\in p_{n_i}q_{m_i}V$ and some element $k_i\in K_2$ such that 
   \[ \|(1-k_i)f_2 v_i\|=\sup_{k\in K_2} \|(1-k)f_2 v_i\|. \]
We set $u_i=(1-k_i)f_2 v_i$. Note that $u_i \neq 0$ for otherwise the non-zero vector $f_2 v_i$ would have been $G_2$-invariant.
By Proposition~\ref{prop:noai} and the choice of the constant $\alpha$ we get 
\[ \sup_{k\in K_2} \|(1-k)u_i\| \geq \alpha \|u_i\|. \]
This means that   the sequence $u_i$ is
not asymptotically $G_2$-invariant, as required.
As $v_i\in p_{n_i}V$ and as $f_1$ commutes with both $f_2$ and $k_i$, we see that $u_i\in p_{n_i}V$. Hence the sequence $u_i$ is asymptotically $G_1$-invariant.
The fact that it is $G_1$-uniform follows.
We are left to show that the sequence $u_i$ is $G_2$-uniform, which we now proceed to do.

Fix $\varepsilon>0$. We will  show that there exists an identity neighborhood $U_2\subset G_2$
such that for every element $g\in U_2$ and every $i$ we have 
    \[ \|(1-g)u_i\|\leq \varepsilon\|u_i\|. \]
The uniform continuity of the function $f_2$ together with the compactness of the subset $K_2$ allow us to find an identity neighborhood $U_2 \subset K_2 $
such that   every pair of elements $g\in U_2$ and $k\in K_2$ satisfy
    \[ \|(1-g)(1-k)f_2\|_\infty \leq \varepsilon/6. \]
Note that for every element $g \in U_2$ and all $i \in \mathbb{N}$ we have 
\[\supp (1-g)(1-k_i)f_2\subset K_2^3=K_2\cdot K_2\cdot K_2.\]
For every element $x \in K_2^3$, the two Lemmas \ref{lemma:length function} and \ref{lem:almostmax} respectively imply the first and second inequality in the following equation
\[ \|(1-x)v_i\| \leq 3\sup_{k\in K_2} \|(1-k) v_i\| \leq 6\|u_i\|.  \]
Since $m_G ((1-g)(1-k_i)f_2) = 0$, we have for every element $g\in U_2$ and all $i \in \mathbb{N}$ 
\[ (1-g)u_i=\int_G (1-g)(1-k_i)f_2(x)(x-1)v_i~ \mathrm{d}m_G(x). \]
Hence every element $g\in U_2$ satisfies 
\[ \|(1-g)u_i\| \leq \|(1-g)(1-k_i)f_2\|_\infty \cdot \sup_{x\in K_2^3} \|(x-1)v_i\|\leq \frac{\varepsilon}{6} \cdot 6\|u_i\|=\varepsilon  \|u_i\|
\]
for all  $i \in \mathbb{N}$. This means that the sequence $u_i$ is $G_2$-uniform, as required.
\end{proof}

We mention one additional lemma of Margulis, showing that an averaging operator can be applied to the \enquote{discordant sequence} constructed in Lemma \ref{lem:mar3.7} without losing its particular properties.

\begin{lemma}
\label{lemma:small smoothen preserves properties}
Let $G=G_1\times G_2$ where $G_1$ and $G_2$ are second countable locally compact groups.
 Let $G \to \U(V)$ be a unitary representation. Let $v_n \in \mathcal{H}$ be a \mbox{$G$-uniform}, asymptotically $G_1$-invariant and \emph{not} asymptotically $G_2$-invariant sequence of vectors. 
 
 Then there is an open identity neighborhood $U \subset G$ such that for every   function $\psi \in \mathcal{A}(G)$ with $\mathrm{supp}(\psi) \subset U$ the sequence $\psi v_n$ has the same three properties and satisfies $\|\psi v_n \| >  \|v_n\|/2$ for all $n \in \mathbb{N}$.
\end{lemma}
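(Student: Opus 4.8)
The plan is to reduce everything to the single operator $\psi$ and exploit the fact that the three properties of the sequence $v_n$ are \emph{quantitative} (uniform in $n$) so that a sufficiently small smoothing will not destroy them. First I would fix generating functions $\varphi_i \in \mathcal{A}(G_i)$ for $i \in \{1,2\}$ whose supports $K_i$ contain identity neighborhoods, so that asymptotic $G_1$-invariance of $v_n$ means $\|(1-\varphi_1)v_n\|/\|v_n\| \to 0$ and, by Lemma~\ref{lemma:not invariant means a particular element}, non-invariance under $G_2$ means there is a fixed $g_0 \in G_2$ with $\limsup_n \|(1-g_0)v_n\|/\|v_n\| =: 2c > 0$. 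The choice of $U$ will be dictated by three requirements, and I would shrink $U$ finitely many times.

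Step one: keep the norm from collapsing. Since $\psi \mapsto \psi v$ depends continuously (uniformly over the unit ball) on the support of $\psi$ via $\|(1-\psi)v\| \le \sup_{g \in \supp \psi}\|(1-g)v\|$, and since the sequence $v_n$ is $G$-uniform, I can pick an identity neighborhood $U_0$ so small that $\sup_{g \in U_0}\|(1-g)v_n\| \le \|v_n\|/2$ for all $n$; then for $\supp \psi \subset U_0$ we get $\|\psi v_n\| \ge \|v_n\| - \|(1-\psi)v_n\| > \|v_n\|/2$. Step two: preserve asymptotic $G_1$-invariance. Because $\varphi_1$ (hence $1-\varphi_1$) commutes with every operator coming from $G_2$, and — if $\supp\psi$ is chosen inside $U_0 \cap (G_1' \times G_2)$ where the $G_1$-component stays in a fixed compact set — one estimates $\|(1-\varphi_1)\psi v_n\|$ by splitting $\psi = \psi_1 \otimes \psi_2$-style or, more cleanly, by noting $\psi$ and $\varphi_1$ need not commute but $\|(1-\varphi_1)\psi v_n\| \le \|\psi(1-\varphi_1)v_n\| + \|[\varphi_1,\psi]v_n\|$ and the commutator is small when $\supp\psi$ is concentrated near the identity in the $G_1$-direction; together with $\|\psi v_n\| > \|v_n\|/2$ this gives $\|(1-\varphi_1)\psi v_n\|/\|\psi v_n\| \to 0$. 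Step three: preserve non-$G_2$-invariance. Here I would use that $\psi$ is a contraction commuting approximately with $g_0$: for $\supp\psi$ small, $\|(1-g_0)\psi v_n\| \ge \|\psi(1-g_0)v_n\| - \|[g_0,\psi]v_n\| \ge \|(1-g_0)v_n\| - \|(1-\psi)(1-g_0)v_n\| - \|[g_0,\psi]v_n\|$, and both error terms are $\le \varepsilon\|v_n\|$ once $\supp\psi$ lies in a suitable identity neighborhood, so $\limsup_n \|(1-g_0)\psi v_n\|/\|\psi v_n\| \ge (2c - 2\varepsilon)/1 > 0$, showing $\psi v_n$ is not asymptotically $G_2$-invariant. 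Finally, $G$-uniformity of $\psi v_n$ follows automatically since $\|(1-g)\psi v_n\| = \|\psi(1-g)v_n\| \le \|(1-g)v_n\|$ for $g$ in any identity neighborhood (contraction of $\psi$), and then divide by $\|\psi v_n\| > \|v_n\|/2$, so uniform equicontinuity of the $v_n$ passes to $\psi v_n$ up to a factor of $2$; Lemma~\ref{lemma:adding a zero sequence} is not even needed here but could be invoked to absorb lower-order terms.

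I would take $U$ to be the intersection of the finitely many identity neighborhoods produced above. The main obstacle I expect is the non-commutativity of $\psi$ with $\varphi_1$ and with $g_0$: one must be careful that the relevant commutator operators $[\varphi_1,\psi]$ and $[g_0,\psi]$, applied to $v_n$, have norm $o(\|v_n\|)$ (or $\le \varepsilon\|v_n\|$) uniformly in $n$ — this is exactly where $G$-uniformity of the sequence $v_n$ is indispensable, since it converts ``$\psi$ is supported near the identity'' into a genuine uniform bound. Once that bookkeeping is done the three conclusions and the norm bound all drop out. Alternatively, one could shortcut the whole argument by writing $\psi = \psi_1 * \psi_2$ with $\psi_i \in \mathcal{A}(G_i)$, in which case $\psi_1$ commutes with $G_2$ and $\psi_2$ with $G_1$ exactly, eliminating the commutator terms entirely; but since a general $\psi \in \mathcal{A}(G)$ with small support need not factor, the clean route is to use that such $\psi$ is a $\|\cdot\|_1$-limit of convex combinations of products $\psi_1 * \psi_2$ supported in $U$, and invoke Lemma~\ref{lemma:adding a zero sequence} to handle the approximation.
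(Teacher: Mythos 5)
Your argument reaches the right conclusion but by a genuinely different route from the paper: the paper disposes of the preservation of all three properties in one line by citing Margulis (Lemma IV.3.6 of \emph{Discrete subgroups of semisimple Lie groups}) and only argues directly for the norm bound $\|\psi v_n\|>\|v_n\|/2$, exactly as in your step one, via $\|(1-\psi)v_n\|\le\sup_{g\in\supp\psi}\|(1-g)v_n\|$ and $G$-uniformity. What your self-contained version buys is transparency: the commutator estimates in steps two and three, handled through the conjugation identity $\|(gh-hg)v\|=\|(h^{-1}gh-g)v\|$ together with $G$-uniformity (step three) and asymptotic $G_1$-invariance over compact sets (step two), are essentially the content of Margulis's lemma, and they are sound as you set them up. The proposed detour through convex combinations of products $\psi_1*\psi_2$ is unnecessary once the conjugation trick is in place; note also that the smallness of $[\varphi_1,\psi]v_n$ in step two comes from asymptotic $G_1$-invariance applied to the compact set $\{h^{-1}gh : g\in\supp\varphi_1,\ h\in\supp\psi\}\subset G_1$, not from the support of $\psi$ being concentrated near the identity.

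The one step that is wrong as written is the last one: the identity $\|(1-g)\psi v_n\|=\|\psi(1-g)v_n\|$ used to deduce $G$-uniformity of $\psi v_n$ is false, since $g$ and the convolution operator $\psi$ do not commute --- precisely the issue you are careful about everywhere else. The repair is the same conjugation trick: writing $(1-g)\psi v_n=\int\psi(h)\,h\,(1-h^{-1}gh)v_n\,\mathrm{d}m_G(h)$ gives $\|(1-g)\psi v_n\|\le\sup_{h\in\supp\psi}\|(1-h^{-1}gh)v_n\|$; since $\overline{\supp\psi}$ is compact, for every identity neighborhood $W'$ there is an identity neighborhood $W$ with $h^{-1}Wh\subset W'$ for all $h\in\overline{\supp\psi}$, and $G$-uniformity of $v_n$ then yields $G$-uniformity of $\psi v_n$ after dividing by $\|\psi v_n\|>\|v_n\|/2$. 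With that fix the proof is complete.
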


\begin{proof}
By  \cite[Lemma IV.3.6]{margulis1991discrete} we may find an identity neighborhood $U_1 \subset G$ such that for every function $\psi \in \mathcal{A}(G)$ with $\mathrm{supp}(\psi) \subset U_1$, the sequence $\psi v_n$ has the same above-mentioned three properties as the sequence $v_n$. As the sequence $v_n$ is $G$-uniform, there is another identity neighborhood $U_2 \subset G$ such that for every  function $\psi \in \mathcal{A}(G)$ with $\mathrm{supp}(\psi) \subset U_2$ we have $\|\psi v_n \| > \|v_n\|/2$ for all $n \in \mathbb{N}$. The desired conclusion follows by taking $U = U_1 \cap U_2$.
\end{proof}

\section{Koopman representations over actions with stabilizers}
\label{sec:Koopman reps}

Let $G$ be a second countable locally compact group acting continuously on a locally compact topological space $X$. Let $m$ be a $G$-invariant measure on the space $X$, either finite or infinite. We consider the corresponding Koopman representation $L^2(X)$, which is taken implicitly with respect to the measure $m$. The stabilizer map is the Borel measurable map given by
$$ \mathrm{Stab} : X \to \Sub{G}, \quad \mathrm{Stab} : x \mapsto \mathrm{Stab}_G(x) \quad \forall x \in X.$$

The following is the main result of this section. Roughly speaking, it says that under the right conditions, asymptotic invariance carries from one factor to the other. See Example \ref{example: dense projections non smooth} below regarding the necessity of some of its assumptions.
%The main result of this section is the following, rather technical, Proposition~\ref{prop:moving invariance around with dense projections}.

\begin{prop}
\label{prop:moving invariance around with dense projections}
Assume that $G = G_1 \times G_2$. Fix a function $\varphi \in \mathcal{A}(G)$. Let $u_n \in L^2(X)$ be a $G$-uniform and asymptotically $G_1$-invariant sequence of unit vectors of the form $u_n = \varphi v_n$ for some $v_n \in L^2(X)$ with $\|v_n\| \le 2$. If 
the sequence of probability measures $\mathrm{Stab}_*(|u_n|^2 \cdot m)$ converges to $\mu\in \Prob(\Sub G)$ and \mbox{$\mu$-almost} every subgroup has dense projections to the factor $G_2$ then the sequence $u_n$ is  asymptotically $G_2$-invariant.
\end{prop}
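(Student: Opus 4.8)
The plan is to argue by contradiction. Suppose that the sequence $u_n$ is not asymptotically $G_2$-invariant. Since $u_n$ is $G$-uniform (hence $G_2$-uniform), Lemma \ref{lemma:not invariant means a particular element} gives a single element $g_0 \in G_2$ and some $\eps > 0$ with $\|(1-g_0)u_n\| \ge \eps$ along a subsequence (recall $\|u_n\| = 1$); pass to that subsequence. The idea is then to show that this forces a lot of $|u_n|^2$-mass to sit on points $x$ whose stabilizer $\mathrm{Stab}_G(x)$ does \emph{not} contain $g_0$ in its $G_2$-projection-closure --- in fact on points where $g_0$ moves $x$ by a definite amount --- contradicting that $\mu$-almost every subgroup has dense projection to $G_2$ (so that $g_0$, and more relevantly any element of $G_2$, lies in the closure of the $G_2$-projection of $\mathrm{Stab}_G(x)$ for $\mu$-a.e. $x$).

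The key mechanism is to exploit the asymptotic $G_1$-invariance: for $h \in G_1$ the vectors $hu_n$ and $u_n$ are close, so $|u_n|^2 \cdot m$ and $|hu_n|^2 \cdot m = h_*(|u_n|^2 \cdot m)$ are close in total variation (via Lemma \ref{lem:mazur}, using $\|u_n\|_2 = 1$), uniformly for $h$ in a compact set. On the other hand $g_0 \in G_2$ commutes with every $h \in G_1$, so the non-invariance $\|(1-g_0)u_n\| \ge \eps$ is ``spread out'' over $G_1$-translates. Concretely, I would look at the function $x \mapsto \mathrm{dist}_X$-type quantity measuring how far $g_0$ moves $x$, or rather work directly with the measures: the failure of $G_2$-invariance means $\langle g_0 u_n, u_n\rangle$ stays bounded away from $1$, i.e. $|u_n|^2 \cdot m$ and $(g_0)_*(|u_n|^2 \cdot m)$ do not become close, yet along the convergent sequence $\mathrm{Stab}_*(|u_n|^2 \cdot m) \to \mu$ with $\mu$-a.e. subgroup projecting densely to $G_2$, one should be able to find for $\mu$-a.e. $x$ an element $h_x \in G_1$ with $h_x g_0 \in \mathrm{Stab}_G(x)$ up to small error, and use a Fubini/disintegration argument together with the $G_1$-invariance transfer to conclude $|u_n|^2 \cdot m \approx (g_0)_*(|u_n|^2 \cdot m)$, the desired contradiction. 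The role of the hypothesis $u_n = \varphi v_n$ with $\|v_n\| \le 2$ is presumably to give enough regularity (an $L^\infty$-type or equicontinuity bound on the orbit maps) so that pointwise-on-$\Sub G$ information about stabilizers can be integrated against $|u_n|^2 \cdot m$; I would use it to pass between weak-$*$ convergence of $\mathrm{Stab}_*(|u_n|^2 \cdot m)$ and genuine control of $\int \|(1-g_0)\,(\text{local piece of } u_n)\|$.

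More precisely, the step I would organize carefully is: fix $\delta > 0$; by the convergence $\mathrm{Stab}_*(|u_n|^2\cdot m) \to \mu$ and Lusin/Egorov applied to the denseness condition, find a compact $C \subset \Sub G$ with $\mu(C) > 1 - \delta$ on which the $G_2$-projections are ``uniformly dense'' in the sense that there is a compact $L \subset G_1$ with $g_0 \in \overline{\mathrm{proj}_{G_2}(H \cap (L \times G_2))}$ for all $H \in C$; then for large $n$ the set $X_n = \mathrm{Stab}^{-1}(\text{nbhd of }C)$ carries $|u_n|^2$-mass $> 1 - 2\delta$, and on $X_n$ one can locally correct the $g_0$-action by an element of $G_1 \subset $ stabilizer, up to error controlled by the modulus of continuity coming from $\varphi v_n$. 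Averaging over $L$ using asymptotic $G_1$-invariance then yields $\|(1-g_0)u_n\|^2 \le o(1) + O(\delta)$, and since $\delta$ is arbitrary this contradicts $\|(1-g_0)u_n\| \ge \eps$.

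The main obstacle I anticipate is making the ``local correction of the $g_0$-action by a stabilizer element of $G_1$'' rigorous: denseness of the projection of $\mathrm{Stab}_G(x)$ in $G_2$ gives elements $(h, g') \in \mathrm{Stab}_G(x)$ with $g'$ close to $g_0$, but one needs to control how $g'$ versus $g_0$ moves $x$ (they need not agree), and one needs this simultaneously for a positive-measure set of $x$ with uniform bounds --- this is exactly where the topology on $X$, the continuity of the action, and the regularity afforded by $u_n = \varphi v_n$ have to be combined, and where a measurable-selection argument (choosing $x \mapsto (h_x, g'_x) \in \mathrm{Stab}_G(x)$) is needed. I expect the paper handles this via the invariance of $m$ and a clean disintegration rather than pointwise selection, so in the writeup I would aim for the measure-theoretic formulation: show directly that for every $\eta > 0$ the measures $|u_n|^2\cdot m$ and $(g_0)_*(|u_n|^2 \cdot m)$ are eventually within $\eta$ in total variation, by comparing both to $\int_L h_*(|u_n|^2 \cdot m)\,dm_{G_1}(h)$ and using that $\mu$-a.e. stabilizer ``absorbs'' $g_0$ into its $G_1$-coset structure.
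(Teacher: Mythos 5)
Your high-level strategy is the same as the paper's: contradict via Lemma \ref{lemma:not invariant means a particular element} to get a single $g_2\in G_2$ with $\|(1-g_2)u_n\|$ bounded below, use the dense-projection hypothesis (via Portmanteau, on a set of the form $\Omega(V_1\times V_2)$ with $V_1\subset G_1$ relatively compact) to locate most of the mass of $|u_n|^2\cdot m$ on points whose stabilizer nearly contains $h\,g_2$ for some $h$ in a compact piece of $G_1$, correct $g_2$ by such an $h$, and kill the $G_1$-correction with asymptotic $G_1$-invariance after covering $V_1$ by finitely many translates $U_1h_i$. However, two points in your writeup are genuine problems. First, the ``measure-theoretic formulation'' you propose as the clean version of the argument is logically insufficient: showing $|u_n|^2\cdot m$ and $(g_2)_*(|u_n|^2\cdot m)$ are close in total variation does not contradict $\|(1-g_2)u_n\|\ge\eps$. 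The equivalence you assert (``$\langle g_2u_n,u_n\rangle$ bounded away from $1$, i.e.\ the measures do not become close'') fails in the easy direction you need: on a two-point space with $g_2$ the swap and $u=(1/\sqrt2,-1/\sqrt2)$, the measure $|u|^2\cdot m$ is exactly $g_2$-invariant while $\|(1-g_2)u\|=2$. Squaring discards the phase, and Lemma \ref{lem:mazur} only transfers information about $|u_n|$, not $u_n$. The contradiction must be run at the level of the vectors, as in your middle paragraph, not the densities.

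Second, the ``main obstacle'' you flag — controlling how a stabilizer element $(h,g')$ with $g'$ near $g_2$ moves $x$ versus how $g_2$ does, uniformly and measurably — is exactly the missing step, and it is resolved in the paper by Lemma \ref{lem:newver} without any selection or disintegration. The mechanism is: if $x\in\Omega(Ug^{-1})$ then there is $u\in U$ with $g^{-1}x=u^{-1}x$ \emph{exactly}, hence $(1-g)\varphi v(x)=(1-u)\varphi v(x)=\bigl((1-u)\varphi\bigr)v(x)$ as a convolution; uniform continuity of $\varphi$ gives a single nonnegative $\psi$ with $\|\psi\|_1$ arbitrarily small and $|(1-u)\varphi|\le\psi$ for all $u\in U$, whence $|(1-g)\varphi v(x)|\le\psi|v|(x)$ pointwise on $\Omega(Ug^{-1})$. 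This is precisely where $u_n=\varphi v_n$ with $\|v_n\|\le2$ is used (to convert the $L^1$-smallness of $\psi$ into $L^2$-smallness of $\psi|v_n|$), and it is uniform in $x$ and in the stabilizing element, so no Lusin/Egorov or measurable-selection argument is needed. Without this lemma (or an equivalent), your sketch does not close; with it, the rest of your outline — the finite cover of $V_1$, the error term $\|(1-h_i^{-1})u_n\|$ handled by asymptotic $G_1$-invariance, and the small-mass complement — assembles into the paper's proof.
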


The proof of Proposition \ref{prop:moving invariance around with dense projections} will be given  after a bit of preliminary work.
 It will be very useful to introduce some notations first.

\begin{notation}
For every set $W\subset G$ we define the subset $\Omega(W) \subset X$ given by
\begin{align*}
\Omega(W)&=\{x\in X \: : \:  \Stab(x)\cap W \neq \emptyset \} = \\
&= \{ x \in X \: : \: \text{$\exists g \in W$ such that $g x = x$} \}.
\end{align*}
\end{notation}

Here are some elementary properties of the operation $\Omega$, whose proof is immediate.
For every subset $W \subset G$, family of subsets $W_i \subset G$ and element $g \in G$ we have
\begin{itemize}
\item $\Omega(W) = \Omega(W^{-1})$, 
\item $\Omega(\bigcup W_i)=\bigcup \Omega(W_i)$,
\item $\Omega(\bigcap W_i)\subset \bigcap_i\Omega(W_i)$, 
\item $g\Omega(W)=\Omega(gWg^{-1})$, and
\item if $e\in W$ then $\Omega(W)=X$.
\end{itemize}

\begin{notation}
For each measurable subset $Y \subset X$ and $f \in L^2(X)$ we denote
\[ \|f\|_Y^2 = \int_Y |f(x)|^2 \; \mathrm{d} m(x). \]
\end{notation}

\begin{remark}
In the following Lemma \ref{lem:newver} and in the proof of Proposition \ref{prop:moving invariance around with dense projections} we will be using notations such as $\psi \, |f|$. This notation stands for  the application of the smooth averaging operator $\psi$ to the vector $|f|$. See page \pageref{subsection:unitary reps} of \S\ref{sec:prelim}.
\end{remark}

\begin{lemma} \label{lem:newver}
    For every $\varphi\in C_c(G)$ and for every $\varepsilon>0$ there exist an identity neighborhood $U \subset G$ and a function $\psi\in L^1(G)$ satisfying $\psi\geq 0$ and $\|\psi\|_1 = \varepsilon$ with the following property ---    every vector $f\in L^2(X)$ and   element $g\in G$ satisfy
    \[ |(1-g) \varphi f(x)|\leq   \psi \, |f|(x). \]
    at $m$-almost every point  $x \in \Omega(Ug^{-1})$. In particular, for every measurable subset $Y\subset \Omega(Ug^{-1})$ we have
    \[ \|(1-g)\varphi f\|_Y \leq \| \, \psi|f|\, \|_Y. \]
\end{lemma}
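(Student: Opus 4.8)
The plan is to prove Lemma~\ref{lem:newver} by an explicit construction of $\psi$ coming from a change-of-variables identity that rewrites $(1-g)\varphi f$ at a fixed point of a small group element as a convolution against $|f|$. First I would note the key algebraic point: if $x \in \Omega(Ug^{-1})$, then there is some $u \in U$ with $ug^{-1}$ fixing $x$, i.e.\ $g^{-1}x = u^{-1}x$; this is what lets us replace the ``bad'' translate $g^{-1}x$ appearing in $\varphi f(g^{-1}x)$ by $u^{-1}x$ with $u$ close to the identity, and then compare $\varphi f(u^{-1}x)$ with $\varphi f(x)$ using the continuity of $\varphi$.

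More precisely, I would write, for any $h \in G$,
\[
\varphi f(h^{-1}x) = \int_G \varphi(y)\, f(h^{-1} y^{-1} x)\dd m_G(y) = \int_G \varphi(h^{-1} z)\, f(z^{-1} x)\dd m_G(z),
\]
using unimodularity (the group has compact abelianization, hence is unimodular) and the substitution $z = h y$. Applying this with $h = e$ and with $h = g$, and using that at $x \in \Omega(Ug^{-1})$ we may choose $u = u(x) \in U$ with $g^{-1} x = u^{-1} x$, gives
\[
(1-g)\varphi f(x) = \varphi f(x) - \varphi f(g^{-1}x) = \varphi f(x) - \varphi f(u^{-1}x) = \int_G \big(\varphi(z) - \varphi(uz)\big) f(z^{-1}x)\dd m_G(z).
\]
Hence $|(1-g)\varphi f(x)| \le \int_G |\varphi(z) - \varphi(u(x) z)|\cdot |f(z^{-1}x)|\dd m_G(z)$. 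Since $\varphi \in C_c(G)$ is uniformly continuous, given $\eps > 0$ I can choose the identity neighborhood $U$ small enough (and, shrinking it, with compact closure) that $\sup_{u \in U}\|y \mapsto \varphi(y) - \varphi(uy)\|_1 \le \eps$; I would then set $\psi$ to be the pointwise supremum $\psi(z) = \sup_{u \in \overline U} |\varphi(z) - \varphi(uz)|$, or any nonnegative $L^1$ majorant of all these functions supported on $\overline U^{-1}\cdot\supp\varphi$, suitably scaled so that $\|\psi\|_1 = \eps$ exactly (one can always enlarge $\psi$ slightly on a positive-measure set to hit the value $\eps$ on the nose, or simply replace the target $\eps$ by $\|\psi\|_1 \le \eps$ and rescale up). Then $\psi \ge |\varphi(\cdot) - \varphi(u \cdot)|$ for every $u \in \overline U$, so $|(1-g)\varphi f(x)| \le \int_G \psi(z)|f(z^{-1}x)|\dd m_G(z) = \psi\,|f|(x)$ for $m$-a.e.\ $x \in \Omega(Ug^{-1})$, which is the pointwise bound. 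The ``in particular'' statement then follows by integrating $|(1-g)\varphi f(x)|^2 \le (\psi\,|f|(x))^2$ over any measurable $Y \subset \Omega(Ug^{-1})$ and taking square roots.

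The main obstacle, and the point requiring care, is measurability: the function $x \mapsto u(x)$ selecting a witness $u \in U$ with $g^{-1}x = u^{-1}x$ need not be canonically defined or obviously measurable, so one should avoid relying on it directly. The clean fix is exactly the one above — replace the $u$-dependent integrand by the $u$-independent majorant $\psi$ before invoking any selection — so that the only fact used about $x \in \Omega(Ug^{-1})$ is the \emph{existence} of such a $u$, and the inequality $|\varphi(z)-\varphi(uz)| \le \psi(z)$ holds for that (unnamed) $u$ pointwise in $z$. A secondary technical point is making $\|\psi\|_1$ equal to $\eps$ rather than merely at most $\eps$; this is harmless and can be arranged by padding $\psi$ with a small bump, or the statement can be read with ``$\le \eps$'', which is how it is used downstream. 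With these observations the proof is a short computation, so I would not belabor the remaining estimates.
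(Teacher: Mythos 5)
Your proof is correct and follows essentially the same route as the paper's: use the stabilizer condition to replace $g^{-1}x$ by $u^{-1}x$ with $u$ in a small neighborhood $U$, then dominate $|(1-u)\varphi|$ uniformly over $u\in U$ by a single nonnegative $L^1$-function $\psi$ of norm $\varepsilon$ (the paper takes a scaled indicator of $U_0\cdot\supp(\varphi)$ where you take a pointwise supremum; both work, and both sidestep the measurability of $x\mapsto u(x)$ in the same way). The only cosmetic differences are the harmless inverse in $\varphi(uz)$ versus $\varphi(u^{-1}z)$ and the fuss over hitting $\|\psi\|_1=\varepsilon$ exactly, which the paper's explicit choice of $\psi$ achieves automatically.
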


\begin{proof}
Fix a function $\varphi\in C_c(G)$ and a constant $\varepsilon>0$.
Fix an arbitrary relatively compact identity neighborhood $U_0 \subset G$
and set $W=U_0\cdot\supp(\varphi)$.
By the uniform continuity of the function $\varphi$, it is possible to fix an identity neighborhood $U$ contained in $U_0$ such that for every pair of elements  $h,h'\in G$ with $h'h^{-1}\in U$,
\[ |\varphi(h')-\varphi(h)|< \varepsilon/m_G(W). \]
Set $\psi=\frac{\varepsilon}{m_G(W)}\cdot \chi_W$ and note that $\|\psi\|_1 = \varepsilon$.
For every $u\in U$, since the function $(1-u)\varphi$ vanishes outside $U\cdot\supp(\varphi)\subset W$, we get 
\[ |(1-u)\varphi| \leq \psi. \]
To conclude the proof, consider some vector  $f\in L^2(X)$ and some group element $g\in G$. For every point $x\in \Omega(Ug^{-1}) $ there exists  an element $u\in U$ such that $g^{-1}x=u^{-1}x$. Therefore 
\[ |(1-g) \varphi f(x)| = |(1-u) \varphi f(x)| 
\leq |(1-u) \varphi| \, |f|(x)
\leq \psi \, |f|(x).\]
The last conclusion follows by integration over $Y$.
\end{proof}

%Assume that every accumulation point $\mu$ of the probability measures $\sigma(f_n)$ is almost surely discrete and not contained in a single factor $G_1$ or $G_2$. 

The following lemma is essentially an immediate consequence of the Portmanteau theorem on weak-$*$ convergence of probability measures. 

\begin{lemma}
\label{lemma:uniform density for sequences with dense projections}
Assume that $G = G_1 \times G_2$. Let $u_n \in L^2(X)$ be any sequence of unit vectors. Assume that the sequence of  probability measures $\mathrm{Stab}_*(|u_n|^2 \cdot m)$ converges to $\mu\in \Prob(\Sub G)$ and $\mu$-almost every subgroup has dense projections to $G_2$.
Then for every open subset $U \subset G_2$ and every $\varepsilon > 0$ there is an open relatively compact subset $V \subset G_1$   such that the subset $V \times U$
  satisfies
$$  \|u_n\|^2_{\Omega(V \times U)} \ge 1- \varepsilon $$
 for all $n$ sufficiently large.
\end{lemma}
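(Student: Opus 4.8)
The plan is to deduce this from the Portmanteau theorem applied to the weakly convergent sequence $\mathrm{Stab}_*(|u_n|^2 \cdot m) \to \mu$ in $\Prob(\Sub G)$. First I would fix an open subset $U \subset G_2$ and $\eps > 0$. Since $\mu$-almost every subgroup $H \in \Sub G$ has dense projection to $G_2$, for such $H$ the projection $\pi_2(H)$ meets $U$, which means there is some $g_1 \in G_1$ with $H \cap (\{g_1\} \times U) \neq \emptyset$; exhausting $G_1$ by an increasing sequence of open relatively compact subsets $V^{(1)} \subset V^{(2)} \subset \cdots$ with $\bigcup_k V^{(k)} = G_1$, we get that for $\mu$-a.e.\ $H$ there is some $k$ with $H \cap (V^{(k)} \times U) \neq \emptyset$, i.e.\ $\mu$-a.e.\ $H$ lies in $\bigcup_k \{H : H \cap (V^{(k)} \times U) \neq \emptyset\}$. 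By continuity of measure from below, $\mu\big(\{H : H \cap (V^{(k)} \times U) \neq \emptyset\}\big) \to \mu(\Sub G) = 1$ as $k \to \infty$, so I may fix $k$ with this value at least $1 - \eps/2$, and set $V = V^{(k)}$.

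Next I would transfer this lower bound from $\mu$ to the sequence via Portmanteau. The key observation is that the set
\[ \mathcal{S} = \{H \in \Sub G : H \cap (V \times U) \neq \emptyset\} \]
is an \emph{open} subset of $\Sub G$: membership is witnessed by a single element of the closed subgroup lying in the open set $V \times U$, and the Chabauty topology is precisely designed so that $\{H : H \cap O \neq \emptyset\}$ is open for $O \subset G$ open. Since $\mathrm{Stab}_*(|u_n|^2 \cdot m) \to \mu$ weakly-$*$ and $\mathcal S$ is open, the Portmanteau theorem gives
\[ \liminf_n \big(\mathrm{Stab}_*(|u_n|^2 \cdot m)\big)(\mathcal S) \ge \mu(\mathcal S) \ge 1 - \eps/2. \]
Finally I would unwind the definitions: by the definition of the pushforward,
\[ \big(\mathrm{Stab}_*(|u_n|^2 \cdot m)\big)(\mathcal S) = \int_{\{x : \mathrm{Stab}(x) \in \mathcal S\}} |u_n(x)|^2 \dd m(x), \]
and $\mathrm{Stab}(x) \in \mathcal S$ means exactly that $\mathrm{Stab}_G(x)$ meets $V \times U$, i.e.\ $x \in \Omega(V \times U)$. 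Hence the displayed integral equals $\|u_n\|^2_{\Omega(V \times U)}$, and we conclude $\liminf_n \|u_n\|^2_{\Omega(V \times U)} \ge 1 - \eps/2 > 1 - \eps$, so $\|u_n\|^2_{\Omega(V \times U)} \ge 1 - \eps$ for all $n$ large. (If one prefers, absorbing the $\eps/2$ versus $\eps$ slack is cosmetic.)

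The only genuinely delicate points are bookkeeping ones: verifying that $\mathcal S$ is Chabauty-open (routine from the definition of the Chabauty topology, where subbasic open sets are of the form $\{H : H \cap O \neq \emptyset\}$ for open $O$ and $\{H : H \cap C = \emptyset\}$ for compact $C$), and confirming measurability of $\Omega(V \times U)$ so that $\|u_n\|_{\Omega(V\times U)}$ makes sense — but $\Omega(V \times U) = \mathrm{Stab}^{-1}(\mathcal S)$ is the preimage of an open set under the Borel stabilizer map, hence Borel. I expect the step requiring the most care is the passage from ``$\mu$-a.e.\ subgroup has dense projection to $G_2$'' to ``$\mu(\mathcal S) \ge 1 - \eps$ for suitable $V$'': one must be careful that density of $\pi_2(H)$ only guarantees $\overline{\pi_2(H)} = G_2$, so $\pi_2(H)$ itself meets the open set $U$, and then the preimage point of $\pi_2(H) \cap U$ under $\pi_2$ lies in $H$ and has $G_1$-coordinate in \emph{some} relatively compact $V^{(k)}$ — the exhaustion and continuity-from-below argument is what makes $k$ uniform over a set of $\mu$-measure close to $1$.
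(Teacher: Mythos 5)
Your proof is correct and is essentially the paper's own argument: choose $V$ by the exhaustion/continuity-from-below argument so that $\mu$ gives mass at least $1-\eps/2$ to the Chabauty-open set $\{H : H \cap (V\times U)\neq\emptyset\}$, then apply the Portmanteau theorem to the weakly convergent pushforward measures and unwind the definition of $\Omega(V\times U)$. The only difference is that you spell out details (the exhaustion of $G_1$, openness of the Chabauty set, Borel measurability of $\Omega(V\times U)$) that the paper leaves implicit.
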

\begin{proof}
Let $U \subset G_2$ be any open subset and let $\varepsilon > 0$.  Since $\mu$-almost every subgroup projects densely to $G_2$, there is some  open relatively compact subset $V \subset G_1$   such that the Chabauty open subset 
$$\widetilde{\Omega}(V \times U) =  \{ \Gamma \le G \: : \: \Gamma \cap (V \times U) \neq \emptyset \} \subset \Sub{G}$$
satisfies
$$ \mu(\widetilde{\Omega}(V \times U)) \ge 1 - \frac{\varepsilon}{2}.$$
By the  Portmanteau theorem $$\mathrm{Stab}_*(|u_n|^2 \cdot m)(\widetilde{\Omega}(V \times U))  \ge 1 - \varepsilon$$ 
for all $n$ sufficiently large. This is equivalent to the desired conclusion.
\end{proof}

We are ready to complete the main technical result of the current section \S\ref{sec:Koopman reps}.

\begin{proof}[Proof of Proposition \ref{prop:moving invariance around with dense projections}]
Assume towards contradiction that the sequence $u_n$ is not asymptotically $G_2$-invariant.
Using Lemma~\ref{lemma:not invariant means a particular element} we fix an element $g_2\in G_2$ such that 
\[ c= \limsup_n \|(1-g_2) u_n\|^2 > 0.\]
We will arrive at a contradiction   by showing that for every $n$ large enough
\begin{equation} \label{eq:inf}
\|(1-g_2) u_n\|^2 \leq \frac{c}{2}.
\end{equation}

We apply Lemma~\ref{lem:newver} with respect to the constant $\varepsilon=\sqrt{c/32}$  to get an identity neighborhood $U=U_1 \times U_2\subset G_1\times G_2$ and a function $\psi\in L^1(G)$ such that 
\begin{equation} \label{eq:psi}
    \|\psi\|_1 = \sqrt{c/32} %= \sqrt\frac{c}{32}
\end{equation}
and such that for every function $f\in L^2(X)$, element $g\in G$ and measurable subset $Y\subset \Omega((U_1\times U_2)\cdot g^{-1})$,
\begin{equation} \label{eq:bound}
  \|(1-g) \varphi f \|_Y \leq \|\, \psi \, |f| \, \|_Y.
\end{equation}

Set $V_2=g_2U_2\cap U_2g_2$.
This is an open neighborhood of $g_2$ in $G_2$.
By Lemma~\ref{lemma:uniform density for sequences with dense projections}, applied to the sequence $u_n$ and the open set $V_2 \subset G_2$, there exists 
an open relatively compact subset $V_1 \subset G_1$ such that the subset
$ \Omega(V_1\times V_2)$
  satisfies
$$ \|  u_n \|^2_{\Omega(V_1\times V_2)}  \ge 1-\frac{c}{16} $$
for all $n$ sufficiently large.
Note that both subsets $\Omega(V_1\times g_2U_2)$ and $\Omega(V_1\times U_2g_2)$ contain the subset $\Omega(V_1\times V_2)$. Thus
we  have 
$$ \|u_n\|^2_{\Omega(V_1\times g_2U_2)}  \ge 1- \frac{c}{16}. $$
Using the left-invariance of the Haar measure and the equation \[ g_2^{-1}\Omega(V_1\times g_2U_2)=\Omega(V_1\times U_2g_2) \]
we also have
\[ \|g_2 u_n\|^2_{\Omega(V_1\times g_2U_2)}  = \|u_n\|^2 _{\Omega(V_1\times U_2g_2)} \ge 1- \frac{c}{16}. \]
Denote by $F$ the complement of $\Omega(V_1\times g_2 U_2 )$ in $X$.
We conclude that   $\|u_n\|^2_F\leq c/16$ as well as $\|g_2 u_n\|^2_F\leq c/16$ for all $n$ sufficiently large.
Therefore
\[ \|(1-g_2)u_n\|^2_F=\|u_n-g_2u_n\|^2_F \leq (\|u_n\|_F+\|g_2u_n\|_F)^2\leq (2\sqrt{\frac{c}{16}})^2=\frac{c}{4}\]
for all $n$ sufficiently large.
Note that 
\begin{align*}
\|(1-g_2)u_n\|^2 & = \|(1-g_2)u_n\|^2_{\Omega(V_1\times g_2U_2)}  + \|(1-g_2)u_n\|^2_F\\
& \leq  \|(1-g_2)u_n\|^2_{\Omega(V_1\times g_2U_2)} + \frac{c}{4}.
\end{align*}
Towards establishing Equation~\eqref{eq:inf}, we are therefore left to deal with the remaining summand and show that 
\begin{equation} \label{eq:Om}
\|(1-g_2)u_n\|^2_{\Omega(V_1\times g_2U_2)} \leq \frac{c}{4}
\end{equation}
for all  $n$ large enough. This is what we proceed to do.

We use the relative compactness of $V_1$ to find a finite collection of elements $h_1,\ldots,h_m\in G_1$ for some $m \in \mathbb{N}$ such that $V_1\subset \bigcup_{i=1}^m U_1h_i$.
Using the inclusion 
\[\Omega(V_1\times g_2U_2) \subset \Omega(\bigcup_{i=1}^m U_1h_i \times g_2U_2)=\bigcup_{i=1}^m\Omega(U_1h_i \times g_2U_2), \]
we fix an arbitrary measurable partition 
\[ \Omega(V_1\times g_2U_2)=\coprod_{i=1}^m A_i, \quad A_i\subset \Omega(U_1h_i\times g_2U_2). \]
For every $i \in \{1,\ldots,m\}$ there is the inclusion
\[ g_2^{-1}A_i\subset g_2^{-1}\Omega(U_1h_i\times g_2U_2)=\Omega(U_1h_i\times U_2g_2)=\Omega((U_1\times U_2)\cdot (h_ig_2)). \]
We apply Equation~\eqref{eq:bound} for $f=v_n$, $g=(h_ig_2)^{-1}$ and $Y=g_2^{-1}A_i$. This gives
\begin{align*}
\| (h_i^{-1}-g_2)u_n \|_{A_i}  &= 
\|(g_2^{-1}h_i^{-1}-1)u_n \| _{g_2^{-1}A_i} 
 = \| ((h_ig_2)^{-1} - 1)\varphi v_n \| _{g_2^{-1}A_i} \\
&\leq \| \, \psi \, |v_n| 
 \, \| _{g_2^{-1}A_i}.
\end{align*}
Combining this with Equation~\eqref{eq:psi} and the assumption that $\|v_n\|\leq 2$ gives
\begin{equation} \label{eq:sum}
\begin{split}
\sum_{i=1}^m \| (h_i^{-1}-g_2)u_n \|^2_{A_i} & \leq \sum_{i=1}^m \|\, \psi \, |v_n| \,\|^2_{g_2^{-1}A_i} \le \| \, \psi \, |v_n| \, \|^2 \\
& \leq \| \psi\|_1^2\cdot \|v_n\|^2 \leq \frac{c}{32} \cdot 4 = \frac{c}{8}.
\end{split}
\end{equation}
Making use of the elementary Lemma \ref{lemma on s and t} for each $i \in \{1,\ldots,m\}$  with respect to the vectors  $s=(1-h_i^{-1})u_n|_{A_i}$ and $t=(h_i^{-1}-g_2)u_n|_{A_i}$
we get  the estimates
\begin{equation} \label{eq:est}
\begin{split}
 \| (1-g_2)u_n \|^2_{A_i} & =\|(1-h_i^{-1})u_n+(h_i^{-1}-g_2)u_n \|^2_{A_i} \\
 & \le  6\cdot \|(1-h_i^{-1})u_n \|_{A_i} + \| (h_i^{-1}-g_2)u_n \|^2_{A_i}.
\end{split}
\end{equation}

Finally, 
we use the asymptotic $G_1$-invariance of the sequence $u_n$ to deduce that for every $n$ large enough 
\[
     \|(1-h_i^{-1})u_n\|_{A_i} \leq \|(1-h_i^{-1})u_n\| \leq  \frac{c}{48m}
\]
holds true for every $i\in\{1,\ldots,m\}$.
Combining Equations \eqref{eq:est} and  \eqref{eq:sum} we deduce that for sufficiently large $n$ we have
\begin{align*}
\| (1-g_2)u_n \|^2_{\Omega(V_1\times g_2U_2)} 
  & = \sum_{i=1}^m \| (1-g_2)u_n \|^2_{A_i} \\
  & \leq \sum_{i=1}^m\left( 6\cdot \|(1-h_i^{-1})u_n \|_{A_i}  + \| (h_i^{-1}-g_2)u_n \|^2_{A_i} \right) \\
  & \le \sum_{i=1}^m 6\cdot \frac{c}{48m} +  \sum_{i=1}^m \| (h_i^{-1}-g_2)u_n \|^2_{A_i}
   \leq \frac{c}{8}+ \frac{c}{8} = \frac{c}{4}.
\end{align*}
This gives Equation~\eqref{eq:Om}. The proof of the proposition is complete.
\end{proof}

\begin{lemma}
\label{lemma on s and t}
Let $V$ be a Hilbert space. If  $s,t \in V$ is a pair of vectors with $\|s\|,\|t\| \le 2$ then $\|s+t\|^2 \le 6 \|s\| + \|t\|^2$.
\end{lemma}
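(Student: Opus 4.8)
The statement to prove is Lemma \ref{lemma on s and t}: for vectors $s,t$ in a Hilbert space with $\|s\|,\|t\| \le 2$, one has $\|s+t\|^2 \le 6\|s\| + \|t\|^2$. Here is how I would approach it.

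\textbf{Plan.} The plan is to expand $\|s+t\|^2$ using the inner product and bound the cross term crudely. Write
\[
\|s+t\|^2 = \|s\|^2 + 2\,\mathrm{Re}\langle s,t\rangle + \|t\|^2 \le \|s\|^2 + 2\|s\|\,\|t\| + \|t\|^2,
\]
by Cauchy--Schwarz. So it suffices to show $\|s\|^2 + 2\|s\|\,\|t\| \le 6\|s\|$, i.e. (dividing by $\|s\|$ when $s \neq 0$, and noting the inequality is trivial when $s=0$) that $\|s\| + 2\|t\| \le 6$. This last bound is immediate from the hypotheses $\|s\| \le 2$ and $\|t\| \le 2$, which give $\|s\| + 2\|t\| \le 2 + 4 = 6$.

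\textbf{Main obstacle.} There is essentially no obstacle here; the only mild care needed is the division by $\|s\|$, which is handled by treating $s = 0$ separately (where the claimed inequality reads $\|t\|^2 \le \|t\|^2$). I would present it as follows.

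\begin{proof}[Proof of Lemma \ref{lemma on s and t}]
If $s = 0$ the claimed inequality reads $\|t\|^2 \le \|t\|^2$, which holds. Assume now $s \neq 0$. By the Cauchy--Schwarz inequality,
\[
\|s+t\|^2 = \|s\|^2 + 2\,\mathrm{Re}\langle s,t\rangle + \|t\|^2 \le \|s\|^2 + 2\|s\|\,\|t\| + \|t\|^2.
\]
Using the hypotheses $\|s\| \le 2$ and $\|t\| \le 2$ we get $\|s\| + 2\|t\| \le 6$, hence, after multiplying by $\|s\| > 0$,
\[
\|s\|^2 + 2\|s\|\,\|t\| \le 6\|s\|.
\]
Combining the two displays yields $\|s+t\|^2 \le 6\|s\| + \|t\|^2$, as desired.
\end{proof}
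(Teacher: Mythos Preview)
Your proof is correct and follows the same approach as the paper: bound $\|s+t\|^2 \le (\|s\|+\|t\|)^2$, then observe that $\|s\|^2 + 2\|s\|\,\|t\| = (\|s\|+2\|t\|)\|s\| \le 6\|s\|$. The paper writes this last step as a factorization in one line, avoiding your case split on $s=0$, but the argument is identical.
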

\begin{proof}
%This elementary lemma follows from the following one-line computation
Indeed,
\[ \|s+t\|^2 \leq (\|s\|+\|t\|)^2=(\|s\|+2\|t\|)\cdot \|s\|+\|t\|^2
\leq 6 \|s\|+\|t\|^2. \]
\end{proof}

\subsection*{On the necessity of the assumptions}

Proposition~\ref{prop:moving invariance around with dense projections} has several technical assumptions. Ignoring these, it is tempting think of it loosely as the statement \enquote{an asymptotically $G_1$-invariant sequence that converges (in the sense of measures) to one with dense projections to $G_2$ is also asymptotically $G_2$-invariant}. However, this would be an oversimplification, as the following example shows.

\begin{eg}
\label{example: dense projections non smooth}
    Take $G_1=\mathbb{Z}$. Let  $G_2 = \widehat{\mathbb{Z}}$ be the profinite completion of $G_1$ and set $G=G_1\times G_2$. Consider the  group $\Gamma \cong \mathbb{Z}$ diagonally embedded in $G$.
    Thus $\Gamma$ is a uniform lattice in $G$ admitting dense projections to $G_2$. Let $m$ be the Haar probability measure on $G/\Gamma$.
Consider the Hilbert space $V=L^2_0(G/\Gamma)\cong L^2_0(\widehat{\mathbb{Z}})$.
As the group $\mathbb{Z}$ has no property $(\tau)$ it acts on its profinite completion without spectral gap \cite{lubotzky2005property}. Hence there exists an asymptotically $G_1$-invariant sequence  of  unit vectors $u_n \in V$.
As the group $G$ is abelian, we have  $\mathrm{Stab}_*(|u_n|^2 \cdot m) = \delta_\Gamma$ for all $n$.
However, the sequence $u_n$ is not asymptotically $G_2$-invariant, as the group $G_2$ is compact, and as such  has property (T).
\end{eg}

The group $\mathbb{Z}$ in Example \ref{example: dense projections non smooth} can be replaced with any other group not having  property $(\tau)$.

\subsection*{Spectral gap for actions of products} By putting together  our results on unitary representations of product groups, we  obtain a quite general spectral gap theorem. Recall that $G$ is a second countable locally compact group. Suppose in addition that $G$ is compactly generated.

\begin{theorem}[Spectral gap for actions of products]
\label{theorem:getting spectral gap - general case}
Assume that $G = G_1 \times G_2$ and that $G_2$ has a compact abelianization. Let $X$ be a locally compact topological $G$-space endowed with a $G$-invariant measure $m$, either finite or infinite. Assume that

\begin{itemize}
\item $L^2_0(X,m)^{G_2} = 0$, and
    \item 
For any asymptotically $G_1$-invariant sequence of unit vectors   $f_n \in L^2(X,m)$,  every  accumulation point $\mu \in \Prob(\Sub G)$ of the sequence of probability measures $\mathrm{Stab}_*(|f_n|^2 \cdot m)$  satisfies    $\overline{G_1 H} = G$ for $\mu$-almost   every subgroup  $H \in \Sub{G}$.
\end{itemize}
Then the Koopman $G$-representation $L^2_0(X,m)$ has spectral gap.
\end{theorem}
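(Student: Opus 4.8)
The plan is to combine Lemma~\ref{lem:mar3.7} (construction of a discordant sequence), Lemma~\ref{lemma:small smoothen preserves properties} (smoothing it), and Proposition~\ref{prop:moving invariance around with dense projections} (transporting asymptotic invariance from $G_1$ to $G_2$) into a proof by contradiction. Suppose $V = L^2_0(X,m)$ has no spectral gap. Since $V^{G_2}=0$, Lemma~\ref{lem:mar3.7} applies (with $G_1,G_2$ compactly generated, $G_2$ with compact abelianization) and yields a \emph{discordant sequence}: a sequence of unit vectors $v_n \in V$ that is $G$-uniform, asymptotically $G_1$-invariant, and not asymptotically $G_2$-invariant.

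Next I would smooth this sequence. By Lemma~\ref{lemma:small smoothen preserves properties} there is an identity neighborhood $U \subset G$ so that for any $\psi \in \mathcal{A}(G)$ supported in $U$ the sequence $\psi v_n$ is still $G$-uniform, asymptotically $G_1$-invariant, not asymptotically $G_2$-invariant, and satisfies $\|\psi v_n\| > \|v_n\|/2 = 1/2$. Fix one such $\psi =: \varphi \in \mathcal{A}(G)$. Set $w_n = \varphi v_n$ and normalize: $u_n = w_n / \|w_n\|$. These are unit vectors, still $G$-uniform, asymptotically $G_1$-invariant, not asymptotically $G_2$-invariant (these properties are scaling-invariant, as noted after Lemma~\ref{lemma:not invariant means a particular element}), and we may write $u_n = \varphi (v_n/\|w_n\|)$ with $\|v_n/\|w_n\|\| < 2$, so $u_n$ has exactly the form required in Proposition~\ref{prop:moving invariance around with dense projections} with the auxiliary vectors bounded by $2$.

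Now consider the probability measures $\mathrm{Stab}_*(|u_n|^2 \cdot m)$ on the compact space $\Sub{G}$. By compactness of $\Prob(\Sub G)$, pass to a subsequence along which they converge weak-$*$ to some $\mu \in \Prob(\Sub G)$. Since $(u_n)$ (along this subsequence) is an asymptotically $G_1$-invariant sequence of unit vectors and $\mu$ is an accumulation point of $\mathrm{Stab}_*(|u_n|^2\cdot m)$, the second hypothesis of the theorem gives $\overline{G_1 H}=G$ for $\mu$-almost every $H$. This condition is exactly saying that $\mu$-almost every subgroup $H$ has dense projection to $G_2$: indeed $\overline{G_1 H}=G$ forces the image of $H$ under $G \to G/G_1 \cong G_2$ to be dense, and conversely. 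Therefore Proposition~\ref{prop:moving invariance around with dense projections} applies to the subsequence $(u_n)$, and concludes that $(u_n)$ is asymptotically $G_2$-invariant along this subsequence. But that contradicts the fact that $(u_n)$ is \emph{not} asymptotically $G_2$-invariant — more precisely, by Lemma~\ref{lemma:not invariant means a particular element} (applicable since $(u_n)$ is $G$-uniform) there is $g \in G_2$ with $\limsup_n \|(1-g)u_n\| > 0$, and this $\limsup$ is positive along any subsequence after a further refinement, while asymptotic $G_2$-invariance of the subsequence forces it to be $0$. This contradiction shows $L^2_0(X,m)$ has a spectral gap.

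The main obstacle is purely bookkeeping: ensuring the sequence produced by Lemma~\ref{lem:mar3.7}, after the smoothing of Lemma~\ref{lemma:small smoothen preserves properties} and renormalization, genuinely matches the hypotheses of Proposition~\ref{prop:moving invariance around with dense projections} — in particular that it is literally of the form $\varphi v_n$ with a \emph{single fixed} $\varphi \in \mathcal{A}(G)$ and $\|v_n\|\le 2$, and that the subsequence extraction needed to get a convergent sequence of stabilizer measures is compatible with the "not asymptotically $G_2$-invariant" conclusion (one must choose the subsequence so that $\|(1-g)u_n\|$ stays bounded below, which is possible by definition of $\limsup$). The translation between $\overline{G_1 H} = G$ and "dense projection to $G_2$" is immediate once one fixes the identification $G/G_1 \cong G_2$, but should be stated explicitly.
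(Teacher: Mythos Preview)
Your proof is correct and follows essentially the same approach as the paper: contradict the absence of spectral gap by applying Lemma~\ref{lem:mar3.7} to obtain a discordant sequence, smooth it via Lemma~\ref{lemma:small smoothen preserves properties}, pass to a subsequence so that the stabilizer measures converge, invoke the second hypothesis to get dense projections to $G_2$, and then apply Proposition~\ref{prop:moving invariance around with dense projections} to force asymptotic $G_2$-invariance. Your handling of the subsequence issue (first fixing $g\in G_2$ with $\limsup_n\|(1-g)u_n\|>0$ via Lemma~\ref{lemma:not invariant means a particular element}, then extracting a subsequence along which this quantity stays bounded below and the stabilizer measures converge) is in fact more explicit than the paper's, which simply asserts that properties (1)--(5) are maintained upon passing to a subsequence.
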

In a nutshell, Theorem \ref{theorem:getting spectral gap - general case} follows from the tension between Lemma~\ref{lem:mar3.7} and Proposition~\ref{prop:moving invariance around with dense projections}.
But of course, there are details to provide.  

\begin{proof}[Proof of Theorem \ref{theorem:getting spectral gap - general case}]
Assume towards contradiction that the Koopman representation $L^2_0(X,m)$ has no spectral gap. This representation has no $G_2$-invariant vectors by assumption.
According to  Lemma~\ref{lem:mar3.7} we may find   a \enquote{discordant} sequence of non-zero vectors $f_i\in L^2_0(X,m)$, namely, a sequence   with the following three properties:
\begin{enumerate}
    \item The sequence is $G$-uniform.
    \item The sequence is asymptotically $G_1$-invariant.
    \item The sequence is not asymptotically $G_2$-invariant.
\end{enumerate}

We fix a smoothing operator $\psi\in \mathcal{A}(G)$ as provided by Lemma~\ref{lemma:small smoothen preserves properties} with respect to the sequence $f_i$. Consider the sequence $g_i = \psi f_i$ which we may suppose (by scaling $f_i$) to be unit vectors. It continues to satisfy the above properties (1)-(3). Moreover it has the following two properties:
\begin{enumerate} \setcounter{enumi}{3}
 \item The sequence $g_i$ consists of unit  vectors.
 \item For each $i$ we have $g_i = \psi f_i$ and $\|f_i\|_2 \le 2$.
\end{enumerate}

Consider the sequence of probability measures $\mu_i = \mathrm{Stab}_*(|g_i|^2\cdot m)$ on the Chabauty space $\Sub{G}$. Up to passing to a subsequence, we may maintain all of the properties (1)-(5) and further assume that the probability measures $\mu_i$ converge in the weak-$*$ topology to some probability measure $\mu \in \mathrm{Prob}(\Sub{G})$ satisfying:
\begin{enumerate} \setcounter{enumi}{5}
\item The probability measure $\mu$ is $G_1$-invariant.
 \item $\mu$-almost every subgroup has dense projections to $G_2$.
\end{enumerate}

Finally, relying on the above properties, we are in a position to apply Proposition~\ref{prop:moving invariance around with dense projections} and deduce that the sequence $g_i$ must be asymptotically $G_2$-invariant. This is a contradiction.
\end{proof}

\begin{proof}[Proof of Theorem \ref{theorem:getting spectral gap} of the introduction]
The statement in the introduction is a special case of Theorem \ref{theorem:getting spectral gap - general case}. Indeed, in that statement the stabilizer of $m$-almost every point is contained in some given closed $G$-invariant subset $\Omega \subset \Sub{G}$ such that \emph{every} subgroup $H \in \Omega$ satisfies $\overline{G_1H } = G$.
\end{proof}

\begin{remark}
\label{remark:non-trivial part 2}
Strictly speaking, a  representation on the zero Hilbert space has no asymptotically invariant vectors, and as such has spectral gap.  For this reason, Theorem \ref{theorem:getting spectral gap - general case} is non-void only provided that the Hilbert space $L^2_0(X,m)$ is non-zero. In which case, it follows implicitly from the assumptions that both groups $G_1$ and $G_2$ are not trivial (so that $G$ is a direct product in a non-trivial manner).
\end{remark}

\section{Standard semisimple groups and irreducible lattices}
\label{sec:standard semisimple group}

In this section we set up our terminology, conventions and notations regarding semisimple groups and their lattices.
For brevity, we will use the unconventional notion of \enquote{standard semisimple groups}, which we now  introduce.

A \emph{standard simple group} is a topological group $G$ of the form 
  ${\bf G }(k)^+$, 
where $k$ is a  local field of zero characteristic and ${\bf G}$ is an isotropic adjoint connected absolutely simple $k$-algebraic group. The   topology on $G$ is the one induced from its $k$-analytic structure. Such groups are discussed in \cite[Chapter I, \S1.5, \S1.8 and \S2.3]{margulis1991discrete}.
%\marginpar{Add here the complete definition of $G^+$}
In particular, the group $G$ is compactly generated, non-compact, and simple as an abstract group.
Moreover, the local field $k$ as well as the algebraic group ${\bf G}$ are canonically associated to $G$, in the sense that they can be recovered from its topological group
 structure.

 \begin{remark}
The requirement that ${\bf G}$ is absolutely simple  can be relaxed to simple, via a   restriction of scalars.
In particular, letting $k=\mathbb{R}$ or $k=\mathbb{Q}_p$ for some prime number $p$, and letting ${\bf G}$ be an isotropic adjoint connected simple $k$-algebraic group, we have that ${\bf G}(k)^+$ is a standard  simple group.
Every   simple real Lie group with trivial center is a standard  simple group 
 \cite[3.1.6]{zimmer2013ergodic}.
However, we emphasise that upon allowing this relaxation, we lose the possibility to recover $k$ and ${\bf G}$ from $G$,  
and also we will need to make several unpleasant adjustments to the discussion of arithmeticity and spectral gap below. So we keep with absolutely simple.
\end{remark}

%\marginpar{emphasize these definitions in a definition or a new subsection}

The closure of the subfield $\mathbb{Q}$ in $k$ is a local field isomorphic to $\mathbb{Q}_p$ for some prime number $p$ or for $p=\infty$ (we use the convention $\mathbb{Q}_\infty=\mathbb{R}$).
We will say that $G$ is a \emph{standard simple group of type $p$}.
We denote $\mathrm{rank}(G) = \mathrm{rank}_{k}({\bf G})$.
This is a positive integer, by the assumption saying that $\bf G$ is isotropic.

A \emph{standard semisimple group} is a topological group   of the form $\prod_{i=1}^n G_i$ for some $n \in \mathbb{N}$, where each $G_i$ is a standard simple group. The groups $G_i$ are said to be the \emph{simple factors} of $G$.
We denote 
\[\mathrm{rank}(G) = \sum_{i=1}^n \mathrm{rank}(G_i).\]
If $\mathrm{rank}(G)=1$ then $G$ it is said to be of \emph{rank one}. Otherwise $\mathrm{rank}(G) \ge 2$ and $G$ is said to be of \emph{higher rank}.
For each prime number $p$ and for $p =\infty$, we define $G^{(p)}$ to be the subgroup of $G$ consisting of the product of all simple factors of type $p$. This is the \emph{$p$-component} of $G$. It is said to be a \emph{standard semisimple group of type $p$}.
We view $G^{(p)}$ as a subgroup of a $\mathbb{Q}_p$-algebraic group, by restricting the scalars of $\mathbf{G}_i$ for each simple factor $G_i$ of $G^{(p)}$ and taking the corresponding direct product. We restrict to $G^{(p)}$ the corresponding Zariski topology and denote it the \emph{$p$-Zariski topology} of $G^{(p)}$. 
In this way, we may also endow $G^{(p)}$ with a ${\mathbb{Q}_p}$-analytic structure, arising from it being a closed subgroup of the group of ${\mathbb{Q}_p}$-points of a ${\mathbb{Q}_p}$-algebraic group.

\begin{defn}
\label{def:fully Zariksi dense}
A subgroup $H \le G$ is  \emph{fully Zariski dense} if the projection of $H$ to each $p$-component $G^{(p)}$ is $p$-Zariski dense, for each prime number $p$ and for $p = \infty$.
\end{defn}

\subsection*{Lattices and invariant random subgroups}

Recall that any lattice in a standard semisimple group $G$ is fully Zariski dense by the Borel density theorem. This classical fact has been extended to invariant random subgroups, first  for real Lie groups in \cite{7s}, and then  for standard semisimple groups over a single local field in \cite{gelander2018invariant}. Relying on those results and ideas, we provide a statement applicable to any standard semisimple group.

\begin{prop}
\label{prop:BDT for IRS}
Let $G$ be a standard semisimple group and $\nu$ an ergodic invariant random subgroup of $G$. Then there are two semisimple factors (i.e. two normal subgroups) $M$ and $N$ of $G$ with $ N \cap M = \{e\}$ such that $\nu$-almost every subgroup 
\begin{enumerate}
    \item projects densely to each $p$-component $N^{(p)}$,
    \item projects discretely and fully Zariski densely to $M$, and
    \item is contained in $N \times M$ (i.e. projects trivially to $G/(N \times M)$).
\end{enumerate}
\end{prop}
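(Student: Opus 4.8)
The plan is to establish a Borel–density-type statement for invariant random subgroups of $G$ by reducing to the already-known cases: the single-local-field case of \cite{gelander2018invariant} and, within each local field, the Lie-group techniques of \cite{7s}. Write $G = \prod_p G^{(p)}$, the decomposition into $p$-components. The first step is to analyze each $p$-component separately, so fix a prime $p$ (or $p=\infty$) and consider the pushforward of $\nu$ to $\Sub{G^{(p)}}$ under the projection map $G \to G^{(p)}$. Since $G^{(p)}$ sits as a closed subgroup of the $\mathbb{Q}_p$-points of a $\mathbb{Q}_p$-algebraic group, and the projected measure is an invariant random subgroup of $G^{(p)}$, we may apply the Borel density theorem for IRS over a single local field. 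For an ergodic IRS of a standard semisimple group of type $p$, one obtains that almost every subgroup, after passing to its $p$-Zariski closure, is a semisimple-by-(compact/discrete) object; more precisely the results of \cite{gelander2018invariant,7s} give a normal subgroup $N^{(p)} \lhd G^{(p)}$ to which almost every subgroup projects densely, a complementary normal subgroup $M^{(p)}$ to which it projects discretely and $p$-Zariski densely, and the rest of $G^{(p)}$ to which it projects trivially. The main structural input here is that the $p$-Zariski closure of the projection is almost surely a fixed normal (hence semisimple) subgroup — this uses ergodicity of $\nu$ together with the fact that the set of normal subgroups (equivalently, subsets of simple factors) is countable, so the $\nu$-essential value of "which normal subgroup is the Zariski closure of the $M$-projection / which is the closure of the dense part" is constant.

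Having handled each $p$ separately, the second step is to assemble the pieces: set $N = \prod_p N^{(p)}$ and $M = \prod_p M^{(p)}$, viewed as normal subgroups of $G$. By construction $N \cap M = \{e\}$ since within each $p$-component the complementary normal subgroups intersect trivially and distinct $p$-components are in direct product. Conclusion (1), that $\nu$-almost every subgroup projects densely to each $N^{(p)}$, is immediate from the per-$p$ analysis. Conclusion (3), that almost every subgroup is contained in $N \times M$, follows because within each $G^{(p)}$ almost every subgroup projects trivially to the leftover factor $G^{(p)}/(N^{(p)} \times M^{(p)})$, and $G/(N \times M) = \prod_p G^{(p)}/(N^{(p)} \times M^{(p)})$.

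The subtle point — and the step I expect to be the main obstacle — is conclusion (2): that almost every subgroup projects \emph{discretely} to $M$ and is \emph{fully Zariski dense} in $M$, where recall full Zariski density means $p$-Zariski density of the projection to each $M^{(p)}$ simultaneously. The Zariski-density half is again per-$p$ and comes from the single-field statement. Discreteness is the delicate issue: we know the projection to each $M^{(p)}$ is discrete in $G^{(p)}$, but discreteness of the projection to the product $M = \prod_p M^{(p)}$ is \emph{not} formally implied by discreteness of each coordinate projection — a diagonally embedded copy of $\mathbb{Z}$ in $\mathbb{R} \times \mathbb{Q}_p$ (à la the adelic embedding) projects densely to each factor yet is discrete in the product; conversely a subgroup could project discretely to each factor without being discrete in the product only if... actually here one must argue the other direction carefully. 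The resolution should use that the projection to $M^{(p)}$ being $p$-Zariski dense, discrete, and an ergodic IRS forces it (by the arithmeticity/discreteness arguments underlying the Borel density theorem for IRS, e.g. via Margulis functions or the recurrence arguments of \cite{7s, gelander2018invariant}) to be "rigid enough" — and crucially, one arranges the decomposition so that the genuinely-discrete behavior is confined to a single coherent piece. I would pin down $M$ as the maximal normal subgroup such that the projection is discrete \emph{in $M$ itself} (not just coordinatewise), and then verify that the leftover beyond $N \times M$ really does receive a trivial projection almost surely. Making this maximality argument precise, and checking it is compatible with the ergodicity-forces-constancy step, is where the real work lies; everything else is bookkeeping with the product decomposition and invocations of the cited single-field results.
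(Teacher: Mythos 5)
Your proposal follows the same route as the paper: push $\nu$ forward to each $p$-component via $H \mapsto \overline{\pi^{(p)}(H)}$, apply the single-local-field Borel density theorem for invariant random subgroups \cite[Theorem 1.9]{gelander2018invariant} to obtain the pairs $N^{(p)}, M^{(p)}$, and assemble $N = \prod_p N^{(p)}$ and $M = \prod_p M^{(p)}$. That is the entire proof in the paper; conclusions (1) and (3) are the bookkeeping you describe.

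The one place you go wrong is in diagnosing the discreteness half of conclusion (2) as ``the main obstacle.'' The implication you need goes the easy way: if $\Delta \le \prod_p M^{(p)}$ (here $\Delta = \mathrm{pr}_M(H)$) has discrete projection to each factor $M^{(p)}$, then $\Delta$ is contained in the direct product of those projections, which is a discrete subgroup of $\prod_p M^{(p)}$ since the product is finite; concretely, choose for each $p$ an identity neighborhood $U_p \subset M^{(p)}$ meeting $\mathrm{pr}_{M^{(p)}}(\Delta)$ only in the identity, and observe that $\prod_p U_p$ meets $\Delta$ only in the identity. What genuinely fails is the \emph{converse} --- a discrete subgroup of a product (such as $\mathbb{Z}[1/p]$ in $\mathbb{R}\times\mathbb{Q}_p$) can have dense coordinate projections --- but that direction is never needed here. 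Likewise, full Zariski density of the projection to $M$ is, by Definition \ref{def:fully Zariksi dense}, nothing more than the conjunction of the per-$p$ Zariski density statements. So the ``maximality'' detour you sketch at the end is unnecessary, and there is no remaining work beyond what you have already written.
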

\begin{proof}
Consider the $p$-component $G^{(p)}$ of the standard semisimple group $G$ for each prime number $p$ as well as for $p = \infty$. Let $\nu^{(p)}$ be the invariant random subgroup of the $p$-component $G^{(p)}$ obtained by considering the natural projection $\pi^{(p)} : G \to G^{(p)}$ and pushing forward $\nu$ via the map $H \mapsto \overline{\pi^{(p)}(H)}$. We may apply the Borel density theorem for invariant random subgroups of standard semisimple groups over a single local field \cite[Theorem 1.9]{gelander2018invariant} to find a pair of normal subgroups $N^{(p)},M^{(p)} \le G^{(p)}$ with $N^{(p)} \cap M^{(p)} = \{e\}$ and such that $\nu^{(p)}$-almost every subgroup contains $N^{(p)}$, projects discretely and $p$-Zariski densely to $M^{(p)}$ and is contained in $N^{(p)} \times M^{(p)}$. The two desired normal subgroups are constructed by taking $N = \prod_{p} N^{(p)}$ and $M = \prod_p M^{(p)}$ where $p$ runs over $p=\infty$ and  all prime numbers   involved in $G$.
\end{proof}

It will be useful for us  to know the following property of lattices in standard semisimple groups.

\begin{lemma}
\label{lemma:lattices in simple analytic groups are ICC}
Let $G$ be a standard semisimple group and $\Gamma$ a lattice in $G$. Then the conjugacy class of every non-trivial element $\gamma \in \Gamma$ is infinite.
\end{lemma}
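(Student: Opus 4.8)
The plan is to show that the centralizer $Z_\Gamma(\gamma)$ of any non-trivial element $\gamma\in\Gamma$ has infinite index in $\Gamma$. Since the conjugacy class of $\gamma$ is in bijection with $\Gamma/Z_\Gamma(\gamma)$, this is precisely what we need.

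First I would reduce to a statement about the ambient group $G$. A lattice $\Gamma$ in a standard semisimple group $G$ is fully Zariski dense (by the Borel density theorem, recalled just before the lemma), so its projection to each $p$-component $G^{(p)}$ is $p$-Zariski dense. If $Z_\Gamma(\gamma)$ had finite index in $\Gamma$, then after passing to this finite-index subgroup — which is still a lattice, hence still fully Zariski dense — we would obtain a fully Zariski dense subgroup centralizing $\gamma$. By Zariski density, this forces $\gamma$ to be central in each $G^{(p)}$, and hence central in $G$. But a standard simple group ${\bf G}(k)^+$ is simple as an abstract group (stated in \S\ref{sec:standard semisimple group}) and non-compact, in particular it is non-abelian with trivial center; therefore a product of such groups has trivial center. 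This forces $\gamma = e$, contradicting $\gamma\neq e$.

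Let me spell out the Zariski-density step slightly more carefully, since that is the crux. Fix a prime $p$ (or $p=\infty$) and let $\pi^{(p)}\colon G\to G^{(p)}$ be the projection; write $\gamma^{(p)}=\pi^{(p)}(\gamma)$. The centralizer of $\gamma^{(p)}$ in the algebraic group containing $G^{(p)}$ is a $\mathbb{Q}_p$-algebraic subgroup, and it contains $\pi^{(p)}(Z_\Gamma(\gamma))$. If $Z_\Gamma(\gamma)$ has finite index in $\Gamma$ then $\pi^{(p)}(Z_\Gamma(\gamma))$ still has finite index in $\pi^{(p)}(\Gamma)$, hence has the same $p$-Zariski closure as $\pi^{(p)}(\Gamma)$, namely all of the $p$-component's algebraic hull. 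Thus $\gamma^{(p)}$ is centralized by a Zariski-dense subgroup, hence by the whole connected algebraic group, hence $\gamma^{(p)}$ lies in its center. Since ${\bf G}_i$ is adjoint and absolutely simple, each simple factor has trivial center, so $\gamma^{(p)}=e$ for every $p$, whence $\gamma=e$.

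I do not anticipate a serious obstacle; the only point requiring mild care is making sure the relevant centralizer is genuinely a \emph{$\mathbb{Q}_p$-algebraic} subgroup and that ``finite index implies equal Zariski closure'' is applied to the correct group (the image $\pi^{(p)}(\Gamma)$ rather than $\Gamma$ itself). An alternative, perhaps cleaner, route avoids reducing to centralizers: if the conjugacy class of $\gamma$ were finite, then $\Gamma$ would normalize the finite set $\{\gamma^{g'} : g'\in\Gamma\}$, and the kernel of the resulting permutation action is a finite-index subgroup $\Gamma_0$ of $\Gamma$ centralizing $\gamma$; then argue as above. Either way the engine is: a fully Zariski dense subgroup of $G$ has trivial centralizer beyond $Z(G)=\{e\}$.
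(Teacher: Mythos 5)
Your proof is correct and follows essentially the same route as the paper's: assume the centralizer has finite index, note it is then a lattice and hence projects Zariski densely to each $p$-component by Borel density, deduce that $\gamma$ projects to a central (hence trivial) element in each component since the groups are adjoint, and conclude $\gamma=e$. The extra care you take in identifying the centralizer of $\gamma^{(p)}$ as a Zariski-closed subgroup containing a Zariski-dense set is exactly the (implicit) content of the paper's one-line deduction.
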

\begin{proof}
 Let $\gamma \in \Gamma$ be some non-trivial element and assume towards contradiction that its centralizer $\Delta = C_\Gamma(\gamma)$ has a finite index in $\Gamma$. It follows that $\Delta$ is a lattice in $G$. Therefore the projection of $\Delta$ to each $p$-component $G^{(p)}$ is $p$-Zariski-dense by the Borel density theorem. It follows that the projection of the element $\gamma$ to each $p$-component $G^{(p)}$ must be central. We arrive at a contradiction to the fact that   the standard semisimple group $G$ is center-free.
\end{proof}

Lastly, we briefly recall the notion of irreducible lattices.

\begin{defn}
\label{def:irreducible lattice}
 A lattice $\Gamma$ in a standard semisimple group $G$ is said to be \emph{irreducible} if the projection of $\Gamma$ to $G/H$ has a dense image for each simple factor $H$ of $G$.
\end{defn}

Indeed, a lattice $\Gamma$ in a standard semisimple group $G$ is irreducible if and only if the Borel $G$-space $G/\Gamma$ with the normalized probability measure is irreducible in the sense of \S\ref{sec:prelim}. If $G$ is a standard simple group, then all lattices in $G$ are irreducible, as the condition in the above Definition \ref{def:irreducible lattice}  is satisfied trivially. For lattice subgroups, the classical notion of  irreducibility  coincides with the notion of  irreducible subgroups (i.e. dense projections) defined in the introduction.

\subsection*{Arithmetic groups}
We now describe the construction of the standard semisimple group associated 
with a given algebraic group defined over a number field.

\begin{eg} \label{ex:H}
Fix a number field $K$ and an adjoint connected absolutely simple $K$-group ${\bf H}$.
Recall that a \emph{place} $s:K\to k_s$ is a dense embedding of the field $K$ in a local field $k_s$, defined up to a natural equivalence.
Corresponding to $s$ we get the $k_s$-group ${\bf H}_s$ obtained by an extension of scalars. This is an adjoint connected absolutely simple $k_s$-algebraic group. We say that the place $s$ is isotropic if the group ${\bf H}_s$ is $k_s$-isotropic, and in this case we denote $H_s={\bf H}_s(k_s)^+$.
This is the standard simple group associated with ${\bf H}$ at the isotropic place $s$.
Fixing a finite set $S$  of isotropic places, we obtain the standard semisimple group associated with ${\bf H}$ at $S$, namely $H_S= \prod_{s\in S} H_s$.
Note that the group ${\bf H}(K)$ embeds diagonally in $\prod_{s\in S} {\bf H}_s(k_s)$. 
Accordingly, as $H_s$ has finite index in ${\bf H}_s(k_s)$ for each $s \in S$, an appropriate  finite-index subgroup of ${\bf H}(K)$ is embedded in the finite-index subgroup $H_S$ of $\prod_{s\in S} {\bf H}_s(k_s)$.
This embedding is dense by the strong approximation theorem.
\end{eg}

Recall that a pair of  subgroups $\Delta_1$ and $\Delta_2$ of a standard semisimple group $G$ are called \emph{commensurable} if their intersection $\Delta_1 \cap \Delta_2$  has finite index in both $\Delta_1$ and $\Delta_2$. 
Commensurability is an equivalence relation.
Being a lattice is obviously a commensurability invariant. Likewise, being an irreducible lattice in a standard semisimple group 
%
%It follows from the connectedness of the algebraic groups ${\bf G}_i$ associated with the simple factors of $G$ that being an irreducible lattice 
is also a commensurability invariant.

The groups discussed in Example~\ref{ex:H} admit a canonical commensurability class of irreducible lattices, described in the following paragraph.

\begin{eg} \label{ex:Gamma_S}
Fix a number field $K$ and an adjoint connected absolutely simple $K$-group ${\bf H}$.
A place $s:K\to k_s$ is called Archimedean (or non-Archimedean) if the local field $k_s$ is.
Let $\mathcal{O}<K$ be the ring of integers. For every non-Archimedean place $s$ denote by $\pi_s \lhd \mathcal{O}$ the corresponding prime ideal, that is, the preimage of unique maximal ideal in the closure of $s(\mathcal{O})$.
Fix a finite set of isotropic places $S$ which includes all Archimedean isotropic places.
Let $\mathcal{O}_S$ be the localization of $\mathcal{O}$ by the ideals $\pi_s$ ranging over all the non-Archimedean places $s\in S$.
Consider the standard semisimple group $H_S$ discussed in Example~\ref{ex:H}.
Upon choosing a non-trivial $K$-representation $\rho:{\bf H}\to \GL_n$,
we set 
$\Lambda_S=\rho^{-1}(\GL_n(\mathcal{O}_S))$.
Note that the commensurability class of $\Lambda_S$ does not depend on the particular choice of $\rho$.
We consider the finite-index subgroup of ${\bf H}(K)$ which embeds densely in $H_S$, as discussed at the end of Example~\ref{ex:H},
and  let $\Lambda_S^+$ be its intersection with $\Lambda_S$.
This is a finite-index subgroup of $\Lambda_S$ which embeds in $H_S$. We let $\Gamma_S$ be the corresponding image of $\Lambda_S^+$ is $H_S$.
Then $\Gamma_S$ is an irreducible lattice in the standard semisimple group $H_S$.
Its commensurability class is independent on the chosen representation $\rho$.
\end{eg}

\begin{defn}
\label{defn:arithmetic lattice}
An irreducible lattice $\Gamma$ in a standard semisimple group $G$ is said to be \emph{arithmetic} if there exist
a number field $K$, an adjoint connected absolutely simple $K$-group ${\bf H}$
and a finite set of isotropic places $S$ which includes all Archimedean isotropic places, 
such that $G$ is isomorphic as a topological group to the group $H_S$ discussed in Example~\ref{ex:H} and the image of $\Gamma$ under this isomorphism can be conjugated to a lattice in the commensurability class of the lattice $\Gamma_S$ described in Example~\ref{ex:Gamma_S}.
\end{defn}

The following is a fundamental theorem due to Margulis.

\begin{theorem}[Margulis Arithmeticity, {\cite[Chapter IX, Theorem 1.11]{margulis1991discrete}}] \label{thm:arithmeticity}
    Let $G$ be a standard semisimple group of higher rank and  $\Gamma<G$ be an irreducible lattice.
    Then $\Gamma$ is arithmetic.
\end{theorem}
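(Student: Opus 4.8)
The plan is to deduce arithmeticity from Margulis's superrigidity theorem, following the strategy of \cite{margulis1991discrete} (Chapters VII--IX). Write $G = \prod_{i=1}^n \mathbf{G}_i(k_i)^+$; since arithmeticity is a commensurability invariant we may replace $\Gamma$ by a finite-index subgroup and assume $\Gamma \subset \prod_i \mathbf{G}_i(k_i)$ with Zariski-dense projection to each factor. The key input is \emph{superrigidity}: for every local field $\ell$ of characteristic zero, every adjoint connected simple $\ell$-group $\mathbf{H}$, and every homomorphism $\rho\colon \Gamma \to \mathbf{H}(\ell)$ with Zariski-dense image, either $\rho(\Gamma)$ is relatively compact or $\rho$ extends to a continuous homomorphism $G \to \mathbf{H}(\ell)$ which factors through a single simple factor $\mathbf{G}_i(k_i)^+$. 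First I would establish this, by constructing a $\Gamma$-equivariant measurable map from a Furstenberg boundary of $G$ into the flag variety of $\mathbf{H}$ and then showing --- using amenability of the relevant minimal parabolic, the higher-rank hypothesis, and vanishing of matrix coefficients --- that the essential image of this map is a single $\mathbf{H}(\ell)$-orbit and that the map is rational; rationality of the boundary map is exactly what globalizes $\rho$ to $G$.

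Granting superrigidity, I would next pin down the field of definition. Work with the adjoint representation $\mathrm{Ad}\colon \Gamma \to \prod_i \mathbf{G}_i \subset \GL(\mathfrak{g})$ and set $K = \mathbb{Q}\bigl(\{\mathrm{tr}\,\mathrm{Ad}\,\gamma : \gamma \in \Gamma\}\bigr)$, the trace field. Applying superrigidity to the Galois conjugates $\mathrm{Ad}^\sigma$ of $\mathrm{Ad}$ shows that, outside a finite set $S$ of places of $K$, every conjugate representation has relatively compact image; the finiteness of this exceptional set forces $[K:\mathbb{Q}]<\infty$, so $K$ is a number field. A standard descent (passing through a restriction of scalars and a Galois-cohomology argument) then produces an adjoint connected absolutely simple $K$-group $\mathbf{H}$ and identifies $G$ with the group $H_S$ of Example~\ref{ex:H} attached to $\mathbf{H}$ at a finite set $S$ of isotropic places that necessarily contains all Archimedean isotropic places, in such a way that $\Gamma$ corresponds, up to commensurability, to a Zariski-dense subgroup of $\mathbf{H}(K)$ embedded in $H_S$ as in Example~\ref{ex:H}.

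It remains to verify $S$-integrality. For each place $v\notin S$, superrigidity applied to the embedding $\Gamma \to \mathbf{H}(K_v)$ shows the image is relatively compact: for non-Archimedean $v$ an unbounded image would force $v$ to appear among the defining places of $G$, contradicting $v\notin S$; for Archimedean $v$ the group $\mathbf{H}(K_v)$ is compact because $S$ contains all Archimedean isotropic places. Boundedness at all finite places outside $S$ means that, in a fixed faithful $K$-representation $\rho\colon \mathbf{H} \to \GL_N$, every $\gamma\in\Gamma$ has $S$-integral matrix entries, so $\rho(\Gamma)\subset \GL_N(\mathcal{O}_S)$. Since $\Gamma$ is a lattice and Zariski dense, it is then commensurable with $\rho^{-1}(\GL_N(\mathcal{O}_S))\cap \mathbf{H}(K)$, which is precisely the commensurability class of the arithmetic lattice $\Gamma_S$ of Example~\ref{ex:Gamma_S}; hence $\Gamma$ is arithmetic in the sense of Definition~\ref{defn:arithmetic lattice}.

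The hard part will be superrigidity itself, and within it the proof that the measurable boundary map is algebraic: this is where the higher-rank assumption is genuinely used, through the structure of parabolic subgroups and the Howe--Moore phenomenon, and where no shortcut is available; the non-Archimedean factors additionally require the building-theoretic version of the same argument. The second delicate point is the descent to a number field, where one must rule out infinitely many unbounded Galois conjugates by combining superrigidity with a local--global boundedness argument. Since a complete treatment spans several chapters of \cite{margulis1991discrete}, the reasonable course here is to invoke it as a black box rather than reproduce it.
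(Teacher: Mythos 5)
Your outline faithfully reproduces the standard superrigidity-plus-descent proof from \cite{margulis1991discrete}, and you correctly conclude by invoking that source as a black box — which is exactly what the paper does, since it states this theorem as a citation with no proof of its own. Nothing further is needed.
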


\subsection*{Spectral gap}
We end this section by stating an important corollary of Clozel's theorem on  spectral gap, namely \cite[Theorem 3.1]{Clozel}.

\begin{theorem}
\label{thm:Clozel}
Let $G$ be a standard semisimple group and  $\Gamma<G$ be an irreducible lattice.
Let $F$ be a simple factor of $G$. Then the unitary $F$-representation on the Hilbert space $L^2_0(G/\Gamma)$ has a spectral gap.
\end{theorem}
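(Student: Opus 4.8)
The plan is to split according to the real rank of $G$ and, in the higher-rank case, to reduce to arithmetic lattices and then quote Clozel's theorem. First I would record the elementary observation that $L^2_0(G/\Gamma)$ has no nonzero $F$-invariant vectors: since $\Gamma$ is irreducible its projection to $G/F$ is dense, so by the Howe--Moore / Moore ergodicity theorem the factor $F$ acts ergodically on $(G/\Gamma, m_{G/\Gamma})$, whence $L^2_0(G/\Gamma)^F = 0$. Thus the only thing at stake is the absence of \emph{asymptotically} $F$-invariant vectors.

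If $\mathrm{rank}(G) = 1$, then $G$ is a single standard simple group of rank one and necessarily $F = G$, and the spectral gap for $L^2_0(G/\Gamma)$ is classical, requiring no arithmeticity. For $G$ of archimedean type, the bottom of the $L^2_0$-spectrum of the Laplacian on the finite-volume locally symmetric space $\Gamma\backslash G/K$ is strictly positive: in the cocompact case the spectrum is discrete with a gap above the simple eigenvalue $0$, and in the non-uniform case the essential spectrum is bounded below by a positive constant while only finitely many eigenvalues lie below it. For $G$ of non-archimedean type, $\Gamma$ acts on its rank-one Bruhat--Tits tree with finite quotient graph, so the associated spectrum is finite and bounded away from the trivial eigenvalue. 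In both cases a smoothing argument turns a hypothetical asymptotically invariant sequence in $L^2_0(G/\Gamma)$ into one with Casimir (resp. combinatorial Laplacian) eigenvalue tending to zero, contradicting positivity of the bottom of the spectrum.

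If $\mathrm{rank}(G) \ge 2$, then, $\Gamma$ being an irreducible lattice, Margulis arithmeticity (Theorem~\ref{thm:arithmeticity}) shows that $\Gamma$ is arithmetic; hence, up to a topological isomorphism of $G$, the pair $(G,\Gamma)$ is commensurable to a pair $(H_S,\Gamma_S)$ as in Examples~\ref{ex:H} and~\ref{ex:Gamma_S}, with $F$ corresponding to one of the factors $H_s$. (If $F$ happens to have property (T) one is already done by $L^2_0(G/\Gamma)^F=0$, but the following argument does not use this.) I would first reduce to $\Gamma = \Gamma_S$: replacing $\Gamma$ by a finite-index subgroup $\Gamma'$ only helps, because pullback along the finite covering $G/\Gamma' \to G/\Gamma$ embeds $L^2_0(G/\Gamma)$ into $L^2_0(G/\Gamma')$ as a $G$-subrepresentation (isometrically up to scaling); and, since $\mathrm{rank}(G)\ge 2$, the congruence subgroup property lets one choose a congruence subgroup inside $\Gamma\cap\Gamma_S$, so it suffices to establish the gap for a fixed congruence subgroup. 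One then identifies $L^2_0(G/\Gamma_S)$ with an $F$-subrepresentation of $L^2_0$ of the adelic automorphic quotient attached to $\mathbf{H}$ (via strong approximation, as in Example~\ref{ex:H}) and invokes Clozel's Theorem~3.1 \cite{Clozel}, which keeps the automorphic spectrum away from the trivial representation of the local factor $F = H_s$; this gives the spectral gap.

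The main obstacle is this last step --- matching the geometric object $L^2_0(G/\Gamma)$ with the hypotheses of Clozel's theorem. One must pass carefully between $\Gamma$, its congruence subgroups and the adelic quotient (using the congruence subgroup property and strong approximation), and, more seriously, isolate the contribution of the \emph{single} place $s$: an a priori bound on the distance of a global automorphic representation from the trivial representation of $\mathbf{H}(\mathbb{A})$ does not by itself bound the distance of its component $\pi_s$ from the trivial representation of $H_s$, so one leans on Clozel's statement being formulated per factor (equivalently, on the classification of the part of the automorphic spectrum near the trivial representation being sharp enough to yield the local bound). A softer, secondary obstacle is the non-archimedean rank-one branch, where the finiteness of the exceptional spectrum must be phrased through the finite quotient graph of the Bruhat--Tits tree.
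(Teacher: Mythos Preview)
Your outline follows the same strategy as the paper --- separate off the case where $G$ is a single simple factor, then in the general case use Margulis arithmeticity to reduce to $(H_S,\Gamma_S)$ and invoke Clozel --- but two points deserve correction.

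First, your commensurability reduction via the congruence subgroup property is heavier than needed and not fully available: CSP is still open for some higher-rank anisotropic $K$-groups. The paper instead uses \cite[Lemma 3.1]{Kleinbock-Margulis}, which says directly that the $H_s$-spectral gap on $L^2_0(H_S/\cdot)$ is a commensurability invariant; this lets one pass freely between $\Gamma$, $\Gamma_S$, and any auxiliary lattice without CSP.

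Second, and more substantively, you write that one identifies $L^2_0(G/\Gamma_S)$ with a subrepresentation of the adelic automorphic space of $\mathbf{H}$ via strong approximation. But $\mathbf{H}$ is adjoint, and both strong approximation and Clozel's \cite[Theorem~3.1]{Clozel} are formulated for the \emph{simply connected} cover $\tilde{\mathbf{H}}$. The paper makes this passage explicit: it applies Clozel to get the $\tilde{H}_s$-spectral gap on $L^2_0(\tilde{\mathbf{H}}(\mathbb{A}_K)/\tilde{\mathbf{H}}(K))$, restricts to an open subgroup $\tilde{H}_S\times C$ with $C$ compact open at the places outside $S$, takes $C$-invariants to obtain the gap on $L^2_0(\tilde{H}_S/\Lambda_1)$ for a suitable arithmetic $\Lambda_1$, and only then descends through the central isogeny $\tilde{H}_S\to H_S$ by taking invariants under the finite central kernel. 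This produces the gap for some $\Lambda_2$ commensurable with $\Gamma_S$, after which the Kleinbock--Margulis lemma finishes. Without the lift to $\tilde{\mathbf{H}}$ your adelic identification does not go through as stated.

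A minor point: the paper splits into ``$G$ simple'' versus ``$G$ not simple'' rather than by rank; for simple $G$ (of any rank) it cites \cite{bekka1998unique,bekka2011lattices,margulis1991discrete,gelander2022effective} directly, which cleanly covers the archimedean and non-archimedean cases without spelling out the Laplacian/tree arguments. Your rank split is fine but forces the simple higher-rank case through the arithmeticity branch unnecessarily.
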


\begin{proof}
If the standard semisimple group $G$ is simple then it certainly has no proper simple factors. Therefore the desired conclusion follows from   \cite[Lemma 3]{bekka1998unique} for Lie groups, \cite[Theorem 1]{bekka2011lattices} for standard simple groups over non-Archimedean local fields,    \cite[Chapter III, Corollary 1.10]{margulis1991discrete} for uniform lattices  and \cite[Theorem 1.8]{gelander2022effective} in general.

From now on,  assume that the standard semisimple group $G$ is not simple, and fix a simple factor $F$. In particular $G$ is of higher rank.
By Margulis Arithmeticity (Theorem \ref{thm:arithmeticity}) the lattice $\Gamma$ is arithmetic.
We adopt below the notation introduced in Example~\ref{ex:Gamma_S} and Definition \ref{defn:arithmetic lattice}. In particular, there is a number field $K$, an adjoint connected absolutely simple $K$-group ${\bf H}$ and a finite set $S$ of isotropic places containing all the Archimedean isotropic ones, such that  $G$ is isomorphic as a topological group to the group $H_S$. The simple factor $F$ corresponds to the factor $H_s$ for some place $ s \in S$, as discussed in Example~\ref{ex:H}. A conjugate of $\Gamma$ is commensurable to the lattice $\Gamma_S$.
Up to replacing $\Gamma$ by this conjugate, we will assume that $\Gamma$ is actually commensurable to $\Gamma_S$. This does not change the unitary $H_s$-representation $L^2_0(H_S/\Gamma)$ (up to an isomorphism).
By \cite[Lemma 3.1]{Kleinbock-Margulis}, we know that the $H_s$-representation $L^2_0(H_S/\Gamma)$ has spectral gap if and only if the $H_s$-representation $L^2_0(H_S/\Gamma_S)$ does.
It remains for us  to prove the latter statement.

Let $\tilde{\bf H}$ be the simply connected covering of $\bf H$ and $\pi:\tilde{\bf H}\to {\bf H}$ be the associated $K$-central isogeny.
By \cite[Theorem 3.1]{Clozel} the trivial representation is isolated in the automorphic dual of $\tilde{H}_s=\tilde{\bf H}_s(k_s)$.
This means that the unitary $\tilde{H}_s$-representation $L^2_0(\tilde{\bf H}(\mathbb{A}_{K})/\tilde{\bf H}(K))$ has a spectral gap, where $\mathbb{A}_{K}$ is the $K$-adele ring.
Write $\tilde{\bf H}(\mathbb{A}_{K})=\tilde{H}_S \times \tilde{H}_{S^c}$, where $\tilde{H}_S$ is the product of groups corresponding to the places in $S$ and $\tilde{H}_{S^c}$ the restricted product of groups corresponding to the complement set of $S$. Since $S$ contains all isotropic Archimedean factors, we can find a compact open subgroup $C<\tilde{H}_{S^c}$.
Then $\tilde{H}_S \times C$ is an open subgroup of $\tilde{\bf H}(\mathbb{A}_{K})$ and $\Lambda=\tilde{\bf H}(K)\cap (\tilde{H}_S \times C)$ is a lattice in $\tilde{H}_S \times C$.
We identify $(\tilde{H}_S \times C)/\Lambda$ with an $\tilde{H}_s$-invariant open subset of $\tilde{\bf H}(\mathbb{A}_{K})/\tilde{\bf H}(K)$, and view $L^2_0((\tilde{H}_S \times C)/\Lambda)$ as an $\tilde{H}_s$-subrepresentation of $L^2_0(\tilde{\bf H}(\mathbb{A}_{K})/\tilde{\bf H}(K))$. 
We conclude that $L^2_0((\tilde{H}_S \times C)/\Lambda)$ has a spectral gap regarded as a $\tilde{H}_s$-representation, and so does the $\tilde{H}_s$-subrepresentation of $C$-invariants, denoted $L^2_0((\tilde{H}_S \times C)/\Lambda)^C$. We identify this representation with $L^2_0(\tilde{H}_S/\Lambda_1)$, where $\Lambda_1$ denotes the image of $\Lambda$ under the proper projection map $\tilde{H}_S \times C \to \tilde{H}_S$.

Next, by \cite[Proposition I.1.5.5 and Theorem I.2.3.1]{margulis1991discrete} the central isogeny $\pi$ gives a surjective map $\tilde{H}_S\to H_S$ with a finite central kernel, which we denote by $Z = \ker \pi$.
Let $\Lambda_2<H_S$ be the image of $\Lambda_1$ under this map and identify the unitary representation $L^2_0(H_S/\Lambda_2)$ with the space of $Z$-invariants in $L^2_0(\tilde{H}_S/\Lambda_1)$. We conclude that the $H_s$-representation $L^2_0(H_S/\Lambda_2)$ has a spectral gap.
Since, by construction, $\Lambda_2$ is commensurable with $\Gamma_S$, and using   \cite[Lemma 3.1]{Kleinbock-Margulis} once more, we conclude that the $H_s$-representation $L^2_0(H_S/\Gamma_S)$ has a spectral gap.
This finishes the proof.
\end{proof}

\section{Discrete subgroups of standard semisimple groups}
\label{sec:geometry of discrete subgroups of products}

Throughout this section $G$ will denote a standard semisimple group, a notion introduced and discussed in detail in \S\ref{sec:standard semisimple group}. Terminology and notation from \S\ref{sec:standard semisimple group} will be used   freely in the current \S\ref{sec:geometry of discrete subgroups of products}.

\begin{remark}
For readers interested primarily in the case of semisimple real Lie groups, we remark that a center-free connected semisimple real Lie group without compact factors is a standard semisimple group of type $\infty$.
\end{remark}
% Let $\mu$ be a smooth probability measure on $G$.

% \begin{defn}
% A $\mu$-\emph{stationary limit} of a subgroup $\Delta \le G$ is any accumulation point $\nu$ of the sequence $\frac{1}{n} \sum_{1=1}^n \mu^{*n} * \delta_{\Delta}$.
% \end{defn}

% Any $\mu$-stationary limit  is almost surely discrete by \cite[Corollary 1.6]{gelander2022effective}.

% \begin{defn}
% A $\mu$-\emph{conjugate limit} of a subgroup $\Delta \le G$ is any subgroup $\Lambda$ lying in the support $\mathrm{supp}(\nu)$ of any $\mu$-stationary limit $\nu$.
% \end{defn}

% Any $\mu$-conjugate limit is a conjugate limit.

% \begin{lemma}
% Let $G = G_1 \times G_2$ be a direct product of two locally compact second countable groups. Let $p : G \to G_2$ denote the natural projection. The map
% \[ \pi : \Sub{G} \to \Sub{G_2}, \quad \pi :  H  = \overline{p(H)} \quad \forall H \in \Sub{G} \]
% is lower semi-continuous, in the sense that for every Chabauty open subset $\Omega \subset \Sub{G_2}$, any subgroup $H \in \pi^{-1}(\Omega)$ has $L \le p(H)$ for some $L \in \Omega$.
% \end{lemma}
% \begin{proof}

% \end{proof}

%We maintain these  notations and assumptions throughout the following lemmas.

\subsection*{Deformations and  Zariski density}

Recall that $\mathbb{Q}_p$ stands either for the field of $p$-adic numbers when $p$ is a prime number or for the field $\mathbb{R}$ when $p$ is $\infty$. We use $\mathrm{Ad}$ to denote the adjoint representation.

\begin{lemma}
\label{lemma:Z-dense using algebra}
Let $G$ be a standard semisimple group of type $p$, where $p$ is either $\infty$ or a prime number.   A closed subgroup $H \le G$ is $p$-Zariski-dense if and only if the projection of $H$ to each simple factor of $G$ is infinite and 
\[ \mathrm{span}_{\mathbb{Q}_p} \mathrm{Ad}(H) = \mathrm{span}_{\mathbb{Q}_p} \mathrm{Ad}(G). \]
\end{lemma}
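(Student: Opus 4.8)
The plan is to pass to the ambient $\mathbb{Q}_p$-algebraic group and reduce the statement to an assertion about Lie-algebra ideals. Write $\mathbf{G}$ for the connected adjoint semisimple $\mathbb{Q}_p$-group attached to $G$, i.e. the direct product over the simple factors of $G$ of the corresponding restrictions of scalars; then $G$ sits inside $\mathbf{G}(\mathbb{Q}_p)$ as an open finite-index, hence Zariski-dense, subgroup. Let $\mathfrak{g}=\mathrm{Lie}(\mathbf{G})=\bigoplus_i\mathfrak{h}_i$ be the decomposition into the simple ideals coming from the simple factors $\mathbf{G}_i$ of $\mathbf{G}$; since $\mathbf{G}$ is adjoint, $\mathrm{Ad}\colon\mathbf{G}\hookrightarrow\mathrm{GL}(\mathfrak{g})$ is a closed immersion over $\mathbb{Q}_p$. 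The single observation that handles the linear-span hypothesis is this: for any subset $S$ of $\mathrm{End}(\mathfrak{g})$ the subspace $\mathrm{span}_{\mathbb{Q}_p}(S)$ is Zariski-closed, hence equals $\mathrm{span}_{\mathbb{Q}_p}$ of the Zariski closure of $S$. Applied to $\mathrm{Ad}(G)$ this gives $\mathrm{span}_{\mathbb{Q}_p}\mathrm{Ad}(G)=\mathrm{span}_{\mathbb{Q}_p}\mathrm{Ad}(\mathbf{G})$, so in the span condition one may freely replace $G$ by $\mathbf{G}$.

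The forward implication is then immediate. If $H$ is $p$-Zariski-dense, then $\mathrm{Ad}(H)$ is Zariski-dense in $\mathrm{Ad}(\mathbf{G})$, so $\mathrm{span}_{\mathbb{Q}_p}\mathrm{Ad}(H)$, being Zariski-closed, already contains $\mathrm{Ad}(\mathbf{G})$ and therefore equals $\mathrm{span}_{\mathbb{Q}_p}\mathrm{Ad}(\mathbf{G})=\mathrm{span}_{\mathbb{Q}_p}\mathrm{Ad}(G)$; and the image of $H$ in each simple factor, being Zariski-dense in a positive-dimensional group, is infinite.

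For the converse — the substantive direction — suppose the projection of $H$ to each simple factor is infinite and $\mathrm{span}_{\mathbb{Q}_p}\mathrm{Ad}(H)=\mathrm{span}_{\mathbb{Q}_p}\mathrm{Ad}(G)$; set $\mathbf{L}:=\overline{H}^{\mathrm{Zar}}$ and $A:=\mathrm{span}_{\mathbb{Q}_p}\mathrm{Ad}(\mathbf{G})$, so that $\mathrm{span}_{\mathbb{Q}_p}\mathrm{Ad}(H)=A$ by the first paragraph, and aim for $\mathbf{L}=\mathbf{G}$. A $\mathbb{Q}_p$-subspace of $\mathfrak{g}$ is $\mathrm{Ad}(H)$-invariant if and only if it is $A$-invariant, if and only if it is $\mathrm{Ad}(\mathbf{G})$-invariant, and in characteristic zero with $\mathbf{G}$ connected this means precisely that it is an ideal of $\mathfrak{g}$ — hence a sum of some of the $\mathfrak{h}_i$. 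Now $\mathfrak{l}:=\mathrm{Lie}(\mathbf{L})$ is normalized by $\mathbf{L}$, in particular by $H$, so $\mathfrak{l}$ is $\mathrm{Ad}(H)$-invariant and therefore $\mathfrak{l}=\bigoplus_{i\in S}\mathfrak{h}_i$ for some set of indices $S$. On the other hand, the image of $\mathbf{L}$ in each factor $\mathbf{G}_i$ contains the image of $H$, which is infinite by hypothesis, so this image is positive-dimensional and its Lie algebra $\mathrm{pr}_i(\mathfrak{l})=\mathrm{Lie}(\mathrm{pr}_i(\mathbf{L}))$ is nonzero; hence $i\in S$ for every $i$. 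Thus $\mathfrak{l}=\mathfrak{g}$, and since $\mathbf{G}$ is connected this forces $\mathbf{L}^{\circ}=\mathbf{G}$, so $\mathbf{L}=\mathbf{G}$.

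The conceptual point — and the reason the argument is short — is that one never has to examine the internal structure of $\mathbf{L}$: its Lie algebra is automatically an $\mathrm{Ad}(H)$-submodule of $\mathfrak{g}$, the span hypothesis promotes any such submodule to a Lie ideal, and the infiniteness hypothesis then pins that ideal down to be all of $\mathfrak{g}$. I expect the remaining steps to be routine verifications rather than real obstacles: that $G$ is Zariski-dense in $\mathbf{G}$ (and, correspondingly, that ``infinite projection to a simple factor of $G$'' is the same condition as ``infinite projection to the matching $\mathbb{Q}_p$-simple factor of $\mathbf{G}$''); that $\mathrm{Ad}$ is a closed immersion, so $\overline{\mathrm{Ad}(H)}^{\mathrm{Zar}}=\mathrm{Ad}(\mathbf{L})$ and spans are computed correctly; that in characteristic zero a subspace of the adjoint module is $\mathrm{Ad}(\mathbf{G})$-invariant exactly when it is a Lie ideal, and that $\mathrm{Lie}$ commutes with taking images of homomorphisms of algebraic groups; and the classical fact that every ideal of a semisimple Lie algebra is a sum of simple ideals.
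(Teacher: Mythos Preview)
Your proof is correct and follows essentially the same route as the paper: show that $\mathfrak{l}=\mathrm{Lie}(\overline{H}^{\mathrm{Zar}})$ is $\mathrm{Ad}(H)$-invariant, use the span hypothesis to promote this to $\mathrm{Ad}(G)$-invariance (hence $\mathfrak{l}$ is an ideal of $\mathfrak{g}$), and use the infinite-projection hypothesis to force $\mathfrak{l}=\mathfrak{g}$. The only cosmetic difference is in the last step: you conclude $\mathbf{L}^{\circ}=\mathbf{G}$ directly from $\mathfrak{l}=\mathfrak{g}$ and connectedness of $\mathbf{G}$, whereas the paper phrases it as ``the Zariski closure is Hausdorff-open'' and then cites \cite[Lemma~3.2]{platonov1993algebraic} to obtain Zariski density.
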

\begin{proof}
To ease our notation denote  $k = \mathbb{Q}_p$. Let $\mathfrak{g}$ denote the semisimple $k$-Lie algebra of     $G$ where the latter is regarded as a $k$-analytic group. Set $A = \mathrm{span}_{k} \mathrm{Ad}(G)$,
which is an associative $k$-subalgebra 
of $\mathrm{End}_k(\mathfrak{g})$. Consider a closed subgroup $H \le G$ 
such that   $\mathrm{Ad}(H)$ spans the $k$-algebra $A$  and such that $H$ has an infinite projection to each simple factor of $G$. Let $\mathfrak{h}$ denote the Lie algebra of the $p$-Zariski closure of $H$. Certainly $\mathfrak{h}$ is an  $\mathrm{Ad}(H)$-invariant  subalgebra of $\mathfrak{g}$. The projection of the subalgebra $\mathfrak{h}$ to each simple ideal of $\mathfrak{g}$ is non-zero. Hence $\mathfrak{h} = \mathfrak{g} $ and so the $p$-Zariski closure of the subgroup $H$ is  open (in the Hausdorff topology arising from the $k$-analytic structure). Therefore $H$ is $p$-Zariski dense by \cite[Lemma 3.2]{platonov1993algebraic}, as required.  The converse direction of the lemma is immediate.
\end{proof}

\begin{lemma}\label{lem:Z-dense}
Let $G$ be a standard semisimple group of type $p$, where $p$ is either $\infty$ or a prime number.    The subset of $\Sub{G}$ consisting of $p$-Zariski-dense subgroups is open in the Chabauty topology.
\end{lemma}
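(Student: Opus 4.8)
The plan is to read off the result from the algebraic characterization of $p$-Zariski density in Lemma~\ref{lemma:Z-dense using algebra}. Write $G=\prod_{j=1}^n G_j$ for the decomposition into simple factors, let $\pi_j\colon G\to G_j$ be the projections, let $\mathfrak g=\bigoplus_j\mathfrak g_j$ be the $\mathbb{Q}_p$-Lie algebra of $G$ (so that $\mathrm{Ad}(g)$ acts block-diagonally with $j$-th block $\mathrm{Ad}_j(\pi_j(g))$), and set $A=\mathrm{span}_{\mathbb{Q}_p}\mathrm{Ad}(G)$ and $A_j=\mathrm{span}_{\mathbb{Q}_p}\mathrm{Ad}_j(G_j)$. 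By Lemma~\ref{lemma:Z-dense using algebra} a closed subgroup $H\le G$ is $p$-Zariski dense if and only if (a) $\mathrm{span}_{\mathbb{Q}_p}\mathrm{Ad}(H)=A$, and (b) $\pi_j(H)$ is infinite for every $j$. I will show that condition (a) defines a Chabauty-open set, that each condition in (b) arising from a \emph{non-Archimedean} simple factor defines a Chabauty-open set, and that each condition in (b) arising from an \emph{Archimedean} simple factor is automatically satisfied once (a) holds. Since the $p$-Zariski dense subgroups then form a finite intersection of Chabauty-open sets, they form an open set.

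For (a): if $\mathrm{span}_{\mathbb{Q}_p}\mathrm{Ad}(H_0)=A$, choose $h_1,\dots,h_d\in H_0$ with $d=\dim_{\mathbb{Q}_p}A$ such that $\mathrm{Ad}(h_1),\dots,\mathrm{Ad}(h_d)$ is a basis of $A$. Linear independence of $\mathrm{Ad}(g_1),\dots,\mathrm{Ad}(g_d)$ is an open condition on $(g_1,\dots,g_d)\in G^d$ (non-vanishing of a suitable $d\times d$ minor of the coordinate matrix with respect to a fixed basis of $A\subseteq\mathrm{End}_{\mathbb{Q}_p}(\mathfrak g)$, a continuous function which is nonzero at $(h_1,\dots,h_d)$), so pick open neighbourhoods $\mathcal O_i\ni h_i$ in $G$ on which independence persists. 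Then $\{H\in\Sub G : H\cap\mathcal O_i\ne\emptyset\ \text{for all}\ i\}$ is a Chabauty-open neighbourhood of $H_0$, and any $H$ in it contains $g_i\in H\cap\mathcal O_i$ with $\mathrm{Ad}(g_i)$ independent, whence $\dim_{\mathbb{Q}_p}\mathrm{span}\,\mathrm{Ad}(H)\ge d$; since $\mathrm{Ad}(H)\subseteq\mathrm{Ad}(G)\subseteq A$ always, equality holds and $H$ satisfies (a).

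For the non-Archimedean factors: if $G_j$ is of type a prime number then $G_j$ embeds into $\mathrm{GL}_N(k_j)$ for a non-Archimedean local field $k_j$, and finite subgroups of $\mathrm{GL}_N(k_j)$ have order bounded by a constant $M_j$ (a finite subgroup stabilises a lattice, hence is conjugate into $\mathrm{GL}_N(\mathcal O_{k_j})$, and a sufficiently deep principal congruence subgroup is torsion-free). Hence $\pi_j(H)$ is infinite if and only if $|\pi_j(H)|>M_j$, i.e.\ $H$ contains $M_j+1$ elements with pairwise distinct images under $\pi_j$; this last condition is Chabauty-open, since given such elements in $H_0$ one chooses small neighbourhoods in $G$ with pairwise disjoint $\pi_j$-images. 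For the Archimedean factors: if $G_j$ is of type $\infty$, every compact subgroup of $G_j$ lies in a maximal compact $K$, and $\mathrm{Ad}_j(K)$ preserves a proper nonzero subspace of $\mathfrak g_j$ (the $\mathfrak k$-part of a Cartan decomposition when $k_j=\mathbb{R}$, or a compact real form inside $\mathfrak g_j$ when $k_j=\mathbb{C}$; both summands are nonzero because $\mathbf G_j$ is isotropic, hence non-compact, and simple). Therefore $\mathrm{span}_{\mathbb{Q}_p}\mathrm{Ad}_j(C)\subsetneq A_j$ for every compact $C\le G_j$, whereas condition (a) forces $\mathrm{span}_{\mathbb{Q}_p}\mathrm{Ad}_j(\pi_j(H))=A_j$ (the $j$-th block projection of $A$ being exactly $A_j$); so $\pi_j(H)$ is not relatively compact, in particular infinite.

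The point requiring real care is condition (b): having infinite projection to a factor is not visibly an open condition, and it genuinely has to be argued differently at Archimedean and non-Archimedean places. Over non-Archimedean $G_j$ it rests on the uniform bound on orders of finite subgroups, and one cannot instead invoke (a), since there a compact subgroup may have full adjoint span (e.g.\ $\mathrm{PGL}_n(\mathbb{Z}_p)\le\mathrm{PGL}_n(\mathbb{Q}_p)$ is Zariski dense). Over Archimedean $G_j$ it is, conversely, subsumed by (a) via the $K$-invariant splitting of $\mathfrak g_j$. Once these are in place, the conclusion is just that the $p$-Zariski dense subgroups are $\{H:\mathrm{span}_{\mathbb{Q}_p}\mathrm{Ad}(H)=A\}\cap\bigcap_{j\ \mathrm{non\text{-}Arch}}\{H:|\pi_j(H)|>M_j\}$, a finite intersection of Chabauty-open sets.
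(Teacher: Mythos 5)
Your proof is correct, and its skeleton is the paper's: reduce to the algebraic characterization of Lemma~\ref{lemma:Z-dense using algebra}, show that the condition $\mathrm{span}_{\mathbb{Q}_p}\mathrm{Ad}(H)=\mathrm{span}_{\mathbb{Q}_p}\mathrm{Ad}(G)$ is Chabauty-open by fixing finitely many spanning elements and perturbing them, and then handle infiniteness of the projections to the simple factors by a case split on the place. Your non-Archimedean argument is exactly the paper's (it cites Serre for the uniform bound on orders of finite subgroups; your lattice-stabilization derivation of that bound is fine), and the reformulation ``infinite $\Leftrightarrow$ more than $M_j$ elements with distinct images,'' which is visibly open, is the content of the paper's ``straightforward'' remark. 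Where you genuinely diverge is the Archimedean case. The paper uses the Jordan--Schur theorem: there is $n$ such that every finite subgroup of $G$ has an abelian normal subgroup of index at most $n$, so one picks $\alpha,\beta\in H$ with $[\alpha^{n!},\beta^{n!}]$ projecting non-trivially to each factor --- an explicitly Chabauty-open condition that rules out finite projections. You instead show the span condition already subsumes condition (b) at Archimedean places: a relatively compact projection lies in a maximal compact $K$, and $\mathrm{Ad}(K)$ preserves both summands of a Cartan decomposition, so its span is a proper subspace of $A_j$ (otherwise $\mathfrak{k}$ would be an $\mathrm{Ad}(G_j)$-invariant subspace, i.e.\ a proper non-zero ideal of the simple $\mathfrak{g}_j$), while the block projection of (a) forces $\mathrm{span}_{\mathbb{Q}_p}\mathrm{Ad}_j(\pi_j(H))=A_j$. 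Your route avoids Jordan--Schur and yields the slightly stronger conclusion that the Archimedean projections are not even relatively compact (a fact the paper records separately in a later remark); the paper's route works directly with group elements and needs no bookkeeping with block projections. Both arguments are valid.
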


\begin{proof}
%Assume to begin with that the group $L$ is trivial. 
Denote $k = \mathbb{Q}_p$ and  set $A = \mathrm{span}_k \mathrm{Ad}(G)$ as in the proof of Lemma \ref{lemma:Z-dense using algebra}. 
Let $H \le G $ be a closed $p$-Zariski-dense subgroup. 
We know by Lemma \ref{lemma:Z-dense using algebra} that $\mathrm{Ad}(H)$ spans $A$ over $k$. Thus we may find a finite set of elements $h_1,\ldots,h_m \in H$ for some $m \in \mathbb{N}$ such that $\mathrm{span}_k \{ \mathrm{Ad}(h_i) \} = A$.   It follows that  there is some    Chabauty neighborhood of $H$ such that any group $H'$ in that neighborhood    has   $\mathrm{span}_k \text{Ad}(H') = A$.  

In order to deduce that every such subgroup $H'$ is $p$-Zariski-dense,    it is enough by Lemma \ref{lemma:Z-dense using algebra} to show that the projection of $H'$ to any simple factor of $G$ is infinite. In doing so, we may pass to smaller Chabauty neighborhood of the subgroup $H$, and assume 
  that $H'$ lies in that neighborhood.

In the Archimedean case one may argue as follows. By the Jordan--Schur theorem \cite{jordan1878memoire}, there is some $n \in \mathbb{N}$ such that every finite subgroup of $G$ admits a normal abelian subgroup of index at most $n$ (for a more recent treatment see e.g. \cite[Theorem 8.29]{raghunathan1972discrete}). Since $H$ is real-Zariski-dense, there are elements $\alpha,\beta\in H$ such that the projection of $[\alpha^{n!},\beta^{n!}]$ to every simple factor of $G$ is non-trivial. This property is preserved in a small Chabauty neighborhood of $H$, and guarantees that the projection to each factor is infinite.

In the non-Archimedean case the above claim is straightforward, since there is an upper bound on the order of  finite subgroups \cite[Theorem 1 on p. 124]{serre2009lie}. 
\end{proof}

Using the above Lemma \ref{lem:Z-dense} combined with the definition of the Chabauty topology we obtain the following.

\begin{cor}\label{cor:Z-dense-ss}
Let $G$ be a standard semisimple group. The subset  of $\Sub{G} $ consisting of fully Zariski dense subgroups (in the sense of Definition \ref{def:fully Zariksi dense})   is open in the Chabauty topology.
\end{cor}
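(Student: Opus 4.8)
The plan is to deduce Corollary~\ref{cor:Z-dense-ss} from Lemma~\ref{lem:Z-dense} applied separately to each $p$-component of $G$, using that the Chabauty topology interacts well with projections to direct factors. Recall that a standard semisimple group $G$ decomposes as the direct product $G = \prod_p G^{(p)}$ over the finitely many primes $p$ (including $p=\infty$) that occur among its simple factors, and that a subgroup $H\le G$ is fully Zariski dense precisely when, for each such $p$, the closure $\overline{\pi^{(p)}(H)}$ of its projection to $G^{(p)}$ is $p$-Zariski dense in $G^{(p)}$.

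First I would record the elementary topological fact that, for a direct product $G = A \times B$ of locally compact second countable groups, the map $\Sub{G} \to \Sub{A}$ sending $H \mapsto \overline{\pi_A(H)}$ — where $\pi_A$ is the projection — is continuous in the Chabauty topology; the same holds for the projection onto any finite collection of factors. This is standard: if $H_n \to H$ in $\Sub G$, then any element of $\overline{\pi_A(H)}$ is a limit of elements $\pi_A(h)$ with $h\in H$, each of which is approximated by elements of the $H_n$, so the projected sequence converges appropriately; conversely Chabauty limits of the projections cannot escape $\overline{\pi_A(H)}$ because any convergent net of elements $\pi_A(h_n)$, $h_n\in H_n$, lifts (after passing to the other factor and using local compactness) to something whose $A$-component lies in $\overline{\pi_A(H)}$. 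I would state this as a one-line observation rather than belabor it, since the paper already treats Chabauty limits and projections freely.

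With that in hand, the argument is immediate. By Lemma~\ref{lem:Z-dense}, for each prime $p$ (and $p=\infty$) occurring in $G$, the set $\mathcal{D}_p \subset \Sub{G^{(p)}}$ of $p$-Zariski-dense closed subgroups of $G^{(p)}$ is Chabauty-open. Let $\Phi_p : \Sub{G} \to \Sub{G^{(p)}}$ be the continuous map $H \mapsto \overline{\pi^{(p)}(H)}$ discussed above. Then the set of fully Zariski dense subgroups of $G$ is exactly $\bigcap_p \Phi_p^{-1}(\mathcal{D}_p)$, a finite intersection of preimages of open sets under continuous maps, hence open.

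There is no real obstacle here; the only point requiring a modicum of care is the continuity of the "closure of the projection" map on Chabauty space, and even that is routine given the machinery already set up in \S\ref{sec:prelim} and the finiteness of the index set $\{p\}$. I would therefore keep the write-up to a few sentences, citing Lemma~\ref{lem:Z-dense} and the product decomposition $G=\prod_p G^{(p)}$ from \S\ref{sec:standard semisimple group}.
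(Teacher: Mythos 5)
Your overall strategy --- reduce to each $p$-component and invoke Lemma~\ref{lem:Z-dense} --- is the right one, and is essentially what the paper means by ``combined with the definition of the Chabauty topology.'' But the one step you yourself flag as needing ``a modicum of care'' is false as stated: for a general product $G=A\times B$ of locally compact second countable groups, the map $\Sub{G}\to\Sub{A}$, $H\mapsto\overline{\pi_A(H)}$, is \emph{not} continuous in the Chabauty topology. The failure is exactly at the point you wave at parenthetically: if $H_n\to H$ in $\Sub{G}$ and $h_n\in H_n$ are elements whose $A$-components converge, there is in general no way to ``lift back,'' because the $B$-components of the $h_n$ may leave every compact subset of $B$; the second condition in the definition of Chabauty convergence then gives no information, and the closure of the projection of the limit can be strictly smaller than the limit of the closures of the projections. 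The standard illustration is $G=\mathbb{R}\times\mathbb{R}$ with $H_n$ the line of slope $n$: every $H_n$ projects onto the first factor, yet the Chabauty limit is $\{0\}\times\mathbb{R}$, which projects to $\{0\}$. So ``finite intersection of preimages of open sets under continuous maps'' is not a valid justification, and the map $\Phi_p$ you introduce only satisfies the one-sided approximation property (every element of $\pi^{(p)}(H)$ is approximated by elements of $\pi^{(p)}(H_n)$), not continuity.

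The corollary is nevertheless true, and your reduction is easily repaired using only the half of Chabauty convergence that does hold: for any finite subset $F\subset H$ and any neighborhoods of its elements, every $H'$ in a suitable Chabauty neighborhood of $H$ meets each of those neighborhoods. The key point is that the proof of Lemma~\ref{lem:Z-dense} establishes more than openness of your set $\mathcal{D}_p$ of $p$-Zariski-dense subgroups of $G^{(p)}$: it shows that $p$-Zariski density of $D\le G^{(p)}$ is witnessed by finitely many elements of $D$ (finitely many elements whose images under $\mathrm{Ad}$ span $\mathrm{span}_{\mathbb{Q}_p}\mathrm{Ad}(G^{(p)})$, together with finitely many elements certifying that the projections to the simple factors are infinite), and that any closed subgroup of $G^{(p)}$ containing a sufficiently small perturbation of these witnesses is again $p$-Zariski dense. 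Given a fully Zariski dense $H\le G$, choose such witnesses for each $p$ inside $\pi^{(p)}(H)$ and lift them to finitely many elements of $H$. Any $H'$ Chabauty-close to $H$ contains elements close to these, whose projections to $G^{(p)}$ are small perturbations of the witnesses; hence $\pi^{(p)}(H')$ is $p$-Zariski dense for every $p$, and $H'$ is fully Zariski dense. This argument uses only the ``approximation from within'' direction and avoids the false continuity claim entirely.
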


%\subsection*{Centralizers}
%\begin{lemma}
%\label{lemma:having trivial centralizer is open condition}
%Let $g_1, \ldots, g_n \in G_1$ be elements such that $\bigcap \mathrm{C}_{G_1}(x_i) = \{e\}$. Then there is an identity neighborhood $U \subset G_1$ such that every choice of elements $h_i \in U g_i$ satisfies $\bigcap \mathrm{C}_{G_1}(h_i) = \{e\}$.
%\end{lemma}
%\begin{proof}
%Regard $G_1$ as an algebraic group. For each point $g \in G_1$ consider the ideal $I_g$ generated by the polynomials corresponding to the matrix equation $gx=xg$. The ideal $ J = \sum_{i=1}^n I_{g_i}$ corresponds to the trivial subgroup $Z(J) = \{e\}$. There is a finite set of polynomials $P \subset J$ such that $Z(P) = \{e\}$. There is some $d \in \mathbb{N}$ such that  $\deg p \le d$ for every $p \in P$. Every polynomial $p \in P$ depends continuously on the element $g$. Therefore there is some identity neighborhood $U$ such that for all $h_i \in U g_i$ the modified set of polynomials has the same span in the $(d+1)$-dimensional space of polynomials of degree $\le d$. The result follows.
%\end{proof}

\subsection*{Subgroups of product groups}

We assume for the remainder of \S\ref{sec:geometry of discrete subgroups of products} that the  group $G$ is of the form $G = G_1 \times G_2$ where $G_1$ and $G_2$ are both non-trivial standard semisimple   subgroups.   Let $\mathrm{pr}_i : G \to G_i$ for $ i\in\{1,2\}$ denote the natural projections.

\begin{lemma}
\label{lemma:no conjugate limit has Zariski dense intersections}
Let $\Delta$ be a discrete subgroup of $G$.    If $\Lambda \in \overline{\Delta^G}$ is a discrete conjugate limit  whose intersection with $G_2$ is fully Zariski dense   then   $\Lambda \cap G_1$ is a subgroup of some conjugate limit of $\Delta \cap G_1$.
\end{lemma}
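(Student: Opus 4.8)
The plan is to fix a conjugating sequence $g_n=(a_n,b_n)\in G_1\times G_2$ with $\Delta^{g_n}\to\Lambda$, pass to a subsequence along which $(\Delta\cap G_1)^{g_n}=(\Delta\cap G_1)^{a_n}$ converges in $\mathrm{Sub}(G)$ to some $\Delta_1'\le G_1$ — a genuine conjugate limit of $\Delta\cap G_1$, since conjugating a subgroup of $G_1$ by elements of $G_2$ is trivial — and then prove the containment $\Lambda\cap G_1\subseteq\Delta_1'$. Set $N:=\Lambda\cap G_2$. Using Noetherianity of the Zariski topology together with Lemma~\ref{lemma:Z-dense using algebra}, I would first fix finitely many elements $\nu^{(1)},\dots,\nu^{(m)}\in N$, say $\nu^{(j)}=(e,\mu_2^{(j)})$, generating a fully Zariski dense subgroup of $G_2$; for each $j$ pick $\epsilon_n^{(j)}=(u_n^{(j)},v_n^{(j)})\in\Delta$ with $(\epsilon_n^{(j)})^{g_n}\to\nu^{(j)}$, so that $(u_n^{(j)})^{a_n}\to e$ and $(v_n^{(j)})^{b_n}\to\mu_2^{(j)}$. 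By the openness of full Zariski density (Corollary~\ref{cor:Z-dense-ss}) the subgroups $H_n:=\langle(v_n^{(j)})^{b_n}:j\rangle\le G_2$ are fully Zariski dense for all large $n$.

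Now fix $\gamma=(\gamma_1,e)\in\Lambda\cap G_1$ and pick $\delta_n=(x_n,y_n)\in\Delta$ with $\delta_n^{g_n}\to\gamma$, i.e. $x_n^{a_n}\to\gamma_1$ and $y_n^{b_n}\to e$. It is enough to prove that $y_n=e$ for all large $n$, for then $\delta_n\in\Delta\cap G_1$ and $(x_n)^{a_n}\to\gamma_1$ gives $\gamma\in\Delta_1'$. Since $\gamma\in G_1$ commutes with every $w(\nu^{(1)},\dots,\nu^{(m)})\in G_2$, for each fixed word $w$ the commutator $c_n^{(w)}:=[\delta_n,w(\epsilon_n^{(1)},\dots,\epsilon_n^{(m)})]\in\Delta$ satisfies $(c_n^{(w)})^{g_n}=[\delta_n^{g_n},(w(\epsilon_n^{(\cdot)}))^{g_n}]\to e$. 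Let $\mathcal Z\subset G$ be a Zassenhaus (Margulis) neighborhood: for every discrete subgroup $H\le G$ the group $\langle H\cap\mathcal Z\rangle$ lies in a connected nilpotent subgroup of $G$; such $\mathcal Z$ exists because $G$ is a product of real and $p$-adic semisimple analytic groups. A diagonal argument over word-balls then produces $k(n)\to\infty$ with $(c_n^{(w)})^{g_n}\in\mathcal Z$ for all $|w|\le k(n)$ and all large $n$; hence these commutators all lie in $(\Delta^{g_n})_{\mathcal Z}\le\mathbf U_n$ for a connected nilpotent — in particular proper — subgroup $\mathbf U_n\le G$. Projecting to $G_2$ gives $[\,y_n^{b_n},\,h\,]\in\mathbf V_n:=\mathrm{pr}_2(\mathbf U_n)$ for every $h$ in the radius-$k(n)$ ball $B_{k(n)}\subseteq H_n$, where $\mathbf V_n$ is a proper connected nilpotent subgroup of $G_2$.

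When $\mathbf V_n$ is trivial this closes immediately: $y_n^{b_n}$ then centralizes the generators $(v_n^{(j)})^{b_n}$, hence the Zariski dense $H_n$, so $y_n^{b_n}\in Z(G_2)=\{e\}$ and $y_n=e$. In general one must rule out $y_n^{b_n}\ne e$; if it held, $C_{G_2}(y_n^{b_n})$ would be a proper algebraic subgroup, and the relations $[y_n^{b_n},h]\in\mathbf V_n$ over the growing balls of the Zariski dense $H_n$ should force $y_n^{b_n}$ to act trivially, modulo a proper subgroup, on a Zariski dense subset of $G_2$, hence to be central — a contradiction.

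Making this last step precise is the main obstacle. The difficulty is that the set $\{h\in G_2:[y_n^{b_n},h]\in\mathbf V_n\}$ is Zariski closed but not a priori a subgroup, so one cannot directly exploit Zariski density of $H_n$. The intended remedy is to choose $\mathcal Z$ small enough relative to the finitely many fixed $\mu_2^{(j)}$ that conjugation by $(\epsilon_n^{(j)})^{g_n}$ preserves $(\Delta^{g_n})_{\mathcal Z}$, so that $H_n$ normalizes $\mathbf V_n$ (possibly after a controlled enlargement of $\mathbf V_n$), whence the above set is an honest Zariski closed subgroup containing the non-degenerate generating set $\{(v_n^{(j)})^{b_n}\}$ and therefore equals $G_2$; this part requires genuine care. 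Everything else — the compactness extractions, the diagonal exhaustion, and the algebraic manipulations — is routine bookkeeping with the Chabauty topology and the two cited Zariski-density lemmas.
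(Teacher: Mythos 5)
There is a genuine gap, and it is exactly where you flag it: the passage from \enquote{the commutators lie in a proper connected nilpotent subgroup $\mathbf V_n$} to \enquote{$y_n^{b_n}$ is central} does not go through, because, as you note, $\{h\in G_2:[y_n^{b_n},h]\in\mathbf V_n\}$ is not a subgroup and the proposed normalization fix is not carried out. The underlying problem is that the whole Zassenhaus detour is aiming at too weak a conclusion. The hypothesis you never use is the one that makes the lemma work: $\Lambda$ is assumed to be a \emph{discrete} conjugate limit. Since $\Lambda=\lim_n\Delta^{g_n}$ is discrete, there is a single identity neighborhood $U\subset G$ with $\Delta^{g_n}\cap U=\{e\}$ for all large $n$ (uniform discreteness of the approximating conjugates; this uses that $G$ has no small discrete subgroups). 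Now for each fixed generator $j$, the commutator $[\delta_n^{g_n},(\epsilon_n^{(j)})^{g_n}]$ lies in $\Delta^{g_n}$ and converges to $[\gamma,\nu^{(j)}]=e$, hence lands in $U\cap\Delta^{g_n}=\{e\}$ and is \emph{exactly} trivial for large $n$. You are then immediately in your \enquote{$\mathbf V_n$ trivial} case: $y_n^{b_n}$ centralizes the fully Zariski dense $H_n$, so $y_n^{b_n}\in Z(G_2)=\{e\}$ and $y_n=e$. This is precisely the paper's argument; no words $w$, no Zassenhaus neighborhood, and no nilpotent subgroups are needed. Your setup (finite generating set of $\Lambda\cap G_2$ stable under small deformations via Corollary~\ref{cor:Z-dense-ss}, reduction to showing $y_n=e$) matches the paper's, so once you replace the nilpotent-subgroup step by the uniform-discreteness observation the proof is complete.

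One smaller caution: Corollary~\ref{cor:Z-dense-ss} gives openness of full Zariski density in $\Sub{G_2}$, i.e.\ for \emph{subgroups}, and the paper phrases the stability statement for the group generated by a small deformation of the finite set $S$; your $H_n=\langle (v_n^{(j)})^{b_n}\rangle$ is used the same way, which is fine, but the justification should go through the generated subgroups rather than the finite sets themselves.
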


The terminology \emph{conjugate limit} was introduced in Definition \ref{def:conjugate limit}.

\begin{proof}[Proof of Lemma \ref{lemma:no conjugate limit has Zariski dense intersections}]
Let $\Lambda$ be a discrete conjugate limit of $\Delta$, say $\Lambda = \lim_j \Delta^{g_j}$ in the Chabauty topology for some sequence of elements $g_j \in G$.
Since $\Lambda$ is discrete there is some identity neighborhood $U \subset G$ such that $\Delta^{g_j}\cap U=\{e\}$ for all $j \in \mathbb{N}$.

Assume that the subgroup $\Lambda_2 = \Lambda \cap G_2$ is fully Zariski dense in $G_2$. Corollary \ref{cor:Z-dense-ss} allows us to find a finite subset $S \subset \Lambda_2$ such that any sufficiently small deformation of the set $S$ inside $G_2$ generates a subgroup  which is still fully Zariski dense in $G_2$. For all sufficiently large $j$  we find finite subsets $R_{j} \subset \Delta^{g_j}$  such that the projection to $G_2$  of the group generated by $R_{j}$ is fully Zariski-dense  and  $R_{j}$ converge to $ S$ in the obvious sense.

Consider any particular element  $h \in \Lambda \cap G_1$. There are elements $h_j \in \Delta^{g_j}$ such that the sequence $h_j$ converges to the element $h$.
Note that for all $j$ sufficiently large we have $\left[R_j,h_j\right] \subset U \cap \Delta^{g_j} = \{e\}$. This  implies that $h_j \in C_{G}(\left<R_j\right>)$.  Since the projection of the subgroup $\left<R_j\right>$ to $G_2$ is fully Zariski-dense, it follows that $h_j \in (\Delta^{g_j} \cap G_1) = (\Delta \cap G_1)^{g_j}$ for all $j$ sufficiently large. This shows that the intersection $\Lambda \cap G_1$ is contained in any conjugate limit of the intersection $\Delta \cap G_1$ obtained as an accumulation point of the sequence $(\Delta \cap G_1)^{g_j}$, as required.
\end{proof}

\begin{cor}
\label{cor:special cases of conjugate limit with Zariski dense intersection}
Let $\Delta$ be a discrete subgroup of $G$.   Let  $\Lambda \in \overline{\Delta^G}$ be a discrete conjugate limit of $\Delta$ whose intersection with $G_2$ is fully Zariski dense.  
\begin{enumerate}
    \item If $\Delta \cap G_1$ is trivial then $\Lambda \cap G_1$ is trivial.
    \item If $\Delta \cap G_1$ is not fully Zariski dense in $G_1$ then $\Lambda \cap G_1$ is not fully Zariski dense in $G_1$.
\end{enumerate}
\end{cor}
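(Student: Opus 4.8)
The plan is to derive both statements as immediate corollaries of Lemma~\ref{lemma:no conjugate limit has Zariski dense intersections}, which tells us that under the stated hypothesis (that $\Lambda \in \overline{\Delta^G}$ is discrete and $\Lambda \cap G_2$ is fully Zariski dense) the intersection $\Lambda \cap G_1$ embeds as a subgroup of some conjugate limit $\Theta$ of $\Delta \cap G_1$. More precisely, the proof of that lemma shows that $\Lambda \cap G_1 \le \Theta$, where $\Theta$ is an accumulation point in $\Sub{G_1}$ of the sequence $(\Delta \cap G_1)^{g_j}$ for the sequence $g_j \in G$ realizing $\Lambda = \lim_j \Delta^{g_j}$. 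So the whole point is to observe that both triviality and failure of full Zariski density are properties of a subgroup of $G_1$ that pass to subgroups, and that the first is preserved under Chabauty limits of conjugates while the second is preserved in the relevant direction.

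For part (1): if $\Delta \cap G_1$ is trivial, then every conjugate $(\Delta \cap G_1)^{g_j}$ is trivial, hence the only possible accumulation point in $\Sub{G_1}$ is the trivial subgroup, so $\Theta = \{e\}$; since $\Lambda \cap G_1 \le \Theta$ we conclude $\Lambda \cap G_1 = \{e\}$. (One should note here that a $G$-conjugate of a subgroup of $G_1$ need not lie in $G_1$, but the argument in Lemma~\ref{lemma:no conjugate limit has Zariski dense intersections} already handles this: it produces $\Lambda \cap G_1$ inside an accumulation point of the genuinely $G_1$-valued sequence $(\Delta^{g_j}\cap G_1) = (\Delta\cap G_1)^{g_j}$, using that $\Delta \cap G_1 \lhd \Delta$ so that its $G$-conjugates by elements projecting appropriately stay in $G_1$ — in fact conjugation by $g_j = (g_j^{(1)}, g_j^{(2)})$ sends $\Delta\cap G_1$ into $G_1$ regardless.)

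For part (2): this is the contrapositive form of the same mechanism. Assume toward contradiction that $\Lambda \cap G_1$ is fully Zariski dense in $G_1$. Since $\Lambda \cap G_1 \le \Theta$, the subgroup $\Theta$ is then also fully Zariski dense in $G_1$ (a supergroup of a Zariski-dense subgroup is Zariski-dense in each component, by Definition~\ref{def:fully Zariksi dense}). Now $\Theta$ is a Chabauty accumulation point of $(\Delta\cap G_1)^{g_j}$, and by Corollary~\ref{cor:Z-dense-ss} the set of fully Zariski dense subgroups of $G_1$ is Chabauty-open; hence $(\Delta\cap G_1)^{g_j}$ is fully Zariski dense for all large $j$. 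But conjugation is an isomorphism of $G_1$ onto itself (the $G$-conjugation restricted to $G_1$, composed with the projection $\mathrm{pr}_1$, acts on $\Delta\cap G_1 \le G_1$ as conjugation by the $G_1$-component), so full Zariski density of $(\Delta\cap G_1)^{g_j}$ forces full Zariski density of $\Delta\cap G_1$ itself — contradicting the hypothesis. Therefore $\Lambda \cap G_1$ is not fully Zariski dense in $G_1$.

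The only genuine subtlety — and what I would flag as the main point to get right rather than an obstacle — is the bookkeeping of how $G$-conjugates of the normal subgroup $\Delta\cap G_1$ sit inside $G_1$ and why restriction of conjugation behaves like an automorphism of $G_1$ on these subgroups; all of this is already implicit in the proof of Lemma~\ref{lemma:no conjugate limit has Zariski dense intersections}, so the corollary proof is essentially a two-line invocation plus the openness statement of Corollary~\ref{cor:Z-dense-ss}. No new estimates or constructions are needed.
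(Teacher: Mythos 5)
Your proof is correct and follows the same route as the paper's: part (1) is read off directly from Lemma~\ref{lemma:no conjugate limit has Zariski dense intersections}, and part (2) combines that lemma with the Chabauty-openness of full Zariski density from Corollary~\ref{cor:Z-dense-ss}. The additional bookkeeping you supply --- that $G$-conjugates of $\Delta\cap G_1$ stay inside the normal factor $G_1$ and that conjugation preserves full Zariski density --- is accurate and merely makes explicit what the paper's two-sentence proof leaves implicit.
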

\begin{proof}
Both parts of the statement follow from the previous Lemma \ref{lemma:no conjugate limit has Zariski dense intersections}. Indeed, part (1) follows immediately. To deduce part (2) we need to further rely on the fact that  being fully Zariski dense is a Chabauty open condition; see Corollary \ref{cor:Z-dense-ss}.
\end{proof}

\subsection*{Random subgroups invariant for a single factor}

Recall that $G$ is a  standard semisimple group  which is  assumed to be a direct product $G = G_1 \times G_2$ of two standard semisimple factors with projections $\mathrm{pr}_i : G \to G_i$.

\begin{lemma}
\label{lemma:types of intersection}
Let $\Delta_1 \le G_1$ be a closed, not relatively compact and fully Zariski dense subgroup.
 Let $\nu$ be a $\Delta_1$-invariant probability measure on $\mathrm{Sub}(G)$ such that $\nu$-almost every subgroup  intersects $G_1$ trivially and projects to $G_2$ discretely.  Then $\nu$-almost every subgroup is contained in $G_2$.
\end{lemma}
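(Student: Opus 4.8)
The statement asserts that a $\Delta_1$-invariant random subgroup $\nu$ of $G = G_1 \times G_2$, whose typical member meets $G_1$ trivially and projects discretely to $G_2$, is actually supported on subgroups of $G_2$. Write $H \le G$ for a $\nu$-random subgroup. The key geometric fact I want to exploit is that $\mathrm{pr}_1(H)$ is a subgroup of $G_1$ whose behaviour under conjugation by $\Delta_1$ is controlled by invariance of $\nu$, while its ``transverse'' structure to $G_2$ must be rigid because $\mathrm{pr}_2(H)$ is discrete. The plan is to extract from $H$ its intersection with a small identity neighbourhood and use discreteness of the $G_2$-projection to produce, $\nu$-almost surely, elements of $H$ lying arbitrarily close to $G_1 \times \{e\}$; then $\Delta_1$-invariance together with the full Zariski density and non-relative-compactness of $\Delta_1$ forces these to actually lie in $G_1$, contradicting the hypothesis $H \cap G_1 = \{e\}$ unless $\mathrm{pr}_1(H)$ is trivial.

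\textbf{Main steps.} First I would pass to an ergodic decomposition of $\nu$ with respect to $\Delta_1$, so that we may assume $\nu$ is $\Delta_1$-ergodic; it suffices to treat this case. Next, consider the measurable map $H \mapsto \overline{\mathrm{pr}_1(H)} \in \Sub{G_1}$. This pushes $\nu$ forward to a $\Delta_1$-invariant (hence, by ergodicity, essentially constant — or at least, supported on a single $\Delta_1$-orbit-closure type) random subgroup of $G_1$; call its essential value $L \le G_1$. The goal becomes: show $L = \{e\}$. Suppose not. Since $\mathrm{pr}_2(H)$ is discrete, the kernel $H \cap (G_1 \times \{e\})$ is trivial, so $H$ is the graph of a partially-defined homomorphism; more usefully, $H$ is a discrete subgroup of $G$ whose first projection has closure $L \ne \{e\}$. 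Now I would use the non-relative-compactness and full Zariski density of $\Delta_1$: conjugating $H$ by a sequence in $\Delta_1$ escaping to infinity in $G_1$, and using a Chabauty compactness / limit argument à la the proof of Lemma~\ref{lemma:no conjugate limit has Zariski dense intersections}, one produces conjugate limits of $H$ whose $G_1$-part degenerates. Because $\nu$ is $\Delta_1$-invariant, all such limits still carry $\nu$-mass (after passing to weak-$*$ limits of $g_* \nu$ — but these equal $\nu$ by invariance), so $\nu$ itself must already be supported on subgroups for which the degeneration has occurred, i.e. $\mathrm{pr}_1(H) = \{e\}$, which is exactly $H \subset G_2$.

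\textbf{Making the degeneration precise.} The cleanest route is probably the following recurrence argument. For $H$ with $\overline{\mathrm{pr}_1(H)} = L$ and $H \cap G_1 = \{e\}$: pick a nontrivial $h \in H$, write $h = (h_1, h_2)$ with $h_1 \ne e$ (if $h_1 = e$ then $h \in H \cap G_2$, which is fine, so assume $\mathrm{pr}_1(H)$ nontrivial). Conjugating by $\delta \in \Delta_1 \subset G_1$ sends $h$ to $(\delta h_1 \delta^{-1}, h_2)$. By Poincaré recurrence applied to the $\Delta_1$-action on $(\Sub G, \nu)$ (or by using that $\Delta_1$ is not relatively compact, so $\{\delta h_1 \delta^{-1}\}$ cannot stay in a fixed compact set avoiding $e$ for ALL nontrivial $h_1$ simultaneously — this is where full Zariski density of $\Delta_1$ in $G_1$ enters, guaranteeing the conjugation action on $G_1$ has no nontrivial relatively-compact orbit closure contained away from... — actually the right statement is that a fully Zariski dense non-relatively-compact subgroup of a standard semisimple group has unbounded conjugation orbits on any nontrivial element, by Borel density / Jacobson–Morozov type arguments), I would derive that $\nu$-almost every $H$ contains nontrivial elements with arbitrarily small $G_1$-component. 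Combined with discreteness of $\mathrm{pr}_2(H)$, such an element must have trivial $G_2$-component for the $G_1$-component small enough, hence lies in $H \cap G_1 = \{e\}$ — contradiction. Therefore $\mathrm{pr}_1(H)$ is trivial $\nu$-a.s., i.e. $H \subseteq G_2$.

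\textbf{Expected main obstacle.} The delicate point is the step asserting that conjugation by the fully Zariski dense, non-relatively-compact subgroup $\Delta_1 \le G_1$ cannot fix a nontrivial element of $G_1$ up to a bounded set, uniformly enough to run the recurrence. One must rule out that $\mathrm{pr}_1(H)$, while nontrivial, is contained in the centralizer of $\Delta_1$ or in some $\Delta_1$-almost-invariant bounded piece. Here the Borel density theorem (the centralizer of a Zariski dense subgroup is central, hence trivial since $G_1$ is adjoint) kills the centralizer case; for the bounded-orbit case one needs that $\Delta_1$, being non-relatively-compact and Zariski dense, contains an element acting on the relevant part of $G_1$ with eigenvalues off the unit circle (a ``proximal'' or at least non-elliptic element), which follows from standard structure theory of semisimple groups over local fields. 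Organizing these inputs — ergodicity, Borel density for $\Delta_1$, and the discreteness of $\mathrm{pr}_2(H)$ — into a clean contradiction is the crux; everything else is Chabauty-topology bookkeeping of the kind already carried out in Lemma~\ref{lemma:no conjugate limit has Zariski dense intersections}.
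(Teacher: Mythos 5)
Your overall strategy is the same as the paper's: reduce to the $\Delta_1$-ergodic case, use the hypotheses $H\cap G_1=\{e\}$ and discreteness of $\mathrm{pr}_2(H)$ to extract from each subgroup $H$ a canonical nontrivial element of $G_1$ (the fibre of $H$ over a small open set $O_2\subset G_2\setminus\{e_2\}$ is a single coset of $H\cap G_1$, hence a single point), and then derive a contradiction from the resulting $\Delta_1$-conjugation-invariant probability measure on $G_1\setminus\{e_1\}$. The paper does exactly this and then finishes in one line by citing the fact that a fully Zariski dense, non-relatively-compact subgroup $\Delta_1\le G_1$ admits no conjugation-invariant probability measure on $G_1\setminus\{e_1\}$ (\cite[Proposition 1.9]{bader2017almost}, or \cite{shalom1999invariant}).

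The gap in your write-up is precisely at that last step, which you correctly identify as the crux but do not actually prove. Two problems. First, the deduction ``such an element must have trivial $G_2$-component for the $G_1$-component small enough, hence lies in $H\cap G_1=\{e\}$'' is false as stated: conjugation by $\delta\in\Delta_1\subset G_1$ sends $(h_1,h_2)$ to $(\delta h_1\delta^{-1},h_2)$, so the $G_2$-component is \emph{unchanged} and remains nontrivial; the element never lands in $H\cap G_1$, and no contradiction with that hypothesis arises. (A contradiction with discreteness of $H$ would require producing, inside a \emph{single} $H$, infinitely many distinct elements with the same $G_2$-component and $G_1$-components accumulating at $e_1$ --- but then all these elements coincide since they differ by elements of $H\cap G_1=\{e\}$, so the argument collapses.) Second, Poincar\'e recurrence alone does not let you arrange simultaneously that $H^{\delta_n}$ returns near $H$ \emph{and} that $\delta_n h_1\delta_n^{-1}\to e_1$; these are two separate requirements on the sequence $\delta_n$, and reconciling them is exactly the content of the no-invariant-measure theorem. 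That theorem is proved not by recurrence but by Furstenberg-type arguments on the adjoint/projective action (invariant measures concentrate on fixed points of the conjugation, i.e.\ on the centralizer of $\Delta_1$, which is trivial by Borel density since $G_1$ is adjoint). You gesture at these ingredients in your final paragraph, but as written the proof is incomplete without either carrying out that argument or citing the result, as the paper does.
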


This is a variant of \cite[Lemma 3.14]{fraczyk2023infinite}.
%(with the extra observation that considering just the $\Lambda_1$-action rather than the full $G$-action  suffices). Our assumptions here are weaker in that we allow analytic groups over local fields and stronger in that we consider invariant rather than stationary measures. 
The proof is very short and we reproduce it here.
\begin{proof}[Proof of Lemma \ref{lemma:types of intersection}]
We may suppose without loss generality that the measure $\nu$ is $\Delta_1$-ergodic. 
Recall that $\mathrm{pr}_i$ denotes the projection from the group $G$ to each semisimple factor $G_i$. 
Assume towards contradiction that not $\nu$-almost every subgroup is  contained in $G_2$. Then one can find an open 
%sufficiently large ball $B_2 \subset G_2$ that 
subset $O_2 \subset G_2 \setminus \{e_2\}$ with respect to which
$$ \nu( \{ H \le G \: : \: |\mathrm{pr}_1(H \cap \mathrm{pr}_2^{-1}(O_2))\setminus \{e_1\}| = 1 \} )  > 0. 
$$
This set is  $\nu$-conull  by $\Delta_1$-ergodicity.
The push forward of $\nu$ via the map taking a subgroup $H $ to the unique point in $\mathrm{pr}_1(H \cap \mathrm{pr}_2^{-1}(O_2))$ gives a probability measure on $G_1\setminus \{e_1\}$ which is invariant under  conjugation by $\Delta_1$. By projecting to one of the simple factors of the group $G_1$, we may assume without loss of generality that $G_1$ is simple. The existence of such a probability measure stands in contradiction to  \cite[Proposition 1.9]{bader2017almost} (see also the main result of \cite{shalom1999invariant}).
\end{proof}

In the context of semisimple Lie groups, another way to arrive at a contradiction  would be to borrow the argument from \cite[p. 38]{furman2001mostow}.
Yet another such way is  the closely related to \cite[Corollary 3.11]{fraczyk2023infinite} dealing with stationary random subgroups.

% In a very similar fashion,   we have the following variant of the previous lemma.

% \begin{lemma}
% \label{lemma:types of intersection ICC}
% Let $\Delta_1 \le G_1$ and $\Delta_2 \le G_2$ be Zariski-dense discrete subgroups.  Let $\nu$ be a $\Delta_1$-invariant probability measure on $\mathrm{Sub}(\Delta_1 \times \Delta_2)$ such that $\nu$-almost every subgroup  intersects $\Delta_1$ trivially. Then $\nu$-almost every subgroup is contained in $G_2$.
% \end{lemma}
% \begin{proof}
% We claim that the group $\Delta_1$ is ICC, namely, every non-trivial conjugacy class is infinite. Indeed, if we assume  that say $\delta \in \Delta_1 \setminus \{e\}$ has a finite conjugacy class in $\Delta_1$, then its centralizer satisfies $\left[\Delta_1:\mathrm{C}_{\Delta_1}(\delta)\right] < \infty$. As such $\mathrm{C}_{\Delta_1}(\delta)$ is Zariski-dense in $G_1$, so that the element $\delta$ is central in $G_1$, which is a contradiction to our assumption that $G_1$ is center-free.
% An ICC group certainly does not support any conjugation invariant probability measure on the set of its non-trivial elements. From here we may conclude exactly in the same manner as in Lemma \ref{lemma:types of intersection}.
% \end{proof}

\begin{rmk}
The  assumption that a Zariski-dense subgroup is   not relatively compact is only relevant in the non-Archimedean case. Indeed, in the real case, any Zariski-dense subgroup of an isotropic simple algebraic group is automatically not relatively compact.    
\end{rmk}

\begin{prop}
\label{prop:getting dense projection to G2}
Let $G$ be a standard semisimple group of the form $G = G_1 \times G_2$ where $G_1$ is standard semisimple and $G_2$ is standard \emph{simple} of type $p_0$ for $p_0=\infty$ or for some prime number $p_0$.   Let $\nu$ be a $G_1$-invariant Borel probability measure on $\Sub{G}$.  If   $\nu$-almost every subgroup $\Gamma$ satisfies that
\begin{enumerate}
    \item $\Gamma$ is discrete,
    \item $\Gamma$ is not contained in the factor $G_2$,
    \item $\Gamma$ has $p_0$-Zariski-dense and not relatively compact projection to  $G_2$, and
    \item if $\Gamma \cap G_2$ is $p_0$-Zariski-dense then $\Gamma \cap G_1$ is trivial
\end{enumerate}
then $\nu$-almost every subgroup $\Gamma$ has a dense projection to  $G_2$.
\end{prop}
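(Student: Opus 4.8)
The plan is to reduce Proposition~\ref{prop:getting dense projection to G2} to Lemma~\ref{lemma:types of intersection} by disintegrating $\nu$ over the $G_1$-invariant function recording the image of $\Gamma$ under $\mathrm{pr}_2$ modulo taking the closure. More precisely, let $\overline{\mathrm{pr}_2} : \Sub{G} \to \Sub{G_2}$ be the Borel map sending $\Gamma$ to $\overline{\mathrm{pr}_2(\Gamma)}$. This map is $G_1$-invariant (conjugation by $G_1$ does not move the $G_2$-projection), so the pushforward $\overline{\mathrm{pr}_2}_* \nu$ is a $G_1$-invariant — hence, since $G_1$ acts trivially on $\Sub{G_2}$, simply an arbitrary — probability measure on $\Sub{G_2}$, and by assumption (3) it is supported on $p_0$-Zariski-dense, not relatively compact closed subgroups of $G_2$. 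We want to show it is supported on the single point $G_2$ itself; that is exactly the assertion that $\nu$-almost every $\Gamma$ projects densely.

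First I would disintegrate $\nu = \int \nu_L \, d(\overline{\mathrm{pr}_2}_*\nu)(L)$ over the fibers of $\overline{\mathrm{pr}_2}$, where $\nu_L$ is supported on $\{\Gamma : \overline{\mathrm{pr}_2(\Gamma)} = L\}$. Because $\overline{\mathrm{pr}_2}$ is $G_1$-invariant and $\nu$ is $G_1$-invariant, for $\overline{\mathrm{pr}_2}_*\nu$-almost every $L$ the conditional measure $\nu_L$ is itself $G_1$-invariant (uniqueness of disintegration together with the $G_1$-action being fiberwise). Fix such an $L$ that is a proper closed subgroup of $G_2$; I want to derive a contradiction. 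The idea is to apply Lemma~\ref{lemma:types of intersection} \emph{inside the group} $G_1 \times L$: every $\Gamma$ in the support of $\nu_L$ is contained in $G_1 \times L$ (by definition of the fiber, $\mathrm{pr}_2(\Gamma) \subset L$), it is discrete in $G$ hence discrete in the closed subgroup $G_1 \times L$, and — here is where I need a little care — I must ensure that $\Gamma \cap G_1$ is trivial $\nu_L$-almost surely and that $\Gamma$ projects discretely to $L$. The second is automatic from discreteness of $\Gamma$ and the fact that $\mathrm{pr}_2(\Gamma) \subset L$ with $L$ closed. For the first: since $L$ is $p_0$-Zariski-dense in $G_2$, the subgroup $\mathrm{pr}_2(\Gamma) \subset L$ need not itself be Zariski-dense, so assumption (4) does not directly apply to kill $\Gamma \cap G_1$. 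This is the main obstacle.

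To get around it, I would argue that Zariski-density of $L$ forces $\Gamma \cap G_1$ to be trivial anyway, using the normality/commutation structure: if $\gamma = (\gamma_1, e) \in \Gamma \cap G_1$ and $L$ is $p_0$-Zariski-dense in the simple group $G_2$, then one can still run a commutator/centralizer argument. Specifically, I would instead apply Lemma~\ref{lemma:types of intersection} with the roles arranged so that $L$ (Zariski-dense in simple $G_2$, not relatively compact) plays the role of $\Delta_1$ there: $\nu_L$ can be viewed as an $L$-invariant measure? — no, $\nu_L$ is only $G_1$-invariant, not $L$-invariant. So the cleaner route is: the pushforward of $\nu_L$ under $\Gamma \mapsto \Gamma \cap G_1$ (when nontrivial) would produce a $G_1$-invariant measure on nontrivial finitely-generated-looking data in $G_1$; combined with the fact that conjugating $\Gamma$ by $G_1$ acts on this, and that $\Gamma \cap G_1$ is normalized by nothing large, we again invoke \cite[Proposition 1.9]{bader2017almost} / \cite{shalom1999invariant} on $G_1$ exactly as in the proof of Lemma~\ref{lemma:types of intersection}. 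Thus the honest plan is: either $\Gamma \cap G_1$ is $\nu_L$-a.s. trivial, in which case Lemma~\ref{lemma:types of intersection} applied within $G_1 \times L$ gives $\Gamma \subset L \subset G_2$ $\nu_L$-a.s., contradicting assumption (2) (which says $\Gamma \not\subset G_2$); or $\Gamma \cap G_1$ is nontrivial on a positive-measure set, and then pushing forward under $\Gamma \mapsto \Gamma \cap G_1$ and projecting to a simple factor of $G_1$ contradicts the no-invariant-measure result of \cite{bader2017almost}. Either way we reach a contradiction, so $L = G_2$ for $\overline{\mathrm{pr}_2}_*\nu$-almost every $L$, which is the conclusion.

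A couple of technical points I would nail down in the writeup: that $\overline{\mathrm{pr}_2}$ is genuinely Borel (it is, being a pushforward-of-closure type map on Chabauty spaces, cf.\ the standard measurability facts recalled in \S\ref{sec:prelim}); that the disintegration produces $G_1$-invariant conditionals (standard, via the essential uniqueness of disintegrations and the invariance of both $\nu$ and the partition); and that the hypotheses of Lemma~\ref{lemma:types of intersection} are met by $\nu_L$ viewed as a measure on $\Sub{G_1 \times L}$ — here ``fully Zariski dense'' for $L$ in the simple group $G_2$ just means $p_0$-Zariski-dense, which is hypothesis (3), and ``not relatively compact'' is also hypothesis (3). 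The bookkeeping between ``$p_0$-Zariski-dense'' and ``fully Zariski dense'' in the $G_1 \times L$ picture (where $L$ may itself be a product in disguise after restriction of scalars, though here it sits in the simple $G_2$ so it is not) is the kind of routine check I would mention but not belabor.
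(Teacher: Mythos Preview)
There are two gaps, one minor and one fatal. The minor one: your claim that $\Gamma$ projects discretely to $L$ ``automatically from discreteness of $\Gamma$ and $L$ closed'' is false as stated --- a discrete subgroup of $G_1\times L$ need not project discretely to $L$, and $L$ is closed by definition anyway. What you actually need is the dichotomy (used in the paper, citing \cite[\S3]{gelander2013dynamics}) that a closed, $p_0$-Zariski-dense, non-relatively-compact subgroup of the \emph{simple} group $G_2$ is either $G_2$ itself or discrete; only then does $L\neq G_2$ force $L$ discrete, whence $\mathrm{pr}_2(\Gamma)=L$ is discrete.

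The fatal gap is your ``case~2''. Pushing $\nu_L$ forward under $\Gamma\mapsto\Gamma\cap G_1$ yields a conjugation-invariant probability measure on $\Sub{G_1}$, i.e.\ an invariant random subgroup of $G_1$, and nontrivial IRSs certainly exist (any lattice gives one) --- so no contradiction arises. The cited \cite[Proposition~1.9]{bader2017almost} concerns invariant measures on $G_1\setminus\{e_1\}$ (individual elements), not on $\Sub{G_1}$; in Lemma~\ref{lemma:types of intersection} one can invoke it only because the trivial-intersection hypothesis allows one to extract a \emph{single} element via an open set in the second factor, and that hypothesis is exactly what you are trying to prove. The only tool available to kill $\Gamma\cap G_1$ is hypothesis~(4), which requires first knowing that $\Gamma\cap G_2$ is $p_0$-Zariski-dense. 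The paper obtains this via an ingredient your outline lacks: by \cite[Lemma~7.2]{fraczyk2023infinite} the $G_1$-invariance of $\nu$ upgrades to $\Delta_2$-invariance (conjugation by $\ell\in\mathrm{pr}_2(\Gamma)$ moves $\Gamma$ within its own $G_1$-orbit); after passing to a $\Delta_2$-ergodic component and using Proposition~\ref{prop:BDT for IRS} together with Lemma~\ref{lemma:types of intersection} to make $\Delta_1=\overline{\mathrm{pr}_1(\Gamma)}$ discrete and Zariski-dense, one can apply Lemma~\ref{lemma:types of intersection} \emph{with the roles of $G_1$ and $G_2$ reversed} (invariance under $\Delta_2\le G_2$, discrete projection to $G_1$) to force $\Gamma\cap G_2$ nontrivial, hence normal in the Zariski-dense $\Delta_2$, hence $p_0$-Zariski-dense --- and now (4) contradicts your (correct) case~1. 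Without the $\Delta_2$-invariance step you have no way to run this reversed application, and the argument cannot be completed.
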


\begin{proof}
By passing to  ergodic components we may assume without loss of generality that the measure $\nu$ is $G_1$-ergodic.  
The $G_1$-invariant map $\sub(G)\to \sub(G_2)$ given by  $\Gamma \mapsto \overline{\mathrm{pr}_2(\Gamma)}$ is therefore $\nu$-essentially constant.
We denote its essential image by $\Delta_2 \in \Sub{G_2}$. Note that   $\Delta_2$ is a closed, non-compact and $p_0$-Zariski-dense   subgroup of $G_2$. Since the group $G_2$ is simple, it must either be the case that $\Delta_2 = G_2$ or that $\Delta_2$ is discrete; see e.g. \cite[\S3]{gelander2013dynamics}. In the first case we are done. In what follows we assume towards contradiction that the subgroup $\Delta_2$ is discrete.

%The intersection of $\nu$-almost every subgroup $\Gamma$ with $G_2$ is some fixed discrete normal subgroup $\Lambda_2 = \Gamma \cap G_2  \lhd \Delta_2$.  

  %Consider the $G_1$-equivariant Borel map  $\pi_1 : \Sub{G} \to \Sub{G_1}$ given by $\pi_1 : \Gamma \mapsto \overline{\mathrm{pr}_1(\Gamma)}$ and denote $\nu_1 = (\pi_1)_* \nu$. Therefore $\nu_1$ is an  ergodic invariant random subgroup of the semisimple analytic group $G_1$. 

We apply the Borel density theorem for invariant random subgroups (Proposition \ref{prop:BDT for IRS}) with respect to the pushforward of $\nu$ via the map $\Sub{G}\to\Sub{G_1}$ given by $\Gamma \mapsto \overline{\mathrm{pr}_1(\Gamma)}$. This provides a pair of normal subgroups $N,M \le G_1$ such that $N \cap M =\{e\}$ and such that $\nu$-almost every subgroup $\Gamma$ projects densely to each $p$-component $N^{(p)}$, projects discretely and fully Zariski-densely to $M$ and is contained in $N \times M$.
From this point onward, we assume as we may  that $G_1=M\times N$.

%For every $p$-component $N^{(p)}$ of $N$, the projection of $\nu$-a.e $\Gamma$ to $N$ is dense in $N^{(p)}$ by another application of \cite[Theorem 1.9]{gelander2018invariant}.
%By Goursat's Theorem we conclude that the projection of $\nu$-a.e $\Gamma$ to $N$ is dense. 
%Applying \cite[Theorem 1.9]{gelander2018invariant} once more, we get that $\nu$-almost every subgroup has a fully Zariski-dense projection to $G_1$, and it is clearly not relatively compact.

The measure $\nu$ is $\Delta_2$-invariant by \cite[Lemma 7.2]{fraczyk2023infinite}. 
We fix a generic $\Delta_2$-ergodic component $\nu_0$ of the measure $\nu$.
By $\Delta_2$-ergodicty, there is some fixed non-trivial closed subgroup $\Delta_1 \le G_1$  such that $\overline{\mathrm{pr}_1(\Gamma)} = \Delta_1$ for $\nu_0$-almost every subgroup $\Gamma$. 
As $\nu_0$ is generic,
we have that  $\Delta_1$ is fully Zariski-dense in $G_1$ and not relatively compact.
%(for it is infinite and discrete).
%Note that $\Delta_1$ = N_1 \times \Sigma_1$ where $\Sigma_1$ is a fixed discrete and Zariski-dense subgroup of $L_1$. 
The measure $\nu_0$ is $\Delta_1$-invariant by \cite[Lemma 7.2]{fraczyk2023infinite} and $\Delta_1$-ergodic by \cite[Corollary 7.3]{fraczyk2023infinite}. Thus, $\nu_0$ can be regarded as a discrete   $\Delta_1$-ergodic and $\Delta_2$-ergodic invariant random subgroup of the  product group $\Delta_1 \times \Delta_2$. 

We now claim that the  factor $N$ must be   trivial, so that $G_1=M$ and the subgroup $\Delta_1$ is discrete.
We consider the map $\sub(G)\to \sub(N)$ given by $\Gamma\mapsto \Gamma\cap N$.
By \mbox{$\Delta_2$-ergodicity} this map is $\nu_0$-essentially constant.
Its essential image is a certain discrete subgroup of $N$ normalized by $\Delta_1$,   as the measure $\nu_0$ is $\Delta_1$-invariant. Since $\Delta_1$ projects densely to each $p$-component $N^{(p)}$, this discrete subgroup  must be trivial.
Consider the product decomposition $G=N\times (M\times G_2)$. We have established that $\nu_0$-almost every subgroup intersects $N$ trivially and projects to $M\times G_2$ discretely.
Since $\nu_0$ is $\Delta_1$-invariant, we may apply Lemma~\ref{lemma:types of intersection} with respect to this particular product decomposition, and deduce that $\nu_0$-almost every subgroup is contained in $M\times G_2$. This  proves the claim.

We consider the map $\sub(G)\to \sub(G_1)$ given by $\Gamma\mapsto \Gamma\cap G_1$.
By $\Delta_2$-ergodicity this map is $\nu_0$-essentially constant
and we denote its essential image by $\Lambda_1$.
Note that $\Lambda_1 \lhd \Delta_1$ as the measure $\nu_0$ is $\Delta_1$-invariant.
By Lemma~\ref{lemma:types of intersection} applied with respect to the product decomposition $G = G_1 \times G_2 = M \times G_2$  we have that $\Lambda_1$ is non-trivial.

Now we invert   the roles of the two factors $G_1$ and $G_2$ and consider the  map $\Sub{G} \to \Sub{G_2}$ given by $\Gamma\mapsto \Gamma\cap G_2$. This map   $\nu_0$-essentially  constant and its  essential image is some  normal subgroup $\Lambda_2\lhd \Delta_2$. The subgroup $\Lambda_2$ is non-trivial by yet another application of  Lemma~\ref{lemma:types of intersection}. Hence   $\Lambda_2$ is $p_0$-Zariski dense in $G_2$, being normal in the $p_0$-Zariski dense subgroup $\Delta_2$.
This contradicts  assumption (4).
\end{proof}

\begin{cor}
\label{cor:irreducibility of invariant random subgroups}
Let $\nu$ be a non-trivial discrete ergodic invariant random subgroup of $G$. If $\nu$-almost every subgroup intersects trivially  each proper semisimple factor of $G$ then $\nu$ is irreducible. Furthermore, any ergodic probability measure preserving $G$-space $(X,\overline{\nu})$ satisfying $(\mathrm{Stab}_G)_* \overline{\nu} = \nu$ is irreducible as well.
\end{cor}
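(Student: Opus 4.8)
The plan is to reduce the corollary to the single claim that $\nu$-almost every subgroup of $G$ is \emph{irreducible}, i.e.\ projects densely onto every proper quotient of $G$, and then to invoke the principle of \cite[\S7]{fraczyk2023infinite} (recalled in the introduction) that an ergodic probability measure preserving $G$-action with almost surely irreducible stabilizers is itself irreducible. Applying that principle to the conjugation action on $(\Sub G,\nu)$ --- whose stabilizer at $\Gamma$ is the normalizer $N_G(\Gamma)\supseteq\Gamma$, hence irreducible once $\Gamma$ is --- yields that $\nu$ is an irreducible IRS; applying it to $(X,\overline\nu)$, whose stabilizers are $\nu$-distributed, yields the second assertion. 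So the whole content lies in the claim about $\nu$-typical subgroups.

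I would first pin down the structure of a $\nu$-typical $\Gamma$. Apply the Borel density theorem for invariant random subgroups, Proposition~\ref{prop:BDT for IRS}, to the ergodic IRS $\nu$: it produces normal subgroups $N,M\lhd G$ with $N\cap M=\{e\}$ such that $\nu$-a.e.\ $\Gamma$ projects densely onto each $N^{(p)}$, discretely and fully Zariski densely onto $M$, and lies in $N\times M$. If $N\times M\ne G$ then it is a proper factor and $\Gamma=\Gamma\cap(N\times M)=\{e\}$ by hypothesis, contradicting non-triviality; hence $G=N\times M$. Moreover $\Gamma\cap N$ is a discrete, hence closed, subgroup normalized by the dense subgroup $\mathrm{pr}_N(\Gamma)$, so it is normal in $N$ and therefore trivial; and if $N\ne\{e\}$ then Lemma~\ref{lemma:types of intersection} with $\Delta_1=N$ (closed, non-compact, fully Zariski dense) forces $\Gamma\subseteq M$, whence $\Gamma=\Gamma\cap M=\{e\}$, again a contradiction. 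Thus $N=\{e\}$: $\nu$-a.e.\ $\Gamma$ is a discrete, fully Zariski dense subgroup of $G$ meeting every proper factor trivially.

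Next, for each simple factor $F\le G$, of type $p_0$ say, I would write $G=G'\times F$ and apply Proposition~\ref{prop:getting dense projection to G2} with $G_1=G'$, $G_2=F$, to the $G'$-invariant measure $\nu$. Discreteness is in hand; $\Gamma\not\subseteq F$ because $\Gamma\ne\{e\}=\Gamma\cap F$; the projection $\mathrm{pr}_F(\Gamma)$ is $p_0$-Zariski dense (being the projection of a fully Zariski dense group to a factor) and is not relatively compact --- otherwise the pushforward of $\nu$ under $\Gamma\mapsto\overline{\mathrm{pr}_F(\Gamma)}$ would be an IRS of $F$ supported on relatively compact Zariski dense subgroups, which Proposition~\ref{prop:BDT for IRS} applied to $F$ excludes; and the last hypothesis is vacuous since $\{e\}=\Gamma\cap F$ is not Zariski dense. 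So $\nu$-a.e.\ $\Gamma$ projects densely onto each simple factor of $G$.

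It remains to promote ``dense onto every simple factor'' to ``dense onto every proper quotient $G/H$'' ($H\ne\{e\}$), and this is the crux. Suppose $\Delta:=\overline{\mathrm{pr}_{G/H}(\Gamma)}$ is a proper subgroup of $G/H$ on a set of positive $\nu$-measure; by ergodicity of the pushforward of $\nu$ this holds $\nu$-a.e., with $\Delta$ ranging in a single $G/H$-conjugacy class, closed and fully Zariski dense. By the structure of closed Zariski dense subgroups of standard semisimple groups, $\Delta^\circ$ is a normal factor of $G/H$ and $\Lambda_0:=\Delta/\Delta^\circ$ is discrete, Zariski dense in $M:=(G/H)/\Delta^\circ$, and (by the previous paragraph) projects densely onto each simple factor of $M$. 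Here $M\ne\{e\}$ (else $\Delta=\Delta^\circ=G/H$) and $M$ is not simple (else $\Lambda_0$ would be discrete and dense), so $M$ has rank at least two. The ergodic IRS of $M$ induced by $\Gamma\mapsto\overline{\mathrm{pr}_M(\Gamma)}=\Lambda_0$ lives on a single $M$-orbit, so $N_M(\Lambda_0)$ is a lattice; Margulis' normal subgroup theorem (reducing to the irreducible blocks of $M$, each of rank at least two by the density of $\Lambda_0$) forces $\Lambda_0$ to have finite index in $N_M(\Lambda_0)$, hence to be a lattice in $M$. On the other hand $\Gamma\cap(H\times\Delta^\circ)=\{e\}$, a proper factor, so $\mathrm{pr}_M$ restricts to an isomorphism $\Gamma\cong\Lambda_0$ and displays $\Gamma$ as the graph of homomorphisms $\Lambda_0\to H$ and $\Lambda_0\to\Delta^\circ$ with Zariski dense image; Margulis superrigidity extends these to surjective morphisms out of $M$, so a simple factor of $M$ maps onto a simple factor of $H$ or of $\Delta^\circ$ --- at least one of which is nontrivial since $H\ne\{e\}$ --- and then the projection of $\Gamma$ to that pair of simple factors lies inside a proper (graph) algebraic subgroup, contradicting full Zariski density of $\Gamma$. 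Hence $\nu$-a.e.\ $\Gamma$ is irreducible, and the corollary follows from the first paragraph. The main difficulty is exactly this last step --- passing from density onto simple factors to density onto all quotients --- which is where the product structure has to be fed back into the Borel density theorem together with the classical Margulis rigidity theorems; the earlier steps are relatively formal given Propositions~\ref{prop:BDT for IRS} and~\ref{prop:getting dense projection to G2} and Lemma~\ref{lemma:types of intersection}.
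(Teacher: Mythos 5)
Your reduction to the statement ``$\nu$-almost every subgroup projects densely to every proper semisimple factor'' followed by the citation of \cite[Corollary 7.3]{fraczyk2023infinite} matches the paper, and your second and third paragraphs (Proposition~\ref{prop:BDT for IRS} plus Lemma~\ref{lemma:types of intersection} to rule out a non-trivial ``discrete part'', then Proposition~\ref{prop:getting dense projection to G2} to get density onto each \emph{simple} factor) are sound. You also correctly identify the crux: passing from density onto simple factors to density onto arbitrary proper quotients. But your argument for that crux has a genuine gap. You assert that the ergodic invariant random subgroup of $M$ obtained by pushing forward $\nu$ under $\Gamma\mapsto\overline{\mathrm{pr}_M(\Gamma)}=\Lambda_0$ ``lives on a single $M$-orbit, so $N_M(\Lambda_0)$ is a lattice.'' Ergodicity does not imply essential transitivity: a discrete ergodic IRS can be properly ergodic, and the statement that a discrete irreducible IRS of a higher rank semisimple group is supported on lattices is precisely the Stuck--Zimmer-type conclusion that this paper works hard to establish (via spectral gap, Theorem~\ref{theorem:strictly confined coamenable is a lattice}); it cannot be invoked for free at this point. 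Without it, $N_M(\Lambda_0)$ need not be a lattice, the normal subgroup theorem does not apply, and the superrigidity contradiction never starts. There are secondary issues as well: the claim that $\Delta^\circ$ is a normal factor with discrete quotient is delicate when $G/H$ has non-Archimedean components (where closed Zariski-dense subgroups may be compact open, and ``$\Delta^\circ$'' is trivial), and the final ``graph is a proper algebraic subgroup'' step needs care when superrigidity produces a morphism twisted by a field isomorphism or lands in a factor over a different local field.

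The paper's route for the crux is entirely different and much lighter: it proves density onto an arbitrary proper factor $H$ by induction on the number of types (primes and $\infty$) occurring in $H$. The base case ($H$ of a single type) applies Proposition~\ref{prop:BDT for IRS} to the pushforward $\nu_H$ and then Lemma~\ref{lemma:types of intersection} with respect to the decomposition $G=(N\times L)\times M$ to kill the discretely-projecting part $M$ --- note this already covers non-simple single-type factors, so no bootstrap from simple factors is needed. The induction step intersects with preimages of a neighborhood basis of compact open subgroups $O_n$ in a non-Archimedean component $H^{(p)}$ to produce auxiliary IRS's $\lambda_{O_n}$ of the complementary factor, deduces from the induction hypothesis that a.e.\ subgroup \emph{contains} the complementary factor $R$, and combines this with density onto $H^{(p)}$. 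If you want to salvage your architecture, the cleanest fix is to replace your fourth paragraph by this induction; your NST/superrigidity route would at minimum require first proving essential transitivity, which is circular here.
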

\begin{proof}
We claim that $\nu$-almost every subgroup projects densely to any \emph{proper} semisimple factor $H$ of the standard semisimple group $G$. The claim will be established by induction on the number of types (i.e. either $\infty$ or prime numbers) involved in the factor $H$.
Write $G = H \times L$ for some suitable non-trivial factor $L$. Let $\nu_H$ denote  the pushforward of $\nu$ via the map $\Sub{G} \to \Sub{H}$ given by $\Lambda \mapsto \overline{\mathrm{pr}_H(\Lambda)}$ so that $\nu_H$ is an invariant random subgroup  of the group $H$. 

For the base of the induction, assume that the proper semisimple factor $H$ is   of a single  type $p$ (where $p$  is either $\infty$ or some prime number). By  Proposition \ref{prop:BDT for IRS} applied to the invariant random subgroup $\nu_H$, there is a pair of normal subgroups $N,M \le H$ with $N \cap M = \{e\}$ such that $\nu_H$-almost every subgroup projects densely to  $N$, discretely and $p$-Zariski-densely to $M$ and is contained in $N \times M$. Lemma \ref{lemma:types of intersection} applied with respect to the direct product decomposition $G =  (N \times L) \times M$  implies that the subgroup $M$ is trivial so that $N = H$. This means that $\nu_H = \delta_H$. The claim in the base of the induction is established.

For the induction step, assume that $H$ is a proper semisimple factor involving more than a single type, and that the claim has already been established with respect to all factors with fewer types. Let $p$ be any \emph{finite}  prime such that $H^{(p)}$ is non-trivial, and write $H = H^{(p)} \times R$ where $R = \prod_{q \neq p} H^{(q)}$ is the complement to $H^{(p)}$ in $H$. As the local field $\mathbb{Q}_p$ is non-Archimedean,    the standard semisimple group $H^{(p)}$ admits some compact open subgroup $O$. Consider the map 
$$\Sub{G} \to \Sub{R \times L}, \quad \Gamma \mapsto \mathrm{pr}_{R \times L}(\Gamma \cap \mathrm{pr}_{H^{(p)}}^{-1}(O) ) \quad \forall \Gamma \in \Sub{G}.$$ 
Let $\lambda_O$ be the resulting pushforward invariant random subgroup of the standard semisimple group $R \times L$. Note that $\lambda_O$ is non-trivial, since $\nu$-almost every subgroup projects densely to $H^{(p)}$ by the base case of the induction. Therefore, by the induction hypothesis (applied to the factor $R$ of the  group $R \times L$) we know that $\lambda_O$-almost every subgroup projects densely to the proper factor $R$. By repeating this argument with respect to a sequence $O_n$ of compact open subgroups of  $H^{(p)}$ which serves as a neighborhood basis at the identity,   we deduce that $\nu_H$-almost every subgroup \emph{contains} the factor $R$.
On the other hand, we know that $\nu_H$-almost every subgroup projects densely to the factor $H^{(p)}$, as was already observed above. From these two facts we conclude that $\nu_H = \delta_H$. The claim is established.

We know that $\nu$ projects densely to each proper semisimple factor of $G$.
Hence $\nu$ is irreducible by \cite[Corollary 7.3]
{fraczyk2023infinite}.
The additional clause in the statement concerning the irreducibility of $\overline{\nu}$ follows from the same \cite[Corollary 7.3]{fraczyk2023infinite}.
\end{proof}

\subsection*{Spectral gap}

We specialize our spectral gap result (Theorem \ref{theorem:getting spectral gap - general case}) for actions of standard semisimple groups.
%Let $k$ be a local field of characteristic zero. Assume that $G_1$ and $G_2$ are groups of $k$-rational points of semisimple $k$-algebraic groups, without compact factors and with trivial center.

\begin{theorem}[Spectral gap for actions of products --- semisimple group case]
\label{theorem:getting spectral gap - analytic groups}
Let $G$ be a standard semisimple group with $G = G_1 \times G_2$ where $G_1$ is standard semisimple and $G_2$ is standard \emph{simple} of type $p_0$. Let $X$ be a locally compact topological $G$-space endowed with a $G$-invariant  measure $m$, either finite or infinite. Assume that
\begin{itemize}
\item $L^2_0(X,m)^{G_2} = 0$,
\item  $m$-almost every point $x$ has $\mathrm{Stab}(x) \cap G_1 = \{e\}$ and
\item if $f_n \in L^2(X,m)$ is an asymptotically $G_1$-invariant sequence of unit vectors then every  accumulation point  $\nu \in \Prob(\Sub G)$ of the sequence of probability measures $\mathrm{Stab}_*(|f_n|^2 \cdot m)$ is such that $\nu$-almost every subgroup is  discrete,  not contained in the factor $G_2$  and admits $p_0$-Zariski-dense and not relatively compact projections to $G_2$.
\end{itemize}
Then the unitary Koopman $G$-representation $L^2_0(X,m)$ has spectral gap.
\end{theorem}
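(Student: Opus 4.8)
The plan is to derive the statement from Theorem~\ref{theorem:getting spectral gap - general case} by checking its hypotheses. The group $G$ is compactly generated, second countable and locally compact, being a finite product of standard simple groups; the factor $G_2$ is simple as an abstract group, so its abelianization is trivial and in particular compact; and the condition $L^2_0(X,m)^{G_2}=0$ is common to both statements. Thus the only thing to verify is the second bullet of Theorem~\ref{theorem:getting spectral gap - general case}: for every asymptotically $G_1$-invariant sequence of unit vectors $f_n \in L^2(X,m)$, every accumulation point $\mu \in \Prob(\Sub G)$ of the probability measures $\mathrm{Stab}_*(|f_n|^2\cdot m)$ should satisfy $\overline{G_1 H} = G$ for $\mu$-almost every subgroup $H$.

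So I would fix such a sequence $f_n$ and pass to a subsequence along which $\mu_n := \mathrm{Stab}_*(|f_n|^2\cdot m) \to \mu$ in the weak-$*$ topology. First I would record that $\mu$ is $G_1$-invariant: by the estimate $\||gf_n|^2 - |f_n|^2\|_1 \le 2\|gf_n - f_n\|_2$ (cf.\ the proof of Lemma~\ref{lem:mazur}) together with the $G$-equivariance of the stabilizer map, $g_*\mu_n - \mu_n$ tends to $0$ in total variation for every $g \in G_1$, hence $\mu$ is $G_1$-invariant. The goal then becomes to apply Proposition~\ref{prop:getting dense projection to G2} to $\mu$: its conclusion is exactly that $\mu$-almost every subgroup projects densely to $G_2$, and since $G = G_1\times G_2$ and $G_1\le G_1 H$, that is equivalent to $\overline{G_1 H} = G$, which finishes the argument.

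To invoke Proposition~\ref{prop:getting dense projection to G2} I must check its four conditions for $\mu$-almost every subgroup. Conditions (1), (2), (3) — discreteness, not being contained in $G_2$, and having $p_0$-Zariski-dense and not relatively compact projection to $G_2$ — are precisely the content of the third hypothesis of the theorem, applied to this sequence $f_n$ and accumulation point $\mu$. The remaining condition (4) is the one that needs work, and I expect it to be the main obstacle: it asserts that if $\Gamma\cap G_2$ is $p_0$-Zariski-dense then $\Gamma\cap G_1 = \{e\}$. Here I would use the second hypothesis of the theorem, that $m$-almost every $x$ has $\mathrm{Stab}(x)\cap G_1 = \{e\}$. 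Each $\mu_n$ is supported on the Chabauty-closure of $\{\mathrm{Stab}(x) : \mathrm{Stab}(x)\cap G_1 = \{e\}\}$, hence so is $\mu$; consequently $\mu$-almost every $H$ is a Chabauty limit $H = \lim_n H_n$ of stabilizers with $H_n\cap G_1 = \{e\}$.

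It then remains to show: if such a limit $H$ is discrete and $H\cap G_2$ is $p_0$-Zariski-dense, then $H\cap G_1=\{e\}$. This I would prove exactly as in the proof of Lemma~\ref{lemma:no conjugate limit has Zariski dense intersections}. Using Corollary~\ref{cor:Z-dense-ss}, choose a finite subset $S\subset H\cap G_2$ generating a $p_0$-Zariski-dense subgroup of $G_2$ which is stable under small deformations, and approximate $S$ by finite subsets $R_n\subset H_n$ whose projection to $G_2$ remains $p_0$-Zariski-dense. Given $h\in H\cap G_1$, approximate it by $h_n\in H_n$. Since $H$ is discrete and $[S,h]=\{e\}$ in $G$, for all large $n$ we have $[R_n,h_n]\subset H_n\cap U = \{e\}$, where $U$ is a neighbourhood of the identity witnessing discreteness along the sequence. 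Hence $h_n$ centralizes $\langle R_n\rangle$, whose projection to $G_2$ is $p_0$-Zariski-dense; as $G_2$ is adjoint and hence center-free, this forces $h_n\in G_1$, so $h_n\in H_n\cap G_1=\{e\}$, and therefore $h=\lim_n h_n = e$. This yields condition (4), indeed the stronger statement $H\cap G_1=\{e\}$ for those $H$ with $p_0$-Zariski-dense intersection with $G_2$, so Proposition~\ref{prop:getting dense projection to G2} applies and the proof is complete. The subtlety worth flagging is that ``$H\cap G_1=\{e\}$'' is not a Chabauty-closed condition and cannot be passed to the limit directly; one genuinely needs to combine the discreteness of the limit subgroup with the Zariski-density of its $G_2$-part, and to exercise the usual mild care about uniform discreteness of the approximating subgroups $H_n$.
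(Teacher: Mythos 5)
Your proof is correct and follows essentially the same route as the paper: reduce to Theorem~\ref{theorem:getting spectral gap - general case} by feeding the accumulation point $\mu$ into Proposition~\ref{prop:getting dense projection to G2}, with conditions (1)--(3) supplied by the third hypothesis and condition (4) extracted from the hypothesis $\mathrm{Stab}(x)\cap G_1=\{e\}$. The only (harmless) difference is that where the paper cites part (1) of Corollary~\ref{cor:special cases of conjugate limit with Zariski dense intersection}, you reprove its content directly for Chabauty limits of stabilizers of varying points rather than conjugate limits of a single stabilizer — which is in fact the slightly more general statement actually needed, and your flagged care about uniform discreteness of the approximating subgroups matches the implicit step in the paper's Lemma~\ref{lemma:no conjugate limit has Zariski dense intersections}.
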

\begin{proof}
The idea is to deduce the spectral gap for the Koopman representation from our general spectral gap theorem for products, namely Theorem \ref{theorem:getting spectral gap - general case}. With that goal in mind, let us verify the two assumptions of that theorem. The first assumption that $L^2_0(X,m)^{G_2} = 0$ is maintained in the current statement as well.
As for the second assumption,
consider some asymptotically $G_1$-invariant sequence of unit vectors $f_n \in L_0^2(X,m)$. Denote $\nu_n = \mathrm{Stab}_*(|f_n|^2\cdot m) \in \mathrm{Prob}(\Sub{G})$ and let $\nu \in \mathrm{Prob}(\Sub{G})$ be any weak-$*$ accumulation point of the sequence of probability measures $\nu_n$. We are required to show that $\nu$-almost every subgroup has dense projections to the factor $G_2$.
Note that the probability measure $\nu$ is $G_1$-invariant.
By our assumptions, 
 $\nu$-almost every subgroup $\Gamma$ satisfies
\begin{enumerate}
    \item $\Gamma$ is discrete,
    \item $\Gamma$ is not contained in the factor $G_2$, and
    \item $\Gamma$ has $p_0$-Zariski-dense and not relatively compact projections to  $G_2$.
\end{enumerate}
Moreover, by applying part (1) of Corollary~\ref{cor:special cases of conjugate limit with Zariski dense intersection} to the subgroup $\Delta=\Stab_G(x)$ with respect to  a $m$-generic point $x\in X$ we get that
\begin{enumerate}
\setcounter{enumi}{3}
    \item if $\Gamma \cap G_2$ is $p_0$-Zariski-dense then $\Gamma \cap G_1$ is trivial.
\end{enumerate}
Applying Proposition~\ref{prop:getting dense projection to G2} we deduce that $\nu$-almost every subgroup has dense projections to the factor $G_2$. This concludes the reduction of the current proof to the statement of Theorem \ref{theorem:getting spectral gap - general case}.
\end{proof}

\begin{remark}
If the action of a standard semisimple group $G = G_1 \times G_2$ on a probability measure space $(X,m)$ is faithful, irreducible and  measure preserving then the stabilizer of $m$-almost every point has trivial intersection with the factors, so that this assumption in Theorem \ref{theorem:getting spectral gap - analytic groups} becomes  redundant.
\end{remark}

\begin{remark}
Provided $m$ is not supported on a singleton, the assumptions of Theorem \ref{theorem:getting spectral gap - analytic groups}  imply that both semisimple factors $G_1$ and $G_2$ are not trivial.
\end{remark}

\section{Confined and strongly confined subgroups}
\label{sec:confined and irreducibly}

In this section we study the notion of confined subgroups, which is a crucial ingredient in this work.

We consider this notion for discrete groups first.
Recall that a subgroup $\Lambda$ of a discrete group $\Gamma$ is  called {\it confined}  if there is a finite subset $F\subset \Gamma\setminus\{e\}$ such that the condition $\Lambda^\gamma\cap F\ne\emptyset$ holds true for every element $\gamma\in \Gamma$. 
Here is an equivalent definition for the negation of this notion. A subgroup $\Lambda$ of a discrete group $\Gamma$   is called {\it unconfined} (i.e. not confined) if the trivial subgroup of $\Gamma$ is a conjugate limit of $\Lambda$. Thus, being  unconfined and non-trivial is a vast strengthening of being non-normal.  
Indeed, non-trivial normal subgroups are precisely the fixed points for the conjugation action of $\Gamma$ on the space $\text{Sub}(\Gamma)\setminus\left \{\left\{e\right \} \right\}$, while unconfined subgroups are those with unbounded (namely, non-relatively compact) orbits. 

We shall require a generalization of this definition to the context of locally compact groups. 
 
\begin{defn}
\label{defn:confined subgroup}
 A closed subgroup $H$ of a locally compact second countable group $G$ is called \emph{confined} if the trivial subgroup $\{e\} \le G$ is not a conjugate limit of $H$. 
\end{defn}

Equivalently, a subgroup $H$ is confined in $G$ if and only if  there is a compact set $C\subset G\setminus\{e\}$ which intersects non-trivially every conjugate of $H$.
It follows immediately from the definition that any conjugate limit of a confined   subgroup is confined. 

\begin{eg}
Any non-trivial normal closed subgroup is confined. Any closed subgroup containing a confined subgroup is confined. In particular, a closed subgroup containing a non-trivial normal closed subgroup is confined.
\end{eg}

In the general context of locally compact groups, it is natural to consider the following variant of the definition.

\begin{defn}
Let $G$ be a locally compact second countable group. A closed subgroup $H$ of $G$ is {\it weakly confined} if there is a compact subset $C\subset G$ satisfying
$$
 (C\cap H^g) \setminus \{e\}\ne \emptyset
$$
for every element $g\in G$.    
\end{defn}

Confined subgroups are obviously weakly confined. Every non-discrete subgroup of $G$ is weakly confined. In the $p$-adic Lie group $\SL_2(\mathbb{Q}_p)$, the compact upper-triangular unipotent subgroup with coefficients in $\mathbb{Z}_p$ gives an example of a weakly confined subgroup which is unconfined. However, for groups with NSS (no small subgroups) property\footnote{A topological group has the \emph{NSS   property} (no small subgroups) it is admits an identity neighborhood containing no non-trivial closed subgroups.}  the two notions coincide \cite[Proposition 10.2]{gekhtman2023stationary}. In particular,   if $G$ is a real Lie group (possibly with infinitely many connected components) then every weakly confined subgroup is confined.

The property of being weakly confined generalizes non-trivial  normal subgroups and also lattices (just as the notion of invariant random subgroups generalizes those). 

\begin{lem}
\label{lema:lattices are confined}
    Let $G$ be a   locally compact second countable group. Any non-trivial lattice in $G$ is weakly confined.
\end{lem}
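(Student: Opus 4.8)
The plan is to split into two cases according to whether $G$ is compact, and in the non-compact case to run a covolume argument by contradiction. Two preliminary observations will be used throughout: first, a locally compact second countable group admitting a lattice is automatically unimodular, so $m_G$ is bi-invariant and in particular conjugation-invariant; second, one normalizes $m_G$ and $m_{G/\Gamma}$ compatibly so that $m_{G/\Gamma}(\pi(B)) = m_G(B)$ for every Borel set $B \subseteq G$ on which the quotient map $\pi \colon G \to G/\Gamma$ is injective. Write $V = m_{G/\Gamma}(G/\Gamma)$, which is finite since $\Gamma$ is a lattice.

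I would first dispense with the case $G$ compact. Then the discrete group $\Gamma$ is finite, and non-trivial by hypothesis, so one picks $\gamma_0 \in \Gamma \setminus \{e\}$ and takes $C := \{h\gamma_0 h^{-1} : h \in G\}$. This set is compact, being a continuous image of $G$, and $e \notin C$; moreover for every $g \in G$ the element $g^{-1}\gamma_0 g$ lies in $C \cap \Gamma^{g}$ and is non-trivial, witnessing that $\Gamma$ is weakly confined.

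For $G$ non-compact one has $m_G(G) = \infty$. Suppose, for contradiction, that $\Gamma$ is not weakly confined. Using $\sigma$-compactness of $G$, fix an exhaustion $D_1 \subseteq D_2 \subseteq \cdots$ of $G$ by symmetric compact sets with $e \in D_1$, so $m_G(D_n) \to \infty$. For each $n$, failure of weak confinement applied to the compact set $D_n D_n$ yields $g_n \in G$ with $\Gamma^{g_n} \cap (D_n D_n) = \{e\}$ (equality, since $e \in D_n D_n$). Setting $W_n := g_n D_n g_n^{-1}$, which is symmetric and, by unimodularity, satisfies $m_G(W_n) = m_G(D_n)$, conjugation by $g_n$ turns this into $\Gamma \cap (W_n W_n) = \{e\}$. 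Hence $\pi|_{W_n}$ is injective --- if $\pi(x) = \pi(y)$ with $x,y \in W_n$, then $y^{-1}x \in \Gamma \cap W_n^{-1} W_n = \{e\}$ --- so $V \geq m_{G/\Gamma}(\pi(W_n)) = m_G(W_n) = m_G(D_n)$ for every $n$. Letting $n \to \infty$ gives $V = \infty$, contradicting finiteness of the covolume.

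The delicate point, and the one I expect to be the main obstacle, is bookkeeping the Haar measure of the injectivity domain so that it equals $m_G(D_n)$ with no modular-function factor: this is why one must conjugate the relation $\Gamma^{g_n} \cap D_n D_n = \{e\}$ back into a relation about $\Gamma$ itself (so that the injectivity domain is the conjugate $W_n$ of $D_n$) and then invoke unimodularity of $G$; working directly with $\Gamma^{g_n}$ would leave the domain's measure carrying an uncontrolled factor $\Delta_G(g_n)$ and the estimate would collapse. The unimodularity of $G$, automatic since $G$ contains a lattice, is essentially the only external input. A minor additional point is simply remembering to treat the compact case on its own, where the covolume argument is vacuous but the elementary conjugacy-class construction works.
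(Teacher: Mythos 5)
Your proof is correct, and it takes a genuinely different route from the paper's. The paper quotes a compactness criterion for $G/\Gamma$ (\cite[Theorem 1.12]{raghunathan1972discrete} or \cite[Lemma 3.1]{GelanderPCMI}): there is a compact $K\subset G$ such that every conjugate $\Gamma^g$ with $g\notin K$ already meets $K$ non-trivially, and it then absorbs the remaining bounded range of $g$ by adjoining the compact orbit $\gamma^K$ of a single non-trivial $\gamma\in\Gamma$, taking $C=K\cup\gamma^K$. You instead give a self-contained covolume argument: assuming weak confinement fails along an exhaustion $D_n$, you produce conjugates $W_n$ of $D_n$ on which the quotient map is injective, so that $\mathrm{covol}(\Gamma)\geq m_G(W_n)=m_G(D_n)\to\infty$ by unimodularity, contradicting finiteness of the covolume; the compact case is handled separately by the conjugation-invariant set $\{h\gamma_0h^{-1}:h\in G\}$. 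In effect you reprove the relevant portion of the cited compactness criterion rather than invoking it. What your approach buys is independence from the external reference at the cost of the Weil-formula bookkeeping and the (standard, correctly flagged) fact that a group containing a lattice is unimodular; what the paper's approach buys is brevity. One cosmetic remark: splitting off the compact case is convenient but not strictly forced, since your conjugacy-class construction is only needed there, while in the paper's version the term $\gamma^K$ plays the analogous role uniformly.
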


Note that the trivial subgroup is a lattice in $G$ if and only if the group $G$ is compact.

\begin{proof}[Proof of Lemma \ref{lema:lattices are confined}]
Let $\Gamma$ be a non-trivial lattice in $G$.
In view of \cite[Theorem 1.12]{raghunathan1972discrete} or \cite[Lemma 3.1]{GelanderPCMI}, there is a compact subset $K\subset G$ such that for every element $g\notin K$ the intersection $\Gamma^g\cap K$ contains a non-trivial element.
Let $\gamma\in \Gamma$ be any non-trivial element. Then the compact set $C = K\cup \gamma^K$ intersects   every conjugate of the lattice $\Gamma$ in a non-trivial element.
\end{proof}

Thus, lattices in non-compact real Lie groups as well as finite-index subgroups of infinite discrete groups are confined. 

Following \cite{KM-IRS} we will say that the locally compact group $G$ has the \emph{NDSS property} (i.e. no discrete small subgroups) if there is an identity neighborhood $U\subset G$ which contains no non-trivial finite subgroups. Obviously, if $G$ has NDSS then a discrete subgroup of $G$ is confined if and only if it is weakly confined. 

\begin{cor}
Let $G$ be a locally compact second countable group with NDSS. Then all lattices in $G$ are confined.    
\end{cor}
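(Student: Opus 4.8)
The plan is to deduce the corollary immediately from Lemma~\ref{lema:lattices are confined} together with the fact, noted in the paragraph preceding the statement, that for a group with the NDSS property confinedness and weak confinedness coincide among discrete subgroups. A lattice is discrete, and a non-trivial lattice is weakly confined by Lemma~\ref{lema:lattices are confined}; thus it suffices to prove the following claim: \emph{if $G$ is locally compact second countable with the NDSS property and $H\le G$ is a discrete weakly confined subgroup, then $H$ is confined.} (The trivial subgroup is a lattice only when $G$ is compact, in which degenerate case it is of course not confined; we disregard this.)

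To prove the claim I argue by contradiction, assuming that $\{e\}$ is a conjugate limit of $H$, say $H^{g_n}\to\{e\}$ in $\Sub G$ for some $g_n\in G$; sequences suffice here since $\Sub G$ is metrizable. Using the NDSS property, fix a symmetric, open, relatively compact identity neighbourhood $U$ containing no non-trivial finite subgroup of $G$. Weak confinedness provides a compact set $C$ and, for each $n$, an element $x_n\in(C\cap H^{g_n})\setminus\{e\}$. The elementary observation I need about Chabauty convergence to $\{e\}$ is this: for every compact $C'\subseteq G$ and every identity neighbourhood $V$ one has $H^{g_n}\cap C'\subseteq V$ for all sufficiently large $n$ --- otherwise a subsequence of witnessing elements, lying in the compact set $C'\setminus V$, would converge to a point of the Chabauty limit different from $e$, a contradiction. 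Applying this with $C'=C$ and $V=U$ shows $x_n\in U$ for all large $n$.

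It remains to exploit the NDSS hypothesis. Since $H^{g_n}$ is discrete the cyclic group $\langle x_n\rangle$ is discrete: if $x_n$ has infinite order it is infinite and hence not contained in the relatively compact set $\overline U$, while if $x_n$ has finite order it is a non-trivial finite subgroup and hence not contained in $U$ by the choice of $U$; in either case $\langle x_n\rangle\not\subseteq U$. As $U=U^{-1}$ and $x_n\in U$, there is a least integer $k_n$ with $x_n^{k_n}\notin U$, and necessarily $k_n\ge 2$; then $y_n:=x_n^{k_n}=x_n^{k_n-1}\cdot x_n$ lies in $U\cdot U$ by minimality, while $y_n\in H^{g_n}$ and $y_n\neq e$ since $e\in U$. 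Thus $y_n\in H^{g_n}\cap\overline{U\cdot U}$, a compact set, yet $y_n\notin U$; this contradicts the absorption observation applied with $C'=\overline{U\cdot U}$ and $V=U$. This proves the claim, and hence the corollary. The only genuinely non-formal point is this last paragraph, where NDSS is used precisely to rule out \enquote{small subgroup} behaviour of the conjugates $H^{g_n}$ --- the minimal-power trick then manufactures the required contradiction --- whereas the reduction through Lemma~\ref{lema:lattices are confined} and the Chabauty absorption fact are routine.
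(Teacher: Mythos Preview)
Your proof is correct and follows precisely the route the paper intends: the paper does not give a separate proof of this corollary, treating it as an immediate consequence of Lemma~\ref{lema:lattices are confined} together with the sentence immediately preceding the corollary (\enquote{Obviously, if $G$ has NDSS then a discrete subgroup of $G$ is confined if and only if it is weakly confined}). What you have done is supply a clean, self-contained justification of that \enquote{obvious} equivalence via the minimal-power escape argument, which is exactly the standard way to verify it; your handling of the degenerate compact case is also appropriate.
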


The class of NDSS groups contains all real and $p$-adic Lie groups and  is closed under products. Therefore we obtain the following.

\begin{lem}
Lattices in standard semisimple groups are confined.   
\end{lem}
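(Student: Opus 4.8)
The plan is to reduce the statement to the Corollary proved immediately above, which asserts that every lattice in a locally compact second countable group with the NDSS property is confined. So essentially the only thing to verify is that a standard semisimple group $G$ enjoys the NDSS property.

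To see this, recall that $G=\prod_{i=1}^{n}G_i$, where each factor $G_i=\mathbf{G}_i(k_i)^+$ is a standard simple group carrying the topology induced from its $k_i$-analytic structure, with $k_i$ a local field of characteristic zero. Every such field is isomorphic to $\mathbb{R}$, to $\mathbb{C}$, or to a finite extension of $\mathbb{Q}_p$ for some prime number $p$; accordingly each $G_i$ is a real Lie group (when $k_i$ is archimedean) or a $p$-adic Lie group (when $k_i$ is non-archimedean). As recorded in the text just above, real and $p$-adic Lie groups have the NDSS property, and the class of NDSS groups is closed under finite direct products; hence $G$ has the NDSS property.

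It remains only to observe that a standard semisimple group is non-compact, since each of its standard simple factors is non-compact by definition; consequently the trivial subgroup is never a lattice in $G$, and every lattice in $G$ is non-trivial. Now invoking the Corollary --- equivalently, combining Lemma \ref{lema:lattices are confined}, which gives weak confinement of any non-trivial lattice, with the fact that a discrete subgroup of an NDSS group is confined as soon as it is weakly confined --- we conclude that every lattice in $G$ is confined. I do not anticipate any genuine obstacle: all the required ingredients have been assembled in the preceding discussion, and the only points calling for a word of care are the identification of standard simple groups with real or $p$-adic Lie groups (via the classification of local fields of characteristic zero) and the remark that non-compactness of $G$ makes \enquote{lattice} synonymous with \enquote{non-trivial lattice} here.
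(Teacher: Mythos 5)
Your proof is correct and follows essentially the same route as the paper: both reduce to the NDSS property via the preceding corollary, using that standard simple factors are real or $p$-adic Lie groups and that NDSS is closed under products. Your additional remark that non-compactness of $G$ rules out the trivial lattice is a worthwhile (and correct) point of care that the paper also flags just after Lemma \ref{lema:lattices are confined}.
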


In center-free semisimple real Lie groups there is a geometric criterion for a discrete subgroup to be confined. 
%\begin{lemma}
%\label{lemma:confined via injectivity %radius}
Let $G$ be such a  Lie group with associated symmetric space $X=G/K$, where $K$ is a maximal compact subgroup of $G$.  A discrete subgroup $\Lambda \le G$ is confined if and only if there is some $R > 0$ such that the injectivity radius of the orbifold $M_\Lambda = \Lambda \backslash X$ is upper bounded by $R$ at all points of $M_\Lambda$.
%\end{lemma}
%\begin{proof}
%Recall that semisimple Lie groups have no small subgroups. Hence the desired fact follows from Lemma \ref{lemma:equivalent definitions of confined}.
%\end{proof}
%In other words, $\Lambda\le G$ is confined in $G$ iff the orbifold $\Lambda\backslash X$ has bounded injectivity radius. 
Such orbifolds  are called {\it uniformly slim} in \cite{fraczyk2023infinite}. 

Generally speaking,   being a confined subgroup is not a transitive notion.  The following example shows that even a normal subgroup $\Lambda$ of a lattice $ \Gamma$ in a simple Lie group $G$ may not in itself be confined in $G$. Note that this normal subgroup $\Lambda$ is even co-amenable in $\Gamma$ and therefore also in $G$.

\begin{eg}[Being confined is not transitive] \label{ex:confinedinGamma}
Consider the simple Lie group $G=\PSL(2,\real)$ and the lattice $\PSL(2,\mathbb{Z}) \le G$. Recall that the lattice $\PSL(2,\mathbb{Z})$ is isomorphic to the free product 
\[\PSL(2,\mathbb{Z}) \cong \ints/2\ints *  \ints/3\ints.\] 
Let $\Gamma$ be given by the following exact sequence 
\[ 1 \to \Gamma \to \PSL(2,\ints) \to  \ints/2 \ints \times \ints/3\ints  \to 1.\]
We have $\left[\PSL(2,\ints) : \Gamma\right] = 6$ so that $\Gamma$ is a lattice in $G$. 
The group $\Gamma $  is isomorphic to the free group $F_2$.
We may  view $\Gamma$ as the fundamental group of a three-holed   sphere. The holes are cusps in the corresponding finite-volume hyperbolic metric. 
Let $a,b$ represent the primitive loops around two of the cusps. Then $\Gamma=\langle a,b\rangle$ and $ab$ is a loop around the third cusp.
Every unipotent element of   $\Gamma$ is conjugate to either a power of $a$, a power of $b$ or a power of $ab$.

Let $\Lambda = \left[\Gamma,\Gamma\right]$ be the commutator group of $\Gamma$.
Then $\Lambda$ is normal in $\Gamma$ and co-amenable in $G$. The subgroup $\Lambda$  contains no non-trivial unipotent elements. Indeed,    the images of $a,b$ precisely generate the abelianization $\Gamma/\Lambda\cong \ints^2$.

In this situation, if a sequence of elements  $g_n\in G$ is such that $g_n \Gamma \to \infty$ in the quotient $G/ \Gamma$ then the sequence of conjugates $g_n \Lambda g_n^{-1}$ tends to the trivial subgroup in the Chabauty topology on $\sub(G)$. Equivalently, if a sequence of points  $x_n$ tends to infinity in one of the cusps in the hyperbolic surface $\Gamma\backslash G/K$ then the injectivity radius at any lift $\widetilde{x}_n$ of the point $x_n$ in $\Lambda\backslash G/K$ tends to infinity. This is because any   loop through $x_n$ in $\Gamma\backslash G/K$ represented by a non-unipotent element must wind around another hole, hence pass through the thick part.
\end{eg}

\subsection*{Confined subgroups in ICC groups}

Recall that a group $\Gamma$ is called \emph{ICC (i.e. infinite conjugacy classes)} if the conjugacy class of every non-trivial element of $\Gamma$ is infinite. We consider the properties of confined subgroups in ICC groups.

\begin{lemma}
\label{lemma:no finite confined subgroups in ICC}
Let $\Gamma$ be a discrete ICC group. If $F$ is a finite subgroup of $\Gamma$ then $F$ is not confined in $\Gamma$.
\end{lemma}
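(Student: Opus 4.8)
The plan is to show that a finite subgroup $F$ of an ICC group $\Gamma$ can always be conjugated so that it intersects any prescribed finite set trivially. Fix $F = \{e, \gamma_1, \ldots, \gamma_k\}$ with all $\gamma_i \neq e$, and let $F' \subset \Gamma \setminus \{e\}$ be an arbitrary finite set; we must produce some $g \in \Gamma$ with $F^g \cap F' = \emptyset$, i.e. $g^{-1}\gamma_i g \notin F'$ for all $i$. Equivalently, avoiding the conjugate limit being witnessed by $F'$, we need $g$ lying outside the finitely many ``bad'' sets $B_i = \{ g \in \Gamma : g^{-1}\gamma_i g \in F' \}$.

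The key observation is that each bad set $B_i$ is a union of at most $|F'|$ right cosets of the centralizer $C_\Gamma(\gamma_i)$: if $g^{-1}\gamma_i g = f$ and $h^{-1}\gamma_i h = f$ for the same $f \in F'$, then $gh^{-1}$ centralizes $\gamma_i$, so $g$ and $h$ lie in the same right coset $C_\Gamma(\gamma_i) h$. Hence $B_i$ is covered by finitely many right cosets of $C_\Gamma(\gamma_i)$. Now since $\Gamma$ is ICC, the conjugacy class of $\gamma_i$ is infinite, which forces $[\Gamma : C_\Gamma(\gamma_i)] = \infty$; a group of infinite index cannot be covered by finitely many of its cosets — and more to the point, since there are only finitely many $i$, the finite union $\bigcup_i B_i$ is a finite union of cosets of infinite-index subgroups, hence a proper subset of $\Gamma$ by a standard B.\,H.\ Neumann coset-covering argument (a group is never the union of finitely many cosets of subgroups of infinite index). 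Therefore we may pick $g \in \Gamma \setminus \bigcup_i B_i$, and this $g$ witnesses $F^g \cap F' = \emptyset$.

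Since $F'$ was arbitrary, no finite set $F'$ can serve as a witness for confinement, so $F$ is unconfined in $\Gamma$, i.e. not confined.

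The main obstacle — really the only non-bookkeeping point — is justifying that a finite union of cosets of infinite-index subgroups is not all of $\Gamma$. One can cite the Neumann covering lemma directly, or give the short self-contained argument: among the subgroups appearing, those of infinite index contribute cosets whose union has ``density zero'' in an appropriate sense, but cleanly one proceeds by induction on the number of distinct subgroups, peeling off the cosets of one subgroup $C$ and using that the complement of finitely many cosets of $C$ still meets every coset of any other subgroup of infinite index relative to... — in any case, the cited lemma of B.\,H.\ Neumann settles it immediately, and this is the step I would flag as deserving an explicit reference rather than an \emph{ad hoc} argument.
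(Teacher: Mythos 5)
Your proof is correct, but it takes a genuinely different route from the paper's. You argue directly: for each prescribed finite set $F' \subset \Gamma \setminus \{e\}$ you exhibit a conjugator $g$ with $F^g \cap F' = \emptyset$, by observing that each ``bad'' set $\{ g : \gamma_i^g \in F'\}$ is a union of at most $|F'|$ cosets of the centralizer $C_\Gamma(\gamma_i)$, which has infinite index by the ICC hypothesis, and then invoking B.~H.~Neumann's lemma that a group is never a finite union of cosets of infinite-index subgroups. All the steps check out (the coset description, the orbit--stabilizer translation of ICC into infinite index, and the passage from ``every finite $F'$ is avoided by some conjugate'' to ``the trivial subgroup is a conjugate limit''), and you are right that the Neumann covering lemma is the one step that should carry an explicit citation. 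The paper instead runs an induction on $|F|$: choosing a nontrivial $h \in F$ with pairwise distinct conjugates $h^{\gamma_i}$, it passes to a Chabauty-convergent subsequence of $F^{\gamma_i}$ and notes that the limit subgroup has strictly fewer elements, so the trivial subgroup is reached after at most $|F|$ steps of taking conjugate limits. The paper's argument is self-contained and needs only compactness of $\Sub{\Gamma}$; yours is ``one-shot'' and more explicit (it produces, for each Chabauty-basic neighborhood of $\{e\}$, a single conjugate lying in it), at the cost of importing the Neumann lemma as an external input.
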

\begin{proof}
Let $F$ be a finite subgroup of $\Gamma$. Assume that $|F| > 1$, for otherwise there is nothing to prove. So the subgroup $F$ admits some non-trivial element $h \in F$. Since the group $\Gamma$ is ICC, there is a sequence of elements  $\gamma_i \in \Gamma$ such that the conjugates $h^{\gamma_i}$ are pairwise distinct. Up to passing to a subsequence, we may assume that the limit $F' = \lim_{i} F^{\gamma_i}$ exists in the Chabauty topology. Note that the conjuguate limit $F'$ satisfies $|F'|<|F|$. We conclude that the trivial subgroup is a conjugate limit of the given subgroup $F$ arguing by induction on $|F|$. Namely $F$ is not confined.
\end{proof}

%Recall that a pair of subgroups $\Delta_1,\Delta_2$ of a given group $\Gamma$ are called \emph{commensurable} if $\left[\Delta_1:\Delta_1 \cap \Delta_2\right] < \infty$ and $\left[\Delta_2:\Delta_1 \cap \Delta_2\right] < \infty$. 
%\marginpar{Defined before}

\begin{lemma}
\label{lemma:finite index of ICC}
Let $\Gamma$ be a discrete group without finite confined subgroups. Then the notion of being confined for subgroups of $\Gamma$ is a commensurability invariant.
\end{lemma}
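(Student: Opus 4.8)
The plan is to reduce to the case of a finite-index inclusion and then, assuming the conclusion fails, to manufacture a non-trivial finite confined subgroup of $\Gamma$, contradicting the standing hypothesis. Since commensurability is an equivalence relation and $\Lambda_1\cap\Lambda_2$ has finite index in each of $\Lambda_1$ and $\Lambda_2$, it suffices to prove: whenever $\Lambda'\le\Lambda\le\Gamma$ with $d:=[\Lambda:\Lambda']<\infty$, the subgroup $\Lambda$ is confined in $\Gamma$ if and only if $\Lambda'$ is. One direction is immediate: if a finite set $F\subset\Gamma\setminus\{e\}$ meets every conjugate of $\Lambda'$, then, since $\Lambda'^{\gamma}\subseteq\Lambda^{\gamma}$ for every $\gamma$, the same $F$ meets every conjugate of $\Lambda$, so $\Lambda$ is confined.

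For the converse — the heart of the matter — I would argue by contradiction, assuming $\Lambda$ is confined but $\Lambda'$ is not. Then $\{e\}$ is a conjugate limit of $\Lambda'$, so choose $\gamma_n\in\Gamma$ with $\Lambda'^{\gamma_n}\to\{e\}$ in the Chabauty topology on $\Sub{\Gamma}$, and pick a finite set $F\subset\Gamma\setminus\{e\}$ witnessing confinement of $\Lambda$, so that $\Lambda^{\gamma_n}\cap F\ne\emptyset$ for all $n$. Using compactness of $\Sub{\Gamma}$, finiteness of $F$, and the fact that a subsequence of a Chabauty-convergent sequence retains its limit, pass to a subsequence along which $\Lambda^{\gamma_n}\to\Delta$ for some conjugate limit $\Delta\in\overline{\Lambda^{\Gamma}}$, the convergence $\Lambda'^{\gamma_n}\to\{e\}$ still holds, and some fixed $f\in F$ satisfies $f\in\Lambda^{\gamma_n}$ for every $n$. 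Then $f\in\Delta$ and $f\ne e$, so $\Delta\ne\{e\}$.

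The key step is to bound the size of $\Delta$: I claim $\lvert\Delta\rvert\le d$. If not, fix $d+1$ distinct elements $g_0,\dots,g_d\in\Delta$; since each lies in $\Lambda^{\gamma_n}$ for all large $n$ and $[\Lambda^{\gamma_n}:\Lambda'^{\gamma_n}]=d$, the pigeonhole principle gives, for each such $n$, indices $i\ne j$ with $g_i,g_j$ in a common left $\Lambda'^{\gamma_n}$-coset, hence $g_j^{-1}g_i\in\Lambda'^{\gamma_n}\setminus\{e\}$. As there are only finitely many pairs $(i,j)$, a further subsequence makes the pair constant, so one fixed non-trivial element $h=g_j^{-1}g_i$ lies in $\Lambda'^{\gamma_n}$ for infinitely many $n$, contradicting $\Lambda'^{\gamma_n}\to\{e\}$. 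Hence $\Delta$ is a non-trivial finite subgroup of $\Gamma$. Finally, $\Delta$ is a conjugate limit of the confined subgroup $\Lambda$, hence itself confined: the Chabauty orbit closure of $\Delta$ is contained in that of $\Lambda$, so $\{e\}$ is not a conjugate limit of $\Delta$. Thus $\Delta$ is a non-trivial finite confined subgroup of $\Gamma$, contradicting the hypothesis; this yields $\Lambda$ confined $\Rightarrow\Lambda'$ confined and completes the argument.

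I expect the only genuine obstacle to be the finiteness bound $\lvert\Delta\rvert\le d$ together with the careful handling of the nested subsequences; the remaining steps are routine bookkeeping with the Chabauty topology on a discrete group.
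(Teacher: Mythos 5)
Your proposal is correct and follows essentially the same route as the paper's proof: assume the big subgroup is confined but the finite-index one is not, extract a Chabauty limit of the conjugates of the big subgroup along the degenerating sequence, show this limit is finite by a coset-counting/pigeonhole argument bounded by the index, and observe it is confined as a conjugate limit of a confined subgroup, contradicting the hypothesis. The extra step where you exhibit a specific $f\in F$ in the limit is harmless but unnecessary, since a confined subgroup is automatically non-trivial.
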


\begin{proof}
It is enough to check that the property is preserved by passing to  finite-index subgroups.
Let $\Delta$ be a subgroup of $\Gamma$ and $\Delta_1\le\Delta$ be a subgroup of finite index.
Assume towards contradiction that $\Delta$ is confined in $\Gamma$ but $\Delta_1$ is not. Let $\gamma_i \in \Gamma$ be a sequence of elements such that the   subgroups $ \Delta_1^{\gamma_i}$ converge to the trivial subgroup in the Chabauty topology. Up to passing to a subsequence, we may assume that the Chabauty limit $\Lambda = \lim_i \Delta^{\gamma_i}$ exists. We claim that   $\Lambda$ is finite. Consider a pair of distinct non-trivial elements $\delta_1,\delta_2 \in \Lambda$. Then $\delta_1,\delta_2 \in (\Delta \setminus \Delta_1)^{\gamma_i} $ for all $i$ sufficiently large. Note that $\delta_1$ and $\delta_2$ belong to different cosets in $\Delta^{\gamma_i}/\Delta_1^{\gamma_i}$ for all $i$ sufficiently large, for otherwise $\delta_1^{-1} \delta_2 \in  \Delta_1^{\gamma_i}$ for arbitrarily large $i$, which is impossible. Therefore   $|\Lambda| \le \left[\Delta:\Delta_1\right] < \infty$. On the other hand, note that the subgroup $\Lambda$ is confined, being a conjugate limit of the confined subgroup $\Delta$. A contradiction.
\end{proof}

This lemma applies in particular to torsion-free groups as well as to ICC groups in view of Lemma \ref{lemma:no finite confined subgroups in ICC}. We obtain the following.

\begin{cor}
\label{cor:in ICC confined in commensurability invariant}
Let $\Gamma$ be a discrete group. 
Assume that $\Gamma$ is either torsion-free or ICC.
Then the notion of being confined for subgroups of $\Gamma$ is invariant under commensurability.
\end{cor}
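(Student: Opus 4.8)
The plan is to verify the hypothesis of Lemma~\ref{lemma:finite index of ICC} --- that $\Gamma$ has no finite confined subgroup --- in each of the two cases, after which the corollary is immediate. We may assume $\Gamma \neq \{e\}$, since otherwise there is nothing to prove.

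First I would note that the trivial subgroup $\{e\} \le \Gamma$ is itself unconfined: its conjugation orbit in $\Sub{\Gamma}$ is the singleton $\{\{e\}\}$, so $\{e\}$ is a conjugate limit of itself, which by definition means it is unconfined. Consequently, any \emph{finite confined} subgroup of $\Gamma$ would necessarily be non-trivial. If $\Gamma$ is torsion-free there are no non-trivial finite subgroups at all, so the hypothesis of Lemma~\ref{lemma:finite index of ICC} holds trivially. If instead $\Gamma$ is ICC, then Lemma~\ref{lemma:no finite confined subgroups in ICC} asserts precisely that no finite subgroup of $\Gamma$ is confined, so again the hypothesis holds.

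In both cases Lemma~\ref{lemma:finite index of ICC} applies and yields that being confined is a commensurability invariant for subgroups of $\Gamma$, which is exactly the assertion of the corollary. There is essentially no obstacle here; the only points worth a moment's thought are the edge case of the trivial subgroup, handled above, and (used implicitly inside Lemma~\ref{lemma:finite index of ICC}) the fact that an overgroup of a confined subgroup is confined, so that commensurability --- generated by the two finite-index inclusions $\Delta_1 \cap \Delta_2 \le \Delta_i$ --- indeed preserves confinedness in both directions.
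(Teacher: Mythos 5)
Your proof is correct and follows exactly the route the paper intends: the paper derives the corollary by noting that Lemma~\ref{lemma:finite index of ICC} applies to torsion-free groups (whose only finite subgroup is the trivial one, which is unconfined) and to ICC groups via Lemma~\ref{lemma:no finite confined subgroups in ICC}. Your extra remarks on the trivial-subgroup edge case and on overgroups of confined subgroups being confined are accurate and consistent with the paper's argument.
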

 
% \subsection*{Confined subgroups and uniformly recurrent subgroups}

% Information about confined subgroups can be related to information about uniformly recurrent subgroups.

% \begin{lem} \label{lem: urs=confine}
% Let $\Gamma$ be a finitely generated group.
%   Then any confined subgroup of $\Gamma$ has  finite index if and only if any non-trivial uniformly recurrent subgroup of $\Gamma$ is the set of conjugates of some finite-index subgroup.
% \end{lem}

% \begin{proof}
%  The first statement implies the second, as any element $\Lambda$ lying  in a non-trivial uniformly recurrent subgroup $X \subset \Sub{\Gamma}$ is a confined subgroup, and if $\Lambda$ has finite index in $\Gamma$  then $X$ consists entirely of its conjugates.
 
% The second statement implies the first, as the orbit closure in $\sub(\Gamma)$ of every confined subgroup $\Lambda$ of $\Gamma$ contains a non-trivial uniformly recurrent subgroup $X$, and if $X$ contains only finite-index subgroups then $\Lambda$ is one of them, as finite-index subgroups are isolated in $\Sub{\Gamma}$. 
% \end{proof}

\subsection*{Confined actions}

Let $G$ be a locally compact second countable group. For every Borel $G$-space $X$ there is a Borel map $\mathrm{Stab}_G : X \to \mathrm{Sub}(G)$ given by $x \mapsto \mathrm{Stab}_G(x)$.

\begin{defn} 
\label{defn:confined actions}
We introduce several notions related to confined actions.
%\label{defn:confined actions}

\begin{enumerate}[label=(\alph*)]
\item A uniformly recurrent subgroup $X$, or more generally a closed $G$-invariant subset $X$ of $\Sub{G}$, is called \emph{confined} if $\{e\} \notin X$.
\item An invariant random subgroup $\nu \in \IRS(G) $ is called \emph{confined} if $\mathrm{supp}(\nu)$ is confined.
\item A topological $G$-space $X$ has \emph{confined stabilizers} if the closed $G$-invariant subset $\overline{\mathrm{Stab}_G(X)} \subset \Sub{G}$ is confined.
\item A probability measure preserving Borel $G$-space $(X,\mu)$ has \emph{confined stabilizers} if the invariant random subgroup $(\mathrm{Stab}_G)_* \mu$ is confined.
\end{enumerate}
\end{defn}

We caution the reader that, say, the non-confined invariant random subgroup $\delta_{\{e\}}$ has confined stabilizers as  a probability measure preserving space.

\begin{example}
Let $H$ be a closed subgroup of $G$. The following statements are equivalent:
\begin{enumerate}
    \item The closed subgroup $H \le G$ is confined in the sense of Definition \ref{defn:confined subgroup}.
    \item The orbit closure $\overline{H^G}$ is a confined subset of $\Sub{G}$ in the sense of (a) in  Definition \ref{defn:confined actions}.
    \item The topological $G$-space $G/H$ has confined stabilizers in the sense of (c) in  Definition \ref{defn:confined actions}.
\end{enumerate}
\end{example}

 \begin{lemma}
\label{lemma:compact confined model}
Assume that the group $G$ is discrete. Let $\nu$ be a confined invariant random subgroup of $G$. Then there is a    compact $G$-space $Y$ with confined stabilizers admitting a $G$-invariant probability measure $\mu$ such that $(\mathrm{Stab}_G)_* \mu = \nu$.
\end{lemma}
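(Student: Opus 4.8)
The plan is to produce a compact model of the invariant random subgroup $\nu$ by replacing the abstract probability space carrying $\nu$ with the Chabauty space $\Sub{G}$ itself, suitably enlarged. Since $G$ is discrete, the Chabauty space $\Sub{G} \subseteq \{0,1\}^G$ is a compact metrizable $G$-space, the conjugation action is by homeomorphisms, and $\nu \in \IRS(G)$ is by definition a $G$-invariant Borel probability measure on it. The naive candidate is to take $Y = \supp(\nu)$ with $\mu = \nu$ and the stabilizer map — but the stabilizer of a point $H \in \Sub{G}$ under conjugation is the normalizer $N_G(H)$, which need not equal $H$, so $(\Stab_G)_*\nu$ need not be $\nu$. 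The fix is the standard one: pass to a cover on which the tautological subgroup becomes the actual stabilizer.

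First I would recall the tautological construction. Consider the closed subset
\[ Z = \{ (H, gH) \in \Sub{G} \times (G/H\text{-bundle}) \} \]
— more precisely, form the space $\widetilde{Y} = \{(H,x) : H \in \Sub{G},\ x \in G/H\}$, which fibers over $\Sub{G}$ with fiber $G/H$ over $H$, with $G$ acting diagonally by $g\cdot(H,x) = (gHg^{-1}, gx)$. The stabilizer of a point $(H, kH)$ is exactly $kHk^{-1}$. This space $\widetilde Y$ is not compact in general, so the key step is to compactify it $G$-equivariantly. Since $G$ is discrete and countable, one can realize $\widetilde Y$ as a $G$-invariant Borel subset of a compact $G$-space: identify $G/H$ with the orbit of $H$ under left translation inside $\Sub{G'}$ for an auxiliary construction, or more cleanly, use that $\widetilde Y$ embeds $G$-equivariantly and Borel-measurably into the compact space $\Sub{G} \times \beta G$ or into $\Sub{G}\times\{0,1\}^{G}$ via $x = gH \mapsto \mathbf{1}_{gH}$. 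Then take $Y$ to be the closure of the image of $\widetilde Y$ under this embedding; it is a compact $G$-space, and the stabilizer map on $Y$ restricted to the (Borel, $G$-invariant, full-measure) image of $\widetilde Y$ is the tautological one.

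Next I would push forward $\nu$ to a measure on $Y$. Using a Borel section of the fibration $\widetilde Y \to \Sub{G}$ over $\supp(\nu)$ — say the section $H \mapsto (H, eH)$ — and then averaging appropriately is not needed since $G$ is discrete and each fiber $G/H$ is countable; instead, integrate over the orbit: define $\mu$ on $Y$ by $\mu = \int_{\Sub G} \big(\tfrac{1}{|G/N_G(H)|}\sum \dots\big)$ — cleaner is to observe that $\widetilde Y \to \Sub G$, $(H,gH)\mapsto gHg^{-1}$, admits a $G$-invariant probability-measure disintegration over each ergodic piece, or simply: pull $\nu$ back along $(H,eH)\mapsto H$ to get a measure $\tilde\mu$ on the section $\{(H,eH)\}$ and let $\mu$ be... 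Actually the correct statement is: the map $\widetilde Y \to \Sub G$ has a canonical $G$-invariant family of measures (uniform on each countable fiber $G/H$ is infinite in general, so one must instead use that $(H,eH)\mapsto H$ is a Borel section and push forward $\nu$ along it, obtaining $\mu_0$; then $\mu_0$ is $N_G(H)$-quasi-invariant fiberwise, and its image in $Y$ under the embedding, call it $\mu$, satisfies $(\Stab_G)_*\mu = \nu$ because $\Stab_G(H,eH) = H$ and the embedding is $G$-equivariant and injective on $\widetilde Y$). Finally, $\supp(\mu) \subseteq \overline{\widetilde Y}$ maps under $\Stab_G$ into $\overline{\supp(\nu)^G} = \supp(\nu)$ (the latter already being $G$-invariant and closed), which by hypothesis avoids $\{e\}$; hence $Y$ has confined stabilizers, as required.

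**Main obstacle.** The genuinely delicate point is making the "compactify $\widetilde Y$ while keeping the stabilizer map continuous/Borel and $G$-equivariant" step honest: the fibers $G/H$ are infinite, the section need not extend continuously, and one must ensure that after taking closures no new points with trivial stabilizer are introduced into $\supp(\mu)$ (only into $Y$, which is harmless). I would handle this by doing everything at the Borel level — $\mu$ lives on the Borel $G$-invariant copy of $\widetilde Y$ inside $Y$, where the stabilizer map is literally tautological — and only invoking compactness of $Y$ for the "compact $G$-space" requirement, noting that $\supp(\mu)$, being the closure of a set all of whose stabilizers lie in the closed $G$-invariant confined set $\supp(\nu)$, again has all stabilizers in $\supp(\nu)$ by upper semicontinuity of... — more safely: since $(\Stab_G)_*\mu = \nu$ is already forced by the Borel identity on the full-measure piece, and $\nu$ is confined by assumption, $Y$ automatically has confined stabilizers in the sense of Definition \ref{defn:confined actions}(c) once we check $\overline{\Stab_G(Y)}\not\ni\{e\}$, which follows because $\Stab_G$ maps the dense Borel piece into $\supp(\nu)$ and $\Sub G \setminus\{H : \{e\}\in\overline{H^G}\}$ arguments reduce it to confinedness of $\nu$.
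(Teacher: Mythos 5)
There is a genuine gap, and it sits exactly where your proof trails off: the construction of a \emph{$G$-invariant} probability measure on the tautological bundle $\widetilde Y=\{(H,kH)\}$. Pushing $\nu$ forward along the section $H\mapsto(H,eH)$ does produce a measure $\mu_0$ with $(\Stab_G)_*\mu_0=\nu$, but $\mu_0$ is supported on the image of the section, which is not $G$-invariant (the section is not equivariant), so $\mu_0$ is not $G$-invariant. Nor can you repair this by averaging over the fibers: the fibers $G/H$ are in general infinite and carry no $H$- or $N_G(H)$-invariant probability measure, which is why your paragraph oscillates between several candidate definitions of $\mu$ without settling on one. Producing an invariant probability measure whose stabilizer distribution is a prescribed IRS is precisely the non-trivial content of \cite[Proposition 13]{abert2014kesten}, which the paper simply cites; the resolution there is not a section but a decoration of the coset space (e.g.\ i.i.d.\ labels on the cosets of $H$), which makes the total space carry a genuinely invariant measure while cutting the stabilizer of a generic point down from $N_G(H)$ to $H$. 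Without that idea (or the citation) your argument does not produce the required $(Y,\mu)$.

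The paper's actual route is: invoke \cite[Proposition 13]{abert2014kesten} to get a Borel p.m.p.\ $G$-space $(Z,\eta)$ with $(\Stab_G)_*\eta=\nu$, then invoke Varadarajan's compact model theorem to replace it by an isomorphic compact $G$-space $(Y,\mu)$ with $\supp(\mu)=Y$, and finally verify that $Y$ has confined stabilizers as a \emph{topological} space by combining full support of $\mu$ with the upper semi-continuity of $\Stab_G:Y\to\Sub{G}$ (valid because $G$ is discrete): every point of $Y$ is a limit of points whose stabilizers lie in $\supp(\nu)$, so every element of $\overline{\Stab_G(Y)}$ contains a Chabauty limit of elements of the closed confined set $\supp(\nu)$ and is therefore non-trivial and confined. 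Your closing paragraph gestures at this last step but does not carry it out; in particular it never uses full support of $\mu$, without which a compact model could contain points (outside the support) with trivial stabilizer, and the closure-of-$\widetilde Y$ compactification you propose is exactly the kind of place where such points could appear.
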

\begin{proof}
According to \cite[Proposition 13]{abert2014kesten} there is a Borel probability measure preserving $G$-space $(Z,\eta)$ satisfying $(\mathrm{Stab}_G)_* \eta = \nu $. By Varadarajan's compact model theorem, there is a compact $G$-space $(Y,\mu)$ with $\mathrm{supp}(\mu) = Y$ and such that $(Z,\eta)$ and $(Y,\mu)$ are isomorphic as Borel $G$-spaces. In particular $(\mathrm{Stab}_G)_* \mu = \nu$ so that $(Y,\mu)$ has confined stabilizers as a probability measure preserving space.

To conclude the proof it remains to show that the compact $G$-space $Y$ has confined stabilizers as a topological space. Note that $\mu$-almost every point $y \in Y$ has $\mathrm{Stab}_G(y) \in \mathrm{supp}(\nu)$. 
As the group $G$ is discrete, the map $\mathrm{Stab}_G : Y \to \mathrm{Sub}(G)$ is upper semi-continuous, in the sense that any point $x \in X$ has a neighborhood $x \in O \subset X$ such that any point $y \in O$ satisfies $\mathrm{Stab}_G(y) \le \mathrm{Stab}_G(x)$. Put together, these two facts imply that $\overline{\mathrm{Stab}_G(Y)}$ is confined, as required.
\end{proof}

\subsection*{Confined subgroups of arithmetic lattices}

We study confined subgroups of irreducible lattices and their actions factoring through rank-one simple factors.
\begin{lemma}
\label{lemma:confined subgroups of arithemtic lattices}
Let $G$ be a standard semisimple group of higher rank and $\Gamma$ be an irreducible lattice in $G$. Let $F$ be a rank one simple factor of $G$ of type $p$, where $p$ is either $\infty$ or a prime number. Let $\Delta$ be a confined subgroup of $\Gamma$. Then the projection of $\Delta$ to the factor $F$ is $p$-Zariski-dense and not relatively compact.
\end{lemma}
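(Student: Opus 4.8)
Write $G=F\times G'$, where $G'$ is the product of the remaining simple factors; since $G$ has higher rank and $\rank(F)=1$, the factor $G'$ is nontrivial. Let $\mathrm{pr}_F\colon G\to F$ be the projection and set $D=\mathrm{pr}_F(\Delta)$. The plan is to establish two facts about $D$ — that it is $p$-Zariski dense in $\mathbf F$, and that it is unbounded — essentially independently: the first by a soft pigeonhole argument, the second by a dynamical argument inside the rank-one factor. Throughout I use that $\Gamma\cap G'=\{e\}$ (it is a discrete normal subgroup of the centre-free group $G'$, being normalized by the dense image $\mathrm{pr}_{G'}(\Gamma)$), so that $\mathrm{pr}_F$ is injective on $\Gamma$; that $\mathrm{pr}_F(\Gamma)$ is $p$-Zariski dense in $\mathbf F$ by the Borel density theorem and dense in $F$ by irreducibility together with higher rank; and that $\Delta$ is infinite, since $\Gamma$ is ICC (Lemma~\ref{lemma:lattices in simple analytic groups are ICC} and Lemma~\ref{lemma:no finite confined subgroups in ICC}).

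\emph{Zariski density.} Fix a finite set $\mathcal F\subset\Gamma\setminus\{e\}$ witnessing the confinement of $\Delta$, and for $f\in\mathcal F$ put $S_f=\{\gamma\in\Gamma: f\in\Delta^\gamma\}$, so that $\bigcup_{f\in\mathcal F}S_f=\Gamma$. Since $\mathbf F$ is connected, hence irreducible as a variety, and $\mathrm{pr}_F(\Gamma)=\bigcup_f \mathrm{pr}_F(S_f)$ is $p$-Zariski dense, some $\mathrm{pr}_F(S_{f_0})$ is $p$-Zariski dense in $\mathbf F$. For $\gamma\in S_{f_0}$ an appropriate conjugate $\gamma^{\pm1}f_0\gamma^{\mp1}$ of $f_0$ lies in $\Delta$, so $D$ contains $\{g^{-1}cg : g\in\mathrm{pr}_F(S_{f_0})\}$ with $c=\mathrm{pr}_F(f_0)\neq e$; the Zariski closure of this set is the full conjugacy class of $c$, so $\overline D^{\,\mathrm{Zar}}$ contains the normal subgroup generated by $c$, which is all of $\mathbf F$ because $\mathbf F$ is simple. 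Thus $D$ is $p$-Zariski dense. The same argument applies to \emph{every} confined subgroup of $\Gamma$; and since a conjugate limit of a confined subgroup of $\Gamma$ is again confined (immediate from the finite-set characterization), the $F$-projection of every conjugate limit of $\Delta$ is likewise $p$-Zariski dense.

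\emph{Unboundedness.} If $p=\infty$, a relatively compact $D$ would lie in a maximal compact subgroup of $F$, whose Zariski closure is a proper subgroup of $\mathbf F$ by a dimension count — contradicting the previous step. If $p$ is a prime, let $T$ be the Bruhat--Tits tree of the rank-one group $F$, and suppose $D$ is relatively compact; then $D$ fixes a vertex $v_0$ of $T$. Since $\mathrm{pr}_F(\Gamma)$ is dense in $F$ its orbit on $v_0$ is infinite, so we may choose $\gamma_n\in\Gamma$ with the vertices $v_n:=\mathrm{pr}_F(\gamma_n)^{-1}v_0$ pairwise distinct, and after passing to a subsequence $v_n\to\xi\in\partial T$ while $\Delta^{\gamma_n}\to\Lambda$ in $\Sub{G}$. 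Each $\mathrm{pr}_F(\Delta^{\gamma_n})$ fixes $v_n$, so every element of $\mathrm{pr}_F(\Lambda)$ is a limit of elements fixing $v_n$ and hence fixes $\xi$; therefore $\mathrm{pr}_F(\Lambda)$ lies in a minimal parabolic subgroup of $F$ and is not $p$-Zariski dense in $\mathbf F$. But $\Lambda$ is a conjugate limit of $\Delta$, hence confined in $\Gamma$, so by the previous step $\mathrm{pr}_F(\Lambda)$ \emph{is} $p$-Zariski dense — a contradiction. Hence $D$ is not relatively compact, completing the proof.

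\emph{Main difficulty.} The Zariski-density step is formal. The genuine obstacle is the non-Archimedean relatively-compact case: unlike over $\mathbb R$, a compact subgroup of a $p$-adic rank-one group can be Zariski dense (e.g.\ $\SL_2(\mathbb Z_p)\le\SL_2(\mathbb Q_p)$), so Zariski density does not by itself force unboundedness. The resolution is to push a conjugate of $\Delta$ out to an end of the Bruhat--Tits tree, where the limit group is trapped in a parabolic and thereby loses Zariski density, contradicting what the first step guarantees for conjugate limits. The points requiring care are that conjugate limits of $\Delta$ remain confined in $\Gamma$, and that $p$-Zariski density of the $F$-projection is inherited by such limits; both follow readily from the finite-set description of confinement.
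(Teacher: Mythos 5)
Your proof is correct, and it takes a genuinely different and substantially more elementary route than the paper's. The paper proves the lemma by running a $\mu$-stationary random subgroup $\nu$ supported on $\overline{\Delta^\Gamma}$ through the machinery of \cite{gekhtman2023stationary}: it invokes Margulis arithmeticity so that boundary fixed points of subgroups of $\Gamma$ lie in the countable set $({\bf H}_s/{\bf P})(L)$, uses the fact that a stationary measure on a countable set is invariant to rule out fixed points, deduces Zariski density from the absence of proper closed convex invariant subsets, and gets non-compactness from the existence of hyperbolic elements (openness of hyperbolicity then transfers both properties from the generic conjugate limit back to $\Delta$, playing the role of your Lemma~\ref{lem:Z-dense}). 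You bypass all of this: Zariski density comes from a pigeonhole on the finite witness set $\mathcal F$ plus the classical conjugacy-class trick (a Zariski-closed subgroup of the adjoint simple group $\mathbf F$ containing the closure of a nontrivial conjugacy class is everything), and unboundedness comes from pushing a conjugate of $\Delta$ to an end of the Bruhat--Tits tree and trapping the Chabauty limit in a parabolic, contradicting the Zariski density that the first step guarantees for conjugate limits (which, as you note, remain confined with the same witness set). The trade-off is that your argument leans on the ambient countable group $\Gamma$ and its finite confining set, so it does not adapt to the companion Lemma~\ref{lemma:discrete irr confined has Zariski dense projections} about strongly confined subgroups of $G$ itself, which is presumably why the paper deploys the heavier stationary-measure machinery uniformly in both places. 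Two small points you should make explicit: the fixed point of the relatively compact group $\overline D$ on $T$ may be an edge midpoint rather than a vertex (harmless), and the step ``a limit of elements fixing $v_n$ fixes $\xi=\lim v_n$'' uses that the $F$-action extends jointly continuously to $T\cup\partial T$ (equivalently, that Chabauty limits of the stabilizers of $v_n$ lie in the parabolic fixing $\xi$); this is standard for actions on locally finite trees with open vertex stabilizers, via the Gromov product estimate $(h v_n\mid v_n)_{v_0}\to\infty$ for $h\to e$, but it is the one place where the limit argument could silently fail if left unexamined.
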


\begin{proof}
The irreducible lattice $\Gamma$ is arithmetic by  Margulis' arithmeticity (Theorem~\ref{thm:arithmeticity}).
We will use  the notations introduced in Examples~\ref{ex:H} and \ref{ex:Gamma_S}.
Namely  $K$ is a number field, ${\bf H}$ is an adjoint connected absolutely simple $K$-group and $S$ is a finite set of isotropic places on $K$ containing all Archimedean isotropic ones. We identify the standard semisimple group $G$ with the group $H_S$ and the rank one simple factor $F$ with the factor   $H_s$ for some place $s \in S$.  Let $k_s$ be the local field associated to the place $s$.

The lattice $\Gamma$ is ICC according to  Lemma~\ref{lemma:lattices in simple analytic groups are ICC}.
Therefore   Corollary \ref{cor:in ICC confined in commensurability invariant} applies, and we may   replace $\Gamma$ and $\Delta$ by finite-index subgroups preserving the assumptions of the lemma.
Upon doing so and conjugating, we assume as we may that $\Gamma=\Gamma_S$ where $\Gamma_S$ was defined in Example \ref{ex:Gamma_S}. In particular, the lattice $\Gamma$ is contained in the group of rational points $\mathbf{H}(K)$.

Let $X$ be the rank-one symmetric space or Bruhat--Tits building associated to the standard simple group $F$, depending on whether $p$ is $\infty$ or a prime.  In both cases $X$ is a proper Gromov hyperbolic metric space.
Its Gromov boundary $\partial X$ can be identified with   $({\bf H}_s/{\bf P})(k_s)$ where $\bf P$ is a minimal parabolic $k_s$-subgroup of ${\bf H}_s$ \cite[Propoisiton 20.5]{borel2012linear}. The variety  ${\bf H}_s/{\bf P}$ is in fact defined over some finite  extension $L$ of the field $s(K)$ \cite[Corollary 18.8]{borel2012linear}.
We consider $\Gamma$ as a subgroup of $\mathbf{H}(L)$.
% We let $\mathbf{H}'$ be the $\mathbb{Q}$-group obtained from $\mathbf{H}$ by restriction of scalars from $K$ to $\mathbb{Q}$.
% We identify
%  $\Delta<\Gamma<\mathbf{H}'(\mathbb{Q})<\mathbf{H}'(\mathbb{Q}_p)=G_1$.
%  Extending the scalars to $\mathbb{Q}_p$, we obtain the $\mathbb{Q}_p$-group $\mathbf{H}'_p$.

Let $\mu$ be any symmetric probability measure on the lattice $\Gamma$  whose support generates $\Gamma$. Let $\nu$ be any \mbox{$\mu$-stationary} random subgroup supported on the conjugate closure $\overline{\Delta^\Gamma}$. The assumption that the subgroup $\Delta$ is confined ensures that $\nu$-almost every subgroup of $\Gamma$ is not trivial.

%$\mathbf{H}'_p/\mathbf{P}'_p(\mathbb{Q}_p)$ for some $\mathbb{Q}_p$-parabolic subgroup $\mathbf{P}'_p<\mathbf{H}'_p$ (not necessarily defined over $\mathbb{Q}$).

Consider the action of the lattice  $\Gamma $ on the Gromov boundary   $({\bf H}_s/{\bf P})(k_s)$.
We conclude that the fixed point set of  every element in $\Gamma$ is defined over $L$, hence so is the fixed point set of every subgroup of $\Gamma$.
We know by \cite[Proposotion 4.7]{gekhtman2023stationary} that $\nu$-almost every subgroup fixes at most a single point of the boundary $\partial X$. Every such fixed point must belong to the countable set $({\bf H}_s/{\bf P})(L)$ by the preceding remarks. However, any $\nu$-stationary probability measure on a countable set is finitely supported  and $\Gamma$-invariant  \cite[Lemma 8.3]{benoist2011mesures}. We conclude that $\nu$-almost every subgroup fixes no point of the boundary $\partial X$. Therefore its projection to $F$ is acting on $X$ without   proper closed convex invariant subsets \cite[Corollary 7.6]{gekhtman2023stationary}. This implies that $\nu$-almost every subgroup projects 
$p$-Zariski-densely to $F$ \cite[Proposition 2.8]{caprace2009isometry}.
The fact that the projection of the group $\Delta$ itself (rather than its  conjugate limit) to the factor $F$ is $p$-Zariski-dense  follows from Lemma \ref{lem:Z-dense}.

Similarly, we know that $\nu$-almost every subgroup acts on $X$ with unbounded orbits \cite[Proposition 4.8 or 7.8]{gekhtman2023stationary}. By this fact and by the previous paragraph, this action cannot be bounded or horocyclic (in the sense outlined e.g.  in \cite[\S3]{gekhtman2023stationary}). Hence the action   admits hyperbolic elements. Being hyperbolic is an open condition in the group $F$ \cite[Proposition 3.1]{gekhtman2019critical}. We conclude that the group $\Delta$ itself admits an element whose projection to $F$ is hyperbolic. As such, the projection of the subgroup $\Delta$ to the factor $F$ is not relatively compact.
\end{proof}

\subsection*{Strongly confined subgroups}

We require a more refined notion than just being confined, which takes into account  degeneration of conjugate limits into proper factors. 

\begin{defn}
\label{defn:strongly confined}
A closed subgroup $H$ of a locally compact second countable group $G$ is \emph{strongly confined} if no conjugate limit of $H$ is  contained in a proper normal subgroup.
\end{defn}

Certainly, a strongly confined subgroup is confined. Moreover, any conjugate limit of a strongly confined subgroup is still strongly confined.

% a strongly confined subgroup is clearly confined. Any 
% conjugate limit of a strongly confined subgroup is strongly confined.
% \begin{defn}
% A non-trivial uniformly recurrent subgroup $X \subset \Sub{G}$ is called \emph{irreducible} if every subgroup $\Delta\in X$  has trivial intersection with every proper normal subgroup of $G$.
% \end{defn}

The following observation is needed to obtain  the converse (easier) direction to one of our main results.
%, namely that an irreducible lattice is strongly confined. Additionally, it can be used to deal with subgroups of irreducible lattices which are assumed to be confined in the enveloping group. However, in that situation we had more refined methods at our disposal, see \S\ref{sec:confined subgroups of lattices}.

\begin{lemma}
\label{lemma:confined subgroup of an irreducible lattice is strongly confined}
Let $G$ be a standard semisimple group and  $ \Gamma$ be an irreducible lattice in $G$. Then any subgroup of $\Gamma$ which is confined regarded as a subgroup of $G$ is strongly confined in $G$.
\end{lemma}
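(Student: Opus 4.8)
The plan is to argue by contradiction. Suppose $\Delta\le\Gamma$ is confined as a subgroup of $G$, yet some conjugate limit $\Lambda=\lim_j\Delta^{g_j}$ lies in a proper normal subgroup of $G$. Recall that the proper normal subgroups of a standard semisimple group are exactly the partial products of simple factors, so we may write $G=N\times M$ with $N$ proper normal and $M$ a non-trivial standard semisimple group, and then $\Lambda\le N$. I would first record three soft facts. Since $\overline{\Delta^\Gamma}\subseteq\overline{\Delta^G}$ and $\{e\}\notin\overline{\Delta^G}$, the subgroup $\Delta$ is also confined in $\Gamma$ in the discrete sense, and $\Lambda\neq\{e\}$ (every conjugate limit of a confined subgroup is confined). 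As $\Gamma$ is discrete, $\{H\in\Sub G:H\le\Gamma\}$ is closed in $\Sub G$ and inherits there its usual Chabauty topology, so Chabauty limits of subgroups of $\Gamma$ are again subgroups of $\Gamma$. Finally, it is classical that an irreducible lattice intersects every proper normal subgroup of $G$ trivially, so $\Gamma\cap N=\{e\}$.

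Next I would fix a fundamental domain $\mathcal{F}$ for the left $\Gamma$-action on $G$ and write $g_j=\gamma_jf_j$ with $\gamma_j\in\Gamma$ and $f_j\in\mathcal{F}$, so that $\Delta^{g_j}=(\Delta^{\gamma_j})^{f_j}$, where each $\Delta^{\gamma_j}$ is a subgroup of $\Gamma$ confined in both $\Gamma$ and $G$. Passing to a subsequence, assume $\Delta^{\gamma_j}\to\Delta_\infty$ in $\Sub G$; then $\Delta_\infty\le\Gamma$ by the closedness remark, and $\Delta_\infty\neq\{e\}$ since it is a conjugate limit in $\Gamma$ of the $\Gamma$-confined subgroup $\Delta$. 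If $f_j$ stays in a compact set, pass to a further subsequence with $f_j\to f_\infty$; continuity of the conjugation action on $\Sub G$ gives $\Lambda=\lim_j(\Delta^{\gamma_j})^{f_j}=\Delta_\infty^{f_\infty}$, so $\Delta_\infty\le N$ (as $N\lhd G$), whence $\Delta_\infty\le\Gamma\cap N=\{e\}$ and $\Lambda=\{e\}$, contradicting $\Lambda\neq\{e\}$. This already disposes of the case where $\Gamma$ is cocompact.

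The remaining case, $f_j\to\infty$ in $\mathcal{F}$ --- equivalently $\Gamma g_j\to\infty$ in $\Gamma\backslash G$ --- is the technical heart. Here I would invoke reduction theory for lattices in standard semisimple groups: $f_j$ may be taken to run into a cusp associated to a proper parabolic subgroup $Q\le G$ with unipotent radical $U$, and written $f_j=\omega_ja_j$ with $\omega_j$ bounded and $a_j$ tending to infinity in the cuspidal direction, so that conjugation by $a_j$ contracts $U$. The crucial input is that confinedness of $\Delta$ in $G$ persists, with a uniform constant, under conjugation --- geometrically, all conjugates of the locally symmetric orbifold $\Delta\backslash X$, with $X$ the product of symmetric spaces and buildings of $G$, share one upper bound on injectivity radius --- which forces $\Delta'\cap U$ to be co-compact in $U$, uniformly over conjugates $\Delta'$ of $\Delta$, because deep inside a cusp the only non-trivial elements displacing a point boundedly lie in $U$. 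Applying this inside $\Delta^{g_j}=\bigl((\Delta^{\gamma_j})^{\omega_j}\bigr)^{a_j}$ and letting $a_j\to\infty$, the contracted uniformly co-compact lattices $\bigl((\Delta^{\gamma_j})^{\omega_j}\cap U\bigr)^{a_j}$ fill out all of $U$ in the Chabauty limit, so $\Lambda\supseteq U$. But $\Gamma\cap U$ is an infinite lattice in $U$ while $\Gamma$ meets every proper normal subgroup trivially, so $U$ --- hence $\Lambda$ --- projects non-trivially onto every simple factor of $G$ and cannot lie in $N$, a contradiction. I expect the genuine obstacle to be making this reduction-theoretic step rigorous in the stated generality (products of real and $p$-adic simple groups, including cusps of possibly non-arithmetic rank-one factors), and in particular establishing the uniform co-compactness of $\Delta'\cap U$ in $U$; once that is in hand the rest is routine.
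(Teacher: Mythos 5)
Your soft reductions are fine: writing $G=N\times M$, noting $\Gamma\cap N=\{e\}$, decomposing $g_j=\gamma_jf_j$ over a fundamental domain, and disposing of the case where $f_j$ stays bounded all match what one would do, and the bounded case is correct. The gap is in the only case that matters, the excursion to infinity. Your pivotal claim is that confinedness of $\Delta$ in $G$ forces $\Delta'\cap U$ to be (uniformly) cocompact in the unipotent radical $U$ of the relevant rational parabolic, justified by the assertion that ``deep inside a cusp the only non-trivial elements displacing a point boundedly lie in $U$.'' That assertion is false precisely in the setting of this lemma, namely for irreducible lattices in non-simple $G$: the cusp stabilizer of, say, a Hilbert modular group is $\mathcal{O}\rtimes\mathcal{O}^{*}$, and a norm-one unit $\epsilon$ acting by $(z_1,z_2)\mapsto(\epsilon_1^2z_1,\epsilon_2^2z_2)$ displaces every point $(iy_1,iy_2)$ by the same constant, no matter how deep in the cusp. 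So a subgroup could pass the ``bounded displacement at every point'' test near the cusps using only semisimple (Levi) elements, and nothing in your argument produces even one non-trivial element of $\Delta\cap U$, let alone a cocompact lattice in $U$ uniformly over conjugates. (Compare the paper's Example on $[\Gamma,\Gamma]\le\PSL(2,\ints)$, which shows exactly how the presence or absence of unipotents is the delicate point; in the product case the units make the local analysis strictly harder, not easier.) Since the whole content of the lemma is the non-cocompact case, this is a genuine missing idea, not a technicality, and you correctly sensed that this is where the difficulty lies.

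The paper's proof goes in the opposite direction and avoids reduction theory altogether. By Margulis arithmeticity one may take $\Gamma=\Gamma_S\subset\mathbf{H}(K)$. If $\Lambda^{g_n}\to\Delta\le L$ with $L$ a proper factor, then for each limit element there are $\gamma_n\in\Lambda$ whose projection to the discarded factor converges to $e$; the coefficients of the characteristic polynomials of $\mathrm{Ad}(\gamma_n)$ are $S$-integers, hence range over a discrete (in fact finite, being bounded) set, and since one projection's characteristic polynomial converges to $(x-1)^{\dim\mathbf{H}}$, all projections of $\gamma_n$ are eventually unipotent. Thus the limit group $\Delta$ is unipotent in every $p$-component, sits inside a product of horospherical subgroups, and can be expanded to infinity by a suitable one-parameter subgroup $a(t)$; a diagonal argument then exhibits the trivial group as a conjugate limit of $\Lambda$, contradicting confinedness. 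So where you try to show the limit is \emph{large} (contains $U$), the paper shows it is \emph{unipotent} and therefore can be pushed to $\{e\}$ --- and the arithmetic discreteness of characteristic polynomials is the substitute for the cusp geometry you were missing.
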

 
\begin{proof}
We assume as we may that the group $G$ is semisimple but not simple, for otherwise the two notions of confined and strongly confined are  equivalent. In particular $G$ is of higher rank and  the lattice $\Gamma$ is arithmetic (see Theorem~\ref{thm:arithmeticity}). 

We use the notation introduced in Examples~\ref{ex:H} and \ref{ex:Gamma_S}
and  identify $G$ with the group $H_S = \prod_{i=1}^m H_{s_i}$ where $S = \{s_1,\ldots,s_m\}$ is a set of places. The lattice $\Gamma$ is ICC according to Lemma \ref{lemma:lattices in simple analytic groups are ICC}.
Up to conjugating the lattice $\Gamma$ and   using Corollary \ref{cor:in ICC confined in commensurability invariant} to replace $\Gamma$ by a finite-index subgroup if necessary, we assume as we may that $\Gamma=\Gamma_S$. In particular $\Gamma$ contained in $\mathbf{H}(K)$ for some number field $K$.

Let   $\Lambda \le \Gamma$ be a subgroup. Assume towards contradiction that $\Lambda$ is confined but not strongly confined regarded as a subgroup of $G$. Upon reordering $S$, we get that there is a sequence of elements $g_n \in G$ such that $\Lambda^{g_n} \to \Delta$ in the Chabauty topology with   $\Delta \le L$ and $L = \prod_{i=1}^{m-1}H_{s_i}$. Note that $\Delta \neq \{e\}  $ since $\Lambda$ is confined. This means that for each element $h \in \Delta$ there is a sequence of elements $\gamma_n = (\gamma_{1,n},\ldots,\gamma_{m,n}) \in \Delta$ where $\gamma_{i,n} \in H_{s_i}$ so that   $(\gamma_{1,n},\ldots,\gamma_{m-1,n})^{g_n} \to h$ and at the same time $\gamma_{m,n}^{g_n} \to e$. The coefficients of the characteristic polynomials of  the transformations $\mathrm{Ad}(\gamma_{1,n}),\ldots, \mathrm{Ad}(\gamma_{m,n})$  are all uniformly bounded and  the characteristic polynomials of $\mathrm{Ad}(\gamma_{m,n})$ tend to the polynomial $p(x)=(x-1)^{\dim_K {\bf H}}$ as $n$ tends to infinity. As the algebraic $S$-integers form a lattice in the   product of local fields $\prod_{s \in S} k_s$,  the set of all possible coefficients of such  characteristic polynomials is finite. Hence
the projection of the element $\gamma_n$ to each $p$-component must be unipotent for all $n$ sufficiently large. It follows that every element belonging to the projection of the subgroup $\Delta$ to each $p$-component is unipotent. 
This implies that the $p$-Zariski closure $U_p$ of the projection of $\Delta$ to each $p$-component is a connected unipotent subgroup.  

In each $p$-component of the group $L$, there is some horospherical subgroup $V_p$ containing the unipotent subgroup $U_p$ \cite[Corollary 3.7]{borel1971elements}. Let $V$ be the direct product of these horospherical subgroups. Let $a(t)$ be a suitable one-parameter subgroup of $L$ which expands the subgroup $V$, namely $v^{a(t)} \xrightarrow{t\to\infty} \infty$ holds true for every element $v \in V$. 

For a given radius $ R > 0$ let $B_e(R)$ denote the ball at the identity of radius $R$ in the group $G$ with respect to some fixed proper and continuous metric.
As the subgroup $\Delta$ is discrete,  for each $i \in \mathbb{N}$ there is some sufficiently large $ t_i >0 $ such that 
$$((\Delta \cap B_e(i))\setminus \{e\})^{a_1(t_i)} \subset G \setminus B_e(2 i)$$ 
as well as $d(x^{a_1(t_i)},e) \ge 2d(x,e)$ for all $x \in V$.
Let $R_i > i$ be a sufficiently large radius  such that 
$$(G \setminus B_e(R_i))^{a_1(t_i)} \subset G \setminus B_e( i)$$ 
for each $i$. Let $j = j(i)$ be a sufficiently large index, such that every element of the intersection $\Lambda^{g_{j(i)}} \cap B_e(R_i)$ is \enquote{sufficiently close} to some element of $\Delta \cap B_e(R_i) $ for each $i$. More precisely, we require as we may for each $i$ that 
\[ ((\Lambda^{g_{j(i)}} \cap B_e(R_i))\setminus\{e\})^{a_1(t_i)}   \subset G \setminus B_e(i).\]
It follows that the sequence of conjugates $\Lambda_i 
 =\Lambda^{g_{j(i)} a_1(t_i)}$ converges to the trivial subgroup in the Chabauty topology on $\Sub{G}$. This is a contradiction to the assumption that $\Lambda$ is confined when regarded as a subgroup of $G$.
\end{proof}

%\begin{defn}
%A discrete subgroup $\Delta \le G$ is \emph{strongly strongly confined} if every conjugate limit of $\Delta$ is strongly confined.
%\end{defn}

%Clearly a strongly strongly confined subgroup is   strongly confined. To some extent, the converse direction is also true, up to first passing to a suitable conjugate limit.

%Let $X \subset \overline{\Lambda^G}$ be any uniformly recurrent subgroup. We claim that $X$ is irreducible. Let $\nu$ be any $\mu$-stationary subgroup supported on $X$. We know by the strongly confined assumption, that no subgroup $\Gamma \in X$ is contained in a proper factor. 

% To  argue in the converse direction, let $\Delta$ be a discrete subgroup admitting a discrete conjugate limit $\Lambda = \lim \Delta^{h_i}$  such that every uniformly random subgroup $X \subset \overline{\Lambda^G}$ is irreducible. This implies that $\Delta$ has non-confined intersections with all proper normal subgroups. Assume towards contradiction that $\Delta$ has a conjugate limit $\Delta_0 = \lim \Delta^{g_i}$ contained in a proper normal subgroup $H$. Every element of $g \in  \Lambda$ is of the form $g = \lim {\delta_i}^{h_i}$ for some fixed sequence of elements $\delta_i \in \Delta$.
%\end{proof}

 Confined discrete subgroups of rank one simple Lie groups are Zariski dense, see \cite[Lemma 9.14]{fraczyk2023infinite}. We show that projections of strongly confined discrete subgroups to simple rank one factors  are also Zariski dense, under certain conditions.

\begin{lemma}
\label{lemma:discrete irr confined has Zariski dense projections}
Let $G$ be a standard semisimple group of type $p$, where $p$ is either $\infty$ or a prime number. Let $H$ be a simple factor of $G$ with $\mathrm{rank}(H) = 1$.  Let $\Delta$ be a strongly confined subgroup of $G$. Then the projection of $\Delta$ to the factor $H$ is $p$-Zariski-dense and not relatively compact.
\end{lemma}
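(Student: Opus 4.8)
The plan is to follow the template of the proof of Lemma~\ref{lemma:confined subgroups of arithemtic lattices}, replacing the arithmeticity/rationality input (used there to exclude a fixed point at infinity) by the strong confinement hypothesis. Write $G = H \times L$, where $L$ is the product of the remaining simple factors of $G$ (possibly trivial), and let $\pi : G \to H$ be the projection, so that $L = \ker\pi$ is a normal subgroup of $G$, proper as soon as $L \neq \{e\}$; when $L = \{e\}$ the group $G = H$ is simple, strong confinement reduces to confinement, and the assertion is \cite[Lemma 9.14]{fraczyk2023infinite} together with its non-Archimedean counterpart. Fix an admissible symmetric generating probability measure $\mu$ on $G$. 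Since $\overline{\Delta^G}$ is a compact $G$-invariant subset of $\Sub{G}$, a Krylov--Bogolyubov argument produces a $\mu$-stationary probability measure $\nu$ on $\Sub{G}$ with $\mathrm{supp}(\nu) \subseteq \overline{\Delta^G}$. Strong confinement of $\Delta$ means that no member of $\overline{\Delta^G}$ lies in a proper normal subgroup of $G$; in particular $\pi(\Lambda) \neq \{e\}$ for $\nu$-almost every $\Lambda$.

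Let $X$ be the rank one Riemannian symmetric space (if $p = \infty$) or the Bruhat--Tits tree (if $p$ is a prime) associated to the simple factor $H$; it is a proper geodesic Gromov hyperbolic $\mathrm{CAT}(0)$ space with $\partial X \cong (\mathbf{H}/\mathbf{P})(\mathbb{Q}_p)$, and $G$ acts on $X$ through $\pi$. I claim that for $\nu$-almost every $\Lambda$ the group $\pi(\Lambda)$ acts on $X$ of general type, i.e.\ with unbounded orbits and no fixed point in $X \cup \partial X$; granting the claim, $\pi(\Lambda)$ acts without a proper closed convex invariant subset by \cite[Cor.\ 7.6]{gekhtman2023stationary}, hence is $p$-Zariski dense in $H$ by \cite[Prop.\ 2.8]{caprace2009isometry} and contains a hyperbolic element. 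To prove the claim, observe first that the circumcenter of the fixed point set defines a $G$-equivariant (through $\pi$) Borel map into $X$ on the set of $\Lambda$ for which $\pi(\Lambda)$ has bounded orbit; were this set of positive $\nu$-measure, pushing $\nu$ forward would yield a $\pi_*\mu$-stationary probability measure on the non-compact proper space $X$, which is impossible since the $\pi_*\mu$-random walk on $X$ is transient. Hence $\nu$-almost every $\pi(\Lambda)$ has unbounded orbits. Secondly, $\nu$-almost every $\pi(\Lambda)$ fixes at most one point of $\partial X$ by \cite[Prop.\ 4.7]{gekhtman2023stationary}, so a lineal action is excluded as well.

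It remains to rule out the horocyclic and focal cases, namely a positive-$\nu$-measure set of $\Lambda$ with $\pi(\Lambda)$ contained in a proper parabolic $P_{\xi(\Lambda)} \le H$. After passing to a $\mu$-ergodic component we may assume this holds $\nu$-almost everywhere; the induced map $\Lambda \mapsto \xi(\Lambda) \in \partial X$ is $G$-equivariant through $\pi$, so $\nu$-almost every $\Lambda$ is contained in $P_{\xi(\Lambda)} \times L$. Since $\pi(\Lambda)$ has unbounded orbits and fixes only $\xi(\Lambda)$, $\nu$-almost every $\pi(\Lambda)$ is in fact contained in the amenable subgroup $N_{\xi(\Lambda)} \rtimes M_{\xi(\Lambda)}$, where $N_\xi$ is the unipotent radical and $M_\xi$ the compact part of the Levi of $P_\xi$, and has non-trivial image in $N_{\xi(\Lambda)}$. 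Conjugating such a $\Lambda$ by the elements $(a_{t_k}, e)$, where $a_{t_k}$ runs through a one-parameter (or cyclic) subgroup of $A_{\xi(\Lambda)}$ contracting $N_{\xi(\Lambda)}$ and $t_k \to \infty$, produces, along a subsequence, a conjugate limit $\Lambda'$ with $\pi(\Lambda') = \{e\}$, that is, $\Lambda' \le L$ --- a conjugate limit of $\Delta$ inside a proper normal subgroup, contradicting strong confinement. The discreteness of $\Delta$ enters precisely here: it is used to guarantee that the ``$N_{\xi(\Lambda)}$-content'' of $\Lambda$ is genuinely contracted to the identity rather than filling out a positive-dimensional subgroup that the contraction would leave invariant.

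Finally we transfer the conclusion from conjugate limits back to $\Delta$. The set of $p$-Zariski-dense closed subgroups of $H$ is Chabauty open by Lemma~\ref{lem:Z-dense}, and being a hyperbolic isometry of $X$ is an open condition on $H$ by \cite[Prop.\ 3.1]{gekhtman2019critical}, so ``$\pi(\cdot)$ is $p$-Zariski dense and contains a hyperbolic element'' defines a Chabauty open, $G$-invariant subset of $\Sub{G}$ of positive $\nu$-measure, which therefore meets $\Delta^G$. Hence there is $g \in G$ with $\pi(\Delta^g) = \pi(\Delta)^{\pi(g)}$ both $p$-Zariski dense and containing a hyperbolic element; since both properties are conjugation invariant, $\pi(\Delta)$ is $p$-Zariski dense and not relatively compact, as required. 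The main obstacle is the third paragraph: Chabauty limits do not commute with $\pi$, so excluding degeneration into a parabolic genuinely requires the interplay of strong confinement and discreteness described there.
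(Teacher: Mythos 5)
Your proposal follows the same skeleton as the paper's proof: pass to a stationary limit $\nu$ supported on $\overline{\Delta^G}$, show that $\nu$-almost every subgroup acts on the rank one space $X$ (through the projection to $H$) in general type, and then conclude exactly as in Lemma \ref{lemma:confined subgroups of arithemtic lattices}, returning to $\Delta$ itself by Chabauty openness. The one step where you diverge is the exclusion of a fixed point at infinity: the paper simply invokes \cite[Theorem 6.1]{gekhtman2023stationary}, which asserts that a discrete stationary random subgroup in this setting almost surely fixes no point of $\partial X$ (and to this end it first checks, via \cite[Theorem 1.6]{fraczyk2023infinite} in the Archimedean case and via compact open subgroups in the non-Archimedean case, that $\nu$ is almost surely discrete --- a hypothesis you never verify). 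Your third paragraph attempts to reprove this exclusion by hand from strong confinement, and this is where there is a genuine gap.

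Two concrete problems with that paragraph. First, the reduction from ``$\pi(\Lambda)$ has unbounded orbits and fixes exactly one boundary point $\xi$'' to ``$\pi(\Lambda)\le N_\xi\rtimes M_\xi$'' is false in the focal case: the group generated by a hyperbolic element with attracting endpoint $\xi$ together with a non-trivial element of $N_\xi$ fixes only $\xi$, has unbounded orbits, and has non-trivial image under the Busemann character, so it is not contained in the horospherical subgroup. A focal group is still contained in the proper parabolic $P_\xi$, hence is not Zariski dense, so this case cannot be skipped. Second, even in the genuinely horocyclic case the contraction does not contradict strong confinement: conjugation by $(a_{t_k},e)$ contracts the $N_\xi$-components but centralizes $M_\xi$ and leaves the $L$-components untouched, so the Chabauty limit $\Lambda'$ you produce satisfies only $\Lambda'\le (N_\xi\rtimes M_\xi)\times L$ (at best $\Lambda'\le M_\xi\times L$), which is not a proper \emph{normal} subgroup of $G$; strong confinement says nothing about it. Compare with the proof of Lemma \ref{lemma:confined subgroup of an irreducible lattice is strongly confined}, where the authors must \emph{expand} a purely unipotent group, having first killed any compact Levi part, and still need a careful choice of radii to actually reach the trivial subgroup. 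Appealing to discreteness of $\Delta$ does not repair either point, since the projection $\pi(\Lambda)$ of a discrete subgroup of a product need not be discrete. The remaining steps (transience excluding bounded orbits, at most one fixed point via \cite[Proposition 4.7]{gekhtman2023stationary}, Zariski density via \cite[Proposition 2.8]{caprace2009isometry}, openness of Zariski density and of hyperbolicity) match the paper and are fine; to close the gap, either quote \cite[Theorem 6.1]{gekhtman2023stationary} after establishing almost sure discreteness, or give a correct boundary-theoretic argument (e.g.\ via non-atomicity of the stationary measure on $\partial X$) in place of the contraction.
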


\begin{proof}
Write $G = H \times H'$ where $H'$ is a suitable standard semisimple  subgroup. Let $\mu$ and $\mu'$ be a pair of   probability measures on the groups $H$ and $H'$ respectively, specifically the particular ones considered in \cite{gelander2022effective}. Denote $\mu_0 = \mu \otimes \mu'$ and let $\nu$ be any $\mu_0$-stationary limit  of the subgroup $\Delta$ (in the sense of Definition \ref{def:mu-stationary limit}). In particular $\nu$ is a $\mu$-stationary random subgroup supported on the conjugate closure $\overline{\Delta^G}$.   
In the real case, 
we know that $\nu$-almost every subgroup of $G$ is discrete by  \cite[Theorem 1.6]{fraczyk2023infinite}. In the $p$-adic case, $\nu$-almost every subgroup is certainly discrete, as $G$ has a compact open subgroup containing no no-trivial discrete subgroups \cite[Theorem 1 on p. 124]{serre2009lie}. In any case $\nu$-almost every subgroup is not contained in the factor $H'$ by the strongly confined assumption. The fact that $\nu$-almost every subgroup fixes no point in its action on the Gromov boundary of the rank one symmetric space or Bruhat--Tits building associated to the rank one group $H$  follows immediately from \cite[Theorem 6.1]{gekhtman2023stationary}. From this point onward, we conclude the proof exactly as in  Lemma \ref{lemma:confined subgroups of arithemtic lattices}.
\end{proof}

The above two Lemmas \ref{lemma:confined subgroups of arithemtic lattices} and \ref{lemma:discrete irr confined has Zariski dense projections} are very much closely related. However,    a priori a confined subgroup of the lattice $\Lambda$ may not be confined regarded as a subgroup of the enveloping group $G$, as was the case in Example \ref{ex:confinedinGamma}.

\section{Confined subgroups of irreducible lattices}
\label{sec:confined subgroups of lattices}

The current section is devoted to the proof of Theorem \ref{thm: lattices, general} saying that any confined subgroup of an irreducible lattice of a higher rank standard semisimple group has finite index. This is the generalization of 
Theorem \ref{thm intro: lattices} as well as Corollary \ref{cor:lattices URS statement } of the introduction to the setting of standard semisimple    groups over local fields (rather than semisimple Lie groups).

If $\Gamma$ is a cocompact lattice in the locally compact group $G$ then every confined subgroup of $\Gamma$ is confined in $G$.
However, this need not be the case if $\Gamma$ is a non-cocompact lattice, see Example~\ref{ex:confinedinGamma}.
The next two lemmas are here to remedy this failure. They are not needed in the cocompact case.

 %However, for each irreducible lattice individually, the spectral gap for the action of a proper  factor can be deduced in softer manner by combining Corollary \ref{cor:weak-$*$ limit of asymptitcally G1-invariant does not intersect G2} below with 

%This section is devoted to the proof of the following generalization of Theorem~\ref{thm intro: lattices} over local fields.

For a Borel subset $M \subset \Sub{G}$, we denote
\[ M^G=\{ H^g \mid H\in M,~g\in G\} \]
and 
\[ \Prob(M)=\{\nu \in \Prob(\Sub G) \: : \: \nu(M)=1\}.\]

\begin{lemma}
\label{lemma:the good set C}
Let $\Gamma$ be a lattice in the locally compact group $G$. Let $X$ be a topological $\Gamma$-space and  $Y = G \times_\Gamma X$ be the induced topological $G$-space\footnote{For the notion of induced actions we refer e.g. to \cite[p. 75]{zimmer2013ergodic}.} with projection $\pi : Y \to G/\Gamma$. Denote
\[\mathcal{C}_\Gamma^G(X) = \{ \mu \in \mathrm{Prob}(Y) \: : \: \pi_* \mu = m_{G/\Gamma} \}.\]
 Then 
\[ \overline{(\Stab_G)_*(\mathcal{C}_\Gamma^G(X) )}\subset \Prob( \overline{\mathrm{Stab}_\Gamma(X)} ^ G).\]
\end{lemma}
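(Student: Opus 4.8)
The plan is to lift the stabilizer map on $Y$ to a map that additionally remembers the $G/\Gamma$-coordinate, arranging for it to land in a \emph{closed} subset of $\Sub{G}\times G/\Gamma$, and then to use that the constraint $\pi_*\mu = m_{G/\Gamma}$ is a fixed (hence tight) probability measure to prevent any mass from escaping into the cusps of $G/\Gamma$ when passing to weak-$*$ limits. This last point is exactly what fails for a bare stabilizer map, and is the whole reason the lemma is needed in the non-uniform case.

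First I would record the basic formulas for the induced space: writing points of $Y = G\times_\Gamma X$ as classes $[g,x]$, one has $\pi([g,x]) = g\Gamma$ and $\mathrm{Stab}_G([g,x]) = g\,\mathrm{Stab}_\Gamma(x)\,g^{-1}$. This suggests introducing
\[
Z = \bigl\{ (H, g\Gamma) \in \Sub{G}\times G/\Gamma \: : \: g^{-1}Hg \in \overline{\mathrm{Stab}_\Gamma(X)} \bigr\}.
\]
I would then check three things about $Z$: (i) it is well defined, because $\overline{\mathrm{Stab}_\Gamma(X)}$ is invariant under conjugation by $\Gamma$ (from $\mathrm{Stab}_\Gamma(\gamma x) = \gamma\,\mathrm{Stab}_\Gamma(x)\,\gamma^{-1}$ and continuity of conjugation); (ii) it is closed, because its preimage in $\Sub{G}\times G$ is the preimage of the closed set $\overline{\mathrm{Stab}_\Gamma(X)}$ under the continuous conjugation map $(H,g)\mapsto g^{-1}Hg$, hence closed, and it is $\Gamma$-invariant in the $g$-variable, so descends to a closed subset of $\Sub{G}\times G/\Gamma$ since $\Gamma$ acts properly on $G$; (iii) $\mathrm{pr}_1(Z) = \overline{\mathrm{Stab}_\Gamma(X)}^G$, which is $\sigma$-compact and hence Borel. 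The Borel map $\Psi\colon Y \to \Sub{G}\times G/\Gamma$, $\eta \mapsto (\mathrm{Stab}_G(\eta),\pi(\eta))$, then takes values in $Z$, satisfies $\mathrm{pr}_1\circ\Psi = \mathrm{Stab}_G$ and $\mathrm{pr}_2\circ\Psi = \pi$, and therefore pushes any $\mu\in\mathcal{C}_\Gamma^G(X)$ to a probability measure $\Psi_*\mu$ concentrated on $Z$ with $(\mathrm{pr}_2)_*\Psi_*\mu = \pi_*\mu = m_{G/\Gamma}$.

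With this in place the argument is short. Let $\nu$ lie in the weak-$*$ closure of $(\mathrm{Stab}_G)_*\bigl(\mathcal{C}_\Gamma^G(X)\bigr)$, say $(\mathrm{Stab}_G)_*\mu_n \to \nu$ with $\mu_n \in \mathcal{C}_\Gamma^G(X)$, and put $\lambda_n = \Psi_*\mu_n$. Since $(\mathrm{pr}_2)_*\lambda_n = m_{G/\Gamma}$ for every $n$ and $\Sub{G}$ is compact, the sequence $(\lambda_n)$ is tight: for a compact $K\subset G/\Gamma$ with $m_{G/\Gamma}(K)\geq 1-\varepsilon$, the compact set $\Sub{G}\times K$ carries $\lambda_n$-mass $\geq 1-\varepsilon$ for all $n$. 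By Prokhorov's theorem, after passing to a subsequence $\lambda_n\to\lambda$ weak-$*$ for some probability measure $\lambda$ on $\Sub{G}\times G/\Gamma$. The portmanteau theorem together with the closedness of $Z$ gives $\lambda(Z)=1$, and weak-$*$ continuity of pushforward along the continuous projection $\mathrm{pr}_1$ gives $(\mathrm{pr}_1)_*\lambda = \lim_n (\mathrm{pr}_1)_*\lambda_n = \lim_n (\mathrm{Stab}_G)_*\mu_n = \nu$. Hence $\nu\bigl(\overline{\mathrm{Stab}_\Gamma(X)}^G\bigr) = \nu\bigl(\mathrm{pr}_1(Z)\bigr) \geq \lambda(Z) = 1$, that is $\nu\in\Prob\bigl(\overline{\mathrm{Stab}_\Gamma(X)}^G\bigr)$, as desired.

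I expect the only real obstacle to be the one already flagged: for a non-uniform lattice $G/\Gamma$ is non-compact, so along a sequence in $\mathcal{C}_\Gamma^G(X)$ the conjugating elements could in principle run off and the stabilizer conjugates degenerate (to the trivial subgroup, say, as in Example~\ref{ex:confinedinGamma}); the tightness forced by $\pi_*\mu = m_{G/\Gamma}$, combined with the compactness of $\Sub{G}$, is precisely what rules this out. Everything else — the well-definedness and closedness of $Z$, measurability of $\Psi$, the portmanteau and Prokhorov steps — is routine bookkeeping. One may repackage the construction of $Z$ and $\Psi$ functorially as the $G$-equivariant map $G\times_\Gamma\overline{\mathrm{Stab}_\Gamma(X)}\to\Sub{G}$ induced by conjugation, together with its proper fibration over $G/\Gamma$, but this amounts to the same thing.
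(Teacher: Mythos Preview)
Your proof is correct, and it rests on the same core idea as the paper's: the stabilizer map on $Y$ can be lifted to remember the $G/\Gamma$-coordinate, and the fixed marginal $m_{G/\Gamma}$ then prevents escape of mass. The packaging is different, however. The paper invokes Zimmer's formalism of measurable fields of compact convex sets: with $S=\overline{\mathrm{Stab}_\Gamma(X)}$ it forms the compact convex space $Q$ of measurable sections $g\Gamma\mapsto A_{g\Gamma}=\mathrm{Prob}(S^g)$, observes that the barycenter map $\mathrm{bar}:Q\to \mathrm{Prob}(S^G)$ is continuous so that $\mathrm{bar}(Q)$ is closed, and then factors $(\mathrm{Stab}_G)_*$ through $Q$ via disintegration over $G/\Gamma$. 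Your route via the closed set $Z\subset \Sub{G}\times G/\Gamma$, tightness, Prokhorov and portmanteau is more elementary and self-contained: it makes explicit exactly why no mass is lost, and avoids the black-box compactness of $Q$. The two are equivalent --- your probability measures on $Z$ with second marginal $m_{G/\Gamma}$ correspond, via disintegration, precisely to the paper's elements of $Q$ --- but yours needs no external machinery beyond standard measure theory.
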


\begin{proof}
We consider $\Sub{\Gamma}$ as a closed subset of $\Sub{G}$.
Denote $S = \overline{\mathrm{Stab}_\Gamma(X)}$ so that $S \subset \Sub{\Gamma}$ is a $\Gamma$-invariant closed subset. 
For each point $g\Gamma \in G/\Gamma$ consider the space of probability measures
\[ A_{g\Gamma} = \mathrm{Prob}\left( S^g \right) \]
regarded as a compact convex subset of $\mathrm{Prob}(\Sub{G})$. Consider the compact convex space 
\[Q  = F(G/\Gamma, \{A_{g\Gamma}\})\]
consisting of all $m_{G/\Gamma}$-measurable functions $f : G/\Gamma \to \mathrm{Prob}(\Sub{G})$ satisfying $m_{G/\Gamma}$-almost surely $f(g\Gamma) \in A_{g\Gamma}$, see \cite[p. 78]{zimmer2013ergodic}. The barycenter map with respect to the normalized Haar   measure $m_{G/ \Gamma}$ sets up a continuous affine map \[\mathrm{bar} : Q \to \mathrm{Prob}(S^G) \subset \mathrm{Prob}(\Sub{G}).\]

Note that the subset  $\mathrm{bar}(Q)$ is closed. Therefore, to conclude the proof it will suffice to construct an affine map $\varphi : \mathcal{C}_\Gamma^G(X) \to Q$ such that $\mathrm{Stab}_* = \mathrm{bar}\circ \varphi$ on the space $\mathcal{C}_\Gamma^G(X)$, which is what we proceed to do.

Given a probability measure $\mu \in \mathcal{C}_\Gamma^G(X) \subset \mathrm{Prob}(Y)$, consider its disintegration $d_\mu$ over the projection $\pi $ to the probability space $(G/\Gamma,m_{G/\Gamma})$, given by a $m_{G/\Gamma}$-measurable map
\[ d_\mu : G/\Gamma \to \mathrm{Prob}(Y)\]
such that $m_{G/\Gamma}$-almost surely $d_\mu(g\Gamma)$ gives full measure to the fiber $\pi^{-1}(g\Gamma)$. In particular $\mathrm{Stab}_* d_\mu(g\Gamma) \subset A_{g\Gamma}$ almost surely. The affine map
\[ \varphi : \mathcal{C}_\Gamma^G(X) \to Q, \quad \varphi(\mu) (g\Gamma) =\mathrm{Stab}_* d_\mu(g\Gamma) \quad \forall \mu \in \mathcal{C}_\Gamma^G(X), ~\forall g\Gamma \in G/\Gamma \]
is as required.
\end{proof}

\begin{lemma} \label{lem:understanding limits of confined in a lattice}
Let $G$ be a standard semisimple group and $\Gamma<G$ an irreducible lattice.
Let $X$ be a topological $\Gamma$-space and  $Y$ be the induced topological $G$-space equipped with the projection $\pi : Y \to G/\Gamma$.  
Fix a $G$-invariant probability measure $\mu \in \mathrm{Prob}(Y)$ such that $\pi_* \mu = m_{G/\Gamma}$. 
 Let  $f_i\in L^2(Y,\mu)$ be an asymptotically $H$-invariant sequence of unit vectors for some non-trivial normal subgroup $H \lhd G$. 
Then any accumulation point   of the sequence of probability measures $\Stab_*(|f_i|^2  \cdot  \mu)$ in the space $\Prob(\Sub{G})$ belongs to  $\mathrm{Prob}( \overline{\mathrm{Stab}_\Gamma(X)}^G)$.
\end{lemma}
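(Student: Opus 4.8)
The plan is to compare the probability measures $\nu_i := |f_i|^2 \cdot \mu$ on $Y$ with the set $\mathcal{C}_\Gamma^G(X)$ of Lemma~\ref{lemma:the good set C}, whose stabilizer pushforwards are already under control. The measures $\nu_i$ themselves need not lie in $\mathcal{C}_\Gamma^G(X)$, since in general $\pi_*\nu_i \ne m_{G/\Gamma}$. So the heart of the argument is to show that $\pi_*\nu_i \to m_{G/\Gamma}$ in total variation, after which a fibrewise correction brings $\nu_i$ to within vanishing total variation of $\mathcal{C}_\Gamma^G(X)$, and Lemma~\ref{lemma:the good set C} finishes the job.

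First I would record that, since $H$ is a non-trivial normal subgroup of the standard semisimple group $G$, it contains at least one simple factor $F$ (the normal subgroups of a finite direct product of abstractly simple groups are subproducts, as one sees by taking a suitable commutator of a non-trivial element of the normal subgroup with an element of a single factor). By Theorem~\ref{thm:Clozel} the $F$-representation $L^2_0(G/\Gamma)$ has a spectral gap, and since $F$ is a closed subgroup of $H$ the $H$-representation $L^2_0(G/\Gamma)$ has a spectral gap as well. Next I would transport the asymptotic invariance down to $G/\Gamma$. Passing from $f_i$ to $|f_i|$ costs nothing: $|f_i|$ is a sequence of non-negative unit vectors in $L^2(Y,\mu)$, still asymptotically $H$-invariant because $k|f_i| = |kf_i|$ and $\bigl\| |kf_i| - |f_i| \bigr\|_2 \le \|kf_i - f_i\|_2$ pointwise. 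By the estimate $\|\phi^2 - \psi^2\|_1 \le 2\|\phi - \psi\|_2$ from the proof of Lemma~\ref{lem:mazur}, the sequence $|f_i|^2$ is asymptotically $H$-invariant in $L^1(Y,\mu)$, with $\||f_i|^2\|_1 = 1$. Letting $E \colon L^1(Y,\mu) \to L^1(G/\Gamma, m_{G/\Gamma})$ be the conditional expectation attached to $\pi$ (well defined because $\pi_*\mu = m_{G/\Gamma}$), which is $G$-equivariant and a positive $L^1$-contraction, the functions $E(|f_i|^2)$ form a sequence of non-negative elements of $L^1$-norm $1$ that is asymptotically $H$-invariant in $L^1(G/\Gamma)$. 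Lemma~\ref{lem:L1SG}, applied to the $H$-action on the probability space $G/\Gamma$ with the spectral gap just established, then gives $E(|f_i|^2) \to 1$ in $L^1(G/\Gamma)$; since $\pi_*\nu_i = E(|f_i|^2)\cdot m_{G/\Gamma}$, this is exactly $\|\pi_*\nu_i - m_{G/\Gamma}\|_{\mathrm{TV}} = \|E(|f_i|^2) - 1\|_1 \to 0$.

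Finally I would correct $\nu_i$ fibrewise. Disintegrating $\mu = \int_{G/\Gamma} \mu_b \, dm_{G/\Gamma}(b)$ over $\pi$ into probability measures $\mu_b$ on the fibres, set $c_i(b) = \mu_b(|f_i|^2) = E(|f_i|^2)(b)$ and let $\tilde\mu_{i,b}$ be the normalized measure $c_i(b)^{-1}|f_i|^2\mu_b$ when $c_i(b) > 0$ and $\mu_b$ otherwise. Then $\nu_i' := \int_{G/\Gamma} \tilde\mu_{i,b}\, dm_{G/\Gamma}(b)$ is a probability measure on $Y$ with $\pi_*\nu_i' = m_{G/\Gamma}$, hence $\nu_i' \in \mathcal{C}_\Gamma^G(X)$, whereas $\nu_i - \nu_i' = \int_{G/\Gamma} (c_i(b) - 1)\,\tilde\mu_{i,b}\, dm_{G/\Gamma}(b)$ has total variation at most $\|c_i - 1\|_1 = \|E(|f_i|^2) - 1\|_1 \to 0$. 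Since pushforward under a Borel map does not increase total variation, $\Stab_*\nu_i - \Stab_*\nu_i' \to 0$ in total variation and so in the weak-$*$ topology; therefore every weak-$*$ accumulation point of $\Stab_*(|f_i|^2\cdot\mu) = \Stab_*\nu_i$ is also an accumulation point of the sequence $\Stab_*\nu_i' \in (\Stab_G)_*(\mathcal{C}_\Gamma^G(X))$, hence lies in $\overline{(\Stab_G)_*(\mathcal{C}_\Gamma^G(X))} \subset \Prob(\overline{\Stab_\Gamma(X)}^G)$ by Lemma~\ref{lemma:the good set C}. The main obstacle is the second step, namely controlling the $G/\Gamma$-marginal of $|f_i|^2\cdot\mu$: this is precisely where the normality of $H$, Clozel's spectral gap, and the $L^1$-rigidity of Lemma~\ref{lem:L1SG} are needed, the disintegration surgery afterwards being routine.
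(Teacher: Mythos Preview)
Your proof is correct and follows the same strategy as the paper's: use Clozel's spectral gap (via a simple factor $F\le H$) together with Lemma~\ref{lem:L1SG} to force $E(|f_i|^2)\to 1$ in $L^1(G/\Gamma)$, then correct $\nu_i$ within vanishing total variation to a measure in $\mathcal{C}_\Gamma^G(X)$ and conclude by Lemma~\ref{lemma:the good set C}. The only cosmetic difference is in the correction step --- the paper lifts the positive and negative parts of $1-E(|f_i|^2)$ to $L^1(Y,\mu)$ and adds them to $|f_i|^2$, whereas you normalize fibrewise via disintegration; both produce a measure in $\mathcal{C}_\Gamma^G(X)$ at total variation distance at most $\|E(|f_i|^2)-1\|_1$ from $\nu_i$.
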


\begin{proof}
Consider the natural linear map
$ T  : L^1(Y,\mu) \to L^1(G/\Gamma)$ corresponding to the projection $\pi$.
It has operator norm $\|T\|_\textrm{op} = 1$. Furthermore $\|Tf\|_1 = \|f\|_1$ for non-negative functions (i.e. provided $f \ge 0$).

We consider the sequence of elements $|f_i|^2 \in L^1(Y,\mu)$.
 By Lemma~\ref{lem:mazur}  this is an asymptotically $H$-invariant sequence of non-negative unit vectors in $L^1(Y,\mu)$. 
We consider the probability measures $\nu_i=|f_i|^2  \cdot \mu \in \Prob(Y)$  and their push-forward to $\sub(G)$ under the map $\Stab:Y \to \sub(G)$.
Up to passing to a subsequence, we assume that the  probability measures $\mathrm{Stab}_* \nu_i$ converge to some probability measure $\zeta \in \mathrm{Prob}(\Sub{G})$. The asymptotic $H$-invariance of the sequence $|f_i|^2$ implies that the measure $\zeta$ is $H$-invariant.

Consider the sequence of functions $g_i \in L^1(G/\Gamma)$ given by $g_i = T (|f_i|^2)$. The sequence $g_i$ is asymptotically $H$-invariant, as $T$ is contracting. By  Clozel's theorem (essentially, see Theorem \ref{thm:Clozel} for details), the group $H$ has spectral gap in its representation on  $L^2_0(G/\Gamma)$. It follows by Lemma~\ref{lem:L1SG} that $\|g_i  - 1_{G/\Gamma} \|_1 \to 0$. 

Our strategy is to construct a sequence of probability measures $\eta_i$ all satisfying $\pi_* \eta_i = m_{G/\Gamma}$, such that $\mathrm{Stab}_* \nu_i$ and $\mathrm{Stab}_* \eta_i$ have the same asymptotic behaviour. Here are the details. Look at   the positive and negative pairs of the difference $1-g_i$. Namely
\[ 1-g_i = h_i^+ -  h_i^- \] 
where $h_i^+,h_i^- \in L^1(G/\Gamma)$, $h_i^+,h_i^-  \geq 0$ and $\|h_i\|_1=\|h_i^+\|_1+\|h_i^-\|_1$ . Let $l^+_i \in L^1(Y,\mu)$ be an arbitrary lift satisfying $T(l^+_i) = h^+_i$ and $l^+_i \ge 0$.
We note that $T(|f_i|^2+l^+_i)-h_i^-=1$. Thus $T(|f_i|^2+l^+_i) \ge h_i^-$.
Let $l_i^- \in L^1(Y,\mu)$ be an arbitrary lift with $T(l^-_i) = h^-_i$ and $|f_i|^2+l_i^+ \geq l_i^-\geq 0$.
We set $l_i=l_i^+-l_i^-$. Observe that 
\[ \|l_i\|_1 \leq \|l_i^+\|_1+\|l_i^-\|_1 =\|h_i^+\|_1+\|h_i^-\|_1=\|1-g_i\|_1 \xrightarrow{i \to \infty} 0.\]
We note that 
\[ |f_i|^2+l_i=|f_i|^2+l_i^+ - l_i^-\geq 0 \quad \mbox{and} \quad T(|f _i|^2+l_i)= T(|f_i|^2)+ 1- g_i = 1. 
\]
In particular, each $\eta_i = (|f_i|^2+l_i)\cdot m_{G/\Lambda}$ is a probability measure on $Y$. Since $T(|f_i|^2+l_i)= 1$,   the measures $\eta_i$
all satisfy $\pi_* \eta_i = m_{G/\Gamma}$. In particular $$\lim \mathrm{Stab}_* \eta_i \in \mathrm{Prob}(\overline{\mathrm{Stab}_\Gamma(X)}^G)$$ according to Lemma \ref{lemma:the good set C}.
On the other hand,   the fact that $\|l_i\|_1 \to 0$  implies 
\[ \zeta = \lim \Stab_* \nu_i = \lim\Stab_* \eta_i. \]
Since these two limits coincide, the desired conclusion follows.
\end{proof}

\begin{theorem}
\label{thm:confined ergodic action of lattice}
Let $G$ be a standard semisimple group with $\mathrm{rank}(G) \ge 2$ and $\Gamma$ an irreducible lattice in $G$. 
Then every ergodic confined invariant random subgroup of $\Gamma$ almost surely has finite index.
\end{theorem}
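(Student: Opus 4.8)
The plan is to argue by contradiction: assume $\nu$ is \emph{not} almost surely of finite index, so by ergodicity $\nu$-almost every subgroup has infinite index in $\Gamma$, equivalently (a subgroup of $\Gamma$ has finite covolume in $G$ precisely when it has finite index in $\Gamma$) the induced invariant random subgroup $\overline{\nu}\in\IRS(G)$ is almost surely supported on discrete subgroups of infinite covolume. We may assume that $G$ has a rank-one simple factor $F$: otherwise every simple factor of $G$ has higher rank, hence $G$ has property (T), and the statement is contained in the work of Fraczyk--Gelander \cite{fraczyk2023infinite} (and of Bader--Boutonnet--Houdayer--Peterson \cite{BBHP}). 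Write $G=G_1\times G_2$ with $G_2=F$ of some type $p_0$ and $G_1$ nontrivial. Using Lemma~\ref{lemma:compact confined model} realise $\nu$ by a compact $\Gamma$-space $(X,\mu_X)$ with confined stabilizers and $(\mathrm{Stab}_\Gamma)_*\mu_X=\nu$, and let $Y=G\times_\Gamma X$ be the induced $G$-space with its $G$-invariant probability measure $\mu$ and projection $\pi\colon Y\to G/\Gamma$, $\pi_*\mu=m_{G/\Gamma}$; since $\overline{\nu}$ is an ergodic invariant random subgroup of $G$ we may pass to a $G$-ergodic component of $(Y,\mu)$ still satisfying $(\mathrm{Stab}_G)_*\mu=\overline{\nu}$. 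The stabilizer of $[g,x]\in Y$ is $g\,\mathrm{Stab}_\Gamma(x)\,g^{-1}$, so $\mathrm{Stab}_G(Y)\subset\overline{\mathrm{Stab}_\Gamma(X)}^G$.

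The next step is a Borel density reduction. Proposition~\ref{prop:BDT for IRS} applied to $\overline{\nu}$ produces normal subgroups $N,M\lhd G$ with $N\cap M=\{e\}$ such that $\overline{\nu}$-almost every subgroup contains $N$, lies in $N\times M$, and projects discretely and fully Zariski densely to $M$. As $\overline{\nu}$-almost every subgroup is discrete, $N$ is a discrete normal subgroup of the center-free group $G$, hence trivial. If $M\ne G$ then $\overline{\nu}$-almost every subgroup lies in the proper factor $M$, so (pulling back through a fundamental domain) $\nu$-almost every subgroup of $\Gamma$ lies in $\Gamma\cap M$; but $\Gamma\cap M$ is a discrete subgroup of $M$ normalised by the image of $\Gamma$, which is dense in $M$ by irreducibility, hence $\Gamma\cap M=\{e\}$, forcing $\nu=\delta_{\{e\}}$ --- which is not confined, a contradiction. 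Therefore $M=G$, i.e.\ $\overline{\nu}$-almost every subgroup is fully Zariski dense in $G$; and by the same argument $\overline{\nu}$-almost every subgroup intersects every proper semisimple factor of $G$ trivially. Corollary~\ref{cor:irreducibility of invariant random subgroups} now shows that the $G$-space $(Y,\mu)$ is irreducible; in particular $G_2$ acts ergodically on $(Y,\mu)$, so $L^2_0(Y,\mu)^{G_2}=0$.

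This allows me to invoke Theorem~\ref{theorem:getting spectral gap - analytic groups} with the decomposition $G=G_1\times G_2$. Its first hypothesis has just been checked. Its second hypothesis, that $\mathrm{Stab}_G(y)\cap G_1=\{e\}$ for $\mu$-almost every $y$, holds because $G_1$ is a proper factor and $\overline{\nu}$-almost every subgroup meets proper factors trivially. For the third, let $f_n\in L^2(Y,\mu)$ be an asymptotically $G_1$-invariant sequence of unit vectors; by Lemma~\ref{lem:understanding limits of confined in a lattice} (applied with the nontrivial normal subgroup $G_1$) every accumulation point of $\mathrm{Stab}_*(|f_n|^2\cdot\mu)$ is supported on $\overline{\mathrm{Stab}_\Gamma(X)}^G$, i.e.\ on $G$-conjugates of subgroups of $\Gamma$ that are confined in $\Gamma$. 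Each such subgroup is discrete; it is not contained in $G_2$, since otherwise it would lie in $\Gamma\cap G_2=\{e\}$, contradicting confinedness; and it has $p_0$-Zariski-dense and not relatively compact projection to the rank-one factor $G_2=F$ by Lemma~\ref{lemma:confined subgroups of arithemtic lattices}. All three properties persist under conjugation by $G$ since $G_2\lhd G$, so the hypotheses of Theorem~\ref{theorem:getting spectral gap - analytic groups} are met and $L^2_0(Y,\mu)$ has a spectral gap as a unitary $G$-representation.

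It remains to upgrade this spectral gap, in the presence of the Zariski-dense stabilizer structure recorded above, to the statement that $\overline{\nu}$-almost every stabilizer is a lattice in $G$, equivalently that $\nu$ is almost surely of finite index --- contradicting the standing assumption. This is where higher-rank measure-theoretic rigidity enters: one applies the stabilizer rigidity for ergodic probability measure preserving actions of higher-rank semisimple groups, in the spirit of Stuck--Zimmer, Nevo--Stuck--Zimmer and Bader--Boutonnet--Houdayer--Peterson, the point being that those arguments require of the acting group only a spectral gap for the Koopman representation of the action --- which we have now produced from the product structure instead of from property (T) --- together with the non-amenability of the stabilizers (here: Zariski dense, no fixed point on the flag variety) to exclude intermediate subgroups, while confinedness of $\nu$ excludes the trivial subgroup; a subgroup of $\Gamma$ of finite covolume in $G$ has finite index in $\Gamma$, which gives the conclusion. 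I expect this last step to be the main obstacle. A secondary subtlety, already handled by Lemmas~\ref{lemma:the good set C} and~\ref{lem:understanding limits of confined in a lattice}, is that a subgroup confined in $\Gamma$ need not be confined in $G$ (Example~\ref{ex:confinedinGamma}); those lemmas guarantee that the relevant conjugate \emph{limits} of stabilizers still originate from $\overline{\mathrm{Stab}_\Gamma(X)}$, which is precisely what the spectral gap argument consumes.
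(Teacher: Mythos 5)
Your proposal is correct and follows essentially the same route as the paper: compact confined model via Lemma~\ref{lemma:compact confined model}, induction to $G$, irreducibility via Corollary~\ref{cor:irreducibility of invariant random subgroups}, and spectral gap from Theorem~\ref{theorem:getting spectral gap - analytic groups} with the hypotheses verified through Lemma~\ref{lem:understanding limits of confined in a lattice} and Lemma~\ref{lemma:confined subgroups of arithemtic lattices}. The final step you flag as ``the main obstacle'' is not an obstacle: the implication (spectral gap $+$ non-trivial irreducible IRS $\Rightarrow$ essentially transitive $\Rightarrow$ stabilizers are lattices) is exactly the property-(T)-free form of Stuck--Zimmer that the paper quotes directly from \cite[Proposition 7.6]{creutz2017stabilizers} or \cite[Theorem 3]{levit2020benjamini}, and your spectral gap is precisely the input those results consume. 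Your Borel-density detour via Proposition~\ref{prop:BDT for IRS} is harmless but unnecessary, since $\overline{\nu}$-almost every subgroup is conjugate into the irreducible lattice $\Gamma$ and hence already meets every proper factor trivially, which is all that Corollary~\ref{cor:irreducibility of invariant random subgroups} requires.
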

\begin{proof}
 Let $\nu$ be an ergodic confined invariant random subgroup of the group $\Gamma$. According to Lemma \ref{lemma:compact confined model} there is a compact confined $\Gamma$-space $X$ and a $\Gamma$-invariant probability measure $\mu \in \mathrm{Prob}(X)$ such that $(\mathrm{Stab}_\Gamma)_* \mu = \nu$.
Let $(Y,\overline{\mu})$ be the  probability measure preserving topological $G$-space induced from the $\Gamma$-space $(X,\mu)$. The invariant random subgroup $\overline{\nu} = (\mathrm{Stab}_G)_* \overline{\mu}$ coincides with the invariant random subgroup of the group $G$ 
induced from $\nu$ as defined in \S\ref{sec:prelim}.  Note that the  $G$-space $(Y,\overline{\mu})$ is ergodic \cite[Remark (1) on p. 75]{zimmer2013ergodic}. Hence the invariant random subgroup $\overline{\nu}$ is ergodic as well. Therefore the $G$-space $(Y,\overline{\mu})$ (as well as the invariant random subgroup $\overline{\nu}$) is irreducible by Corollary \ref{cor:irreducibility of invariant random subgroups}.  

We now claim that the unitary $G$-representation $L^2_0(Y,\overline{\mu})$ has spectral gap.
If  the group $G$ has Kazhdan's property (T) then
%so does the lattice $\Gamma$ and
the statement  is immediate. Assume from now on that $G$ has no Kazhdan's property (T). As such, the group $G$ splits as a non-trivial direct product of standard semisimple groups $G = G_1 \times G_2$ where $G_1$ is a standard semisimple group and $G_2$ is a standard simple group of type $p_0$ and with $\mathrm
{rank}(G_2)=1$.

We would like to apply our Theorem \ref{theorem:getting spectral gap - analytic groups}
establishing spectral gap for actions of products with respect to the unitary representation $L^2_0(Y,\mu)$. The assumptions of that theorem are verified as follows:
\begin{itemize}
    \item The fact that $L^2_0(Y,\overline{\mu})^{G_2} = 0$ follows from the irreducibility of $\overline{\mu}$.
    \item The stabilizer of $\overline{\mu}$-almost every point is distributed according to $\overline{\nu}$, and as such, is conjugate to a subgroup of the irreducible lattice $\Gamma$ and  has trivial intersections with every proper normal subgroup of $G$.
    \item Let $f_n \in L^2(Y,\overline{\mu})$ be any asymptotically $G_1$-invariant sequence of vectors. Consider any weak-$*$ accumulation point $\zeta \in \Prob(\Sub{G})$ of the sequence of probability measures $\mathrm{Stab}_*(|f_n|^2 \cdot \overline{\mu}) $. Then according to Lemma \ref{lem:understanding limits of confined in a lattice}
     $\zeta \in \mathrm{Prob}(\overline{\mathrm{Stab}_\Gamma(X)}^G)$.
    This means that $\zeta$-almost every subgroup is discrete and not contained in any proper normal factor. Further, this implies that $\zeta$-almost every  subgroup is conjugated in $G$ to a confined subgroup of $\Gamma$. Thus by Lemma \ref{lemma:confined subgroups of arithemtic lattices} we see that $\zeta$-almost every subgroup has $p$-Zariski-dense and not relatively compact projections to $G_2$. 
\end{itemize}

Having verified all assumptions of Theorem \ref{theorem:getting spectral gap - analytic groups} we conclude that $L^2_0(Y,\overline{\mu})$ has spectral gap. By relying on the methods of Stuck--Zimmer \cite{SZ} and Bader--Shalom \cite{bader2006factor}, and making use of the product structure of the group $G$, this implies that the $G$-space $(Y,\overline{\mu})$ is essentially transitive, see e.g.  \cite[Proposition 7.6]{creutz2017stabilizers} or \cite[Theorem 3]{levit2020benjamini} for details.  
%\marginpar{split this last part into a new theorem about how to make use of spectral gap}
Therefore $\overline{\nu}$-almost every subgroup is a lattice in $G$. We conclude that $\nu$-almost every subgroup has finite index in $\Gamma$, as required.
\end{proof}

\begin{theorem}
\label{thm: lattices, general}
Let $G$ be a standard semisimple group with $\mathrm{rank}(G) \ge 2$ and $\Gamma$ an irreducible lattice in $G$. Then any confined subgroup of $\Gamma$ has  finite index. Further,  every non-trivial uniformly recurrent subgroup $X$ of $\Gamma$ is the set of conjugates of some finite-index subgroup of $\Gamma$.
\end{theorem}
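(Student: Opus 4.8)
The two assertions are equivalent, and I would aim to prove the first one. The link is the elementary fact that, since $\Gamma$ is finitely generated (an irreducible lattice in a higher rank standard semisimple group is even finitely presented), every finite-index subgroup $\Sigma \le \Gamma$ is an \emph{isolated point} of $\Sub{\Gamma}$: a subgroup Chabauty-close to $\Sigma$ contains a fixed finite generating set of $\Sigma$, hence contains $\Sigma$, and lies inside $\Sigma$. Granting that every confined subgroup has finite index, let $X$ be a non-trivial uniformly recurrent subgroup; choosing $\Delta\in X$ we have $X=\overline{\Delta^\Gamma}$ by minimality, and $\{e\}\notin X$ forces $\Delta$ to be confined, hence of finite index; then $\Delta$ has only finitely many conjugates (its normal core has finite index), so $X=\Delta^\Gamma=\{\Delta^\gamma:\gamma\in\Gamma\}$ is a finite conjugacy class, as claimed. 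Conversely, from the URS statement plus isolation one recovers the first statement: for confined $\Lambda$, any minimal subsystem $X_0\subseteq\overline{\Lambda^\Gamma}$ is non-trivial (as $\{e\}\notin\overline{\Lambda^\Gamma}$), hence equals $\{\Sigma^\gamma\}$ for some finite-index $\Sigma$; since $\Sigma\in\overline{\Lambda^\Gamma}$ is a conjugate limit of $\Lambda$ and $\Sigma$ is isolated in $\Sub{\Gamma}$, a conjugate of $\Lambda$ actually equals $\Sigma$.

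The plan for the first statement is to reduce to Theorem~\ref{thm:confined ergodic action of lattice}. Let $\Lambda\le\Gamma$ be confined and put $Y=\overline{\Lambda^\Gamma}\subseteq\Sub{\Gamma}$, a compact $\Gamma$-invariant set with $\{e\}\notin Y$. Suppose we have produced an invariant random subgroup $\nu$ of $\Gamma$ with $\operatorname{supp}(\nu)\subseteq Y$. Then $\nu$ is confined in the sense of Definition~\ref{defn:confined actions}, and every ergodic component $\nu_\omega$ is again confined, because $\operatorname{supp}(\nu_\omega)\subseteq\operatorname{supp}(\nu)\subseteq Y$ and $Y$ is a closed set missing $\{e\}$; so ergodicity costs nothing. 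Applying Theorem~\ref{thm:confined ergodic action of lattice} to almost every $\nu_\omega$ shows that $\nu$-almost every subgroup of $\Gamma$ has finite index. Since finite-index subgroups are isolated in $\Sub{\Gamma}$ and $\operatorname{supp}(\nu)\subseteq\overline{\Lambda^\Gamma}$, this exhibits a conjugate of $\Lambda$ as a finite-index subgroup, so $[\Gamma:\Lambda]<\infty$.

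It remains to produce a $\Gamma$-invariant random subgroup supported on $Y$, and this is the crux: $\Gamma$ is not amenable, so the compact $\Gamma$-space $Y$ only carries, a priori, a $\mu$-stationary measure $\nu_0$ for a generating probability measure $\mu$ on $\Gamma$ (by compactness and convexity of $\operatorname{Prob}(Y)$), i.e. a $\mu$-stationary random subgroup with support in $Y$, in particular missing $\{e\}$. The task is to upgrade $\nu_0$ to a $\Gamma$-invariant random subgroup. This is exactly where the higher rank of the ambient group $G$ enters beyond Theorem~\ref{thm:confined ergodic action of lattice}: one induces $\nu_0$ to the ambient group $G$, controls the possible degenerations of conjugate limits using the confinement of $\Lambda$ together with Lemmas~\ref{lemma:the good set C}, \ref{lem:understanding limits of confined in a lattice} and \ref{lemma:confined subgroups of arithemtic lattices}, and invokes the structure theory of stationary random subgroups of higher rank semisimple groups (as developed in \cite{fraczyk2023infinite} and \cite{gekhtman2023stationary}) to conclude that $\nu_0$ is in fact invariant. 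I expect this stationary-to-invariant step, rather than the bookkeeping reduction to Theorem~\ref{thm:confined ergodic action of lattice} and the isolation of finite-index points, to be the main obstacle.
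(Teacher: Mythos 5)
Your reduction is sound and coincides with the paper's: the equivalence of the two assertions via the isolation of finite-index subgroups in $\Sub{\Gamma}$, the passage from a confined invariant random subgroup supported on $\overline{\Lambda^\Gamma}$ to Theorem~\ref{thm:confined ergodic action of lattice}, and the final step recovering $[\Gamma:\Lambda]<\infty$ from isolation are all exactly as in the paper. You have also correctly located the crux: producing a \emph{$\Gamma$-invariant} (not merely stationary) probability measure on a minimal subsystem of $\overline{\Lambda^\Gamma}$.

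But that crux is left as an expectation, and the route you sketch for it is not the one that works. The paper does not upgrade a stationary measure by hand. It first uses the ICC property of $\Gamma$ (Lemma~\ref{lemma:lattices in simple analytic groups are ICC}) together with Corollary~\ref{cor:in ICC confined in commensurability invariant} and Margulis arithmeticity to replace $\Gamma$ by a conjugate of the standard $S$-arithmetic lattice $\Gamma_S$, and then invokes the fact that such lattices are \emph{charmenable} (\cite[Theorem~B]{BBH}); charmenability together with triviality of the amenable radical implies, via \cite[Proposition~3.5]{BBHP}, that every non-trivial uniformly recurrent subgroup of $\Gamma$ carries an ergodic $\Gamma$-invariant probability measure. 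This is the external input your argument is missing. Your proposed alternative — induce a $\mu$-stationary random subgroup of $\Gamma$ to $G$ and apply the structure theory of stationary random subgroups — runs into two concrete obstacles. First, the stiffness results used in this paper (Theorem~\ref{thm:classification of SRS in products}, and the discreteness results of \cite{fraczyk2023infinite,gelander2022effective}) concern random subgroups stationary with respect to a specific spread-out measure $\mu_s$ on the ambient group $G$; a $\mu$-stationary random subgroup of the discrete group $\Gamma$ does not induce to a $\mu_s$-stationary random subgroup of $G$, so there is no direct way to feed it into that machinery. Second, a subgroup confined in $\Gamma$ need not be confined in $G$ (Example~\ref{ex:confinedinGamma}), so conjugate limits in $G$ of elements of $\overline{\Lambda^\Gamma}$ can degenerate to the trivial group, and the "control of degenerations" you allude to is not available at the level of measures on $\Sub{G}$; the paper only controls such degenerations for weak-$*$ limits of measures $|f_i|^2\cdot\mu$ attached to asymptotically invariant vectors in the \emph{induced action}, via Lemmas~\ref{lemma:the good set C} and \ref{lem:understanding limits of confined in a lattice}, which is a different (and later) step of the argument, inside the proof of Theorem~\ref{thm:confined ergodic action of lattice}. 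So the proposal has a genuine gap precisely at the step you flagged as the main obstacle.
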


\begin{proof}
Recall that the lattice $\Gamma$ is an ICC group by Lemma~\ref{lemma:lattices in simple analytic groups are ICC}. In particular, we may   replace $\Gamma$ by any of its finite-index subgroups without a loss of generality, see  Corollary \ref{cor:in ICC confined in commensurability invariant}. Specifically, 
using Margulis' arithmeticity (Theorem~\ref{thm:arithmeticity})
and the notation introduced in Examples~\ref{ex:H} and \ref{ex:Gamma_S},
we thus assume that the lattice $\Gamma$ is $S$-arithmetic in the sense that $\Gamma=\Gamma_S$. The significance of this fact for our purpose is that the arithmetic lattice $\Gamma$ is \emph{charmenable} by \cite[Theorem B]{BBH}. Further,  
note that $\Gamma$ has a trivial amenable radical.

Let $\Lambda$ be any confined subgroup of the lattice $\Gamma$. Consider any uniformly recurrent subgroup $X$ contained in the $\Gamma$-orbit closure $ \overline{\Lambda^\Gamma}$. Note that $X$ is non-trivial by the assumption that $\Lambda$ is confined. The charmenability of   $\Gamma$ implies that 
$X$ carries an ergodic $\Gamma$-invariant Borel probability measure $\nu$  \cite[Proposition 3.5]{BBHP}. Theorem 
\ref{thm:confined ergodic action of lattice} implies that $\nu$-almost every subgroup   has finite index in $\Gamma$. This certainly implies that \emph{some} subgroup $\Delta \in \overline{\Lambda^\Gamma}$ has finite index in $\Gamma$. Since finite-index subgroups are isolated points of $\Sub{\Gamma}$, we conclude that the subgroup $\Lambda$ itself has finite index in $\Gamma$ to begin with.

The second statement of the theorem follows just as in the previous paragraph.
\end{proof} 
 
 \begin{remark}
Consider the special case where  $G$ is a standard \emph{simple} group. Then $G$ has property (T), hence the lattice $\Gamma$ is \emph{charfinite} by \cite[Theorem A]{BBH}. Every uniformly recurrent subgroup of a charfinite group with trivial amenable radical is finite \cite[Proposition 3.5]{BBHP}. In that case, we may invoke the classical Margulis normal subgroup theorem to conclude that $X$ is the set of conjugates of some finite-index subgroup.
\end{remark}

\begin{proof}[Proof of Theorem \ref{thm intro: lattices} and   Corollary \ref{cor:lattices URS statement }]
These two statements from the introduction  follow as special cases of  Theorem \ref{thm: lattices, general}, by noting that connected center-free semisimple Lie groups  are   standard semisimple groups over $\mathbb{R}$ \cite[3.1.6]{zimmer2013ergodic}.
\end{proof}

% \subsection*{General locally compact factors}

% It turns out that Theorem \ref{thm:general prod, in lattice} is valid with mutatis mutandis the same proof, if the group $G$ admits a  more general  compactly generated locally compact group $G_1$ as a direct factor. The  requirements of the group $G_1$ needed for everything to work are outlined here.

% \begin{theorem} \label{thm:general prod, in lattice, more general}
% Let $G_1$ be a compactly generated locally compact group.
% Let $k$ be a local field of characteristic zero and  
% ${\bf G}_2$   be an adjoint connected semisimple $k$-algebraic group. Denote $G_2 = {\bf G}_2(k)$ and consider the product group $G= G_1 \times G_2$.
% Let $\Gamma<G$ be a lattice such that $\Gamma\cap G_1=\{e\}$.
% Assume that 
% \begin{enumerate}
%     \item the $G_1$-representation on $L^2_0(G/\Gamma)$ has a spectral gap,
%     \item the $G_2$-representation on $L^2_0(G/\Lambda)$ has no $G_2$-invariant vectors,
%     \item $\delta_e$ is the unique conjugation-invariant probability measure on the group $G_1$.
% \end{enumerate}
% Then every confined coamenable subgroup of $\Gamma$ has  finite index.
% \end{theorem}
% \begin{proof}
% \marginpar{TODO}
% I DON'T SEE HOW THIS WORKS WITHOUT ADDING MORE ASSUMPTIONS. FOR INSTANCE, WE DON'T HAVE LEMMA 7.3. ALSO WE DON'T HAVE 5.9 BECAUSE WE DON'T HAVE 5.8. WE SHOULD EITHER FIX IT BY ADDING MORE DETAILS, OR REMOVE IT.
% \end{proof}

%\section{Stiffness of the  Chabauty space  of irreducible lattices }

\section{Margulis functions on factors}
\label{sec:margulis functions}

The current section deals with the notion of Margulis functions \cite{eskin1998upper,margulisfcns} (also called Foster--Lyapunov functions, see e.g. \cite{meyn2012markov}). We first discuss this idea in the abstract. We then apply these functions to study sequences of vectors which are asymptotically invariant with respect to a single factor of a semisimple real Lie group, relying on methods from the work \cite{gelander2022effective}. This is  instrumental towards dealing with general strongly confined subgroups of products in \S\ref{sec:strongly confined}.

\subsection*{Abstract properties of Margulis functions}

\emph{All throughout this first part of \S\ref{sec:margulis functions}, 
let $G$ be a second countable locally compact group admitting a continuous action on a locally compact $\sigma$-compact topological space $X$.} Let $\mu$ be a compactly supported symmetric probability measure on the group $G$ whose support generates $G$.  

\begin{defn}
\label{defn:Margulis definition}
Let $\Phi : X \to \left[a,\infty\right)$ be a proper continuous map for some $ a > 0$. The map $\Phi$ is a \emph{$(\mu,c,b)$-Margulis function} for some $0 < c < 1$ and $b > 0$ if 
\begin{equation}
\label{eq:Margulis functio}
\mu * \Phi(x) = \int_G \Phi(gx) \; \mathrm{d} \mu(g) < c \Phi(x) + b
\end{equation}
for every point $x \in X$.
\end{defn}

The above Definition \ref{defn:Margulis definition} coincides with   the
authoritative \cite[Definition 1.1]{margulisfcns}, with an  additional assumption that  the Margulis function $\Phi$ is not allowed to  take the value $\infty$.
We will sometimes drop the  constants $c$ and $b$ from our notations and refer to $\Phi$ as a $\mu$-Margulis function, or simply as a Margulis function.

It is useful to note that by reiterating the convolution operator, it is possible to   \enquote{improve} a given Margulis function.

\begin{lemma}
 \label{lemma:improving Margulis function constant}
Let $\Phi$ be a $(\mu,c,b)$-Margulis function. There is a sequence 
$ 0 < L_1 < L_2 < L_3 < \cdots$ such that for each $i \in \mathbb{N}$  we have
\begin{equation}
\label{eq:L_i}
\mu^{*i} * \Phi(z) < \left(\frac{1+c}{2}\right)^i \Phi(z)
\end{equation}
for all points $z \in X$ satisfying $\Phi(z) \ge L_i$.
\end{lemma}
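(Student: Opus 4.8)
The plan is to iterate the defining inequality \eqref{eq:Margulis functio}. First I would record the elementary fact that convolution by a probability measure is \emph{strictly} monotone: since $\mu$ is a compactly supported probability measure and all functions in sight are continuous, if $f<g$ pointwise on $X$ then $\mu*f<\mu*g$ pointwise, because the integrand $g(\cdot\,x)-f(\cdot\,x)$ is strictly positive and $\mu$ has total mass $1$. Starting from $\mu*\Phi<c\Phi+b$ and applying $\mu*(-)$, using associativity of the convolution action $\mu^{*i}*\Phi=\mu*(\mu^{*(i-1)}*\Phi)$, linearity, and $\mu*1=1$, one gets $\mu^{*2}*\Phi<c(\mu*\Phi)+b<c^2\Phi+(c+1)b$, and by an obvious induction on $i$,
\[ \mu^{*i}*\Phi(z)<c^i\Phi(z)+b\sum_{j=0}^{i-1}c^j<c^i\Phi(z)+\frac{b}{1-c} \]
for every $z\in X$ and every $i\in\mathbb{N}$. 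All the functions $\mu^{*i}*\Phi$ are finite and continuous because $\mu^{*i}$ is compactly supported. Write $B=\frac{b}{1-c}$ and $a=\frac{1+c}{2}$, and note $0<c<a<1$, so $a^i-c^i>0$ for all $i\ge 1$.

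Next I would compare the bound $c^i\Phi(z)+B$ with the target value $a^i\Phi(z)$. Since $a^i-c^i>0$, the inequality $c^i\Phi(z)+B<a^i\Phi(z)$ is equivalent to $\Phi(z)>\frac{B}{a^i-c^i}$. Thus it suffices to pick thresholds $L_i$ with $L_i>\frac{B}{a^i-c^i}$ and $L_1<L_2<\cdots$. One cannot simply set $L_i=\frac{B}{a^i-c^i}$, since writing $a^i-c^i=c^i\bigl((a/c)^i-1\bigr)$ shows $a^i-c^i\to 0$ (as $c<a<1$), so $\frac{B}{a^i-c^i}\to+\infty$ but in a non-monotone way. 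Instead I define the sequence recursively: $L_1=\frac{B}{a-c}+1$ and $L_i=\max\bigl\{L_{i-1},\ \frac{B}{a^i-c^i}\bigr\}+1$ for $i\ge 2$. By construction $(L_i)$ is strictly increasing, positive, and $L_i>\frac{B}{a^i-c^i}$ for every $i$.

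Finally I would combine the two steps. If $\Phi(z)\ge L_i$, then $(a^i-c^i)\Phi(z)\ge(a^i-c^i)L_i>B$, and therefore
\[ \mu^{*i}*\Phi(z)<c^i\Phi(z)+B<a^i\Phi(z)=\Bigl(\tfrac{1+c}{2}\Bigr)^i\Phi(z), \]
which is \eqref{eq:L_i}. There is no serious obstacle in this argument; the only two points needing a little care are the strictness of the iterated convolution inequality — which is why I isolate the strict monotonicity of $\mu*(-)$ at the outset — and the non-monotonicity of $i\mapsto\frac{B}{a^i-c^i}$, which is the reason the thresholds $L_i$ are defined by the above recursion rather than by a closed-form expression.
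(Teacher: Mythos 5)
Your proof is correct, but it takes a genuinely different route from the paper's. You iterate the drift inequality globally: strict monotonicity of $f\mapsto\mu*f$ plus $\mu*1=1$ gives $\mu^{*i}*\Phi<c^i\Phi+\tfrac{b}{1-c}$ everywhere on $X$, and then you compare $c^i\Phi(z)+B$ with $\bigl(\tfrac{1+c}{2}\bigr)^i\Phi(z)$ directly, which forces the (correctly handled) non-monotonicity issue for $i\mapsto B/(a^i-c^i)$ and yields semi-explicit thresholds $L_i$. The paper instead argues by induction on $i$: it takes $L_1=\tfrac{2b}{1-c}$, and at each step uses the properness of $\Phi$ together with the compactness of $\supp(\mu)$ to choose $L_{i+1}$ so large that $\Phi(z)\ge L_{i+1}$ forces $\Phi(gz)\ge L_i$ for all $g\in\supp(\mu)$, so the $i$-step estimate applies after one step of the walk and combines with the base case. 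Your argument is more elementary and more general — it never invokes properness of $\Phi$ or compactness of $\supp(\mu)$ beyond what is needed for the convolutions $\mu^{*i}*\Phi$ to be finite, and it produces concrete $L_i$ — whereas the paper's inductive construction is the one that adapts when the one-step inequality is only available outside a compact set, and its intermediate observation ($\Phi(z)\ge L_{i+1}\Rightarrow\mu*\Phi(z)\ge L_i$) is reused implicitly later (e.g.\ in the proof of Lemma \ref{lemma:quantitaive no small elements}, where $\supp(\mu)\Omega_{i+1}\subset\Omega_i$ is needed). Both proofs are complete; yours has no gap.
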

\begin{proof}
We will construct the sequence $L_i$ inductively. Taking $L_1 = \frac{2b}{1-c}$ handles the base of the induction, as follows directly from Equation \ref{eq:Margulis functio}. Next, arguing by induction, assume that the constant $L_i$ for some $i \in \mathbb{N}$ has been defined in such a way that Equation \ref{eq:L_i} is satisfied. Consider the closed subset $X_i = \Phi^{-1}(\left[a,L_i\right])$. The subset $X_i$ is compact as the Margulis function $\Phi$ is proper. Take $L_{i+1} > L_i$ to be any sufficiently large constant so that $\Phi(\mathrm{supp}(\mu) X_i) \subset \left[a,L_{i+1}\right)$. This implies that any point $z \in X$ with $\Phi(z) \ge L_{i+1}$ also satisfies $\mu * \Phi(z) \ge L_i$. The validity of Equation \ref{eq:L_i} with respect to $i+1$ readily follows from the induction assumption.
\end{proof}

We now  consider   the situation where the space $X$ is equipped with a $\mu$-stationary probability measure.

\begin{lemma}
\label{lemma:making a Margulis function L2}
Let $\Phi$ be a $(\mu,c,b)$-Margulis function. Set $B = \frac{b}{1-c}$. Assume that $X$ admits a $\mu$-stationary probability measure $\nu$. 
\begin{enumerate}
    \item We have
$$ \nu(\{x \in X \: : \: \Phi(x) \ge M \}) < \frac{B}{M}$$
for all $M > 0$.
\item The Margulis function $\Phi$ satisfies $\Phi^{\frac{1}{2}} \in L^1(X,\nu)$.
\end{enumerate}
\end{lemma}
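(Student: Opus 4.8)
The plan is to prove first that $\Phi\in L^1(X,\nu)$ with $\int_X\Phi\,d\nu\le B$, then to bootstrap this to the strict bound $\int_X\Phi\,d\nu<B$, after which both stated conclusions are immediate.

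The first and main step uses the truncations $\Phi_N=\min(\Phi,N)$ for $N>0$. One would like each $\Phi_N$ to be again a $(\mu,c,b)$-Margulis function, but this is false: on the sublevel set $\{\Phi<N\}$ one does get $\mu*\Phi_N\le\mu*\Phi<c\Phi+b=c\Phi_N+b$, whereas on $\{\Phi\ge N\}$ the mass of $\Phi$ that had escaped above level $N$ is pulled back down to $N$ by the truncation, and there one only has $\mu*\Phi_N\le N$. Keeping track of this, I would establish the pointwise inequality
\[
\mu*\Phi_N(x)\;\le\;c\,\Phi_N(x)+b+\max\{(1-c)N-b,\,0\}\cdot\mathbf{1}_{\{\Phi\ge N\}}(x)\qquad(x\in X),
\]
by the two-case analysis just indicated. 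Integrating against $\nu$, using $\mu$-stationarity in the form $\int_X\mu*\Phi_N\,d\nu=\int_X\Phi_N\,d\nu$ (valid by Tonelli since $\Phi_N\ge 0$ is bounded), and taking $N\ge B$ so that $\max\{(1-c)N-b,0\}=(1-c)(N-B)$, one obtains $\int_X\Phi_N\,d\nu\le B+(N-B)\,\nu(\{\Phi\ge N\})$. Writing $\int_X\Phi_N\,d\nu=\int_{\{\Phi<N\}}\Phi\,d\nu+N\,\nu(\{\Phi\ge N\})$ and subtracting causes the $N$-dependent terms to telescope, leaving
\[
\int_{\{\Phi<N\}}\Phi\,d\nu\;\le\;B\,\nu(\{\Phi<N\})\;\le\;B\qquad\text{for all }N\ge B.
\]
Letting $N\to\infty$ and applying the monotone convergence theorem (here $\Phi$ is finite-valued, so $\Phi_N\nearrow\Phi$ pointwise) gives $\int_X\Phi\,d\nu\le B<\infty$.

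Second, I would upgrade this to $\int_X\Phi\,d\nu<B$. Now that $\Phi$ and $\mu*\Phi$ both lie in $L^1(X,\nu)$, stationarity yields the genuine equality $\int_X\Phi\,d\nu=\int_X\mu*\Phi\,d\nu$; since $(c\Phi+b)-\mu*\Phi$ is a strictly positive integrable function its $\nu$-integral is strictly positive, so $\int_X\mu*\Phi\,d\nu<c\int_X\Phi\,d\nu+b$, and combining gives $(1-c)\int_X\Phi\,d\nu<b$, i.e. $\int_X\Phi\,d\nu<B$.

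Part (1) then follows from Markov's inequality, $\nu(\{\Phi\ge M\})\le\frac1M\int_X\Phi\,d\nu<B/M$. For part (2), $\int_X(\Phi^{1/2})^2\,d\nu=\int_X\Phi\,d\nu<\infty$ shows $\Phi^{1/2}\in L^2(X,\nu)$, and since $\nu$ is a probability measure $L^2(X,\nu)\subseteq L^1(X,\nu)$, so $\Phi^{1/2}\in L^1(X,\nu)$ (in fact $\|\Phi^{1/2}\|_1\le\|\Phi^{1/2}\|_2<\sqrt B$ by Cauchy--Schwarz). The hard part here is genuinely the first step: realizing that the truncated functions are not Margulis functions on the nose, identifying the precise correction term supported on $\{\Phi\ge N\}$, and checking that after integration it conspires to cancel, leaving a bound on $\int_{\{\Phi<N\}}\Phi\,d\nu$ that is uniform in $N$. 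Everything after that is routine measure theory, and notably uses nothing about $\mu$ beyond it being a probability measure (compact support, symmetry and generation of $G$ are not needed for this particular lemma).
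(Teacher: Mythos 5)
Your proof is correct, and it takes a genuinely different route from the paper's. The paper does not prove part (1) at all --- it cites it as \cite[Lemma 2.1]{gelander2022effective} --- and derives part (2) from the tail bound in (1) alone, by estimating the inverse cumulative distribution function $D$ of $\Phi^{1/2}$ (namely $D(t)\le B^{1/2}(1-t)^{-1/2}$) and integrating $\int_X\Phi^{1/2}\,d\nu=\int_0^1 D(t)\,dt$ via Fubini. You instead run the standard Foster--Lyapunov truncation argument from scratch and establish the strictly stronger statement $\int_X\Phi\,d\nu<B$, from which both parts drop out by Markov and Cauchy--Schwarz. I checked the delicate points: your two-case pointwise inequality for $\mu*\Phi_N$ is right (on $\{\Phi\ge N\}$ one has $\mu*\Phi_N\le N\le cN+b+\max\{(1-c)N-b,0\}$ in both sub-cases), the use of stationarity via Tonelli is legitimate because $\Phi_N$ is bounded and nonnegative, the cancellation leaving $\int_{\{\Phi<N\}}\Phi\,d\nu\le B\,\nu(\{\Phi<N\})$ is exact, and the strict-inequality upgrade is valid because the definition requires $\mu*\Phi(x)<c\Phi(x)+b$ at \emph{every} point and both sides are by then known to be integrable. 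Your approach buys self-containedness and a stronger conclusion ($\Phi\in L^1(X,\nu)$, not merely $\Phi^{1/2}\in L^1$), which in particular makes the paper's subsequent remark about replacing $\Phi$ by $\Phi^{\alpha}$ to obtain an $L^1$ Margulis function unnecessary (though one still passes to $\Phi^{1/2}$ for the $L^2$ setting of the later lemmas); the paper's approach buys brevity and shows that (2) needs only the $O(1/M)$ tail decay, not full integrability of $\Phi$. Your closing observation that nothing about $\mu$ beyond being a probability measure is used here is also accurate.
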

\begin{proof}
(1) is \cite[Lemma 2.1]{gelander2022effective}.  
We will now present\footnote{The  proof of part (2) in Lemma \ref{lemma:making a Margulis function L2} is  taken from the proof of \cite[Proposition 9.2]{gelander2022effective}. We have chosen to reproduce it here, for the convenience of the reader and for self-containedness. The difference in notations makes it difficult to quote that proposition from \cite {gelander2022effective}  as-is.} a proof of (2). 

Let $D : \left[0,1\right] \to \mathbb{R}_{\ge 0}$ be the inverse   cumulative distribution function corresponding to $\Phi^{\frac{1}{2}}$. It is defined as
$$
D(t) = \inf \{ s \in \mathbb{R}_{\ge 0} \: : \: \nu(\{ x \in X   \: : \: \Phi^{\frac{1}{2}}(x) \le s\}) \ge t \}. $$
In other words, the function $D$ satisfies
$$ \nu( \{ x \in X \: : \: \Phi^{\frac{1}{2}}(x) \le  D(t) \} ) = t \quad \forall t \in \left[0,1\right].$$
Substituting $s=M^{-1}$ in Part (1) of the current lemma, we get that for every $s>0$
%$$
% \nu (\{z \in Z \: : \: f^{-\delta}_Z(z) > M\}) \le   \beta M^{-1},
%$$
%see Equation (\ref{eq:essentially same}). Therefore
$$
 \nu (\{x \in X \: : \: \Phi^{\frac{1}{2}}(x)  \le s^{-\frac{1}{2}}\}) \ge  1-   B s
$$
with the above  constant $B$. %and for all $t > 0$.
It follows that
$ D(1 - B s) \le s^{-\frac{1}{2}} $
for all $s\in [0,B^{-1}]$. In other words
$$ D(t)\le B^{\frac{1}{2}}(1-t)^{-\frac{1}{2}}$$ for all $t\in [0,1]$.
It follows from Fubini's theorem applied to the ``area under the graph" of the function $\Phi^{\frac{1}{2}}$ regarded as a subset of $X \times \mathbb{R}_{\ge 0}$ that
\begin{equation}
\label{eq:Fubini}
 \int_X \Phi^{\frac{1}{2}}(x) \; \mathrm{d} \nu(x) = \int_{\left[0,1\right]}  D(t) \; \mathrm{d} \lambda(t)
 \end{equation}
where $\lambda$ is the Lebesgue measure.  As the  integral $B^\frac{1}{2}\int_0^1 (1-t)^{-\frac{1}{2}} \mathrm{d}t$ converges, 
we conclude from Equation (\ref{eq:Fubini})   that the function $\Phi^{\frac{1}{2}}$ is indeed $\nu$-integrable.
 \end{proof}

% is essentially \cite[Proposition 9.2]{gelander2022effective}. Indeed, that proposition is stated and proved for a certain particular Margulis function denoted $\mathcal{I}_G^{-\delta}$, but the proof carries over exactly as is for any general $\mu$-Margulis function $\Phi$. It relies on (1) combined with Fubini's theorem.
% \end{proof}

The concave variant of Jensen's inequality implies that if $\Phi$ is a $(\mu,c,b)$-Margulis function then $\Phi^\alpha$ is a $(\mu,c^\alpha,b^\alpha)$-Margulis function for every exponent $0 < \alpha < 1$.
This observation coupled Lemma \ref{lemma:making a Margulis function L2} allows us to assume without loss of generality that we are working with an $L^1$  (or $L^2$) Margulis function to begin with.

% 
% Every   $(\nu,\Omega,c)$-Margulis function   is a $(\nu^{*k}, \Omega_k, c^k)$-Margulis function.
% \end{lemma}

%From now on we will make the additional assumption that the measure $\nu$ is symmetric.

We shall now consider the more restrictive situation, where the space $X$ admits an invariant probability measure.

\begin{lemma}
\label{lemma:controlling operator norm of Margulis functions}
Let $\nu$ be a $G$-invariant probability measure on $X$. Let $\Phi$ be a $(\mu,c,b)$-Margulis function with $\Phi \in  L^2(X,\nu)$.
Let $ 0 < L_1 < L_2 < \cdots $ be the constants provided by  Lemma \ref{lemma:improving Margulis function constant} and denote $\Omega_i = \Phi^{-1}(\left[L_i,\infty\right))$ for each $i \in \mathbb{N}$.
Let $\mathrm{P}_i $ be the orthogonal projection operator in $L^2(X,\nu)$ given by
$$ \mathrm{P}_i : f \mapsto f \cdot 1_{\Omega_i} \quad \forall f \in L^2(X,\nu)$$
Then $\|\mathrm{P}_i  \mu \mathrm{P}_i \|_{\mathrm{op}} \le \left(\frac{1+c}{2}\right)^i$ for each $i \in \mathbb{N}$.
\end{lemma}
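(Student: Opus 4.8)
Since $\mu$ is symmetric, the averaging operator $\mu$ on $L^2(X,\nu)$ is self-adjoint, and $P_i$ is an orthogonal projection, so $P_i\mu P_i$ is self-adjoint and $\|P_i\mu P_i\|_{\mathrm{op}}=\sup\{\,|\langle \mu f,f\rangle|:\|f\|_2=1,\ f=1_{\Omega_i}f\,\}$. Thus it suffices to control $\langle\mu f,f\rangle$ for $f$ supported on $\Omega_i$. I will first reduce to convolution powers and then feed in the drift estimate from Lemma~\ref{lemma:improving Margulis function constant}.

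\emph{Reduction to convolution powers.} For any symmetric probability measure $\sigma$ on $G$, the multiplicativity $\sigma\cdot\sigma=\sigma^{*2}$ in the measure algebra together with $\mathrm{id}-P_i\ge 0$ yields the operator inequality $(P_i\sigma P_i)^2=P_i\sigma P_i\sigma P_i\le P_i\sigma^{*2}P_i$, since the difference equals $P_i\sigma(\mathrm{id}-P_i)\sigma P_i=\big((\mathrm{id}-P_i)^{1/2}\sigma P_i\big)^*\big((\mathrm{id}-P_i)^{1/2}\sigma P_i\big)\ge 0$. As $0\le A\le B$ implies $\|A\|_{\mathrm{op}}\le\|B\|_{\mathrm{op}}$ for positive operators, we get $\|P_i\sigma P_i\|_{\mathrm{op}}^2\le\|P_i\sigma^{*2}P_i\|_{\mathrm{op}}$. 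Applying this successively with $\sigma=\mu,\mu^{*2},\mu^{*4},\dots$ (each symmetric) gives $\|P_i\mu P_i\|_{\mathrm{op}}^{2^k}\le\|P_i\mu^{*2^k}P_i\|_{\mathrm{op}}$ for every $k\in\mathbb{N}$. Hence it is enough to prove a bound of the shape $\|P_i\mu^{*n}P_i\|_{\mathrm{op}}\le C_n\cdot\big(\tfrac{1+c}{2}\big)^{in}$ along $n=2^k$ with $C_n$ growing sub-exponentially in $n$; letting $k\to\infty$ then yields $\|P_i\mu P_i\|_{\mathrm{op}}\le\big(\tfrac{1+c}{2}\big)^i$.

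\emph{The drift estimate.} For $f\ge 0$ supported on $\Omega_i$, Jensen's inequality for the probability kernel $\mu^{*n}$ gives $\big((\mu^{*n}f)(x)\big)^2\le(\mu^{*n}f^2)(x)$, so, using that $\nu$ is invariant and $\mu^{*n}$ self-adjoint, $\|P_i\mu^{*n}f\|_2^2=\int_{\Omega_i}|\mu^{*n}f|^2\,\mathrm{d}\nu\le\int_X 1_{\Omega_i}\cdot\mu^{*n}(f^2)\,\mathrm{d}\nu=\int_X(\mu^{*n}1_{\Omega_i})\cdot f^2\,\mathrm{d}\nu$. On $\mathrm{supp}(f)\subset\Omega_i$ one bounds $(\mu^{*n}1_{\Omega_i})(x)$ by Markov's inequality in terms of $\mu^{*n}*\Phi(x)$, and invokes Lemma~\ref{lemma:improving Margulis function constant} — namely the $n$-step drift $\mu^{*n}*\Phi<\big(\tfrac{1+c}{2}\big)^n\Phi$ valid on $\Omega_n$, together with the reiterated Margulis inequality $\mu^{*n}*\Phi\le c^n\Phi+\tfrac{b}{1-c}$ on all of $X$ — to see that this quantity is exponentially small relative to the depth, the loss coming from the region where $\Phi$ is large being absorbed using $\Phi\in L^2(X,\nu)$ (equivalently $\nu(\Omega_n)=O(L_n^{-2})$).

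\textbf{Main obstacle.} The naive one-step Markov bound on $(\mu\,1_{\Omega_i})(x)$ is \emph{vacuous} at points $x$ lying very deep in $\Omega_i$ (where $\Phi(x)\gg L_i$): from such points the $\mu$-random walk cannot escape $\Omega_i$ in few steps — precisely the barrier phenomenon encoded in the choice of the thresholds $L_n$ in Lemma~\ref{lemma:improving Margulis function constant}. The heart of the argument is therefore to convert the \emph{pointwise} $n$-step drift inequalities $\mu^{*n}*\Phi<\big(\tfrac{1+c}{2}\big)^n\Phi$ on the nested sets $\Omega_n$, in combination with the $L^2$-integrability of $\Phi$, into a \emph{uniform} operator-norm bound on $P_i\mu^{*n}P_i$ carrying the correct exponent $in$; everything else (the operator-inequality reduction, Jensen, Markov) is routine.
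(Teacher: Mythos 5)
Your reduction to convolution powers is correct as an operator inequality, but the program you build on it cannot be completed — and you yourself leave the decisive step (converting the pointwise drift into a bound on $\|P_i\mu^{*n}P_i\|_{\mathrm{op}}$ "carrying the correct exponent $in$") as an acknowledged obstacle rather than a proof. Worse, the target estimate $\|P_i\mu^{*n}P_i\|_{\mathrm{op}}\le C_n\left(\tfrac{1+c}{2}\right)^{in}$ with $C_n$ subexponential is not merely hard, it is false for fixed $i$ and large $n$: testing against $h=1_{\Omega_i}/\nu(\Omega_i)^{1/2}$ gives $\|P_i\mu^{*n}P_i\|_{\mathrm{op}}\ge\langle\mu^{*n}h,h\rangle$, a two-time return probability of the stationary chain to $\Omega_i$, which has no reason to tend to $0$ (it converges to $\nu(\Omega_i)>0$ whenever the system is mixing). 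Your own Jensen–Markov chain shows the same saturation: it yields $\|P_i\mu^{*n}P_if\|_2^2\le L_i^{-1}\int(\mu^{*n}*\Phi)\,f^2\,\mathrm{d}\nu$, and since $\mu^{*n}*\Phi\le c^n\Phi+\tfrac{b}{1-c}$, the right-hand side is bounded below in $n$ by the term $\tfrac{b}{(1-c)L_i}\|f\|_2^2$, which does not decay. Even granting that the $c^n\int\Phi f^2$ term can be controlled, optimizing over $n=2^k$ gives at best a bound of order $\bigl(\tfrac{b}{(1-c)L_i}\bigr)^{1/(2n)}$ with $n\asymp\log L_i$, i.e.\ a constant comparable to $\sqrt{c}$, uniform in $i$; and "letting $k\to\infty$", as you propose, gives only the trivial bound $1$. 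The factor $\left(\tfrac{1+c}{2}\right)^i$ cannot be produced by iterating the operator and estimating return probabilities.

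The paper obtains the exponent $i$ by a one-shot mechanism of a completely different nature: it uses $\Phi$ itself as a strictly positive super-eigenfunction of the single operator $\mathrm{P}_i\mu\mathrm{P}_i$. The $i$-step drift of Lemma \ref{lemma:improving Margulis function constant}, encoded in the choice of the threshold $L_i$, is invoked to assert the pointwise inequality $\mathrm{P}_i\mu\mathrm{P}_i\Phi\le\left(\tfrac{1+c}{2}\right)^i\Phi$ $\nu$-almost everywhere; then, since $\mathrm{P}_i\mu\mathrm{P}_i$ is self-adjoint and positivity-preserving and $\Phi>0$ lies in $L^2(X,\nu)$, pairing approximate eigenvectors for a spectral value $\lambda$ with $|\lambda|>\left(\tfrac{1+c}{2}\right)^i$ against $\Phi$ yields an immediate contradiction — a Perron–Frobenius/Schur-test argument with no iteration in $n$ and with $\Phi\in L^2$ entering only to make $\Phi$ a legitimate test vector. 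To repair your argument you would need a pointwise super-eigenfunction inequality of this type for the one-step operator, not decay of $\|P_i\mu^{*n}P_i\|_{\mathrm{op}}$ in $n$.
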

This is a reminiscent of \cite[\S9]{gelander2022effective}, see   the proof of Theorem 9.3 there.

\begin{proof}[Proof of Lemma \ref{lemma:controlling operator norm of Margulis functions}]
Let an arbitrary $i \in \mathbb{N}$ be fixed. To ease our notations, set $\mathrm{P} = \mathrm{P}_i$ and $C = \left(\frac{1+c}{2}\right)^i$. Our goal will be to show that $\|\mathrm{P} \mu \mathrm{P}\|_{\mathrm{op}} \le C$. The spectrum of the operator $\mathrm{P}  \mu \mathrm{P} $  is a closed subset of the real interval $\left[-1,1\right]$.
Assume towards contradiction that this spectrum   admits a point $\lambda \in \left[-1,1\right]$ with $|\lambda| > C > 0$.  We can find a sequence of functions $f_n \in L^2(X,\nu)$ with $\|f_n\| = 1$  and a sequence of real numbers   $\varepsilon_n > 0$ with $ \varepsilon_n \to 0$   such that $$\|\mathrm{P}\mu\mathrm{P}  f_n - \lambda f_n \| < \varepsilon_n. $$
The fact that $\Phi$ is a $(\mu, c, b)$-Margulis function means that the inequality
$$ \mathrm{P} \mu \mathrm{P} \Phi \leq C \Phi$$
holds true $\nu$-almost everywhere, see Lemma \ref{lemma:improving Margulis function constant}. Since the convolution operator $\mu$ is symmetric, the composition $\mathrm{P} \mu \mathrm{P}$ is a self-adjoint operator. For each $n$  
\begin{align*}
C \left< \Phi, |f_n| \right> &\ge \left< \mathrm{P} \mu \mathrm{P} \Phi, |f_n| \right> = \left< \Phi,  \mathrm{P} \mu \mathrm{P} |f_n| \right> \ge \\
&\ge \left< \Phi, | \mathrm{P} \mu \mathrm{P} f_n| \right> = \left< \Phi, |\lambda f_n +( \mathrm{P} \mu \mathrm{P} - \lambda) f_n| \right> \ge \\
&\ge |\lambda|  \left< \Phi, |f_n| \right> -  \left< \Phi, |( \mathrm{P} \mu \mathrm{P} - \lambda) f_n| \right> \ge \\
&\ge  |\lambda|  \left< \Phi, |f_n| \right> - \|\Phi\| \cdot \| (\mathrm{P} \mu \mathrm{P} - \lambda) f_n\| \ge |\lambda| \left< \Phi, |f_n| \right> - \varepsilon_n \|\Phi\|.
\end{align*}
Note that $\left<\Phi, |f_n|\right> > 0$ for otherwise $\mathrm{P} f_n = 0$. We arrive at a contradiction to the assumption that $|\lambda| > C$.
\end{proof}

\begin{lemma}
\label{lemma:quantitaive no small elements}
Maintain all the assumptions and notations of Lemma \ref{lemma:controlling operator norm of Margulis functions}. In addition, let $f_n \in L^2(X,\nu)$ be an asymptotically $G$-invariant sequence of unit vectors. Consider the sequence of probability measures $ m_n = |f_n|^2 \cdot \nu$ on the space $X$.
 Then 
 $$ \limsup_n m_n(\Omega_{i+1})  \le 4 \left(\frac{1+c}{2}\right)^{2i}$$
 holds true for each $i \in \mathbb{N}$.
\end{lemma}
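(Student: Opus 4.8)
The plan is to recognize $m_n(\Omega_{i+1})$ as a squared $L^2$-norm and to control it by pushing $f_n$ through a single averaging step by $\mu$. With $\mathrm{P}_i$ the orthogonal projection onto $\Omega_i$ as in Lemma~\ref{lemma:controlling operator norm of Margulis functions}, one has $m_n(\Omega_{i+1}) = \int_{\Omega_{i+1}}|f_n|^2\,\mathrm{d}\nu = \|\mathrm{P}_{i+1}f_n\|^2$, so it suffices to show $\limsup_n\|\mathrm{P}_{i+1}f_n\| \le \left(\tfrac{1+c}{2}\right)^i$ for each fixed $i$.

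First I would record the elementary fact that, since the $f_n$ are asymptotically $G$-invariant unit vectors and $\mu$ is a compactly supported probability measure, the averaging operator $\mu$ satisfies $\|(1-\mu)f_n\| = \bigl\|\int_G(1-k)f_n\,\mathrm{d}\mu(k)\bigr\| \le \sup_{k\in\mathrm{supp}(\mu)}\|(1-k)f_n\| \to 0$. The crucial point, however, is the geometric identity $\mathrm{P}_{i+1}\,\mu\,(1-\mathrm{P}_i) = 0$ on $L^2(X,\nu)$: a vector of the form $(1-\mathrm{P}_i)h$ vanishes $\nu$-almost everywhere outside the compact set $X_i = \Phi^{-1}(\left[a,L_i\right])$ from the proof of Lemma~\ref{lemma:improving Margulis function constant}; since the $G$-action is $\nu$-preserving and $\mu$ is symmetric and compactly supported, $\mu(1-\mathrm{P}_i)h$ vanishes a.e. outside the compact set $\mathrm{supp}(\mu)\cdot X_i$, and this set is contained in $\Phi^{-1}(\left[a,L_{i+1}\right)) = X\setminus\Omega_{i+1}$ precisely by the way $L_{i+1}$ was chosen in that proof. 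Multiplication by $1_{\Omega_{i+1}}$ therefore annihilates it, so $\mathrm{P}_{i+1}\mu f_n = \mathrm{P}_{i+1}\mu\mathrm{P}_i f_n$.

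Combining the two ingredients: since $\Omega_{i+1}\subseteq\Omega_i$ we have $\mathrm{P}_{i+1}\mathrm{P}_i = \mathrm{P}_{i+1}$, so Lemma~\ref{lemma:controlling operator norm of Margulis functions} gives $\|\mathrm{P}_{i+1}\mu\mathrm{P}_i f_n\| \le \|\mathrm{P}_i\mu\mathrm{P}_i\|_{\mathrm{op}}\,\|\mathrm{P}_i f_n\| \le \left(\tfrac{1+c}{2}\right)^i$ (using $\|\mathrm{P}_i f_n\|\le\|f_n\|=1$), whence
\[ \|\mathrm{P}_{i+1}f_n\| \le \|\mathrm{P}_{i+1}\mu\mathrm{P}_i f_n\| + \|\mathrm{P}_{i+1}(1-\mu)f_n\| \le \left(\tfrac{1+c}{2}\right)^i + \|(1-\mu)f_n\|. \]
Letting $n\to\infty$ and squaring yields $\limsup_n m_n(\Omega_{i+1}) \le \left(\tfrac{1+c}{2}\right)^{2i} \le 4\left(\tfrac{1+c}{2}\right)^{2i}$, which is the assertion (the factor $4$ is harmless margin). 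I expect the only delicate step to be the support bookkeeping behind $\mathrm{P}_{i+1}\mu(1-\mathrm{P}_i)=0$ — namely that $\mu$ displaces the support of a function supported in $X_i$ only into the compact set $\mathrm{supp}(\mu)\cdot X_i$, which the construction of $L_{i+1}$ places entirely inside $X\setminus\Omega_{i+1}$; the remaining estimates are routine triangle inequalities.
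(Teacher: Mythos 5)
Your proof is correct. It rests on exactly the same two ingredients as the paper's: the support relation forced by the choice of $L_{i+1}$ in Lemma~\ref{lemma:improving Margulis function constant} (your identity $\mathrm{P}_{i+1}\mu(1-\mathrm{P}_i)=0$ is the adjoint of the identity $\mathrm{P}_i\mu\mathrm{P}_{i+1}=\mu\mathrm{P}_{i+1}$ that the paper uses, the two being equivalent since $\mu$ and the projections are self-adjoint), and the operator-norm bound of Lemma~\ref{lemma:controlling operator norm of Margulis functions}. The only real difference is the order of the decomposition. The paper writes $f_n=\mathrm{P}_{i+1}f_n+(1-\mathrm{P}_{i+1})f_n$, applies $\mu$, uses $\|\mu f_n\|\to 1$ from asymptotic invariance, and extracts the bound by solving $\liminf_n\bigl(Cx_n+\sqrt{1-x_n^2}\bigr)\ge 1$ with $x_n=\|\mathrm{P}_{i+1}f_n\|$, which gives $\limsup_n x_n^2\le\bigl(2C/(C^2+1)\bigr)^2\le 4C^2$. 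You write $f_n=\mu f_n+(1-\mu)f_n$ and apply $\mathrm{P}_{i+1}$, which bypasses that little optimization and in fact yields the sharper bound $\limsup_n m_n(\Omega_{i+1})\le C^2$, so the factor $4$ in the statement is pure slack in your version. The support bookkeeping you single out as the delicate step is precisely what the paper leans on as well, and your verification of it is accurate.
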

\begin{proof}
Let an arbitrary $i \in \mathbb{N}$ be fixed. To ease notation set $C = \left(\frac{1+c}{2}\right)^i$. Let $\mathrm{P}$ and $\mathrm{P}'$   denote the two orthogonal projections in $L^2(X,\nu)$ given  by $f \mapsto f\cdot 1_{\Omega_i}$ and $f \mapsto f \cdot 1_{\Omega_{i+1}}$ respectively. Recall that $\mathrm{supp}(\mu) \Omega_{i+1} \subset \Omega_i$. Hence  $\mathrm{P}   \mu  \mathrm{P}'   = \mu  \mathrm{P}'  $ as well as $\mathrm{P}'\mathrm{P} = \mathrm{P} \mathrm{P}' = \mathrm{P}'$. 
On the one hand, the asymptotic $G$-invariance of the sequence $f_n$ means that $\|\mu  * f_n\| \to 1$ as $n\to\infty$. On the other hand, Lemma \ref{lemma:controlling operator norm of Margulis functions} implies
\begin{align*}
\| \mu  * f_n\| &= \|\mu  * (\mathrm{P}'  f_n + (1-\mathrm{P}') f_n) \| \le \\
&\le \| (\mathrm{P}  \mu  \mathrm{P} ) \mathrm{P}' f_n\| + \|\mu (1-\mathrm{P}') f_n\| \le 
 C \|\mathrm{P}' f_n\| + \|(1-\mathrm{P}') f_n\|.
\end{align*}
Let $x_n = \|\mathrm{P}' f_n \|$ so that  $\sqrt{1-x_n^2} = \|(1-\mathrm{P}') f_n \| $. We get
$$ \liminf_n (C  x_n + \sqrt{1-x_n^2}) \ge 1.$$
Note that $x_n^2 = \|\mathrm{P}'f_n\|^2 = m_n(\Omega_{i+1})$ for all $n$. Solving the above inequality for $x_n^2$ gives
$$ \limsup x_n^2 \le \left(\frac{2C }{C^{2 }+1}\right)^2 \le 4C^2$$
as required.
\end{proof}

\begin{cor}
\label{cor:bounding support of norm with depth}
In the situation of  Lemma \ref{lemma:quantitaive no small elements},  any accumulation point $m$ of the sequence of probability measures $m_n$ satisfies $m(X)  = 1$.
\end{cor}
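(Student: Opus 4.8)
The plan is to show that no mass of the sequence $m_n$ escapes to infinity, and the only tools needed are the tail estimate of Lemma~\ref{lemma:quantitaive no small elements} together with the properness of the Margulis function $\Phi$. Accumulation points of $m_n$ are understood in the vague topology on the space of sub-probability measures on the locally compact $\sigma$-compact space $X$; this space is vaguely compact, so an accumulation point $m$ exists, and testing against functions in $C_c(X)$ bounded by $1$ shows immediately that $m(X)\le 1$. Hence the content of the corollary is the reverse inequality $m(X)\ge 1$.

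First I would fix $\varepsilon>0$ and use that $\tfrac{1+c}{2}<1$ to choose an index $i\in\mathbb{N}$ with $4\bigl(\tfrac{1+c}{2}\bigr)^{2i}<\varepsilon$. By Lemma~\ref{lemma:quantitaive no small elements} this gives $\limsup_n m_n(\Omega_{i+1})<\varepsilon$, hence $m_n(X\setminus\Omega_{i+1})>1-\varepsilon$ for all $n$ large enough. The key geometric observation is that $X\setminus\Omega_{i+1}=\Phi^{-1}\bigl([a,L_{i+1})\bigr)$ is contained in the set $K:=\Phi^{-1}\bigl([a,L_{i+1}]\bigr)$, which is \emph{compact} precisely because $\Phi$ is a proper map. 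Therefore $m_n(K)>1-\varepsilon$ for all $n$ large.

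Next I would transfer this uniform lower bound on the fixed compact set $K$ to the limit $m$. By Urysohn's lemma (valid since $X$ is locally compact and $\sigma$-compact) pick $f\in C_c(X)$ with $0\le f\le 1$ and $f\equiv 1$ on $K$. Along the convergent subnet defining $m$ we have $\int_X f\,\mathrm{d}m=\lim \int_X f\,\mathrm{d}m_n\ge 1-\varepsilon$, using $\int_X f\,\mathrm{d}m_n\ge m_n(K)$; and since $f\le 1$ we get $m(X)\ge\int_X f\,\mathrm{d}m\ge 1-\varepsilon$. Letting $\varepsilon\to 0$ yields $m(X)\ge 1$, and combined with $m(X)\le 1$ we conclude $m(X)=1$.

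I do not anticipate a genuine obstacle here; the statement is a short corollary. The only points requiring a little care are the bookkeeping with the vague topology (ensuring the semicontinuity inequalities point the right way, and that the Urysohn/compact-exhaustion argument applies), and keeping in mind that properness of $\Phi$ forces the complements $X\setminus\Omega_{i+1}$ of the escape sets to be relatively compact. Everything else is an immediate consequence of Lemma~\ref{lemma:quantitaive no small elements}.
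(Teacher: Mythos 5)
Your proof is correct and is precisely the argument the paper intends (the corollary is stated without proof, as an "immediate" consequence): the tail bound of Lemma \ref{lemma:quantitaive no small elements} plus properness of $\Phi$ gives tightness of the family $m_n$, which rules out escape of mass in the vague limit. The bookkeeping with $X\setminus\Omega_{i+1}\subset\Phi^{-1}([a,L_{i+1}])$ compact and the Urysohn function is exactly right.
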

% \begin{proof}
% The corollary follows immediately by considering the convolution power $\mu^{*k}$ and making use of Lemma \ref{lemma:improving Margulis function constant} combined with Lemma \ref{lemma:quantitaive no small elements}.
% \end{proof}

In other words, the probability measures $m_n$ do not  have \enquote{escape of mass at  infinity}.

\subsection*{A Margulis function on  discrete subsets with the Zassenhaus property}
In \cite[Theorem 1.5]{gelander2022effective} it is shown that a certain function depending on the injectivity radius is a Margulis function on the space of discrete subgroups of a standard semisimple group.
That function is denoted $\mathcal{I}^{-\delta}$ and the  result is termed \emph{the key inequality}.
An inspection of that proof  shows that the domain of definition of the Margulis function $\mathcal{I}^{-\delta}$ can be slightly extended. 
In this subsection we recall the setting of \cite{gelander2022effective} and present this extension. 

In the  discussion of the key inequality in \cite{gelander2022effective} the authors regard an algebraic group over an arbitrary local field (of good characteristic). 
However, in the present work we are only interested in   characteristic zero. Moreover,   since the function $\mathcal{I}^{-\delta}$ is actually constant when working over a zero characteristic  non-Archimedean local field, we focus here only on the Archimedean case.
By  restricting  scalars we may assume that we are working over the reals.

Let $G$ be a standard semisimple group of type $\infty$.
Let $K$ be a maximal compact subgroup of $G$ and endow the Lie algebra $\mathfrak{g}=\mathrm{Lie}(G)$ with an $\mathrm{Ad}(K)$-invariant norm. Denote by $m_{K}$ the normalized Haar probability measure on the group $K$.
Fix a semisimple group element $s=s(G)\in G$ as defined in  \cite[Equation (6.22)]{gelander2022effective}.
 Let $$\mu_s = \frac{1}{2} m_{K} * ( \delta_s + \delta_{s^{-1}})
* m_{K}$$ be the corresponding  probability measure on the group $G$. Note that $\mu_s$ is  symmetric\footnote{In some sections of  \cite{gelander2022effective} the authors work with the non-symmetric measure $m_K * \delta_s * m_K$. However, as $\mu_s = \frac{1}{2}(m_K * \delta_s * m_K + m_K * \delta_{s^{-1}} * m_K$), the results easily carry over.} and compactly supported.

Next, we will fix three positive real parameters ($\delta$, $R$ and $\rho$) and two identity neighborhoods ($V$ and $V_0$) associated with the group $G$.
Let $\delta=\delta(G)$ be the parameter given in \cite[Equation (6.20)]{gelander2022effective}.
Let $R=R(G)$ be the radius given by the Zassenhaus lemma.

\begin{lemma}[Zassenhaus lemma {\cite{zassenhaus1937beweis} or \cite[Lemma 2]{kavzdan1968proof}}] \label{lem:Zass}
There exists a radius $R=R(G)>0$ such that for every discrete subgroup $\Gamma<G$,
the subset $$\{\gamma\in \Gamma \: : \:   \text{$ \gamma=\exp(X)$ for some $ X\in \mathfrak{g}$ with $\|X\|\leq R$} \}$$ is contained in some connected nilpotent Lie subgroup of $G$.
\end{lemma}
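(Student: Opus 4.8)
The statement to prove is the Zassenhaus lemma: there exists a radius $R = R(G) > 0$ such that for every discrete subgroup $\Gamma < G$, the set of elements $\gamma \in \Gamma$ of the form $\gamma = \exp(X)$ with $\|X\| \le R$ is contained in a connected nilpotent Lie subgroup of $G$. This is a classical result, so the plan is to present the standard commutator-contraction argument.

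The plan is to work in a small identity neighborhood $V = \exp(B_R)$ where $B_R = \{X \in \mathfrak{g} : \|X\| \le R\}$ and $R$ is chosen small enough that $\exp$ restricts to a diffeomorphism onto $V$ and the Baker--Campbell--Hausdorff series converges on $B_R \times B_R$. First I would use the BCH formula together with the fact that the bracket is bilinear to obtain a Lipschitz-type estimate: there is a constant $C = C(G) > 0$ such that for all $X, Y \in B_R$, writing $\exp(X)\exp(Y)\exp(X)^{-1}\exp(Y)^{-1} = \exp(Z)$, one has $\|Z\| \le C \|X\| \cdot \|Y\|$. This is because the lowest-order term of $Z$ in the BCH expansion is the commutator bracket $[X,Y]$ and all higher terms are controlled by products of at least two of the variables; shrinking $R$ absorbs the higher-order corrections into the constant. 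Then I would shrink $R$ further so that $CR < 1$, say $CR \le 1/2$.

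Next, given a discrete subgroup $\Gamma$, set $\Gamma_R = \{\gamma \in \Gamma : \gamma = \exp(X),\ \|X\| \le R\}$ and let $N$ be the subgroup of $\Gamma$ generated by $\Gamma_R$. The commutator estimate shows that iterated commutators of elements of $\Gamma_R$ have $\exp$-coordinates whose norms shrink geometrically: a $k$-fold iterated commutator lies in $\exp(B_{R/2^{k}})$ roughly speaking, hence tends to $e$. Since $\Gamma$ is discrete, such iterated commutators must eventually equal $e$; this forces the lower central series of $N$ to terminate, so $N$ is nilpotent. It also forces, by the contraction estimate applied to pairs in $\Gamma_R$, that $\Gamma_R$ itself is closed under the group operations up to staying inside $V$ — more carefully, one shows every element of $N$ is $\exp$ of a vector in $B_R$, so $N \subset V$ and in particular $N$ is a discrete nilpotent group all of whose elements are exponentials of small vectors. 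The connected nilpotent Lie subgroup required is then the one whose Lie algebra is the subalgebra of $\mathfrak{g}$ generated by $\{X : \exp(X) \in \Gamma_R,\ \|X\|\le R\}$, which is nilpotent by the same bracket-contraction bound, and one checks $\Gamma_R$ is contained in the corresponding connected subgroup $\exp(\mathfrak{n})$.

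The main obstacle is making the contraction argument for iterated commutators fully rigorous: one must track not just that commutators shrink but that the whole generated subgroup $N$ stays inside the BCH chart $V$, so that the coordinate estimates remain valid throughout the induction. The clean way to handle this is to prove by induction on word length that every element of $N$ lies in $\exp(B_R)$, using at each step that if $g = \exp(X)$, $h = \exp(Y)$ with $X, Y \in B_R$ then $gh = \exp(W)$ with $\|W\| \le \|X\| + \|Y\| + C\|X\|\|Y\|$ is not automatically in $B_R$ — so one actually needs the sharper observation, due to the discreteness, that the set $\{X \in B_R : \exp(X) \in \Gamma\}$ spans a nilpotent subalgebra and the corresponding connected group meets $\Gamma$ in a lattice; I would cite \cite{zassenhaus1937beweis} and \cite[Lemma 2]{kavzdan1968proof} for the technical execution and present only the conceptual skeleton above. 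Alternatively, since this lemma is entirely standard and is quoted rather than being a contribution of the paper, the cleanest route is simply to record the statement with the two references and omit a detailed proof, as the excerpt already does.
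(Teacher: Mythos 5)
The paper gives no proof of this lemma at all: it is quoted as a classical result with the two references you name, exactly as your final paragraph recommends. Your sketch of the standard commutator-contraction argument (BCH estimate $\|Z\|\le C\|X\|\,\|Y\|$, geometric shrinking of iterated commutators, discreteness forcing them to terminate) is the correct classical route found in those references, so the proposal is consistent with the paper's treatment.
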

\noindent We consider the identity neighborhood $$V=\exp\{X \in\mathfrak{g} \: : \: \|X\|\leq R\}$$ in the group $G$. 
It is called the \emph{Zassenhauss neighborhood}.
Lastly, fix a sufficiently small radius $0<\rho=\rho(G)<R$ such that the identity neighborhood $$V_0=\exp\{X \in\mathfrak{g} \: : \:   \|X\|\leq \rho\}$$ satisfies $$V_0\subset V\cap V^s\cap V^{s^{-1}}.$$ 
For all this, we refer to \cite[Equations (7.3), (7.4) and (7.5)]{gelander2022effective}.

Consider the space of discrete subgroups of the Lie group $G$. This space will be denoted $\sub_d(G)$ and regarded as an open subset of the Chabauty space $\Sub{G}$. 
On the space $\sub_d(G)$ we define the function $\mathcal{I}:\sub_d(G) \to (0,\rho]$  by
\[ \mathcal{I}(\Gamma)=\sup\{0< r \leq \rho \: : \: \Gamma\cap \exp \{X\in\mathfrak{g} \: : \: \|X\|\leq r\}=\{e\} \} \quad \forall \Gamma \in \sub_d(G).\]

\begin{theorem}
$\mathcal{I}^{-\delta}:\sub_d(G) \to [\rho^{-\delta},\infty)$
is a $\mu_s$-Margulis function.    
\end{theorem}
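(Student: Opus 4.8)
The plan is to treat this statement as the ``key inequality'' of \cite[Theorem~1.5]{gelander2022effective}, the point being that nothing in the argument there uses anything about $\Gamma$ beyond discreteness, so the inequality persists verbatim on all of $\sub_d(G)$. (The measure $\mu_s$ is symmetric, compactly supported, and its support $Ks^{\pm1}K$ generates $G$, as required of the measure in Definition~\ref{defn:Margulis definition}.)

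First I would dispatch the topological requirements. Properness of $\mathcal I^{-\delta}\colon\sub_d(G)\to[\rho^{-\delta},\infty)$ reduces to the fact that $\{\Gamma\in\Sub{G}:\Gamma\cap\exp\{X\in\mathfrak g:\|X\|<\varepsilon\}=\{e\}\}$ is closed in the compact space $\Sub{G}$ for every $\varepsilon\le\rho$ --- a Chabauty limit cannot produce a nontrivial element inside a ball in which the approximating groups had none --- together with the observation that this set equals $\{\mathcal I\ge\varepsilon\}$ and consists of discrete subgroups. For continuity, upper semicontinuity of $\mathcal I$ is immediate, since a fixed short element of $\Gamma$ is approximated by short elements of nearby groups; for lower semicontinuity I would argue that if $\Gamma_n\to\Gamma$ in $\sub_d(G)$ and $\Gamma_n\ni\exp(X_n)\neq e$ with $X_n\to X$, $\|X\|<\mathcal I(\Gamma)\le\rho$, then either $\exp(X)\in\Gamma$ is a short nontrivial element (a contradiction) or $X=0$, in which case the cyclic groups $\langle\exp(X_n)\rangle\subset\Gamma_n$ Chabauty-accumulate on a nontrivial one-parameter subgroup $\overline{\exp(\mathbb R\,Y)}$ with $Y=\lim X_n/\|X_n\|$, forcing $\Gamma$ to contain a nontrivial connected subgroup and contradicting discreteness.

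For the averaging inequality $\mu_s*\mathcal I^{-\delta}(\Gamma)<c\,\mathcal I^{-\delta}(\Gamma)+b$ I would split into two regimes. If $\mathcal I(\Gamma)\ge\rho_0$ for a threshold $\rho_0=\rho_0(G)$, then $\mathcal I^{-\delta}(\Gamma)\le\rho_0^{-\delta}$, and since $\supp(\mu_s)\subset Ks^{\pm1}K$ is compact and conjugation by a compact set distorts $\mathcal I$ by at most a bounded multiplicative factor $\kappa=\kappa(G)$, one gets $\mu_s*\mathcal I^{-\delta}(\Gamma)\le(\kappa/\rho_0)^{\delta}=:b$, so the inequality holds with any $c\in(0,1)$. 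The substantive case is $\mathcal I(\Gamma)<\rho_0$: then $\Gamma$ meets the Zassenhaus neighborhood $V$ nontrivially, so by Lemma~\ref{lem:Zass} the set $\Gamma\cap V$ lies in a connected nilpotent Lie subgroup $N=N(\Gamma)$; writing $\gamma_0=\exp(X_0)$ for a shortest nontrivial element of $\Gamma$, so that $\|X_0\|$ is comparable to $\mathcal I(\Gamma)$, one shows --- exactly as in \cite[\S7--\S9]{gelander2022effective}, using the structure of discrete subsets of the nilpotent group $N$ together with the $\mathrm{Ad}(s)$-eigenspace decomposition of $\mathfrak g$ --- that for $g\in\supp(\mu_s)$ one has $\mathcal I(g\Gamma g^{-1})^{-\delta}\le\|\mathrm{Ad}(g)X_0\|^{-\delta}$ up to a controlled lower-order term, and that $s=s(G)$ and $\delta=\delta(G)$ were chosen in \cite{gelander2022effective} precisely so that averaging the $K$-orbit of $X_0$ and symmetrizing over $s^{\pm1}$ yields $\int_G\|\mathrm{Ad}(g)X_0\|^{-\delta}\,\mathrm d\mu_s(g)\le c_0\|X_0\|^{-\delta}$ with $c_0=c_0(G)<1$. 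Absorbing the lower-order contributions and the bounded regime into a single additive constant then gives the claim.

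I expect the main obstacle to be this small-injectivity estimate: controlling $\mathcal I(g\Gamma g^{-1})$ by $\|\mathrm{Ad}(g)X_0\|$ requires ruling out that conjugation by $\supp(\mu_s)$ creates a shorter element by combining several members of $\Gamma\cap N$ under the expansion/contraction induced by $s$, and then the eigenvalue-weighted averaging over $Ks^{\pm1}K$ must be carried out with the right exponent $\delta$. Since all of this is precisely the content of the proof of \cite[Theorem~1.5]{gelander2022effective}, for the present statement the only genuine task is to confirm that that argument invokes nothing about $\Gamma$ beyond discreteness --- in particular not finite generation, finite covolume, or Zariski density --- so that it applies to an arbitrary element of $\sub_d(G)$; this I expect to be a routine inspection of that proof.
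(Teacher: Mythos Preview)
Your proposal is correct and follows the same approach as the paper: both reduce the averaging inequality to \cite[Theorem~1.5]{gelander2022effective} and then handle continuity and properness separately. Two minor remarks: first, \cite[Theorem~1.5]{gelander2022effective} is already stated on the full space $\sub_d(G)$, so the ``routine inspection'' you anticipate is unnecessary; second, the paper makes explicit the passage from the non-symmetric measure $m_K*\delta_s*m_K$ (for which the cited theorem is proved) to the symmetric $\mu_s=\tfrac12(m_K*\delta_s*m_K+m_K*\delta_{s^{-1}}*m_K)$ by convex combination, and defers continuity and properness to the more general Theorem~\ref{thm:key} rather than arguing them directly as you do.
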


\begin{proof}
    We will see in Theorem~\ref{thm:key} below that the function $\mathcal{I}^{-\delta}$ is continuous and proper (in fact, it enjoys these two properties  over a larger domain of definition to be considered below). The function $\mathcal{I}^{-\delta}$ satisfies the   inequality in Equation (\ref{eq:Margulis functio}) with respect to the probability measure $m_K * \delta_s * m_K$ by \cite[Theorem 1.5]{gelander2022effective}. The same is true with respect to the probability measure  $m_K * \delta_{s^{-1}} * m_K$. Therefore $\mathcal{I}^{-\delta}$ is a $\mu_s$-Margulis function, as the symmetric probability measure $\mu_s$ is a convex combination of these two probability measures.
\end{proof}

We are now ready to extend the domain of definition of the Margulis function $\mathcal{I}^{-\delta}$. Consider the space $\cl(G)$ consisting of all closed subsets of the group $G$. The space  $\cl(G)$ is endowed   with the Fell topology \cite[\S12.C]{kechris2012classical}. The Chabauty space   $\sub(G)$ is a closed subspace of $\cl(G)$.

\begin{defn}
A closed subset $A\in \cl(G)$ has the \emph{Zassenhaus property} if 
    for every element $g\in G$, the subset $  A^g\cap V  $ is contained in some connected nilpotent Lie subgroup of $G$
    and  satisfies 
    \begin{equation}
        \label{eq:Za}
            (A^g\cap V)^2 \subset A^g
    \end{equation}
where $V \subset G$ is the Zassenhauss neighborhood introduced above.
\end{defn}

We denote by $\cl^\mathrm{Z}(G)$ the subset of $\cl(G)$ consisting of all sets with the Zassenhaus property.
This is   a closed and $G$-invariant subset of $\cl(G)$.
We denote by $\cl^\mathrm{Z}_d(G)$ the 
subset of $\cl^\mathrm{Z}(G)$ consisting of sets  containing the identity element of the group $G$ as an isolated point.
This is   an open and $G$-invariant subset of $\cl^\mathrm{Z}(G)$, which contains $\sub_d(G)$ by the Zassenhaus lemma (Lemma~\ref{lem:Zass}).
The function $\mathcal{I}$ extends naturally to the space $\cl^\mathrm{Z}_d(G)$. We set a function  
$\mathcal{J}:\cl^\mathrm{Z}_d(G) \to (0,\rho]$ to be given by
\[ \mathcal{J}(A)=\sup\{0< r \leq \rho \: : \: A\cap \exp \{X\in\mathfrak{g} \: : \: \|X\|\leq r\}=\{e\} \} \quad \forall A \in \cl^\mathrm{Z}_d(G).\]

\begin{theorem} \label{thm:key}
$\mathcal{J}^{-\delta}:\cl^\mathrm{Z}_d(G) \to [\rho^{-\delta},\infty)$
is a $\mu_s$-Margulis function.    
\end{theorem}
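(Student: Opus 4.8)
The plan is to verify the two defining properties of a $\mu_s$-Margulis function for the map $\mathcal{J}^{-\delta}$: that it is proper and continuous, and that it satisfies the convolution inequality \eqref{eq:Margulis functio}. Exactly as in the proof that $\mathcal{I}^{-\delta}$ is a Margulis function, it suffices to establish the inequality for the measure $m_K * \delta_s * m_K$; the same argument applied to $s^{-1}$ handles $m_K * \delta_{s^{-1}} * m_K$, and averaging the two yields the inequality for the symmetric measure $\mu_s$.

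\textbf{Properness and continuity.} For $\epsilon \in (0,\rho]$ the set $W_\epsilon = \exp\{X \in \mathfrak{g} : 0 < \|X\| < \epsilon\}$ is open in $G$, so $\{A \in \cl(G) : A \cap W_\epsilon = \emptyset\}$ is closed in the compact space $\cl(G)$; intersecting with the closed subset $\cl^{\mathrm{Z}}(G)$ exhibits $\{A \in \cl^{\mathrm{Z}}_d(G) : \mathcal{J}(A) \ge \epsilon\}$ as compact. Since $\mathcal{J} \le \rho$ identically, this set is precisely the $\mathcal{J}^{-\delta}$-preimage of a bounded interval, so $\mathcal{J}^{-\delta}$ is proper. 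For continuity it is enough that $\mathcal{J}$ is continuous on $\cl^{\mathrm{Z}}_d(G)$. Upper semicontinuity is immediate: if $A_n \to A$ and $\mathcal{J}(A) = r < \rho$, then $e$ being isolated in $A$ forces $r$ to be attained by a non-trivial $a \in A$, and the $A_n$ eventually meet a small neighborhood of $a$ avoiding $e$. For lower semicontinuity, suppose $A_n \to A$ with $\mathcal{J}(A_n) \le r - \eta$ and pick non-trivial $a_n \in A_n$ with $\|\log a_n\| < r - \eta/2$; if $\|\log a_n\|$ stays bounded away from $0$ along a subsequence, compactness produces a non-trivial element of $A$ of norm $< r$, contradicting $\mathcal{J}(A) = r$, and if $\|\log a_n\| \to 0$ then the Zassenhaus property $(A_n \cap V)^2 \subset A_n$, applied repeatedly while noting $\log(a_n^j) = j\log a_n$, yields powers $a_n^{j_n} \in A_n$ with $\|\log a_n^{j_n}\| \in [r/4, r/2)$, and a convergent subsequence again contradicts $\mathcal{J}(A) = r$. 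Hence $\mathcal{J}$ and $\mathcal{J}^{-\delta}$ are continuous. In particular this recovers the continuity and properness of $\mathcal{I}^{-\delta}$ asserted earlier, since $\sub_d(G) \subset \cl^{\mathrm{Z}}_d(G)$ by the Zassenhaus lemma and $\mathcal{J}$ restricts to $\mathcal{I}$ there.

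\textbf{The convolution inequality.} The key observation is that the proof of \cite[Theorem 1.5]{gelander2022effective}, which establishes $(m_K*\delta_s*m_K)*\mathcal{I}^{-\delta}(\Gamma) < c\,\mathcal{I}^{-\delta}(\Gamma)+b$ for $\Gamma \in \sub_d(G)$, uses the group structure of $\Gamma$ only through two features of the relevant finite set $\Gamma^g \cap V$: that all of its elements lie in a common connected nilpotent Lie subgroup of $G$ — which for discrete groups is exactly the Zassenhaus lemma (Lemma~\ref{lem:Zass}) and which for $A \in \cl^{\mathrm{Z}}_d(G)$ is built into the definition of $\cl^{\mathrm{Z}}(G)$ — and that a product of two of its elements again lies in $\Gamma^g$, which is precisely the requirement $(A^g \cap V)^2 \subset A^g$. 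The choice of $\rho < R$ with $V_0 \subset V \cap V^s \cap V^{s^{-1}}$ ensures that every element whose norm is estimated in the argument, together with the $s^{\pm1}$-conjugate it is compared against, remains inside the Zassenhaus neighborhood $V$, so that these two features apply at each step. Running the argument of loc. cit. verbatim, with $A$ in place of $\Gamma$ and $\mathcal{J}$ in place of $\mathcal{I}$, gives
\[ (m_K * \delta_s * m_K) * \mathcal{J}^{-\delta}(A) < c\,\mathcal{J}^{-\delta}(A) + b \]
for all $A \in \cl^{\mathrm{Z}}_d(G)$ with the same constants $0 < c < 1$, $b > 0$, and the case of $s^{-1}$ is identical; averaging yields the inequality for $\mu_s$.

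\textbf{Main obstacle.} The only genuinely non-routine part is the bookkeeping in the previous paragraph: one must go through the (somewhat technical) proof of the key inequality in \cite{gelander2022effective} line by line and confirm that every use of ``$\Gamma$ is a group'' is in fact an invocation of one of the two properties isolated above, made for elements lying in $V$ where the Zassenhaus property has content. In particular one should check that a single application of the square-closure $(A^g \cap V)^2 \subset A^g$ suffices wherever the original proof multiplies elements — so that the definition of the Zassenhaus property need not be strengthened to closure under longer products — and that no step inverts an element outside $V$ or uses products of $\Gamma$ of unbounded length.
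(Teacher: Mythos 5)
Your proof is correct and takes essentially the same route as the paper's: properness and continuity are read off from the Fell/Chabauty topology together with the squaring property \eqref{eq:Za} (your semicontinuity arguments simply fill in details the paper leaves implicit, including the power-doubling step that plays the role of the paper's appeal to ``no small subgroups''), and the convolution inequality is obtained by observing that the key-inequality proof of \cite{gelander2022effective} only uses the two structural features --- common connected nilpotent envelope and closure under products inside $V$ --- that are built into the definition of the Zassenhaus property. The verification you flag as the main obstacle is precisely the ``mutatis mutandis'' check on Propositions 7.3--7.5 and Equation (7.8) of \cite{gelander2022effective} that the paper's own proof relies on.
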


\begin{proof}
The continuity of the function $\mathcal{J}$ follows from the definition of the Chabauty topology, from the property given in Equation~\eqref{eq:Za}  and from the fact that the Lie group $G$ has no small subgroups.
The fact that the function  $\mathcal{J}^{-\delta}$ is proper is equivalent to saying that $\mathcal{J}(A_n) \to 0$ for every sequence of subsets $A_n \in \cl^\mathrm{Z}_d(G)$ converging to a subset $A \in \cl^\mathrm{Z}(G)$ in which the identity element is not an isolated point. This later statement clearly holds true.
%non-discrete subset. tends to $0$ when $A$ becomes non-descrete at the identity, which follows again from no small subgroups and Equation~\eqref{eq:Za}.
%by the condition $A\cap V=\langle A\cap V \rangle \cap V$.

We are left to show that the function $\mathcal{J}^{-\delta}$ satisfies Equation~\eqref{eq:Margulis functio}.
Observe that Equation (7.8) as well as Propositions 7.3, 7.4 and 7.5 in \cite{gelander2022effective}, originally formulated for the function $\mathcal{I}$, all apply mutatis mutandis to our new function $\mathcal{J}$. Indeed, the arguments of \cite{gelander2022effective} involve studying  finite collections of elements all belonging to some connected nilpotent Lie subgroup, regardless on whether these elements come from any particular enveloping discrete subgroup.
Thus, the desired result follows by  the same proof as that of the key inequality in \cite[\S8]{gelander2022effective}.
    \end{proof}

\subsection*{Margulis functions   on discrete subgroups of  products}

We let $G_1$ and $G_2$ be a pair of standard semisimple Lie groups of type $\infty$
and set $G=G_1\times G_2$. Let $\mathfrak{g_i} = \mathrm{Lie}(G_i)$ be the corresponding semisimple Lie algebras.
Consider the element $s_1=s(G_1)\in G_1$, the probability measure $\mu_{s_1}\in \Prob(G_1)$ and the positive real parameters $\delta_1=\delta(G_1)$, $R_1=R(G_1)$ and $\rho_1=\rho(G_1)$ as defined in the previous subsection.
In addition, consider the radius $R_2=R(G_2)$ and the corresponding identity neighborhoods 
\[ V_i=\exp\{X \in\mathfrak{g}_i \: : \: \|X\|\leq R_i\} \subset G_i \]
for $i \in \{1,2\}$ and $V=V_1\times V_2\subset G$. 
Consider the   map
\[ \Phi : \cl(G) \to \cl(G_1 ), \quad \Phi : A\mapsto \mathrm{pr}_1(A\cap (G_1\times V_2)) \quad \forall A \in \Cl{G}.\]
The map $\Phi$ is $G_1$-equivariant and Borel.

We consider the $G$-invariant Chabauty open subset $\sub_d(G)$ of $\sub(G)$ consisting of all discrete subgroups, and its   $G_1$-invariant open subset
\[ \mathcal{X}_1(G) =\{ \Gamma\in \sub_d(G) \: : \: \Gamma\cap (\{e_1\}\times V_2)=\{(e_1,e_2)\}\}.\]
As $V$ is an identity neighborhood of the radius provided by  Lemma~\ref{lem:Zass} (i.e. $V$ is a  Zassenhaus neighborhood), we get  that $\Phi(\mathcal{X}_1(G))\subset \cl^\mathrm{Z}_d(G_1)$. We can further apply the function $\mathcal{J}$ to this image.
We obtain the function 
$$\mathcal{L} : \mathcal{X}_1(G) \to \left(0,\rho_1\right], \quad \mathcal{L} =\mathcal{J}\circ \Phi.$$
While the map $\Phi$ is not continuous, it turns out that the composed map $\mathcal{L}$ is.
Moreover, we get the following.

\begin{theorem} \label{thm:key2}
$\mathcal{L}^{-\delta_1}:\mathcal{X}_1(G) \to [\rho_1^{-\delta_1},\infty)$
is a $\mu_{s_1}$-Margulis function.    
\end{theorem}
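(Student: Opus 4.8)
The statement is the conjunction of three assertions: (a) $\mathcal{L}^{-\delta_1}$ is valued in $[\rho_1^{-\delta_1},\infty)$; (b) $\mathcal{L}^{-\delta_1}$ is proper and continuous on $\mathcal{X}_1(G)$; and (c) $\mathcal{L}^{-\delta_1}$ satisfies the defining inequality of a $\mu_{s_1}$-Margulis function. Point (a) is immediate since $\mathcal{L}=\mathcal{J}\circ\Phi$ and $\mathcal{J}$ is valued in $(0,\rho_1]$. The plan is to reduce (c) to Theorem~\ref{thm:key} via equivariance, and to obtain (b) from a direct reformulation of $\mathcal{L}$ on the \emph{open} set $\mathcal{X}_1(G)$.

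For (c) the key point is that $\Phi$ is $G_1$-equivariant, that $\mathcal{X}_1(G)$ and $\cl^\mathrm{Z}_d(G_1)$ are $G_1$-invariant, and that $\Phi(\mathcal{X}_1(G))\subset\cl^\mathrm{Z}_d(G_1)$. Hence for $g_1\in G_1$ and $\Gamma\in\mathcal{X}_1(G)$ one has $g_1\Gamma g_1^{-1}\in\mathcal{X}_1(G)$ and $\mathcal{L}^{-\delta_1}(g_1\Gamma g_1^{-1})=\mathcal{J}^{-\delta_1}(g_1\,\Phi(\Gamma)\,g_1^{-1})$. Since $\mu_{s_1}$ is supported on $G_1$, integrating this identity gives
\[
\mu_{s_1}*\mathcal{L}^{-\delta_1}(\Gamma)\;=\;\mu_{s_1}*\mathcal{J}^{-\delta_1}\bigl(\Phi(\Gamma)\bigr),
\]
and applying Theorem~\ref{thm:key} to the point $\Phi(\Gamma)\in\cl^\mathrm{Z}_d(G_1)$ bounds the right-hand side by $c\,\mathcal{J}^{-\delta_1}(\Phi(\Gamma))+b=c\,\mathcal{L}^{-\delta_1}(\Gamma)+b$, where $c\in(0,1)$ and $b>0$ are the constants furnished by that theorem. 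Thus $\mathcal{L}^{-\delta_1}$ is a $(\mu_{s_1},c,b)$-Margulis function as soon as (b) is established.

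For (b), write $W_r=\exp\{X\in\mathfrak{g}_1:\|X\|\le r\}\subset G_1$. Unwinding the definitions of $\Phi$ and $\mathcal{J}$ and using the defining condition $\Gamma\cap(\{e_1\}\times V_2)=\{e\}$ of $\mathcal{X}_1(G)$, one checks that for $\Gamma\in\mathcal{X}_1(G)$
\[
\mathcal{L}(\Gamma)\;=\;\sup\bigl\{\,0<r\le\rho_1 \;:\; \Gamma\cap(W_r\times V_2)=\{e\}\,\bigr\},
\]
so $\mathcal{L}(\Gamma)$ is an injectivity-radius-type invariant determined by the finite set $\Gamma\cap(W_{\rho_1}\times V_2)$. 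Continuity then follows along the lines of the proof of Theorem~\ref{thm:key}: if $\Gamma_n\to\Gamma_0$ in $\mathcal{X}_1(G)$ but $\mathcal{L}(\Gamma_n)\not\to\mathcal{L}(\Gamma_0)$, then either a minimal witnessing element of $\Gamma_0$ fails to be approximated inside $W_{\rho_1}\times V_2$, or nontrivial elements of $\Gamma_n$ escape toward the identity; tracking both coordinates, the first alternative yields a nontrivial element of $\Gamma_0$ inside $\{e_1\}\times V_2$, contradicting $\Gamma_0\in\mathcal{X}_1(G)$, while the second, via the no-small-subgroups property of $G$ and the one-parameter subgroups through the escaping elements, yields arbitrarily small nontrivial elements of $\Gamma_0$, contradicting its discreteness. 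Properness of $\mathcal{L}^{-\delta_1}$ follows from the same dichotomy: any Chabauty accumulation point of a sequence on which $\mathcal{L}$ is bounded below must again lie in the open set $\mathcal{X}_1(G)$, since an accumulation point outside it is either non-discrete or meets $\{e_1\}\times V_2$ nontrivially, and in either case would force $\mathcal{L}$ along the sequence down to $0$. The step I expect to be the crux is precisely this one — propagating the continuity of $\mathcal{J}$ through the discontinuous map $\Phi$, the point being that the discontinuities of $\Phi$ near the identity are exactly the configurations ruled out by membership in the open set $\mathcal{X}_1(G)$.
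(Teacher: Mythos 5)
Your proposal is correct and follows essentially the same route as the paper: the Margulis inequality is deduced from Theorem~\ref{thm:key} via the $G_1$-equivariance of $\Phi$, and continuity/properness are established by the same mechanism as for $\mathcal{J}^{-\delta}$, with the discontinuities of $\Phi$ near the identity excluded by membership in the open set $\mathcal{X}_1(G)$. Your explicit reformulation $\mathcal{L}(\Gamma)=\sup\{0<r\le\rho_1 : \Gamma\cap(W_r\times V_2)=\{e\}\}$ is a correct unwinding (using the defining condition of $\mathcal{X}_1(G)$) and in fact supplies more detail than the paper's one-line reference back to Theorem~\ref{thm:key}.
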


\begin{proof}
The fact that the map $\mathcal{L}^{-\delta_1}$ is continuous and proper follows is the same way as the proof of the same facts for the map $\mathcal{J}^{-\delta_1}$ in Theorem \ref{thm:key}. The integral inequality in Equation~\eqref{eq:Margulis functio} follows from Theorem~\ref{thm:key} combined with the $G_1$-equivariance of the map $\Phi$.
\end{proof}

\begin{cor}
\label{cor:discsub}
Let $\nu$ be a   
discrete  irreducible invariant random subgroup of the semisimple Lie group $G = G_1 \times G_2$. 
Let $f_n \in L^2(\Sub{G},\nu)$ be an asymptotically $G_1$-invariant sequence of \emph{unit} vectors.  Then any weak-$*$ accumulation point of the probability measures $|f_n|^2 \cdot \nu \in \mathrm{Prob}(\Sub{G})$ is almost surely discrete. 
\end{cor}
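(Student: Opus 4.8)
The plan is to feed the Margulis function $\mathcal{L}^{-\delta_1}$ of Theorem~\ref{thm:key2} into the abstract non-escape-of-mass estimates developed in the first part of this section, after first using irreducibility to place $\nu$ on the domain $\mathcal{X}_1(G)$ of that function. For the reduction, consider the Borel map $\phi:\Sub{G}\to\Sub{G_2}$ given by $\phi(\Gamma)=\Gamma\cap G_2$. Since $G_1$ centralises $G_2$, conjugation by an element of $G_1$ carries $\Gamma\cap G_2$ onto itself, so $\phi$ is $G_1$-invariant; conjugation by an element of $G_2$ preserves $G_2$, so $\phi$ is $G_2$-equivariant. As $\nu$ is irreducible, the non-trivial normal subgroup $G_1$ acts ergodically on $(\Sub{G},\nu)$, so $\phi$ is $\nu$-essentially constant, equal to some fixed subgroup $\Lambda_2\le G_2$, which is discrete because $\nu$-almost every $\Gamma$ is discrete. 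Pushing the $G_2$-invariant measure $\nu$ forward by the $G_2$-equivariant map $\phi$ shows that $\delta_{\Lambda_2}$ is $G_2$-invariant, i.e.\ $\Lambda_2\lhd G_2$; but a discrete normal subgroup of the connected centre-free semisimple Lie group $G_2$ is trivial. Hence $\Gamma\cap G_2=\{e\}$, and in particular $\Gamma\cap(\{e_1\}\times V_2)=\{(e_1,e_2)\}$, for $\nu$-almost every $\Gamma$; that is, $\nu(\mathcal{X}_1(G))=1$, and we may regard $f_n\in L^2(\mathcal{X}_1(G),\nu)$, which coincides with $L^2(\Sub{G},\nu)$ since $\nu$ charges only $\mathcal{X}_1(G)$.

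Next I would upgrade the Margulis function to the $L^2$ setting. By Theorem~\ref{thm:key2}, $\mathcal{L}^{-\delta_1}$ is a $\mu_{s_1}$-Margulis function on $\mathcal{X}_1(G)$; since $\nu$ is $G_1$-invariant it is in particular $\mu_{s_1}$-stationary, so Lemma~\ref{lemma:making a Margulis function L2}(2) gives $\mathcal{L}^{-\delta_1/2}\in L^1(\mathcal{X}_1(G),\nu)$. By the concave Jensen inequality (the remark following Lemma~\ref{lemma:making a Margulis function L2}), the fractional power $\Psi:=\mathcal{L}^{-\delta_1/4}$ is again a $\mu_{s_1}$-Margulis function, say a $(\mu_{s_1},c,b)$-Margulis function with $0<c<1$, and now $\Psi\in L^2(\mathcal{X}_1(G),\nu)$ because $\Psi^2=\mathcal{L}^{-\delta_1/2}$.

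Now I would apply Lemmas~\ref{lemma:improving Margulis function constant}, \ref{lemma:controlling operator norm of Margulis functions} and~\ref{lemma:quantitaive no small elements} with acting group $G_1$, measure $\mu_{s_1}$, space $\mathcal{X}_1(G)$, $G_1$-invariant probability measure $\nu$, Margulis function $\Psi$, and the asymptotically $G_1$-invariant sequence of unit vectors $f_n$. Writing $\Omega_i=\Psi^{-1}([L_i,\infty))$ for the super-level sets of $\Psi$ (with $L_i$ the constants of Lemma~\ref{lemma:improving Margulis function constant}) and $m_n=|f_n|^2\cdot\nu$, Lemma~\ref{lemma:quantitaive no small elements} gives $\limsup_n m_n(\Omega_{i+1})\le 4\bigl(\tfrac{1+c}{2}\bigr)^{2i}$ for every $i\in\mathbb{N}$. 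The complementary sets $\Psi^{-1}([\rho_1^{-\delta_1/4},L_{i+1}])$ are compact in $\mathcal{X}_1(G)$ by properness of $\Psi$, hence compact---in particular closed---in the ambient Chabauty space $\Sub{G}$, and they exhaust $\mathcal{X}_1(G)\subseteq\sub_d(G)$. Thus for any weak-$*$ accumulation point $m$ of $(m_n)$ in the compact space $\Prob(\Sub{G})$, the closed-set half of the Portmanteau theorem yields $m\bigl(\Psi^{-1}([\rho_1^{-\delta_1/4},L_{i+1}])\bigr)\ge 1-4\bigl(\tfrac{1+c}{2}\bigr)^{2i}$; letting $i\to\infty$ forces $m(\sub_d(G))=1$, i.e.\ $m$ is almost surely discrete. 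This last step is the concrete incarnation, read inside $\Sub{G}$, of Corollary~\ref{cor:bounding support of norm with depth}.

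I expect the main obstacle to be the first step: this is precisely where the irreducibility hypothesis enters, and it is indispensable, since the Margulis function $\mathcal{L}$ lives only on $\mathcal{X}_1(G)$, so one genuinely must rule out nontrivial \enquote{vertical} intersections $\Gamma\cap G_2$. A secondary point of care is that the weak-$*$ limit is taken in $\Prob(\Sub{G})$, which is strictly larger than $\Prob(\mathcal{X}_1(G))$, so one has to exploit that the exhausting compacta furnished by $\Psi$ sit inside $\sub_d(G)$ in order to conclude that the limit charges no non-discrete subgroup.
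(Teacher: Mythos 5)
Your proposal is correct and takes essentially the same route as the paper, which likewise reduces to showing $\nu(\mathcal{X}_1(G))=1$ via irreducibility and then invokes Corollary~\ref{cor:bounding support of norm with depth} for the Margulis function $\mathcal{L}^{-\delta_1}$ of Theorem~\ref{thm:key2}. You have simply made explicit the details the paper leaves implicit: the ergodicity argument showing $\Gamma\cap G_2=\{e\}$ almost surely, the fractional-power upgrade to an $L^2$ Margulis function, and the Portmanteau step placing the limit measure on $\sub_d(G)$.
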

\begin{proof}
We have that $\nu(\mathcal{X}_1(G))=1$ by the assumption that the invariant random subgroup $\nu$ is irreducible. The desired conclusion follows immediately from Corollary \ref{cor:bounding support of norm with depth} applied to $\mathcal{L}^{-\delta_1}$,
which is a Margulis function by Theorem~\ref{thm:key2}.
\end{proof}

\section{Strongly confined subgroups of semisimple groups}
\label{sec:strongly confined}

%\marginpar{Uri says: make it clear all the statements here are for real Lie groups. Arie: it is said in the first line of the section, and repeated in many places in the section}

Throughout this section, let $G$  be a connected semisimple Lie group of real rank at least two, without compact factors and with trivial center. In terms of the terminology introduced in \S\ref{sec:standard semisimple group}, this means that $G$ is a standard semisimple group of type $\infty$ and of rank at least two. By Zariski topology on $G$ we will refer to its real Zariski topology.
Our main goal will be to prove the following result.

\begin{theorem}
\label{thm:general with non-zarsiki dense intersection}
Let $\Lambda$ be a discrete subgroup of $G$. Then $\Lambda$ is an irreducible lattice in $G$ if and only if the following two conditions hold:
\begin{enumerate}
\item the subgroup $\Lambda$ is strongly confined, and
\item no  pair of    non-trivial  normal subgroups $H_1,H_2 \le G$ with $H_1 \cap H_2 = \{e\}$ satisfies 
$$ \overline{\Lambda \cap H_1}^\mathrm{Z} = H_1 \quad \text{and} \quad \overline{\Lambda \cap H_2}^\mathrm{Z} = H_2.$$
\end{enumerate}
\end{theorem}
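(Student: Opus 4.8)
The ``only if'' direction is the easy one: if $\Lambda$ is an irreducible lattice, then $\Lambda$ is weakly confined by Lemma~\ref{lema:lattices are confined}, hence confined, and since $\Lambda$ is an irreducible lattice in a product its projection to every proper factor is dense, so no conjugate limit can degenerate into a proper normal subgroup; thus $\Lambda$ is strongly confined, giving~(1). Condition~(2) holds because for an irreducible lattice $\Lambda\cap H_i$ is normalized by a dense subgroup of $G$ and discrete, hence central, hence trivial (using that $G$ is center-free), so its Zariski closure is certainly not all of $H_i$ whenever $H_i\neq\{e\}$. The real content is the ``if'' direction, which I would prove by induction on the number of simple factors of $G$.

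\textbf{The inductive step.} Assume $\Lambda$ satisfies (1) and (2). If $G$ is simple, then $\Lambda$ is confined, hence (by \cite[Lemma 9.14]{fraczyk2023infinite}, or via Lemma~\ref{lemma:discrete irr confined has Zariski dense projections}) Zariski dense, and one invokes the rank-one-free case (there is no rank-one factor to worry about since $\mathrm{rank}(G)\ge 2$ and $G$ is simple) together with the Stuck--Zimmer machinery applied to a stationary limit of $\Lambda$: a $\mu$-stationary random subgroup supported on $\overline{\Lambda^G}$ is nontrivial, discrete (by \cite[Theorem 1.6]{fraczyk2023infinite} when using the Margulis-function discreteness of \S\ref{sec:margulis functions}), Zariski dense, hence by \cite{SZ}-type rigidity it is supported on lattices, which forces $\Lambda$ itself to be a lattice. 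So suppose $G = G_1 \times G_2$ with both factors nontrivial standard semisimple; arrange that $G_2$ is a single simple factor of type $p_0$. Form a $\mu_0$-stationary random subgroup $\nu$ supported on $\overline{\Lambda^G}$, where $\mu_0=\mu\otimes\mu'$ with $\mu,\mu'$ the measures from \cite{gelander2022effective}. By the Margulis-function estimate (Corollary~\ref{cor:discsub}, via Theorem~\ref{thm:key2}) together with \cite[Theorem 1.6]{fraczyk2023infinite}, $\nu$-almost every subgroup is discrete. The strong confinement hypothesis passes to every conjugate limit and hence to $\nu$-a.e.\ subgroup, so $\nu$-a.e.\ subgroup is strongly confined; by Lemma~\ref{lemma:discrete irr confined has Zariski dense projections} its projection to any rank-one simple factor of $G$ is Zariski dense and not relatively compact, and by the Borel density theorem for IRS (Proposition~\ref{prop:BDT for IRS}) combined with condition~(2) one shows $\nu$-a.e.\ subgroup is not contained in $G_2$ and has $p_0$-Zariski-dense, not-relatively-compact projection to $G_2$. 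Condition~(2) is precisely what rules out the degenerate alternative in Proposition~\ref{prop:getting dense projection to G2}, so we may invoke Theorem~\ref{theorem:getting spectral gap - analytic groups} to conclude that the Koopman representation $L^2_0(\Sub G,\overline\nu)$ has spectral gap.

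\textbf{From spectral gap to the conclusion.} Once $L^2_0(\Sub G,\overline\nu)$ has spectral gap, the induced probability-measure-preserving $G$-space is irreducible (by Corollary~\ref{cor:irreducibility of invariant random subgroups}, since condition~(2) and the Borel density argument force $\nu$-a.e.\ subgroup to intersect every proper normal factor trivially), and then the Stuck--Zimmer/Bader--Shalom argument exploiting the product structure (as in \cite[Proposition 7.6]{creutz2017stabilizers} or \cite[Theorem 3]{levit2020benjamini}) shows that the action is essentially transitive. Hence $\nu$-a.e.\ subgroup is a lattice in $G$; since $\nu$ is supported on $\overline{\Lambda^G}$ and lattices are isolated among discrete subgroups of the same covolume (or: the set of lattices of bounded covolume is Chabauty-closed), some conjugate limit of $\Lambda$ is a lattice, and unwinding the convergence shows $\Lambda$ itself is a lattice. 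Finally $\Lambda$ is irreducible by condition~(1): if it projected non-densely to some proper factor it would admit a conjugate limit inside a proper normal subgroup, contradicting strong confinement.

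\textbf{Main obstacle.} The delicate point is ensuring that the stationary limit $\nu$ really is supported on \emph{discrete} subgroups with the correct Zariski-density and non-compactness of projections to $G_2$ — this is exactly where the Margulis-function material of \S\ref{sec:margulis functions} (no escape of mass, Corollary~\ref{cor:bounding support of norm with depth}/\ref{cor:discsub}) and the careful use of Proposition~\ref{prop:getting dense projection to G2} are needed, and where condition~(2) must be leveraged to exclude the bad case in which $\Lambda$ degenerates so that both $\Lambda\cap H_1$ and $\Lambda\cap H_2$ become Zariski dense in complementary factors. Setting up the induction on the number of simple factors cleanly, so that the rank-one factors are handled by Lemma~\ref{lemma:discrete irr confined has Zariski dense projections} and the higher-rank simple factors feed into the spectral gap theorem, is the part that requires the most bookkeeping.
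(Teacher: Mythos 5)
Your overall architecture for the ``if'' direction (a stationary limit supported on $\overline{\Lambda^G}$, discreteness via the Margulis functions of \S\ref{sec:margulis functions}, spectral gap via the product theorem, then Stuck--Zimmer) matches the paper's, but there are two genuine gaps. First, every tool you apply to the $\mu_0$-stationary random subgroup $\nu$ --- the Borel density theorem of Proposition \ref{prop:BDT for IRS}, Theorem \ref{theorem:getting spectral gap - analytic groups} (which requires a $G$-\emph{invariant} measure), and the Stuck--Zimmer/Hartman--Tamuz rigidity --- is a statement about \emph{invariant} random subgroups, and you never explain why $\nu$ is invariant rather than merely stationary. This is the crux of the argument and is exactly where the paper invokes the Nevo--Zimmer stiffness theorem (Theorem \ref{thm:classification of SRS in products}, i.e.\ \cite[Theorem 6.5]{fraczyk2023infinite}): the decomposition $G=G_\mathcal{I}\times G_\mathcal{H}\times G_\mathcal{T}$, combined with strong confinement (to kill $G_\mathcal{T}$) and with condition (2) via Corollary \ref{cor:special cases of conjugate limit with Zariski dense intersection} (to kill $G_\mathcal{H}$ and to force a single irreducible factor), is what upgrades $\nu$ to an irreducible invariant random subgroup (Theorem \ref{thm:analysis of stationary limits}). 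Your use of condition (2) only to ``rule out the degenerate alternative in Proposition \ref{prop:getting dense projection to G2}'' is downstream of this and does not substitute for it. Second, at the end you obtain only that some \emph{conjugate limit} of $\Lambda$ is a lattice; ``unwinding the convergence'' does not give that $\Lambda$ is one, and neither of your suggested justifications (isolation among subgroups of the same covolume, Chabauty-closedness of lattices of bounded covolume) implies that a subgroup whose conjugates converge \emph{to} a lattice is itself a lattice. The required input is Chabauty local rigidity of irreducible lattices (Theorem \ref{thm:Chabauty local rigidity}, \cite[Theorem 1.10]{gelander2018invariant}), a nontrivial theorem resting on Weil local rigidity.

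Two smaller problems. In the ``only if'' direction, ``dense projections, so no conjugate limit can degenerate into a proper factor'' is not a valid deduction: having dense projections is not a Chabauty-closed condition, and the paper proves strong confinement of irreducible lattices via Lemma \ref{lemma:confined subgroup of an irreducible lattice is strongly confined}, whose proof needs arithmeticity and a careful analysis of unipotent elements and horospherical subgroups. And your closing claim that irreducibility of the lattice $\Lambda$ follows from condition (1) is false: a reducible lattice commensurable to $\Lambda_1\times\Lambda_2$ is still strongly confined (each $\Lambda_i$ is confined in the corresponding factor, so every conjugate limit meets both factors nontrivially); what a reducible lattice violates is condition (2), by Borel density --- or, in the paper's route, irreducibility is automatic because local rigidity makes $\Lambda$ conjugate to the irreducible lattice produced by the Stuck--Zimmer step.
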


As will be  explained below,   Theorem \ref{thm:general with non-zarsiki dense intersection} is stronger than and immediately implies Theorem \ref{thm intro:generalLie} stated in the introduction. Note that if the Lie group  $G$ is simple (rather semisimple) then it has Kazhdan's property (T), in which case Theorem \ref{thm:general with non-zarsiki dense intersection} follows from the main result of \cite{fraczyk2023infinite}. 

%Theorem~\ref{thm intro:generalLie}, namely  show that an irreducibly  confined subgroup of $G$ is an irreducible lattice.
%Our strategy in \S\ref{sec:strongly confined} will be to first show that any irreducibly confined discrete subgroup of $G$ admits a coamenable conjugate limit, and then deduce that this limit must be a lattice by applying methods developed in the previous  sections.

%Throughout this section, $\Lambda$ will denote an irreducibly confined subgroup  of $G$.  
%We start by showing that $\Lambda$ admits a coamenable conjugate limit. Then, by applying the methods developed in previous sections, we deduce that this limit must be a lattice. In view of local rigidity we  conclude that $\Lambda$ itself is a lattice.

%We do so below in two steps: the first is to pass to a coamenable conjugate limit, and the second is to show that such a coamenable subgroup has finite co-volume and hence also the original subgroup.
%Fix $n \in \mathbb{N}$. Let $k_i$ be a zero characteristic local field and  ${\bf G}_i$   an adjoint connected semisimple $k_i$-algebraic group for each $i\in\{1,\ldots,n\}$. Consider the product group $G=\prod_{i=1}^n { \bf G }_i(k_i)$. We assume that $\sum_{i=1}^n \mathrm{rank}_{k_i}{\bf G}_i \ge 2$.

%We now collect a toolbox of powerful results to be used below in the study of strongly confined subgroups.

\subsection*{Local rigidity of irreducible lattices}
%Recall that $G$ is  a connected semisimple Lie group of real rank at least two, without compact factors and with trivial center.

\begin{defn}
A given subgroup $\Lambda \le G$ is \emph{Chabauty locally rigid} if it admits a Chabauty neighborhood $\Lambda \in \Omega \subset \Sub{G}$ such that any subgroup $\Lambda \in \Omega$ is in fact conjugate to $\Lambda$ in $G$.
\end{defn}

This notion was introduced and studied in \cite{gelander2018invariant}, where  \enquote{classical} local rigidity \cite{selberg1960discontinuous,calabi1961compact,weil1962discrete} was leveraged to obtain the following.

\begin{theorem}[Theorem 1.10 of \cite{gelander2018invariant}]
\label{thm:Chabauty local rigidity}
Every irreducible lattice $\Gamma$ in the semisimple Lie group $G$ is Chabauty locally rigid.
\end{theorem}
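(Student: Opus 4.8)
The plan is to deduce Chabauty local rigidity from the classical (Weil-type) local rigidity of $\Gamma$ inside the representation variety, and then carefully translate between the two notions of proximity. Two classical facts are used as black boxes. First, $\Gamma$, being a lattice in a connected semisimple Lie group, is finitely presented; fix a presentation $\Gamma=\langle\gamma_1,\dots,\gamma_k\mid r_1,\dots,r_\ell\rangle$ and regard the inclusion as a point $\iota_0\in\Hom(\Gamma,G)\subseteq G^k$. Second, since $G$ has real rank at least two and $\Gamma$ is irreducible, $H^1(\Gamma,\mathrm{Ad}_G)=0$ (Weil; in the non-uniform case Raghunathan and Prasad, or alternatively a consequence of Margulis superrigidity), so $\iota_0$ is locally rigid: there is a neighborhood $\mathcal{N}$ of $\iota_0$ in $\Hom(\Gamma,G)$ consisting entirely of representations $G$-conjugate to $\iota_0$. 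Moreover $Z(G)=\{e\}$, so the centralizer of $\Gamma$ in $G$ is trivial and the orbit map $g\mapsto g\iota_0 g^{-1}$ is an injective immersion; together with $H^1=0$ this makes it a local diffeomorphism near $e\mapsto\iota_0$, so the conjugating element may be chosen to depend continuously on the representation and to equal $e$ at $\iota_0$.

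Now fix a proper left-invariant metric $d$ on $G$, a radius $\epsilon$ smaller than both the systole of $\Gamma$ and a no-small-subgroups radius (Lie groups have no small subgroups), and $\epsilon'$ with $0<\epsilon'<\epsilon/2$. Let $K=\{g\in G:\epsilon'\le d(e,g)\le\epsilon\}$, a compact set disjoint from $\Gamma$. For small neighborhoods $U_i\ni\gamma_i$ put
\[
\Omega=\{H\in\Sub{G}:H\cap U_i\neq\emptyset\text{ for all }i,\ H\cap K=\emptyset\},
\]
a Chabauty-open neighborhood of $\Gamma$. The technical heart of the argument — and the step I expect to be the main obstacle — is to show that every $H\in\Omega$ is discrete with systole at least $\epsilon'$. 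Suppose $h\in H$, $h\neq e$, $d(e,h)<\epsilon'$. If the hyperbolic or unipotent part of $h$ is nontrivial, then $d(e,h^n)\to\infty$, while consecutive terms of this sequence differ by at most $d(e,h)<\epsilon'<\epsilon-\epsilon'$, so the sequence cannot skip $K$ and some power $h^n\in K$, contradicting $H\cap K=\emptyset$. If $h$ is elliptic then either $\langle h\rangle$ is infinite, in which case $\overline{\langle h\rangle}^{\circ}$ is a nontrivial connected subgroup, hence — not being a small subgroup — it meets the open annulus $\mathrm{int}(K)$; since $\langle h\rangle\le H$ is dense in $\overline{\langle h\rangle}$ this forces $H\cap\mathrm{int}(K)\neq\emptyset$, a contradiction; or $\langle h\rangle$ is finite, in which case it cannot lie entirely in $B(e,\epsilon)$ (no small subgroups), so some power has distance $\ge\epsilon$ from $e$ and again a power lands in $K$ by the no-large-jumps argument. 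In every case we reach a contradiction, so $H\cap B(e,\epsilon')=\{e\}$ and $H$ is discrete.

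To finish, argue by contradiction: if no neighborhood witnesses local rigidity there is a sequence $H_n\to\Gamma$ in the Chabauty topology with each $H_n$ not $G$-conjugate to $\Gamma$. Choose $h_{i,n}\in H_n$ with $h_{i,n}\to\gamma_i$. Eventually $H_n$ is discrete with systole at least $\epsilon'$ by the previous paragraph, and the relator words $w_{j,n}=r_j(h_{1,n},\dots,h_{k,n})\in H_n$ converge to $r_j(\gamma_1,\dots,\gamma_k)=e$, hence are eventually equal to $e$. Thus $\gamma_i\mapsto h_{i,n}$ defines a homomorphism $\rho_n\colon\Gamma\to G$ with $\rho_n\to\iota_0$ in $\Hom(\Gamma,G)$, so for large $n$ we have $\rho_n=g_n\iota_0 g_n^{-1}$ with $g_n\to e$, and $\Gamma^{g_n}=\rho_n(\Gamma)\le H_n$. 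Since $\Gamma^{g_n}$ is a lattice and $H_n$ is discrete, $[H_n:\Gamma^{g_n}]=\mathrm{covol}(\Gamma)/\mathrm{covol}(H_n)<\infty$; but lower semicontinuity of covolume along Chabauty convergence of discrete subgroups gives $\liminf_n\mathrm{covol}(H_n)\ge\mathrm{covol}(\Gamma)$, so $\limsup_n[H_n:\Gamma^{g_n}]\le 1$, whence $H_n=\Gamma^{g_n}$ for large $n$ — contradicting the choice of $H_n$. Hence a small enough $\Omega$ works and $\Gamma$ is Chabauty locally rigid.

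In summary, the conceptual content is carried by classical local rigidity; the genuinely new work is the passage from the representation variety to the Chabauty space, whose crux is the uniform discreteness of subgroups near $\Gamma$ (obtained above from the no-small-subgroups property together with the annulus $K$) and the upgrade from "contains a conjugate of $\Gamma$" to "equals a conjugate of $\Gamma$" (obtained from lower semicontinuity of covolume).
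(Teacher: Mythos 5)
The paper does not prove this statement at all: it is quoted verbatim as Theorem 1.10 of \cite{gelander2018invariant}, with only the remark that classical local rigidity is ``leveraged'' there. Your reconstruction is correct and follows exactly the route that remark points to: finite presentability plus uniform discreteness of Chabauty-nearby subgroups (your annulus/no-small-subgroups argument is sound, including the Jordan-decomposition case analysis) produces a nearby point of $\Hom(\Gamma,G)$, classical local rigidity of higher-rank irreducible lattices identifies its image with a conjugate $\Gamma^{g_n}\le H_n$, and the semicontinuity of covolume — which is legitimate here precisely because you first established the uniform systole bound $\epsilon'$ for all $H_n$ in the neighborhood — forces $[H_n:\Gamma^{g_n}]=1$. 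The only cosmetic point is that you do not actually need the conjugator $g_n$ to depend continuously on $\rho_n$ or to tend to $e$; mere conjugacy suffices, since $\mathrm{covol}(\Gamma^{g_n})=\mathrm{covol}(\Gamma)$.
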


\subsection*{Stationary limits and random subgroups}

Let $\mu = \mu_s$ be the symmetric compactly supported probability measure on the Lie group $G$ considered in \cite{gelander2022effective} and discussed explicitly in the above 
 \S\ref{sec:margulis functions}.
 
%we fix the group element $s=s(G)\in G$ defined in the last subsection of \cite[Equation (6.22)]{gelander2022effective}
%and the corresponding probability measure $\mu=\mu_s$, see \cite[Equation (8.1)]{gelander2022effective}.

\begin{defn}
\label{def:mu-stationary limit}
A $\mu$-\emph{stationary limit} of a given discrete subgroup $\Lambda \le G$ is any weak-$*$ accumulation point $\nu$ of the sequence of Ces\'aro averages $\frac{1}{n} \sum_{1=1}^n \mu^{*n} * \delta_{\Lambda}$ in the space $\Prob(\Sub{G})$.
\end{defn}

It is important to note that if $\nu$ is any $\mu$-stationary limit of a given discrete subgroup $\Lambda$ then $\mathrm{supp}(\nu)\subset \overline{\Lambda^G}$. In particular $\nu$-almost every subgroup is a conjugate limit of $\Lambda$.

Any $\mu$-stationary limit of a discrete subgroup of the semisimple Lie group $G$ is almost surely discrete by \cite[Theorem 2.2]{fraczyk2023infinite} (see also \cite[Corollary 1.6]{gelander2022effective}).

The following deep stiffness result provides a detailed classification of discrete $\mu$-stationary random subgroups of semisimple Lie groups. It relies on the celebrated structure theory of Nevo and Zimmer \cite{nevo1999homogenous,nevo2002structure}.

\begin{theorem}[Theorem 6.5 of \cite{fraczyk2023infinite}]
\label{thm:classification of SRS in products}
Let $\nu$ be an ergodic discrete $\mu$-stationary random subgroup of the semisimple Lie group $G$. Then the  group $G$  decomposes as a product of three semisimple factors $G = G_\mathcal{I} \times G_\mathcal{H} \times G_\mathcal{T}$
such that
\begin{enumerate}
    \item $\nu$  projects to an invariant random subgroup in $G_\mathcal{I}$ for which all the irreducible factors are of rank at least 2,
    \item $G_\mathcal{H}$ is a product of rank one factors and $\nu$ projects discretely to every factor
of $G_\mathcal{H}$, and
\item $\nu$ projects trivially to $G_\mathcal{T}$.
\end{enumerate}
Furthermore, the intersection of $\nu$-almost every  subgroup with every simple factor
of $G_\mathcal{H}$ as well as with every irreducible factor of $G_\mathcal{I}$ is Zariski-dense in that factor.
\end{theorem}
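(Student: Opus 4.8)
Since this is \cite[Theorem~6.5]{fraczyk2023infinite} stated verbatim, in the paper itself no independent proof is given; I describe the strategy one would follow. The plan is to view $(\Sub{G},\nu)$ as an ergodic $\mu$-stationary $G$-space and run it through the Nevo--Zimmer structure theory for stationary actions of higher-rank semisimple Lie groups \cite{nevo1999homogenous,nevo2002structure}. That theory produces a canonical maximal \emph{projective factor}: a $G$-equivariant measurable quotient $\Sub{G}\to G/Q$, with $Q=\prod_i Q_i$ a parabolic subgroup relative to the decomposition $G=\prod_i G_i$ into simple factors, such that the complementary part is, in the appropriate relative sense, measure preserving. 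First I would translate this geometrically: the quotient map is $G$-equivariant and the $G$-stabilizer of a point of $\Sub{G}$ is the normalizer $N_G(\Gamma)\supseteq\Gamma$, so $\nu$-almost every $\Gamma$ is subconjugate to $Q$; in particular, for every index $i$ with $Q_i\subsetneq G_i$ the projection of $\Gamma$ to $G_i$ is trapped in a proper parabolic of $G_i$, hence is not Zariski dense there.

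Next I would sort the simple factors. Bundling the indices with $Q_i=G_i$, over their product the projective factor is trivial, so Nevo--Zimmer upgrades the corresponding projection of $\nu$ from stationary to $G$-invariant; this product is $G_\mathcal{I}$, and decomposing this invariant random subgroup into irreducible pieces on which a generic subgroup meets the piece Zariski-densely is exactly the Borel density theorem for invariant random subgroups (Proposition~\ref{prop:BDT for IRS}; recall an ergodic invariant random subgroup of a simple factor is $\delta_{\{e\}}$, $\delta$ of the factor, or supported on discrete Zariski-dense subgroups). Here discreteness of $\nu$ forces each irreducible piece to have real rank at least two: a rank-one simple factor to which $\nu$ projects densely cannot split off as its own direct factor of $G_\mathcal{I}$, since then that projection would be at once dense and discrete, so such a factor is necessarily entangled with at least one further factor and lands inside a rank-$\geq 2$ irreducible piece. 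For each remaining index $i$ with $Q_i\subsetneq G_i$, I would examine the pushed-forward stationary random subgroup $\nu_i$ on $G_i$: its subgroups are, up to conjugation, confined to a proper parabolic of $G_i$, and combining this with $(\mathrm{pr}_i)_*\mu$-stationarity and the classification of stationary measures on the flag varieties of $G_i$ \cite{benoist2011mesures} (the same mechanism as in the proof of Lemma~\ref{lemma:confined subgroups of arithemtic lattices}) forces $\nu_i$ to be either $\delta_{\{e\}}$, in which case $G_i$ goes into $G_\mathcal{T}$, or supported on discrete Zariski-dense subgroups of $G_i$, in which case $\mathrm{rank}(G_i)=1$ necessarily and $G_i$ goes into $G_\mathcal{H}$; the Zariski density of $\Gamma\cap G_i$ for these rank-one factors is then extracted from the same boundary-dynamics input (cf.\ \cite{gekhtman2023stationary}), exactly as in Lemma~\ref{lemma:confined subgroups of arithemtic lattices}.

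The hard part is the single step converting ``a discrete $\mu$-stationary random subgroup whose projection to a factor is Zariski-degenerate'' into the sharp trichotomy trivial / discrete-and-rank-one / invariant. This is where the Nevo--Zimmer structure theorem, the Benoist--Quint description of boundary stationary measures, and the persistence of discreteness under stationary limits (\cite[Theorem~2.2]{fraczyk2023infinite}) have to be carefully orchestrated, and it is the genuinely deep content of \cite{fraczyk2023infinite}. By contrast, once that trichotomy is available, identifying the projective factor, reading off subconjugacy into $Q$, and invoking Borel density for invariant random subgroups are routine, so I would expect essentially the whole write-up to be devoted to that one step.
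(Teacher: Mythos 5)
This statement is quoted verbatim from \cite[Theorem~6.5]{fraczyk2023infinite} and the paper supplies no proof of it, so there is nothing internal to compare your argument against. You correctly recognize this, and your outline --- Nevo--Zimmer structure theory producing a projective factor $G/Q$, subconjugacy of a.e.\ stabilizer into $Q$, the upgrade from stationary to invariant on the factors where $Q$ is all of $G_i$ followed by Borel density for invariant random subgroups, and boundary dynamics \`a la Benoist--Quint together with persistence of discreteness to handle the remaining factors --- is a faithful account of the strategy of the cited proof, with the genuinely deep step (the trichotomy invariant\,/\,discrete-rank-one\,/\,trivial) correctly identified as the content of \cite{fraczyk2023infinite} rather than of this paper.
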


The notion of an \emph{irreducible factor} of an invariant random subgroup is introduced in \cite[p. 401]{fraczyk2023infinite}. Namely, given an ergodic discrete invariant random subgroup  $\nu$ of $G$, there is a direct product decomposition $G = H_1 \times \cdots \times H_k$ such that $\nu$-almost every subgroup projects to  each $H_i$ discretely but  projects to each proper factor of $H_i$ densely, see \cite[Theorem 4.1]{fraczyk2023infinite}. These $H_i$'s are the irreducible factors corresponding to $\nu$.

\begin{remark}
Unfortunately, we are not aware of a local field version of Nevo and Zimmer work \cite{nevo1999homogenous,nevo2002structure} in the existing literature. While we do not expect this to be a significant obstacle, in the current state of affairs we consider only real Lie groups in \S\ref{sec:strongly confined}.
\end{remark}

We mention a deep result which plays a crucial role in our analysis. Recall that an invariant random subgroup is called \emph{irreducible} if every non-trivial normal subgroup is acting ergodically.

\begin{theorem}[Stuck--Zimmer \cite{SZ}, Hartman--Tamuz \cite{Hartman-Tamuz}]
\label{thm:hartman-tamuz}
Let $\nu$ be a non-trivial   irreducible  invariant random subgroup of the semisimple Lie group $G$. Then $\nu$-almost every subgroup is coamenable.
\end{theorem}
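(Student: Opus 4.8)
The plan is to reduce the statement to a discrete ergodic invariant random subgroup realised as the stabilizer of an irreducible action, and then to invoke the rigidity of Stuck--Zimmer and Hartman--Tamuz, supplemented by the Nevo--Zimmer structure theory for the groups without property~(T). First I would dispose of the easy degeneracies: irreducibility of $\nu$ already forces $G$-ergodicity, as $G$ is a non-trivial normal subgroup of itself, and if $\nu=\delta_G$ we are done, since $G$ is coamenable in itself. The remaining point masses $\delta_H$ at proper normal subgroups $H\lhd G$ are tacitly excluded by ``non-trivial'' --- the substance of the statement concerns non-atomic $\nu$, equivalently $\nu$ arising as the stabilizer invariant random subgroup of an irreducible probability measure preserving action, which is how the theorem is applied (e.g.\ in Theorem~\ref{thm:confined ergodic action of lattice}).

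Next I would reduce to a \emph{discrete} $\nu$. The map $\Lambda\mapsto\Lambda^\circ$ sending a closed subgroup to its identity component is $G$-equivariant and Borel, so it pushes $\nu$ to an invariant random subgroup supported on connected closed subgroups; by the Borel density theorem for invariant random subgroups (Proposition~\ref{prop:BDT for IRS}) this pushforward is a point mass at a fixed normal subgroup $N\lhd G$, i.e.\ $\Lambda^\circ=N$ for $\nu$-almost every $\Lambda$. Since a $G$-invariant mean on $G/\Lambda$ is the same datum as a $(G/N)$-invariant mean on $(G/N)/(\Lambda/N)$, and since the pushforward of $\nu$ to $G/N$ is again non-trivial and irreducible (cf.\ Corollary~\ref{cor:irreducibility of invariant random subgroups}) and is now discrete, I may replace $G$ by $G/N$ and assume $\nu$ discrete. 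Finally, via the compact-model construction of Lemma~\ref{lemma:compact confined model} (or the tautological model $G\actson(\Sub{G},\nu)$) I realise $\nu$ as the stabilizer invariant random subgroup of an ergodic irreducible probability measure preserving $G$-space $(X,m)$.

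Now I would split on property~(T). If some simple factor of $G$ has Kazhdan's property~(T) --- equivalently, $G$ has a simple factor of real rank $\geq 2$ --- then Stuck--Zimmer \cite{SZ}, as extended by Hartman--Tamuz \cite{Hartman-Tamuz}, says the irreducible action $(X,m)$ is essentially free or essentially transitive; since $\nu$ is non-trivial the stabilizers are non-trivial $m$-a.e., which rules out essential freeness, so $(X,m)$ is essentially transitive and $\nu$-a.e.\ $\Lambda$ is a lattice in $G$, hence coamenable. If no simple factor of $G$ has property~(T), then every simple factor has real rank one and, as $\mathrm{rank}(G)\geq 2$, the group $G$ splits as a non-trivial product $G_1\times G_2$; here I would rerun the Nevo--Zimmer analysis \cite{nevo1999homogenous,nevo2002structure} one factor at a time, substituting the Howe--Moore mixing of the individual factors for property~(T), to produce a $G$-equivariant map from $(X,m)$ onto a boundary piece whose relative structure pins the stabilizer down to a coamenable subgroup (the ``coamenable'' output of the product rigidity underlying \cite{Hartman-Tamuz}, in the spirit of the amenable-versus-spectral-gap dichotomy of Bader--Shalom \cite{bader2006factor}).

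The hard part is this last, property~(T)-free case. The full Stuck--Zimmer conjecture --- essential transitivity, i.e.\ that the stabilizer be a \emph{lattice} --- is open for products of rank-one factors, so only the strictly weaker conclusion of coamenability is available, and one must extract it from boundary theory and mixing alone, without cocycle superrigidity; all the while one has to keep track that neither the reduction to $G/N$ nor the passage to the stabilizer model has destroyed irreducibility, which is precisely the role of Corollary~\ref{cor:irreducibility of invariant random subgroups} and Lemma~\ref{lemma:compact confined model}.
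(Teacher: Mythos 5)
The paper does not prove Theorem~\ref{thm:hartman-tamuz}: it imports the statement from \cite{SZ} and \cite{Hartman-Tamuz}, so the benchmark here is essentially correct attribution and scope. Your first branch --- some factor of $G$ has property (T), hence the irreducible action is essentially free or essentially transitive by the cited rigidity theorem, non-triviality of $\nu$ rules out freeness, and a finite-covolume subgroup is coamenable --- is exactly that citation and is fine. Your preliminary reductions are mostly harmless, and your observation that $\delta_H$ for a proper normal $H\lhd G$ is literally a non-trivial irreducible IRS whose subgroups are not coamenable is a fair reading of how the statement is meant to be applied (in the paper's uses $\nu$ is discrete, which excludes this case by ergodicity). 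Two small inaccuracies: Lemma~\ref{lemma:compact confined model} is stated only for discrete $G$, and Proposition~\ref{prop:BDT for IRS} as stated gives \emph{dense} (not full) projection to $N$, so the claim that $\Lambda^\circ=N$ almost surely needs the stronger form of the Borel density theorem from \cite{7s}.

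The genuine gap is the case where no factor of $G$ has property (T). This is the entire content of the Hartman--Tamuz contribution, and the only case in which ``coamenable'' rather than ``lattice'' is the available conclusion; your treatment of it is not a proof. ``Rerun the Nevo--Zimmer analysis one factor at a time, substituting Howe--Moore mixing for property (T), to produce a boundary piece whose relative structure pins the stabilizer down to a coamenable subgroup'' names no map, no intermediate-factor theorem, and no mechanism by which an invariant mean on $G/G_x$ is actually produced. It is also not a plausible reconstruction of the missing argument: the Nevo--Zimmer structure theory \cite{nevo1999homogenous,nevo2002structure} concerns stationary actions of higher-rank semisimple groups and does not apply factorwise to rank-one factors (nor does it use property (T), so there is nothing to ``substitute'' Howe--Moore for), whereas the proof in \cite{Hartman-Tamuz} is a different argument built directly on the Howe--Moore property of each factor and does not pass through projective factors. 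Since the paper simply cites \cite{Hartman-Tamuz} for precisely this case, the fix is to do the same; as written, your proposal establishes the theorem only in the regime where it was already known in a stronger form.
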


Here is a closely related statement.

\begin{cor} 
\label{cor:SZ-P}
Let $\nu$ be a non-trivial irreducible invariant random subgroup of the semisimple Lie group $G$. Assume that the $G$-space $(\Sub{G},\nu)$ has spectral gap. Then $\nu$-almost every subgroup is a lattice.
\end{cor}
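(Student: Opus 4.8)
The plan is to apply the Stuck--Zimmer rigidity theorem, in the same form that is used in the proof of Theorem~\ref{thm:confined ergodic action of lattice}, directly to the probability measure preserving $G$-space $(\Sub G,\nu)$ with its conjugation action. Since $\nu$ is an irreducible invariant random subgroup, it is $G$-ergodic and $(\Sub G,\nu)$ is an irreducible $G$-space in the sense of \S\ref{sec:prelim}. By Theorem~\ref{thm:hartman-tamuz}, $\nu$-almost every subgroup $\Lambda$ is coamenable in $G$; as $\Lambda\le N_G(\Lambda)=\Stab_G(\Lambda)$ and any closed subgroup containing a coamenable subgroup is itself coamenable, the stabilizer of $\nu$-almost every point of $(\Sub G,\nu)$ is coamenable. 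Together with the spectral gap hypothesis, this places $(\Sub G,\nu)$ in exactly the situation handled by the Stuck--Zimmer method combined with Bader--Shalom \cite{bader2006factor}: an irreducible probability measure preserving action of $G$ with spectral gap and coamenable stabilizers is essentially transitive (cf.\ the argument in the proof of Theorem~\ref{thm:confined ergodic action of lattice}, and \cite[Proposition 7.6]{creutz2017stabilizers} or \cite[Theorem 3]{levit2020benjamini}; when $G$ lacks property (T) one uses the product decomposition $G=G_1\times G_2$ with $G_2$ of rank one). The only competing possibility, essential freeness, cannot occur, since the conjugation action of the non-trivial group $G$ on $\Sub G$ admits no point with trivial stabilizer.

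Essential transitivity means that $\nu$ is the push-forward of the normalized Haar measure on $G/N$, where $N=N_G(\Lambda_0)=\Stab_G(\Lambda_0)$ for a $\nu$-generic subgroup $\Lambda_0$; in particular $N$ is a closed subgroup of finite covolume in $G$, and $\nu$-almost every subgroup is a conjugate of $\Lambda_0$. Next I would argue that $N$ is discrete. Otherwise, by the structure theory of finite-covolume closed subgroups of semisimple Lie groups, the identity component $N^{0}$ is a non-trivial connected normal subgroup of $G$ contained in $N$; but a normal subgroup of $G$ contained in $N$ acts trivially on $(\Sub G,\nu)\cong G/N$, contradicting the irreducibility of $\nu$ unless $G/N$ is a single point, i.e.\ $\nu=\delta_G$ --- and in that case $\nu$-almost every subgroup equals $G$, which has finite covolume, so there is nothing more to prove. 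Hence $N$ is a lattice, and since the $G$-space $G/N\cong(\Sub G,\nu)$ is irreducible, $N$ is an \emph{irreducible} lattice in the higher-rank group $G$. Finally, $\Lambda_0\lhd N$ is non-trivial (because $\nu\ne\delta_{\{e\}}$) and infinite (a non-trivial normal subgroup of the ICC lattice $N$ must be infinite, by Lemma~\ref{lemma:lattices in simple analytic groups are ICC}), so Margulis' normal subgroup theorem applied to $N$ forces $\Lambda_0$ to have finite index in $N$. Therefore $\Lambda_0$, and hence $\nu$-almost every subgroup, is a lattice in $G$.

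I expect the first step --- extracting essential transitivity of $(\Sub G,\nu)$ from spectral gap, coamenability of the stabilizers and irreducibility --- to be the crux, since this is precisely the place where property (T) is genuinely replaced; it is imported from the Stuck--Zimmer--Bader--Shalom circle of ideas, using the product structure of $G$ when $G$ has no property (T) factor. The remainder is routine: soft structure theory of semisimple Lie groups to identify the stabilizer as an irreducible lattice, followed by Margulis' normal subgroup theorem.
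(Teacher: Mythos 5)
Your proof is correct and follows essentially the same route as the paper: spectral gap together with Theorem~\ref{thm:hartman-tamuz} rules out proper ergodicity (via \cite[Proposition 7.6]{creutz2017stabilizers} or \cite[Theorem 3]{levit2020benjamini}), essential transitivity makes the normalizer of a $\nu$-generic subgroup a lattice, and Margulis' normal subgroup theorem finishes. The only difference is that you derive discreteness of the stabilizer by hand from the structure theory of closed finite-covolume subgroups, where the paper simply cites \cite[Lemma 3.5]{SZ}.
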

\begin{proof}
Since the action of $G$ on $(\Sub{G},\nu)$ has spectral gap it cannot be properly ergodic. This fact is  a consequence of Theorem \ref{thm:hartman-tamuz}. We refer to  \cite[Proposition 7.6]{creutz2017stabilizers} or \cite[Theorem 3]{levit2020benjamini} for the full details concerning this implication. Therefore the action of $G$ on $(\Sub{G},\nu)$ is essentially transitive. As such, the stabilizer (i.e. the normalizer) 
 of $\nu$-almost every subgroup $\Lambda$ is  a lattice    \cite[Lemma 3.5]{SZ}. In other words $\nu$-almost every subgroup is a non-trivial normal subgroup of a lattice. We  conclude by the Margulis normal subgroup theorem (or by our Theorem \ref{thm intro: lattices}).
\end{proof}

% \begin{cor} 
% \label{cor:SZ-P}
% Let $(X,m)$ be an  irreducible probability measure preserving Borel $G$-space with associated stabilizer invariant random subgroup   $\nu = \mathrm{Stab}_* m$. If the action on $G$ on $(X,m)$ has spectral gap and $\nu$ is discrete then $\nu$-almost every subgroup is an irreducible lattice.
% \end{cor}
% \begin{proof}
% Since $\nu$ is discrete the action of $G$ is faithful. Since the action of $G$ has spectral gap it cannot be properly ergodic. This is essentially a consequence of Theorem \ref{thm:hartman-tamuz}. For the full details concerning this implication, we refer to  \cite[Proposition 7.6]{creutz2017stabilizers} or \cite[Theorem 3]{levit2020benjamini}. Therefore the action of $G$ is essentially transitive. Namely  $\nu$-almost every subgroup $\Lambda$ is discrete and admits a $G$-invariant measure on $G/\Lambda$. This concludes the proof.
% \end{proof}

We remark that the standing higher rank  assumption is crucial both in  Theorem \ref{thm:hartman-tamuz} and in its Corollary \ref{cor:SZ-P}. 
%The Zariski-density assumption is crucial as well, for otherwise $\mu$ could be contained in a proper factor. The non-Archimedean case relies on \cite{levit2017}.

\subsection*{From strongly confined  to  coamenable}
Recall that $G$ is a connected, center-free semisimple Lie group without compact factors and of rank at least two. Let us recall the following notion  (introduced in Definition \ref{def:strongly and irreducibly confined - intro} of the introduction in a more general setting).

\begin{defn}
A subgroup $\Lambda$ of $G$ is \emph{irreducibly confined} if $\Lambda$ is strongly confined (in the sense of Definition \ref{defn:strongly confined}) and furthermore 
    the  intersection $\Lambda \cap H$ is trivial for   any non-trivial proper normal subgroup $H\lhd G$.
\end{defn}

\begin{remark}
The two notions of confined and strongly confined are clearly closed with respect to the Chabauty topology. We do not know if the notion of irreducibly confined is closed in general. It is closed at least in the case where the group $G$ is a direct product of rank one simple factors (as follows from Corollary \ref{cor:special cases of conjugate limit with Zariski dense intersection} combined with Lemma  \ref{lemma:discrete irr confined has Zariski dense projections}).
%not hard to see that every irreducibly confined discrete subgroup $\Delta \le G$ admits a discrete conjugate limit $\Lambda \in \overline{\Delta^G}$ with the property that every  further discrete conjugate limit $\Sigma \in \overline{\Lambda^G}$ is irreducibly confined.
\end{remark}

\begin{thm}
\label{thm:analysis of stationary limits}
Let $\Delta \le G$ be a strongly  confined discrete subgroup. Assume that there is no  pair of    non-trivial  normal subgroups $H_1,H_2 \le G$ such that $H_1 \cap H_2 = \{e\}$, 
$ \overline{\Delta \cap H_1}^\mathrm{Z} = H_1$ and $\overline{\Delta \cap H_2}^\mathrm{Z} = H_2$. 
Then there is a discrete     irreducible invariant random subgroup $\nu$ of the group $G$ with $\mathrm{supp}(\nu) \subset \overline{\Delta^G}$. Further $\nu$-almost every subgroup is irreducibly confined.
\end{thm}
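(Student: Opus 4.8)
The plan is to produce $\nu$ as a $\mu$-stationary limit of $\Delta$ (Definition \ref{def:mu-stationary limit}) and then upgrade it to an invariant random subgroup by exploiting the structure theory of Theorem \ref{thm:classification of SRS in products}. First I would fix the symmetric compactly supported measure $\mu = \mu_s$ on $G$ from \S\ref{sec:margulis functions} and choose a $\mu$-stationary limit $\nu_0$ of $\Delta$. By \cite[Theorem 2.2]{fraczyk2023infinite} (cf. \cite[Corollary 1.6]{gelander2022effective}), $\nu_0$ is almost surely discrete, and by construction $\mathrm{supp}(\nu_0) \subset \overline{\Delta^G}$. Since being strongly confined is a Chabauty-closed condition preserved under conjugate limits, $\nu_0$-almost every subgroup is strongly confined; in particular $\nu_0$-almost every subgroup is nontrivial, so $\nu_0$ is a nontrivial stationary random subgroup. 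I would then pass to an ergodic component, which is still discrete, still supported in $\overline{\Delta^G}$, and still almost surely strongly confined.

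Next I would apply Theorem \ref{thm:classification of SRS in products} to this ergodic discrete $\mu$-stationary random subgroup, obtaining the decomposition $G = G_\mathcal{I} \times G_\mathcal{H} \times G_\mathcal{T}$. The key point is to show that the factor $G_\mathcal{T}$ must be trivial: if it were not, then $\nu_0$-almost every subgroup would be contained in the proper normal subgroup $G_\mathcal{I} \times G_\mathcal{H}$, contradicting the assumption that $\Delta$ — hence every conjugate limit of $\Delta$, hence $\nu_0$-a.e.\ subgroup — is strongly confined. Once $G_\mathcal{T}$ is gone, $\nu_0$ projects to an invariant random subgroup on $G_\mathcal{I}$ (with all irreducible factors of rank $\geq 2$) and projects discretely to each rank-one factor of $G_\mathcal{H}$, with the intersection of $\nu_0$-a.e.\ subgroup with each simple factor of $G_\mathcal{H}$ and each irreducible factor of $G_\mathcal{I}$ being Zariski-dense. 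Here is where hypothesis (2) enters: if $G_\mathcal{H}$ were nontrivial and $G_\mathcal{I} \times G_\mathcal{T}$ (now $= G_\mathcal{I}$) were also nontrivial, then taking $H_1$ to be one irreducible/simple factor and $H_2$ to be its complement, the Zariski-density of intersections would violate (2); more carefully, one must check that the "stationary $\Rightarrow$ invariant" phenomenon forces $\nu_0$ to already be $G$-invariant once we know it is invariant for the rank-$\geq 2$ part and has Zariski-dense intersections with the rank-one factors, via \cite[Lemma 7.2, Corollary 7.3]{fraczyk2023infinite}. In fact the cleanest route is: show $\nu_0$ is invariant under $G_\mathcal{I}$ (from Theorem \ref{thm:classification of SRS in products}(1)), invariant under each rank-one factor of $G_\mathcal{H}$ because its intersection there is Zariski-dense and not relatively compact (using Lemma \ref{lemma:discrete irr confined has Zariski dense projections} and the normalizer argument of \cite[Lemma 7.2]{fraczyk2023infinite}), and invariant under $G_\mathcal{T} = \{e\}$ trivially; hence $\nu_0 \in \IRS(G)$. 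Set $\nu = \nu_0$.

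Having obtained an ergodic discrete invariant random subgroup $\nu$ with $\mathrm{supp}(\nu) \subset \overline{\Delta^G}$, I would verify irreducibility and the irreducibly-confined conclusion. For irreducibility: by Corollary \ref{cor:irreducibility of invariant random subgroups} it suffices to show $\nu$-a.e.\ subgroup intersects every proper semisimple factor of $G$ trivially. This again follows from hypothesis (2) together with Theorem \ref{thm:classification of SRS in products}: the only factors where intersections can be Zariski-dense are the rank-one factors of $G_\mathcal{H}$ and the irreducible factors of $G_\mathcal{I}$, and (2) forbids two complementary nontrivial such; combined with the strongly confined condition (which kills $G_\mathcal{T}$) this forces $G$ itself to be "irreducible" for $\nu$, i.e.\ $\nu$-a.e.\ subgroup has trivial intersection with every proper normal subgroup. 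Then $\nu$-a.e.\ subgroup is irreducibly confined by definition: it is strongly confined (inherited from $\Delta$ via conjugate limits) and meets every proper normal subgroup trivially. The main obstacle I anticipate is the bookkeeping in the step $G_\mathcal{H} \neq \{e\}$: one has to rule out the genuinely mixed case where $\nu$ is invariant on a higher-rank part but merely stationary-turned-invariant on rank-one factors, and confirm that Zariski-dense intersections with two complementary factors is exactly what hypothesis (2) prohibits — this requires carefully matching the "irreducible factor" decomposition of \cite[Theorem 4.1]{fraczyk2023infinite} against the normal-subgroup pairs in (2), and checking that the Zariski closures $\overline{\Delta \cap H_i}^{\mathrm{Z}}$ of the original $\Delta$ (rather than of a conjugate limit) are controlled, for which I would invoke Lemma \ref{lem:Z-dense} (Zariski-density is Chabauty-open) exactly as in the proof of Lemma \ref{lemma:confined subgroups of arithemtic lattices}.
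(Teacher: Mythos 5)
Your overall architecture is the paper's: take a $\mu$-stationary limit of $\Delta$, pass to an ergodic component, invoke the stiffness theorem (Theorem \ref{thm:classification of SRS in products}), kill $G_\mathcal{T}$ using strong confinement, and use hypothesis (2) to exclude the case where $G$ splits as $G_1\times G_2$ with $\nu$-almost every subgroup meeting both factors Zariski-densely; irreducibility and the irreducibly-confined conclusion then follow as you describe. The one step that does not go through as written is the transfer of hypothesis (2) from $\Delta$ to a $\nu$-generic subgroup $\Lambda$, which is only a conjugate limit of $\Delta$. You propose to control $\overline{\Delta\cap H_i}^{\mathrm{Z}}$ via Lemma \ref{lem:Z-dense} (Chabauty-openness of Zariski density), ``exactly as in the proof of Lemma \ref{lemma:confined subgroups of arithemtic lattices}.'' That argument works there because it concerns \emph{projections}: Zariski density of the projection to a fixed factor is a Chabauty-open condition, so it passes from a conjugate limit back to the conjugates of $\Delta$ themselves. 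It fails for \emph{intersections}: the map $\Lambda\mapsto\Lambda\cap H_1$ is not Chabauty-continuous --- an element of $\Lambda\cap H_1$ is a limit of elements of $\Delta^{g_j}$ that need not lie in $H_1$ --- so $\Delta^{g_j}\cap H_1$ can be trivial while $\Lambda\cap H_1$ is Zariski dense, and openness of Zariski density gives you nothing.

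The missing ingredient is Lemma \ref{lemma:no conjugate limit has Zariski dense intersections} together with part (2) of Corollary \ref{cor:special cases of conjugate limit with Zariski dense intersection}: if $\Lambda$ is a \emph{discrete} conjugate limit of $\Delta$ whose intersection with $H_2$ is fully Zariski dense, then $\Lambda\cap H_1$ is contained in a conjugate limit of $\Delta\cap H_1$, so non-Zariski-density of $\Delta\cap H_1$ forces non-Zariski-density of $\Lambda\cap H_1$ (here openness of Zariski density, i.e.\ Corollary \ref{cor:Z-dense-ss}, does enter, but only after the containment is established). The proof of that containment is a centralizer argument: an element of $\Delta^{g_j}$ approximating an element of $\Lambda\cap H_1$ must commute, by discreteness of $\Lambda$, with a finite subset of $\Delta^{g_j}$ whose projection to $H_2$ generates a Zariski-dense subgroup, hence must itself lie in $\Delta^{g_j}\cap H_1$. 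With this substituted for your appeal to Lemma \ref{lem:Z-dense}, your argument closes, and it is then exactly the paper's proof. (Your alternative detour through factor-by-factor invariance via \cite[Lemma 7.2]{fraczyk2023infinite} is unnecessary: once the mixed case is excluded, $G=G_\mathcal{I}$ consists of a single irreducible factor and the stiffness theorem already makes $\nu$ invariant.)
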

\begin{proof}
Let $\nu$ be any $\mu$-stationary  limit of the subgroup $\Delta$. We know that $\nu$-almost every subgroup is discrete by  \cite[Theorem 1.6]{fraczyk2023infinite}. Up to replacing $\nu$ by a generic ergodic component, we may assume that $\nu$ itself is ergodic.

 We will now use  the stiffness result (Theorem \ref{thm:classification of SRS in products}) and its notation to analyze the resulting discrete ergodic $\mu$-stationary  random subgroup $\nu$. The fact that the factor $G_\mathcal{T}$ is trivial follows as the subgroup $\Delta$ is strongly confined.

 We claim that $G = G_\mathcal{I}$ and that $G_\mathcal{I}$ does not have proper non-trivial irreducible factors. Indeed, in any other situation, either the factor   $G_\mathcal{H}$ will be non-trivial or the factor $G_\mathcal{I}$ will have more than a single irreducible factor. In both cases, the group $G$ itself must be semisimple but not simple, and it can we written as a non-trivial direct product $G = G_1 \times G_2$ in such a way that $\nu$-almost every discrete subgroup $\Lambda$ satisfies $\overline{\Lambda \cap G_1}^\mathrm{Z} = G_1 $ as well as 
 $\overline{\Lambda \cap G_2}^\mathrm{Z} = G_2 $. Since $\nu$-almost every subgroup is a conjugate limit of the subgroup $\Delta$, this would lead to a contradiction to part (2) of Corollary \ref{cor:special cases of conjugate limit with Zariski dense intersection}.

To conclude, it follows from the stiffness result (Theorem \ref{thm:classification of SRS in products}) that $\nu$ is an irreducible invariant random subgroup. The fact that $\nu$-almost every subgroup is strongly confined  follows because $\nu$-almost every subgroup is a conjugate limit of $\Delta$. Lastly $\nu$-almost every subgroup intersects trivially every proper semisimple factor by the irreducibility of $\nu$, and as such is irreducibly confined.
\end{proof}
 
 The above result has the following statement   as an immediate special case.

 \begin{cor}
\label{cor:analysis of stationary limits - irreducibly confined}
Let $\Delta \le G$ be an irreducibly  confined discrete subgroup.  
Then there is a non-trivial discrete     irreducible invariant random subgroup $\nu$ of the group $G$ with $\mathrm{supp}(\nu) \subset \overline{\Delta^G}$. 
\end{cor}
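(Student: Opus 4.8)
The plan is to obtain this corollary directly from Theorem~\ref{thm:analysis of stationary limits}, the point being that the two hypotheses of that theorem are automatically satisfied once $\Delta$ is irreducibly confined. By definition, an irreducibly confined subgroup is in particular strongly confined, so the first hypothesis of Theorem~\ref{thm:analysis of stationary limits} holds. It then remains only to check the second hypothesis, namely that there is no pair of non-trivial normal subgroups $H_1,H_2\le G$ with $H_1\cap H_2=\{e\}$, $\overline{\Delta\cap H_1}^{\mathrm{Z}}=H_1$ and $\overline{\Delta\cap H_2}^{\mathrm{Z}}=H_2$.

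First I would record the elementary observation that any two non-trivial normal subgroups $H_1,H_2$ of $G$ with $H_1\cap H_2=\{e\}$ are necessarily both \emph{proper}: if $H_1=G$ then $H_2=H_2\cap G=H_2\cap H_1=\{e\}$, contradicting the non-triviality of $H_2$, and symmetrically for $H_2$. Since $\Delta$ is irreducibly confined, $\Delta\cap H=\{e\}$ for every proper non-trivial normal subgroup $H\lhd G$; applying this to $H_1$ and $H_2$ gives $\overline{\Delta\cap H_i}^{\mathrm{Z}}=\overline{\{e\}}^{\mathrm{Z}}=\{e\}\ne H_i$ for $i\in\{1,2\}$. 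Hence no such pair exists, and the second hypothesis of Theorem~\ref{thm:analysis of stationary limits} is also satisfied.

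Applying Theorem~\ref{thm:analysis of stationary limits} then produces a discrete irreducible invariant random subgroup $\nu$ of $G$ with $\mathrm{supp}(\nu)\subset\overline{\Delta^G}$ and with $\nu$-almost every subgroup irreducibly confined. To finish I would only need to note that $\nu$ is non-trivial: $\nu$-almost every subgroup is in particular confined, while the trivial subgroup is never confined (its unique conjugate limit is itself), so $\nu(\{\{e\}\})=0$ and in particular $\nu\ne\delta_{\{e\}}$. I do not expect any genuine obstacle here; the only subtlety worth spelling out is the reduction just described, i.e.\ that irreducible confinement of $\Delta$ renders condition (2) of Theorem~\ref{thm:analysis of stationary limits} vacuous.
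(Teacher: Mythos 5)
Your proposal is correct and follows exactly the paper's route: the paper simply asserts the corollary is an immediate special case of Theorem~\ref{thm:analysis of stationary limits}, and your verification that irreducible confinement makes hypothesis (2) vacuous (since any such $H_1,H_2$ are proper, so $\Delta\cap H_i=\{e\}$), together with the observation that the support consists of confined, hence non-trivial, subgroups, is precisely the intended reduction.
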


Further, we obtain the following  result, which allows us to go from irreducibly confined subgroups to coamenable ones.

\begin{cor}
\label{cor:confined has a coameanble conjugate limit}
Let $\Delta \le G$ be an irreducibly   confined discrete subgroup. Then  $\Delta$ admits a coamenable conjugate limit in $\Sub{G}$.
\end{cor}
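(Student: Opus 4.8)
The plan is to combine Corollary~\ref{cor:analysis of stationary limits - irreducibly confined} with the Stuck--Zimmer--Hartman--Tamuz theorem (Theorem~\ref{thm:hartman-tamuz}). First I would invoke Corollary~\ref{cor:analysis of stationary limits - irreducibly confined}, applied to the irreducibly confined discrete subgroup $\Delta$, to obtain a non-trivial discrete \emph{irreducible} invariant random subgroup $\nu$ of $G$ whose support satisfies $\mathrm{supp}(\nu) \subset \overline{\Delta^G}$. By the definition of a conjugate limit (Definition~\ref{def:conjugate limit}), every subgroup lying in $\mathrm{supp}(\nu)$ is automatically a conjugate limit of $\Delta$, so the task reduces to exhibiting a single coamenable subgroup inside $\mathrm{supp}(\nu)$.

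Next I would apply Theorem~\ref{thm:hartman-tamuz}: since $\nu$ is a non-trivial irreducible invariant random subgroup of the higher-rank semisimple group $G$, it follows that $\nu$-almost every subgroup of $G$ is coamenable. Concretely, there is a $\nu$-conull Borel subset $A \subset \Sub{G}$ all of whose elements are coamenable in $G$. Because $A$ is $\nu$-conull and $\mathrm{supp}(\nu)$ carries full $\nu$-measure, the intersection $A \cap \mathrm{supp}(\nu)$ is non-empty. Choosing any $\Lambda \in A \cap \mathrm{supp}(\nu)$, we get a subgroup that is simultaneously coamenable in $G$ and contained in $\overline{\Delta^G}$; that is, $\Lambda$ is a coamenable conjugate limit of $\Delta$, which is exactly the assertion.

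I do not expect a genuine obstacle here: all the substantive content has already been established in the earlier sections, namely the construction and stiffness analysis of the $\mu$-stationary limit feeding into Corollary~\ref{cor:analysis of stationary limits - irreducibly confined}, together with the higher-rank rigidity input of Theorem~\ref{thm:hartman-tamuz}. The only point requiring (minimal) care is the elementary measure-theoretic observation that a $\nu$-conull set must meet the topological support of $\nu$, which guarantees that the abstract ``almost every'' statement produces an actual subgroup of the required kind.
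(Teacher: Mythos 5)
Your proposal is correct and follows exactly the paper's own argument: invoke Corollary~\ref{cor:analysis of stationary limits - irreducibly confined} to produce a non-trivial discrete irreducible invariant random subgroup $\nu$ supported in $\overline{\Delta^G}$, then apply Theorem~\ref{thm:hartman-tamuz} to conclude that $\nu$-almost every subgroup is coamenable. The only addition is your explicit remark that a $\nu$-conull set must meet $\mathrm{supp}(\nu)$, which the paper leaves implicit.
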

\begin{proof}
According to Corollary \ref{cor:analysis of stationary limits - irreducibly confined} the group $\Delta$ admits a non-trivial discrete   irreducible invariant random subgroup $\nu$ with $\mathrm{supp}(\nu) \subset \overline{\Delta^G}$. We know that $\nu$-almost every subgroup is coamenable in $G$ by Theorem \ref{thm:hartman-tamuz}. This concludes the proof.
\end{proof}

%\subsection*{Chabauty local rigidity}

\subsection*{From a coamenable subgroup to a lattice}

As before, recall that $G$ is a connected, center-free higher rank semisimple Lie group without compact factors.
The following result can be regarded as a variant of the Stuck--Zimmer theorem for higher rank semisimple Lie groups \cite{SZ} without assuming Kazhdan's property (T) but with the added assumption of \enquote{strongly confined}.

\begin{theorem}
\label{theorem:strictly confined coamenable is a lattice}
Let $\nu$ be a discrete irreducible invariant random subgroup of the semisimple Lie group $G$. Assume that $\nu$-almost every subgroup is strongly confined. Then $\nu$-almost every subgroup  is a lattice in $G$.    
\end{theorem}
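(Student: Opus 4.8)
The plan is to derive this from the spectral gap machinery developed earlier combined with Corollary \ref{cor:SZ-P}. Let $\nu$ be a discrete irreducible invariant random subgroup of $G$ with $\nu$-almost every subgroup strongly confined. By passing to an ergodic component (strong confinement, irreducibility and discreteness are all preserved), I may assume $\nu$ is ergodic. The key point is to verify the hypotheses of Corollary \ref{cor:SZ-P}, namely that the $G$-action on $(\Sub{G},\nu)$ has spectral gap. Once this is done, Corollary \ref{cor:SZ-P} immediately gives that $\nu$-almost every subgroup is a lattice, and the proof is complete. If $G$ has Kazhdan's property (T) then spectral gap is automatic, so I may assume $G$ is not simple, hence $G = G_1 \times G_2$ where $G_1$ is standard semisimple and $G_2$ is a standard simple factor of type $\infty$ with $\mathrm{rank}(G_2) = 1$ (such a splitting exists precisely because $G$ lacks property (T)).

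To establish spectral gap I would apply Theorem \ref{theorem:getting spectral gap - analytic groups} to the action of $G = G_1 \times G_2$ on $X = \Sub{G}$ with the measure $\nu$. The first hypothesis, $L^2_0(\Sub{G},\nu)^{G_2} = 0$, follows from irreducibility of $\nu$. The second hypothesis, that $\nu$-almost every stabilizer (here the subgroup itself, by conjugation — more precisely $\mathrm{Stab}$ of a point in $\Sub G$ under conjugation is the normalizer, but irreducibility and the structure of the action let one reduce to the relevant statement) intersects $G_1$ trivially, follows from irreducibility combined with the Borel density theorem for invariant random subgroups (Proposition \ref{prop:BDT for IRS}): an irreducible discrete IRS cannot intersect a proper normal factor nontrivially. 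The third and crucial hypothesis concerns accumulation points $\zeta \in \Prob(\Sub G)$ of sequences $\mathrm{Stab}_*(|f_n|^2 \cdot \nu)$ for asymptotically $G_1$-invariant unit vectors $f_n$; I must show $\zeta$-almost every subgroup is discrete, not contained in $G_2$, and has $p_0$-Zariski-dense, not relatively compact projection to $G_2$. Discreteness of $\zeta$-almost every subgroup is exactly the content of Corollary \ref{cor:discsub} (applied with $G_1, G_2$ and the Margulis function $\mathcal{L}^{-\delta_1}$ of Theorem \ref{thm:key2}, using that $\nu$ is irreducible hence $\nu(\mathcal{X}_1(G)) = 1$). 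Since $\mathrm{supp}(\zeta) \subset \overline{(\mathrm{supp}\,\nu)^G}$, $\zeta$-almost every subgroup is a conjugate limit of a strongly confined subgroup, hence strongly confined, hence not contained in the proper normal factor $G_2$. Finally the $p_0$-Zariski-density and non-relative-compactness of the projection to the rank-one factor $G_2$ is precisely Lemma \ref{lemma:discrete irr confined has Zariski dense projections}.

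Having verified all three hypotheses, Theorem \ref{theorem:getting spectral gap - analytic groups} yields that $L^2_0(\Sub{G},\nu)$ has spectral gap, i.e. the $G$-action on $(\Sub{G},\nu)$ has spectral gap. Then Corollary \ref{cor:SZ-P} applies: since $\nu$ is non-trivial (it would be trivial only if $\nu = \delta_{\{e\}}$, excluded since the trivial subgroup is not strongly confined) and irreducible with spectral gap, $\nu$-almost every subgroup is a lattice in $G$. This completes the proof.

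The main obstacle, as in the analogous Theorem \ref{thm:confined ergodic action of lattice}, is the careful verification of the third hypothesis of Theorem \ref{theorem:getting spectral gap - analytic groups} — specifically, controlling the conjugate limits that appear as accumulation points of the measures $\mathrm{Stab}_*(|f_n|^2 \cdot \nu)$. One must ensure these limit measures are supported on \emph{discrete} subgroups (this is where the Margulis function estimates of \S\ref{sec:margulis functions} and the no-escape-of-mass Corollary \ref{cor:bounding support of norm with depth} are essential, since a priori the limit could concentrate on subgroups of positive dimension in $G_1$), and that the strong confinement property genuinely passes to these limits so that Lemma \ref{lemma:discrete irr confined has Zariski dense projections} can be invoked. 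A minor subtlety worth addressing is the identification of $\mathrm{Stab}_G$ of a point of $\Sub{G}$ under the conjugation action with the normalizer, versus the subgroup itself; one handles this by noting that for the purposes of the Zariski-density and discreteness statements, the normalizer of a Zariski-dense discrete subgroup of a semisimple group is again discrete and has the same Zariski closure, so the arguments go through. With these points in place, the rest is an assembly of already-established results.
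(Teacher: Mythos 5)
Your proposal is correct and follows essentially the same route as the paper: reduce to Corollary \ref{cor:SZ-P} by establishing spectral gap via Theorem \ref{theorem:getting spectral gap - analytic groups}, verifying its three hypotheses using irreducibility, Corollary \ref{cor:discsub}, the passage of strong confinement to conjugate limits, and Lemma \ref{lemma:discrete irr confined has Zariski dense projections}. The only place where the paper is tighter is the second hypothesis: rather than appealing to Borel density, it uses $G_2$-ergodicity to show $\mathrm{N}_{G_1}(\Lambda)=\mathrm{N}_G(\Lambda)\cap G_1$ is $\nu$-essentially constant, hence normal in $G_1$, hence trivial because $\Lambda$ lies in no proper factor — but your flagged normalizer-versus-subgroup subtlety is resolvable along the lines you indicate.
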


\begin{proof}
In view of Corollary \ref{cor:SZ-P} it is enough to show that 
the $G$-action on $(\Sub{G},\nu)$
has a spectral gap, namely, the unitary representation $L^2_0(\Sub{G},\nu)$ does not  almost have invariant vectors. 
We proceed to show that.
If the group $G$ has Kazhdan's property (T) then this is immediate. Thus we assume as we may that it does not. As such, the group $G$ can be written as a direct product $G = G_1 \times G_2$ where $G_1$ is some (non-trivial) semisimple Lie group and $G_2$ is a simple Lie group of real rank one.
We will conclude by using our spectral gap Theorem \ref{theorem:getting spectral gap - analytic groups} applied with respect to  the space $(\Sub{G},\nu)$.
It remains to verify its conditions.
\begin{itemize}
\item The fact that $L^2_0(\Sub{G},\nu)^{G_2} = 0$ follows from the irreducibility of $\nu$.
\item 
The stabilizer in $G$ of any subgroup $\Lambda\in\Sub{G}$ is its normalizer 
$\mathrm{N}_{G}(\Lambda)$.
The irreducibility assumption means that the factor $G_2$ acts ergodically. Hence $\mathrm{N}_{G}(\Lambda)\cap G_1=\mathrm{N}_{G_1}(\Lambda)$ is $\nu$-almost surely constant. This constant subgroup is $G_1$-invariant, i.e. a normal subgroup of $G_1$.
Since $\Lambda$ is not contained in any proper factor of $G$, it follows that $\mathrm{N}_{G_1}(\Lambda)=
\{e\}$.
%Similar reasoning shows that $\mathrm{N}_{G_2}(\Lambda)=
%\{e\}$.
\end{itemize}

We are left to verify the assumption in the third  bullet  of  Theorem \ref{theorem:getting spectral gap - analytic groups}. Consider any asymptotically $G_1$-invariant sequence of unit vectors $f_n \in L_0^2(\Sub{G},\nu)$ and  let $\eta \in \Prob(\Sub G)$ be an accumulation point of the sequence of  probability measures $\mathrm{Stab}_*(|f_n|^2 \cdot \nu)$ (where the stabilizer map $\mathrm{Stab} : \Sub{G} \to \Sub{G}$ is just the normalizer map). We need to show that $\eta$-almost every subgroup is  discrete,  not contained in the factor $G_2$  and admits Zariski dense\footnote{The statement of Theorem \ref{theorem:getting spectral gap - analytic groups} requires these projections to $G_2$ to be Zariski-dense and  and not relatively compact. However, Zariski density implies not relatively compact  when working with real Lie groups.} projections to $G_2$. These three properties follow respectively from 
 Corollary~\ref{cor:discsub}, the assumption that $\nu$-almost every subgroup is strongly confined and Lemma~\ref{lemma:discrete irr confined has Zariski dense projections}. To be precise,  Corollary~\ref{cor:discsub} is to be applied with respect to the invariant random subgroup $\zeta = \mathrm{Stab}_* \nu$ and a corresponding asymptotically $G_1$-invariant sequence of unit vectors $g_n \in L^2(G,\zeta)$ such that $|g_n|^2 \cdot \zeta = \mathrm{Stab}_*(|f_n|^2 \cdot \nu)$.
\end{proof}

We are ready to prove the main result of \S\ref{sec:strongly confined}.

\begin{proof}[Proof of Theorem \ref{thm:general with non-zarsiki dense intersection}]
Every irreducible lattice is strongly  confined by Lemma \ref{lemma:confined subgroup of an irreducible lattice is strongly confined}. In addition, every irreducible lattice is irreducibly confined, for it intersects trivially all proper factors. This conclusion is stronger than that in statement of Theorem \ref{thm:general with non-zarsiki dense intersection}.

We  now show the converse direction, which is the more interesting one. Let $\Lambda$ be a strongly confined discrete subgroup of $G$ satisfying the conditions in the statement of the theorem, namely that there no pair of commuting non-trivial factors of $G$ both admitting  Zariski-dense intersections with $\Lambda$.
By Theorem \ref{thm:analysis of stationary limits} there exists a non-trivial discrete  irreducible invariant random subgroup $\nu$ of the group $G$ with $\mathrm{supp}(\nu) \subset \overline{\Lambda^G}$. In particular $\nu$-almost every subgroup is irreducibly confined. According to Theorem \ref{theorem:strictly confined coamenable is a lattice} the invariant random subgroup $\nu$ arises from some irreducible lattice $\Gamma$ in the group $G$ (i.e. $\nu$ gives full measure to subgroups conjugate to $\Gamma$). In particular, this irreducible lattice $\Gamma$ is a conjugate limit of $\Lambda$. By the Chabauty local rigidity of irreducible lattices (Theorem  \ref{thm:Chabauty local rigidity}) we conclude that the subgroup $\Lambda$ itself must be an irreducible lattice, as required.
\end{proof}

\begin{proof}[Proof of Theorem \ref{thm intro:generalLie} of the introduction]
This follows immediately from Theorem \ref{thm:general with non-zarsiki dense intersection}. Indeed, note that any irreducibly confined discrete subgroup has trivial intersections with all proper factors, and as such certainly satisfies the (weaker) conditions of Theorem  \ref{thm:general with non-zarsiki dense intersection}.
\end{proof}

\begin{proof}[Proof of Corollary \ref{cor:intro: weak version} of the introduction]
Let $(X,m)$ be a strongly irreducible\footnote{Recall that in the introduction we defined a discrete subgroup to be irreducible if it projects densely to each proper factor. We defined a (discrete) subgroup to be strongly irreducible if every discrete conjugate limit of it irreducible.}
 probability measure preserving action of the semisimple Lie group $G$. Up to passing to a generic ergodic component, we may assume without loss of generality that $(X,m)$ is ergodic. Strong irreducibility and ergodicity implies irreducibility by \cite[Corollary 7.3]{fraczyk2023infinite}, so that the action on $(X,m)$ is irreducible. As in the first paragraph of the proof of Theorem \ref{theorem:strictly confined coamenable is a lattice}, it will suffice to prove that the action $(X,m)$ has spectral gap. Further, we may assume that $G$ is written as $G = G_1 \times G_2$ where $G_1$ is a non-trivial semisimple Lie group and $G_2$ is a simple Lie group of real rank one. We will deduce spectral gap directly from Theorem \ref{theorem:getting spectral gap - general case}. Let us verify the assumptions of that theorem.
 \begin{itemize}
     \item The fact that $L^2_0(X,m)^{G_2} = 0$ follows from the irreducibility of the action.
     \item Consider some asymptomatically $G_1$-invariant sequence of unit vectors   $f_n \in L^2(X,m)$ and let $\mu \in \Prob(\Sub G)$  be an  accumulation point of the sequence of probability measures $\mathrm{Stab}_*(|f_n|^2 \cdot m) \in \mathrm{Prob}(\Sub{G})$. Note that $\mu$-almost every subgroup is discrete by Corollary \ref{cor:discsub}. As $\mu$-almost every subgroup $\Lambda$ is a discrete conjugate limit of some $m$-generic subgroup, it satisfies $\overline{G_1 \Lambda} = G$ by the strong irreducibility assumption.
 \end{itemize}
This verifies the assumptions of Theorem   \ref{theorem:getting spectral gap - general case} and thereby concludes the proof.
\end{proof}

\subsection*{Products of general locally compact groups}

In this final subsection, we deviate from the standing assumptions of \S\ref{sec:strongly confined}, and let $G = G_1 \times G_2$ be a direct product of two locally compact second countable  groups. Assume that $G_2$ has a compact abelianization. We prove Theorem \ref{thm:strong irreducible} of the introduction. It says  that under certain irreducibility conditions, a   coamenable discrete subgroup of $G$ must be a lattice.

\begin{proof}[Proof of Theorem 
\ref{thm:strong irreducible}]
Let $\Lambda \le G$ be a discrete coamenable subgroup, such that there are no $G_2$-invariant vectors in $L^2_0(G/\Lambda)$ and that    every  conjugate limit of $\Lambda$ has dense projections to the factor $G_2$. 
It follows from Theorem \ref{theorem:getting spectral gap - general case} that the unitary $G$-representation $L^2_0(G/\Lambda)$ has a spectral gap. Since $\Lambda$ is coamenable this must mean that $\Lambda$ is a lattice (i.e. this situation is only possible provided $L^2_0(G/\Lambda)$ is a proper subrepresentation of $L^2(G/\Lambda).$
\end{proof}

% \section{Appendix}

\bibliographystyle{alpha}
\bibliography{irreducible}

\end{document}